\newcommandx{\change}[2][1=]{\todo[linecolor=blue,backgroundcolor=blue!25,bordercolor=blue,#1]{#2}}
\newcommandx{\changein}[2][1=]{\change[inline, caption={change}, #1]{%
    \begin{minipage}{\textwidth-20pt}#2\end{minipage}}}
\newcommandx{\todoin}[2][1=]{\todo[inline, caption={todo}, #1]{%
    \begin{minipage}{\textwidth-20pt}#2\end{minipage}}}
\newcommandx{\remove}[2][1=]{\todo[linecolor=Plum,backgroundcolor=Plum!25,bordercolor=Plum,#1]{#2}}
\newcommandx{\removein}[2][1=]{\remove[inline, caption={todo}, #1]{%
    \begin{minipage}{\textwidth-20pt}#2\end{minipage}}}
\newcounter{proof}
\newenvironment{myproof}%
{\stepcounter{proof}\begin{proof}}%
{\end{proof}}%
\newcounter{proofstep}[proof]
\crefname{proofstep}{step}{steps}%
\newcounter{proofcase}[proof]
\newenvironment{proofcase}[1][]%
{\refstepcounter{proofcase}\smallskip\par\noindent%
  \ifthenelse{\isempty{#1}}
    {\textsf{Case \theproofcase. }}
    {\textsf{#1.}}
  \noindent}%
{\par}%
\crefname{proofcase}{case}{cases}%
\theoremstyle{plain}
\newcounter{maintheorem}
\newtheorem{thm}{Theorem}[section]
\newtheorem*{thm*}{Theorem}
\newtheorem{pro}[thm]{Proposition}
\newtheorem{cor}[thm]{Corollary}
\newtheorem{lem}[thm]{Lemma}
\newtheorem{question}[thm]{Question}
\theoremstyle{definition}
\newtheorem{dfn}[thm]{Definition}
\newtheorem{ntn}[thm]{Notation}
\newtheorem{rem}[thm]{Remark}
\newtheorem{asm}[thm]{Assumption}
\numberwithin{equation}{section}
\newcommandx{\textref}[2][1=]{\hyperref[#2]{#1\ref*{#2}}}
\newcommandx{\textrefp}[2][1=]{(\hyperref[#2]{#1\ref*{#2}})}
\DeclareMathOperator{\Id}{Id}
\DeclareMathOperator{\supp}{supp}
\DeclareMathOperator*{\esssup}{ess\,sup}
\DeclareMathOperator{\cond}{\mathbb{E}}
\DeclareMathOperator{\var}{\mathbb{V}}
\DeclareMathOperator{\prob}{\mathbb{P}}
\begin{document}

\title[Multipliers on Bi-parameter Haar system Hardy spaces]{Multipliers on Bi-parameter Haar system Hardy spaces}%

\author[Lechner]{R. Lechner}%
\address{R. Lechner, Institute of Analysis, Johannes Kepler University Linz, Altenberger Strasse 69, A-4040 Linz, Austria}%
\email{richard.lechner@jku.at}%

\author[Motakis]{P. Motakis}%
\address{P. Motakis, Department of Mathematics and Statistics, York University, 4700 Keele Street, Toronto, Ontario, M3J 1P3, Canada}%
\email{pmotakis@yorku.ca}%

\author[M\"uller ]{P.F.X. M\"uller}%
\address{P.F.X. M\"uller, Institute of Analysis, Johannes Kepler University Linz, Altenberger Strasse 69, A-4040 Linz, Austria}%
\email{paul.mueller@jku.at}%

\author[Schlumprecht]{Th.~Schlumprecht}%
\address{Th.~Schlumprecht, Department of Mathematics, Texas A\&M University, College Station, TX 77843-3368, USA, and Faculty of Electrical
  Engineering, Czech Technical University in Prague, Technika 2, 16627, Praha 6, Czech Republic}%
\email{t-schlumprecht@tamu.edu}%

\date{\today}%

\subjclass[2010]{%
  46B25,
  47A68,
  30H10,
  60G46
}%

\keywords{Factorization theory, primarity, bi-parameter Haar system spaces, Haar multipliers, Hardy spaces}%

\thanks{The first-named author was supported by the Austrian Science Foundation (FWF) grant P32728. The second author was supported by NSERC grant
  RGPIN-2021-03639.  The third author was supported by the Austrian Science Foundation (FWF) under the grants P344114 and I5231.  The fourth author
  was supported by the National Science Foundation under grant DMS-2054443}%

\begin{abstract}
  Let $(h_I)$ denote the standard Haar system on $[0,1]$, indexed by $I\in \mathcal{D}$, the set of dyadic intervals and $h_I\otimes h_J$ denote the tensor product
  $(s,t)\mapsto h_I(s) h_J(t)$, $I,J\in \mathcal{D}$.  We consider a class of two-parameter function spaces which are completions of the linear span
  $\mathcal{V}(\delta^2)$ of $h_I\otimes h_J$, $I,J\in \mathcal{D}$.  This class contains all the spaces of the form $X(Y)$, where $X$ and $Y$ are either the Lebesgue spaces
  $L^p[0,1]$ or the Hardy spaces $H^p[0,1]$, $1\le p < \infty$.
  
  We say that $D\colon X(Y)\to X(Y)$ is a Haar multiplier if $D(h_I\otimes h_J) = d_{I,J} h_I\otimes h_J$, where $d_{I,J}\in \mathbb{R}$, and ask which more elementary
  operators factor through $D$.  A decisive role plays the \emph{Capon projection} $\mathcal{C}\colon \mathcal{V}(\delta^2)\to \mathcal{V}(\delta^2)$ given by
  $\mathcal{C} h_I\otimes h_J = h_I\otimes h_J$ if $|I|\leq |J|$, and $\mathcal{C} h_I\otimes h_J = 0$ if $|I| > |J|$, as our main result highlights:

  Given any bounded Haar multiplier $D\colon X(Y)\to X(Y)$, there exist $\lambda,\mu\in \mathbb{R}$ such that
  \begin{equation*}
    \text{$\lambda \mathcal{C} + \mu (\Id-\mathcal{C})$ approximately $1$-projectionally factors through $D$,}
  \end{equation*}
  i.e., for all $\eta > 0$, there exist bounded operators $A,B$ so that $AB$ is the identity operator $\Id$, $\|A\|\cdot\|B\| = 1$ and
  $\|\lambda \mathcal{C} + \mu (\Id-\mathcal{C}) - ADB\| < \eta$.  Additionally, if $\mathcal{C}$ is unbounded on $X(Y)$, then $\lambda = \mu$ and then $\Id$ either factors through $D$ or $\Id-D$.
\end{abstract}

\maketitle%
\tableofcontents%

\makeatletter%
\providecommand\@dotsep{5}%
\def\listtodoname{List of Todos}%
\def\listoftodos{\@starttoc{tdo}\listtodoname}%
\makeatother%


\newcounter{mycounter}

\section{Introduction}
\label{sec:intro}
Let us consider a bounded linear operator $T\colon E \to E$ on a Banach space $E$, with Schauder basis $(x_n)$ and biorthogonal functionals $(x_n^*)$.
All information carried by $T$ is then encoded in the matrix,
\begin{equation*}
  ( \langle x_m^*, Tx_n \rangle )_{ m,n =1}^\infty.
\end{equation*}
Indeed for $f\in E$ with basis expansion $f = \sum _{n =1}^\infty \langle x_n^*, f \rangle x_n$ we have
\begin{equation*}
  Tf
  = \sum_{n =1}^\infty \sum_{m=1}^\infty \langle x_m^* , Tx_n \rangle \langle x_n^* , f \rangle x_m.
\end{equation*}
In the special case, where $\langle x_m^* , Tx_n \rangle = 0$ for $m \neq n$, the operator $T$ acts directly as a multiplier on the basis $(x_n)$, and
\begin{equation*}
  Tf
  = \sum_{n =1}^\infty \langle x_n^*, Tx_n \rangle \langle x_n^*, f \rangle x_n .
\end{equation*}
Accordingly, problems arising in operator theory are investigated alongside suitably chosen Schauder bases and biorthogonal systems in Banach spaces.
Thus the Haar system $(h_I)$ and its tensor product arises with the factorization of operators on Lebesgue spaces, Hardy spaces, or mixed norm spaces
like $L^p(L^q)$.

Expressing a given operator problem in terms of Haar functions and exploiting their relative simplicity often leads straight to its combinatorial
core.  In this regard the books Lindenstrauss-Tzafriri~\cite{lindenstrauss:tzafriri:1979:partII}, Bourgain~\cite{bourgain:book:1981}, Hyt\"onen, van
Neerven, Veraar and Weis~\cite{MR3617205,MR3752640} and~\cite{mueller:2005, mueller:2022} point out the following notable examples:
 
Johnson's~\cite{johnson:1972} factorization of operators on $L^p $ through $\ell ^p$, the work of Enflo-Maurey~\cite{maurey:1975:2} and
Enflo-Starbird~\cite{enflo:starbird:1979} on complemented subspaces of $L^p$, Capon's~\cite{capon:1982:2} proof that the space $L^p(L^q)$ is primary,
the construction by Bourgain, Rosenthal and Schechtman~\cite{bourgain:rosenthal:schechtman:1981} of uncountably many complemented subspaces of $L^p$,
the subspace stability theorem for $L^p$, and for rearrangement invariant function spaces by Johnson, Maurey, Schechtman and
Tzafriri~\cite{johnson:maurey:schechtman:tzafriri:1979}, Kalton's $L^1$-embeddings \cite{kalton:1978,kalton:1979}, the construction of resolving
operators in Jones's~\cite{jones:1985} work on the uniform approximation property, Pisier's~\cite{pisier:1975:mart, pisier:2016:martingales}
characterization of martingale cotype and operator valued martingale transforms in the work of Girardi and Weis~\cite{girardi:weis:2005}.

Our recent work~\cite{lechner:motakis:mueller:schlumprecht:2021}, on $L^1(L^p)$ is typical of this line of research.  Establishing that $L^1(L^p)$ is
primary amounts to factoring the identity on $L^1(L^p)$ through $T$ or $\Id -T$, for any $T$ on $L^1(L^p)$.  We reduced the general, functional
analytic task to concrete, probabilistic and combinatorial problems, and solved the latter in \cite{lechner:motakis:mueller:schlumprecht:2021}.  The
Local Theory of Banach spaces provided a framework and tools, see Milman and Schechtman~\cite{milman:schechtman:1986}.  Related quantitative, finite
dimensional factorization problems arose in the work of Bourgain and Tzafriri~\cite{bourgain:tzafriri:1989, bourgain:tzafriri:1987} on restricted
invertibility of operators.

\smallskip

The initial motivation for the present paper was the open problem of whether $L^p(L^1)$, $1 < p < \infty$ and the related $H^1(L^1 )$ and $L^1(H^1 )$ are
primary spaces.  Our main results establish the required factorization theorems in the special case of Haar multipliers on the bi-parameter Haar
system.  We thereby obtain a first step towards proving that these spaces are primary.  Our approach works in a unified way and includes also the
spaces $L^1(L^p)$, $1 < p < \infty$ considered in our previous work~\cite{lechner:motakis:mueller:schlumprecht:2021}.  Even more, our factorization results
include the large class of bi-parameter Haar system Hardy spaces (see \Cref{dfn:hsh-spaces} and \Cref{thm:hsh-factor} for the factorization result).

\subsection{Concepts, methods and techniques}
In the preceding work~\cite{lechner:motakis:mueller:schlumprecht:2021} we proved that $L^1(L^p)$ is primary.  We described there the problem's origin
in Lindenstrauss's~\cite{lindenstrauss:1971} famous research program set forth around 1970, its history and its connection to the indecomposable
Banach spaces constructed by Gowers and Maurey~\cite{gowers:maurey:1993}.

We now discuss ideas, methods and techniques pertaining to factorization of operators on classical Banach spaces.  Our review covers the main steps of
the development.  It begins with the pioneering work of Enflo-Maurey and gradually builds the stage on which current research is taking place.
\subsubsection{\bf The Enflo-Maurey theorem~\cite{maurey:1975:2}}
For $1\le p < \infty$, Enflo via Maurey~\cite{maurey:1975:2} proved that $L^p$ is primary.  Given a linear operator $T\colon L^p \to L^p$ they proved that
there exists $\alpha \in \mathbb{R}$ such that for any $\varepsilon > 0$ there exist bounded operators $A\colon L^p \to L^p$ and $B\colon L^p \to L^p$ such that
\begin{equation}\label{18-9-1}
  AB = \Id,
  \qquad \|A\|\cdot \|B\| \le C_0 ,  
\end{equation}
and
\begin{equation}\label{18-9-2}
  \| ATB -\alpha \Id \| < \varepsilon ,
\end{equation}
where $C_0 < \infty$ is a universal constant independent of $p,T,\alpha,\varepsilon$.  We then say that $T$ is an approximate projectional factor of
$\alpha \Id $, or equivalently that $\alpha \Id $ approximately projectionally factors through $T$.  Separately, we say that $A,B$ are exterior projectional
factors of $\alpha \Id$. (We refer to \Cref{dfn:factorization-modes}.)

\smallskip
\paragraph{\bf Step 1} The first step of the Enflo-Maurey method exhibits a sequence of scalars $(\alpha_I : I \in \mathcal{D}) $ such that for
$\varepsilon > 0$ there exists a block basis of the Haar system $(\tilde{h}_I)$ such that
\begin{align*}
  A_1\colon f
  \mapsto \sum_{I\in \mathcal{D}}
  \langle \tilde{h}_I, f\rangle h_I/|I|,
  \quad\text{and}\quad
  B_1\colon f
  \mapsto \sum_{I\in \mathcal{D}} \langle h_I, f \rangle \tilde{h}_I /|I|,
\end{align*}
are contractions on $L^p$ such that $A_1B_1 = \Id$, and $S_1 = A_1TB_1$ is a small perturbation of a Haar multiplier satisfying
\begin{align*}
  \bigl| \langle S_1 h_I, h_I \rangle - \alpha_I |I| \bigr|
  < \varepsilon |I|^ 5 , \text{ if } I\in \mathcal{D},
  \quad\text{and}\quad
  | \langle S_1 h_I, h_J \rangle |
  < \varepsilon ( |I| \cdot |J|)^5,  \text{ if }I \neq J\in \mathcal{D}.
\end{align*}

The block basis $(\tilde{h}_I)$ constructed by Enflo-Maurey is what we call ``a faithful Haar system'' (see \Cref{dfn:faithful}).  This means that the
orthogonal projection
\begin{equation*}
  P(f)
  = \sum_{I\in \mathcal{D}} \langle \tilde{h}_I, f \rangle \tilde{h}_I/|I|,
\end{equation*}
coincides with the conditional expectation, $\cond( f | \mathcal{F} )$, where $\mathcal{F}$ is the $\sigma$-algebra generated by the sub-system
$(\tilde{h}_I : I \in \mathcal{D} )$, and $f \in \langle \{ h_I : I \in \mathcal{D} \} \rangle$, where $\langle V \rangle$ denotes the linear span of a set of vectors $V$.

\smallskip
\paragraph{\bf Step 2}
The Enflo-Maurey proof utilizes Liapunov's convexity theorem to pass to Haar multipliers with stable entries.  It yields a single scalar $\alpha$ and
a block basis of the Haar basis $(\tilde{k}_I)$ (in fact a faithful Haar system) such that
\begin{align*}
  A_2\colon  f
  &\mapsto \sum_{I\in \mathcal{D}} \langle \tilde{k}_I, f \rangle h_I/|I|,\\
  B_2\colon  f
  &\mapsto \sum_{I\in \mathcal{D}} \langle h_I, f \rangle \tilde{k}_I/|I|,
\end{align*}
are contractions on $L^p$ for which $A_2B_2 = \Id$ and $S_2 = A_2S_1B_2$ is a small perturbation of $\alpha\Id$ on $L^p$ satisfying
\begin{align*}
  & \bigl| \langle S_2 h_I, h_I \rangle - \alpha |I| \bigr|
    < \varepsilon |I|^ 5,
    \quad I  \in  \mathcal{D},\\
  &| \langle S_2 h_I, h_J \rangle |
    < \varepsilon (|I|\cdot |J|)^5,
    \quad I \neq J \in  \mathcal{D} .
\end{align*}

\subsubsection{\bf Capon's theorem on Banach spaces with symmetric bases~\cite{capon:1983}}
Capon~\cite{capon:1983} proved that for any Banach space $E$ with a symmetric basis $(e_i)$, the Bochner Lebesgue space $L^p(E)$ is primary for
$1\le p < \infty$.  For every bounded linear operator $T\colon L^p(E)\to L^p(E)$, Capon~\cite{capon:1983} determines $\alpha \in \mathbb{R}$ such that
$T$ is an approximate factor of $\alpha\Id$.  The norms of the exterior projectional factors obtained by Capon depend only on the symmetry constant of the
basis $(e_i)$ in $E$.  In the course of proving this result, Capon~\cite{capon:1983} was lead to introduce the following property: $E$ together with
the symmetric basis $(e_i)$ satisfies condition $\mathcal{P}_p$ if there exists $K < \infty $ such that for any finite sequence $f_i \in \langle \{ h_I \} \rangle$,
\begin{equation}\label{18-9-4}
  \Bigl\| \sum_i ( \cond_i f_i) e_i \Bigr\|_{L^p(E)}
  \le K \Bigl\| \sum_i f_i e_i \Bigr\|_{L^p(E)} ,
\end {equation}
where $\cond_i$ denotes the orthogonal projection onto $\langle \{ h_I : |I| \ge 2^{-i} \} \rangle$.  In short, $E$ with the symmetric basis $(e_i)$ satisfies
$\mathcal{P}_p$ if the projection
\begin{equation*}
  C(f)
  = \sum_i (\cond_if_i) e_i,
\end{equation*}
introduced by Capon~\cite{capon:1983}, extends boundedly to $L^p(E)$, in which case we write $E\in \mathcal{P}_p$.

As noted by Capon for $1 \le r < \infty$ and $1 < p < \infty$ we find that $\ell^r$ with the standard unit vector basis satisfies $\mathcal{P}_p$, in short
$\ell^r\in \mathcal{P}_p$.  By contrast, she provides examples showing that Capon's projection $C$ does not extend boundedly on the spaces
\begin{equation*}
  L^1(\ell^r),
  \quad\text{and hence}\quad
  \ell^r\notin \mathcal{P}_1,
  \quad 1 < r < \infty .
\end{equation*}
Now we turn to review Capon's method of factoring operators $T\colon L^p(E) \to L^p(E) $ in the special case when $E \notin \mathcal{P}_p$.  This case is of particular
interest to our present paper, which is predominantly motivated by factoring operators on the following spaces
\begin{equation*}
  L^p(L^1)
  \text{ with $1 < p < \infty$},
  \quad H^1(L^1),
  \quad L^1(H^1) .
\end{equation*}

\smallskip
\paragraph{\bf Step 1}
First, Capon reduces all matters to \emph{$E$-diagonal} operators.  For $\varepsilon > 0$, she determines exterior projectional factors
$A\colon L^p(E)\to L^p(E)$ and $B\colon L^p(E)\to L^p(E) $ such that
\begin{equation*}
  ATB
  \text{ is $\varepsilon$-close to an $E$-diagonal operator }
  S\colon L^p(E)\to L^p(E).
\end{equation*}
Recall that $S$ is $E$-diagonal, if there exists a sequence of operators $S_i\colon L^p \to L^p$ such that
\begin{equation*}
  S(f)
  = \sum_i ( S_if_i) e_i,
\end{equation*}
where $f \in L^p(E)$ is given by $f = \sum_i f_i e_i$.

It is worth pointing out that the concept of {$E$-diagonal} operators corresponds to operator valued Haar multipliers. See Girardi~\cite{girardi:2007}
and Wark~\cite{wark:2017,wark:2021}.

\smallskip
\paragraph{\bf Step 2}
After a further reduction, Capon verifies that one may assume that each of the operators $S_i$ is a Haar multiplier, that is,
\begin{equation*}
  S_i (h_I)
  = d_{I,i} h_I,
  \quad d_{I,i}\in \mathbb{R},\ I\in \mathcal{D},\ i \in \mathbb{N}.
\end{equation*}

\smallskip
\paragraph{\bf Step 3a}
Under the mild assumption that the space $E$ does not contain a copy of $\ell^1$, the operators $S = (S_i) $ admit a weakly convergent subsequence.
Specifically, there exists a subsequence $(n_i)$ and $S_\infty\colon L^p \to L^p$ such that
\begin{equation}\label{18-9-9}
  \lim_{i\to\infty} \langle S_{n_i}(f), g \rangle
  = \langle S_{\infty}(f), g \rangle ,
\end{equation}
for $f\in L^p$, $g \in L^r$, with $1/p +1/r =1 $ and $ 1 \le p < \infty$.  One needs the additional assumption on $E$

for the case $p = 1$.  When $1 < p < \infty$ the limiting $S_\infty$ exists without conditions on $E$.

Now Capon obtains a further block basis $(\tilde{h}_I)$ generating contractive exterior projectional factors $A, B\colon L^p \to L^p$ such that
$\tilde{S}_\infty = AS_\infty B$ and $\tilde{S}_i = A S_{n_i} B$ are each Haar multipliers, and that the associated sequences of coefficients
$(\tilde{d}_{I})$ and $ (\tilde{d}_{I,i}) $ satisfy joint stabilization conditions as follows: For $I\in \mathcal{D}$, the limit
\begin{equation*}
  \lambda(I)
  = \lim_{j\to \infty} \sum_{J \subset I,\, |J| =2^{-j}} \tilde{d}_J |J|
\end{equation*}
exists and satisfies
\begin{equation*}
  | \tilde{d}_{I} |I| - \lambda(I) |
  < \varepsilon |I|^ 5,
  \quad \text{for $I\in \mathcal{D}$}.
\end{equation*} 
We then say that the Haar multiplier $\tilde{S}_\infty$ is stable.  Simultaneously, $\tilde{S}_i$ is stable from level $i$ onwards, meaning that
\begin{equation*}
  \lambda_i(I)
  = \lim_{j\to \infty} \sum_{J \subset I , \, |J| =2^{-j} } \tilde{d}_{J,i} |J|
\end{equation*}
exists and satisfies
\begin{equation}\label{18-9-6}
  \bigl| \tilde{d}_{I,i} |I| - \lambda_i(I) \bigr|
  < \varepsilon |I|^5,
  \quad \text{for $I\in \mathcal{D}$, $|I| \le 2^{-i}$, $i\in \mathbb{N}$}.
\end{equation}
The set functions $\lambda(\cdot)$ and $\lambda_i(\cdot)$, as defined initially on dyadic intervals in $[0, 1)$, may be extended uniquely to
absolutely continuous measures, still denoted $\lambda$ and $\lambda_i$.  Moreover, their respective densities form a bounded sequence in
$L^{\infty}$.

\smallskip
\paragraph{\bf Step 3b}
Capon's proof continues with a close examination of the coefficients $(\tilde{d}_{I})$ and $(\tilde{d}_{I,i})$ and gives explicit formulas for the
densities of $\lambda$ and $\lambda_i$.

For $t\in [0,1)$ and $k \in \mathbb{N}$ let the dyadic interval $I = I_k(t)$ be determined by the conditions $|I | = 2^{-k}$, $t \in I$.  Then put
\begin{equation*}
  m_i (t)
  = \lim_{k\to \infty} \tilde{d}_{I_k(t), i}
  \quad\text{and}\quad
  m_\infty (t)
  = \lim_{k\to \infty} \tilde{d}_{I_k(t)}.
\end{equation*}
In view of the above stability conditions, the limits used to form $m_i$ and $m_\infty $ are well defined and
\begin{equation*}
  \lambda(I)
  = \int_I m_\infty (t) dt,
  \quad\text{and}\quad
  \lambda_i (I)
  = \int_I m_i (t) dt,
  \quad\text{for $I\in \mathcal{D}$, $i\in \mathbb{N}$}.
\end{equation*}
Moreover, $\sup_i \|m_i\|_{L^{\infty}} < \infty$, hence there exists a subsequence $(n_i)$ and $m\in L^\infty$ such that
\begin{equation}\label{18-9-14}
  \lim_{i\to\infty} \langle m_{n_i}, f \rangle
  = \langle m , f \rangle,
  \quad f \in L^1.
\end{equation}

\smallskip
\paragraph{\bf Step 3c}
All further development of Capon's proof depends on the relation between the two bounded functions $m_\infty$ and $m$ just defined.  Under the assumption
that $E \notin \mathcal{P}_p$, she proves that the limits defining $m_\infty$ and $m$ are necessarily interchangeable, and that
\begin{equation}\label{18-9-16}
  m(t)
  = \lim_{i\to \infty} \lim_{k\to \infty} \tilde{d}_{I_k(t), n_i}
  = \lim_{k\to \infty} \lim_{i\to \infty} \tilde{d}_{I_k(t), n_i}
  = m_\infty(t) .  
\end{equation}

\smallskip
\paragraph{\bf Step 4}
We continue assuming that $E\notin \mathcal{P}_p$.  Exploiting the crucial identity $m = m_\infty $, Capon improves upon~\eqref{18-9-9}, and shows that strong convergence
holds
\begin{equation}\label{18-9-11}
  \lim_{i\to\infty} \| \tilde{S}_{n_i}(g) -  \tilde{S}_{\infty}(g)\|_{L^p}
  = 0,
  \qquad g\in L^ p.
\end{equation}
Consequently, for a suitable refinement of the subsequence $(n_i) $, still denoted $(n_i) $, we have
\begin{equation}\label{18-9-13}
  \Bigl\|
  \sum_{i=1}^\infty \bigl( \tilde{S}_{n_i}(f_i) e_i - \tilde{S}_{\infty}(f_i) e_i \bigr)
  \Bigr\|_{L^p(E)}
  < \varepsilon \|f\|_{L^p(E)},
\end{equation}
where
\begin{equation*}
  f
  = \sum_i f_i e_i \in L^p(E).
\end{equation*}

\smallskip
\paragraph{\bf Step 5.}
Finally applying the Enflo-Maurey factorization to the limiting operator $\tilde{S}_{\infty}$, Capon readily obtains a simultaneous factorization for the
entire sequence of convergent operators $(\tilde{S}_i )$.

\subsubsection{\bf Capon's theorem on mixed norm spaces~\cite{capon:1982:2}.}
With the results published by Capon in 1983, the important classical Banach spaces
\begin{equation*}
  L^p(\ell^q),
  \quad 1 \le p,q < \infty,
\end{equation*}
were shown to be primary.  At about the same time, Capon turned to the reflexive mixed norm spaces, and in 1982 published the
proof~\cite{capon:1982:2} that the spaces
\begin{equation*}
  L^p(L^q ),
  \quad 1< p,q < \infty,
\end{equation*}
are primary.  Here Capon's~\cite{capon:1982:2} approach utilizes that for $1< p,q < \infty$ the tensor product Haar system,
\begin{equation*}
  h_I \otimes h_J,
  \quad I, J \in \mathcal{D},
\end{equation*}
forms an unconditional basis in $L^p(L^q)$.  Let $T\colon L^p(L^q) \to L^p(L^q)$ be a bounded linear operator and $\varepsilon > 0$.  Capon~\cite{capon:1982:2}
determines a scalar sequence $(a_{I ,J} : I, J \in \mathcal{D})$ satisfying
\begin{equation*}
  |a_{I ,J}|
  \ge 1/2
\end{equation*}
and exterior projectional factors $A,B\colon L^p(L^q) \to L^p(L^q)$ such that for $S = ATB $ or $S = A(\Id -T )B $ we have
\begin{equation*}
  \Bigl\|
  Sf - \sum_{I,J} a_{I , J} \langle h_I \otimes h_J, f \rangle h_I \otimes h_J / ( |I| \cdot |J | )
  \Bigr\|_{ L^p(L^q )}
  \le \varepsilon \|f\|_{L^p(L^q)}.
\end{equation*}
Thus either $T$ or $\Id -T$ projectionally factors through a Haar multiplier with entries satisfying $|a_{I, J}| \ge 1/2 $.

Capon's exterior projectional factors $A, B$ are determined by a bi-parameter block basis
\begin{equation*}
  \tilde{h}_{I , J}
  = \sum_{(K,L)\in \mathcal{A}_{I,J}} h_K \otimes h_L,
\end{equation*}
where $(\mathcal{A}_{I,J} : \, I,J\in \mathcal{D})$ are pairwise disjoint collections of dyadic rectangles.  We have
\begin{align*}
  A\colon f
  &\mapsto \sum_{I, J \in \mathcal{D}} \langle \tilde{h}_{I,J}, f \rangle h_I\otimes h_J / (|I|\cdot|J|),\\
  B\colon f
  &\mapsto \sum_{I, J \in \mathcal{D}} \langle h_I\otimes h_J, f \rangle \tilde{h}_{I,J} / (|I| \cdot |J |).
\end{align*}
We emphasize, that the block basis $(\tilde{h}_{I, J})$ satisfies Capon's \emph{local} product condition; it is, however, \emph{not} of tensor product
structure, and the unconditionality of the Haar system in $L^p(L^q)$ is a crucial ingredient in Capon's~\cite{capon:1982:2} proof of the norm
estimates
\begin{equation}\label{09-10-1}
  \|A \| \cdot \|B\|
  \le C_0(p,q).
\end{equation}
The constants $C_0(p,q) $ are unbounded as $p \text{ or } q \to 1 \text{ or } \infty$.

Exploiting Capon's proof that $L^p(L^q)$, $1< p,q < \infty$ is primary, the third named author~\cite{mueller:1994} obtained the analogous result for the
bi-parameter Hardy space $H^1(H^1)$.  However, proving primarity for any of the spaces
\begin{equation*}
  L^1(L^p )
  \quad\text{or}\quad
  L^p(L^1), \qquad 1 \le p < \infty,
\end{equation*}
is out of reach for the approach developed by Capon~\cite{capon:1982:2}.  Only in 2022 the present
authors~\cite{lechner:motakis:mueller:schlumprecht:2021} proved that the spaces $L^1(L^p) $, $1 < p < \infty$ are primary.

Finally, neither Capon's approach nor the one in our 2022 paper~\cite{lechner:motakis:mueller:schlumprecht:2021} applies to the limiting spaces
\begin{equation*}
  L^1(H^1)
  \quad\text{and}\quad
  H^1(L^1) .
\end{equation*}

\subsubsection{\bf The present paper}
The present paper is motivated by the open problem of whether primarity holds for any of the spaces
\begin{equation*}
  L^p(L^1),\ 1 < p < \infty ,
  \qquad L^1 (H^1)
  \qquad\text{or}\qquad
  H^1(L^1). 
\end{equation*}
One of our main results in this paper yields that Haar multipliers on these spaces are approximate projectional factors of $\alpha \Id $ for some $\alpha \in \mathbb{R}$.

We approach these problems in a unified way, covering also the spaces $L^1(L^p)$ treated in~\cite{lechner:motakis:mueller:schlumprecht:2021}.  We
consider
\begin{equation*}
  \mathcal{F}
  = \{ L^p(L^q),\ H^p(H^q),\ L^p(H^q),\ H^p(L^q) : 1 \le p,q < \infty \},
\end{equation*}
and a subset
\begin{equation*}
  \mathcal{G}
  = \{ L^1(L^p),\ L^1(H^p),\ H^p(L^1) : 1 \le p < \infty \} .
\end{equation*}
By way of example we show that {\em Capon's projection} $ \mathcal{C}$ associated to the bi-parameter Haar system is unbounded on $Z \in \mathcal{G}$, where
\begin{equation*}
  \mathcal{C}(f)
  = \sum_{I \in \mathcal{D}} \sum_{J \in \mathcal{D},\, |I| \leq |J|} \frac{\langle h_I \otimes h_J, f \rangle}{|I|\cdot| J|} h_I \otimes h_J,
  \qquad\text{for}\ f\in \langle \{ h_I \otimes h_J \} \rangle.
\end{equation*}
On the other hand, the bi-parameter Haar system is an unconditional basis in $Z \in \mathcal{F} \setminus \mathcal{G}$ and hence any bounded
multiplier array acting on the bi-parameter Haar system gives rise to a continuous operator on the spaces in $\mathcal{F} \setminus \mathcal{G}$.

\smallskip
\paragraph{\bf Factoring through multipliers of the bi-parameter Haar system}
Let $Z \in \mathcal{F}$ and let $D\colon Z\to Z$ be a bounded multiplier acting on the bi-parameter Haar system by a multiplier array,
$(d_{I,J} : I, J \in \mathcal{D})$, that is,
\begin{equation*}
  D (h_I \otimes h_J )
  = d_{I,J } h_I \otimes h_J,
  \quad I,J \in \mathcal{D} .
\end{equation*}
Given the multiplier array $(d_{I,J})$, we prepare the statement of our main factorization theorems by defining the {\em factorization functionals}
$\lambda$ and $\mu$.
\begin{enumerate}
\item Any two fixed integers $i, j \in \mathbb{N}_0$ give rise to a pavement of the unit square by the following collection of pairwise disjoint
  dyadic rectangles,
  \begin{equation*}
    \mathcal{R}(i, j)
    = \{ (I,J) \in \mathcal{D}\times \mathcal{D} : |I| = 2 ^{-i}, |J| =2^{-j} \}.
  \end{equation*}
  The cardinality of $\mathcal{R}(i, j)$ equals $2^ {i+j}$ and $|I| \cdot |J| = 2^ {-i-j} $ for $(I,J) \in \mathcal{R}(i, j)$.  We use
  $\mathcal{R}(i, j)$ to define an average of the entries in the multiplier array as follows
  \begin{equation*}
    E_{i, j}
    = \sum_{(I,J)\in \mathcal{R}(i, j)} |I| \cdot |J| d_{I,J}
    = 2^{-i-j} \sum_{(I,J)\in \mathcal{R}(i, j)} d_{I,J}.
  \end{equation*}

\item We fix a non-principal ultrafilter $\mathcal{U}$ on $\mathbb{N}$ and form iterated row and column limits of the matrix $(E_{i,j})$ along
  $\mathcal{U}$.  This gives rise to two distinct limits of the averages $E_{i, j}$
  \begin{equation}\label{29-9-2}
    \lambda_\mathcal{U}(D)
    = \lim_{j\to\mathcal{U}} \lim_{i\to\mathcal{U}} E_{i, j}
    \quad\text{and}\quad
    \mu_\mathcal{U}(D)
    = \lim_{i\to\mathcal{U}} \lim_{j\to\mathcal{U}} E_{i, j}.
  \end{equation}
\end{enumerate}

\smallskip
\paragraph{\bf Summary of the main results}
The assertions of our factorization theorem for $D\colon Z \to Z$ are linked directly to $\lambda = \lambda_\mathcal{U}(D) $ and
$\mu = \mu_\mathcal{U}(D)$, and critically depend on the norm of Capon's projection on~$Z$.
\begin{enumerate}
\item If Capon's projection $\mathcal{C}$ does not extend to a bounded operator on $Z$ then, we show that necessarily the iterated limits defining
  $\lambda$ and $\mu$ are interchangeable and we have equality,
  \begin{equation*}
    \lambda
    = \lim_{j\to\mathcal{U}} \lim_{i\to\mathcal{U}} E_{i, j}
    = \lim_{i\to\mathcal{U}} \lim_{j\to\mathcal{U}} E_{i, j}
    = \mu  .
  \end{equation*}
  For $\varepsilon > 0$, there exist two faithful Haar systems $(\tilde{h}_I : I\in \mathcal{D})$ and $(\tilde{k}_J : J\in \mathcal{D})$ such that their respective tensor products
  \begin{equation*}
    \bigl( \tilde{h}_I \otimes \tilde{k}_J : I, J\in \mathcal{D} \bigr)
  \end{equation*}
  generate bounded exterior projectional factors
  \begin{equation}\label{eq:27}
    \begin{aligned}
      A \colon  f
      &\mapsto \sum_{I, J \in \mathcal{D}} \frac{\langle\tilde{h}_I \otimes \tilde{k}_J , f\rangle}{|I|\cdot|J|} h_I\otimes h_J,\\
      B \colon  f
      &\mapsto \sum_{I ,  J \in \mathcal{D}} \frac{\langle h_I\otimes h_J, f\rangle}{|I|\cdot|J|}\tilde{h}_I\otimes \tilde{k}_J.
    \end{aligned}
  \end{equation}
  satisfying
  \begin{equation}\label{eq:28}
    \|A D B - \mu \Id \|_{L^p(L^1)}
    \le \varepsilon.
  \end{equation}

\item If $\mathcal{C}$ extends to a bounded operator on $Z$, then for $\varepsilon > 0$, there exist of two faithful Haar systems $(\tilde{h}_I)$ and
  $(\tilde{k}_J)$ such that, by means of \eqref{eq:27}, their tensor products generate bounded exterior projectional factors $A,B\colon Z \to Z$
  satisfying
  \begin{equation}\label{21-9-1}
    \bigl\| ADB -  \bigl( \lambda \mathcal{C} + \mu (\Id_Z - \mathcal{C}) \bigr)\bigr\|
    < \varepsilon.
  \end{equation}
  We may split the information encoded by~\eqref{21-9-1} into the following three cases:
  \begin{enumerate}
  \item If $(\lambda, \mu)\notin \{(1,0), (0,1)\}$ then $\Id_Z$ factors through
    \begin{equation}\label{09-10-2}
      \lambda \mathcal{C} + \mu (\Id_Z - \mathcal{C})
      \quad\text{or}\quad
      (1-\lambda) \mathcal{C} + (1-\mu) (\Id_Z - \mathcal{C}).
    \end{equation}
    We note in passing that the identities $\lambda_\mathcal{U}(\Id- D) = 1-\lambda_\mathcal{U}(D)$ and
    $\mu_\mathcal{U}(\Id- D) = 1-\mu_\mathcal{U}(D)$ are used in the second alternative of \eqref{09-10-2}.
  \item If $(\lambda, \mu) = (1,0)$ then
    \begin{equation*}
      \|ADB -  \mathcal{C}\|
      < \varepsilon.
    \end{equation*}
  \item If $ (\lambda, \mu ) = (0,1)$ then
    \begin{equation*}
      \|ADB - (\Id_Z - \mathcal{C})\|
      < \varepsilon.
    \end{equation*}
  \end{enumerate}
\end{enumerate}

\smallskip
\paragraph{\bf Remarks:}
\begin{enumerate}[label=(\alph*)]
\item Exploiting bi-tree semi-stabilization, related in spirit to the Semenov-Uksusov~\cite{semenov:uksusov:2012} characterization of bounded Haar
  multipliers on $L^1 $, and the concentration of measure phenomenon for empirical processes, the full combinatorial and probabilistic force of our
  efforts is directed at proving the existence of two faithful Haar systems such that in~\eqref{eq:27}, their respective tensor products generate
  bounded exterior projectional factors $A,B$, for which the factorization estimates~\eqref{eq:28} respectively~\eqref{21-9-1} hold true.

\item The tensor product structure of the block basis defining $A,B$ is then crucial in our proof that
  \begin{equation}\label{09-10-3}
    \|A \| \cdot \|B\|
    \le 1,
  \end{equation}
  independent of $Z \in\mathcal{F}$.  This should be contrasted with Capon's constants $C_0(p,q)$ in~\eqref{09-10-1} which are unbounded as
  $p \text{ or } q \to 1 \text{ or } \infty$.
\end{enumerate}

\smallskip
\paragraph{\bf Haar system Hardy spaces}
Initially, our work was concerned with the factorization of operators defined on the well known spaces in the family
\begin{equation*}
  \mathcal{F}
  = \{ L^p(L^q),\ H^p (H^q),\ L^p(H^q),\ H^p(L^q) : 1 \le p,q < \infty \}.
\end{equation*}
In the present paper, we exhibit a novel method for constructing tensor product bases and exterior projectional factors on a class of bi-parameter
function spaces that includes the family $\mathcal{F}.$ This opened the door for a \emph{unified approach} to solving factorization problems.

In this paper, we consider operators on the class of bi-parameter function spaces called \emph{Haar System Hardy Spaces}, denoted
$\mathcal{H} \mathcal{H}(\delta^2)$. See \Cref{dfn:hsh-spaces}.  The class of Haar system Hardy spaces is broad enough to contain the family
$\mathcal{F}$, and is of independent interest.  Let
\begin{equation*}
  Z \in \mathcal{H}\mathcal{H}(\delta^2).
\end{equation*}
The main result of the present paper asserts that for any bounded multiplier operator $D\colon Z \to Z $ acting by an array $(d_{I,J})$ on the
bi-parameter Haar system $(h_I\otimes h_J)$ at least one of the following statements is true:
\begin{enumerate}[label=(\roman*)]
\item $D\colon Z \to Z$ is an approximate projectional factor of $\lambda \Id_Z$,
\item $D\colon Z \to Z$ is an approximate projectional factor of $\lambda \mathcal{C} + \mu (\Id_Z - \mathcal{C})$, where $\mathcal{C}$ denotes Capon's projection.
\end{enumerate}
The first assertion holds when $\mathcal{C}$ does not extend to a bounded operator on $Z$.  The second one holds when $\mathcal{C}$ is bounded on $Z$. The factorization
functionals $\lambda = \lambda_\mathcal{U} (D) $ and $\mu = \mu_\mathcal{U} (D)$, depending on the ultrafilter $\mathcal{U} $ and the multiplier array
$(d_{I,J})$, are predetermined by \eqref{29-9-2}.

\subsection{Organization of the paper}
\label{sec:organization-paper}

In \Cref{sec:general-definitions} we state the main results of the paper and discuss in detail their usefulness in proving that (some) bi-parameter
Haar system Hardy spaces are primary.

\Cref{sec:semi-stabilization} is devoted to presenting the main combinatorial and probabilistic results, forming our novel stabilization method for
the entries of a multiplier array $(d_{I,J}).$ The inductive proof is facilitated by showing that our stability estimates persist under specific
iterated blockings of faithful Haar systems.

In \Cref{sec:haar-system-hardy-1+2} we present the basic properties of Haar system Hardy spaces, particularly the boundedness of exterior projectional
factors generated by tensor product Haar systems.

In \Cref{sec:mult-capons-proj} we investigate the pointwise multiplication operators generated by a stabilized Haar multiplier $D$.  Those play a
mediating role between the stabilized multiplier array and its derived factorization functionals.  We thus obtain the crucial identity
$\lambda_{\mathcal{U}} = \mu_{\mathcal{U}}$, when the operator norm of Capon's projection $\mathcal{C}$ is unbounded on $Z$.

\section{Main results}
\label{sec:general-definitions}
In this section, we introduce the necessary notation, state our main results, and lay out the pathway towards proving them.

\subsection{Reduction of classes of operators: main results for $L^p(L^1)$, $L^1(H^1)$, and $H^1(L^1)$}
A Banach space $X$ is called \emph{primary} if whenever $X\simeq Y\oplus Z$, where $Y$, $Z$ are closed subspaces of $X$, then $X\simeq Y$ or
$X\simeq Z$.  We focus on Banach spaces of functions of two parameters.  The spaces $L^p(L^q)$, $1<p,q<\infty$ were proved to be primary by Capon in
\cite{capon:1982:2}. $H^1(H^1)$ was proved to be primary by the third author in \cite{mueller:1994}.  Among spaces of this type that involve $L^1$,
only $L^1(L^p)$ was proved to be primary in \cite{lechner:motakis:mueller:schlumprecht:2021}.  The purpose of this paper is to prove results about the
factoring properties of classes of operators of Banach spaces in a general class, called Haar system Hardy spaces, that contains the mixed
Lebesgue-Hardy spaces $L^p(L^1)$, $H^1(L^1)$, $L^1(H^1)$, and explain how these results fit within a scheme of proof of the primarity of these spaces,
which is not yet known.

We begin by recalling the necessary notions to formally state our main results.
\begin{ntn}\label{Haar preliminaries}
  Let $\mathbb{N} = \{1,2,\ldots\}$ and $\mathbb{N}_0 = \mathbb{N}\cup\{0\}$.
  \begin{enumerate}[label=(\alph*)]
  \item For any set of vectors $V$, we denote its linear span by $\langle V \rangle$.
    
  \item We denote by $\mathcal{D}$ the collection of all dyadic intervals in $[0,1)$, namely
    \begin{equation*}
      \mathcal{D}
      = \Big\{\Big[\frac{i-1}{2^j},\frac{i}{2^j}\Big) : j\in\mathbb{N}_0,\ 1\leq i\leq 2^j\Big\}.
    \end{equation*}
    Moreover, we put $\mathcal{D}_j = \{ I \in \mathcal{D} : |I| = 2^{-j}\}$.

  \item For $I\in \mathcal{D}$, let $I^+$ denote the left half of $I$ and $I^-$ denote the right half of $I$, i.e., $I^+$ is the largest
    $J\in\mathcal{D}$ with $J\subsetneq I$ and $\inf J = \inf I$, and $I^- = I\setminus I^+\in\mathcal{D}$.

  \item The Haar system $(h_I)_{I\in\mathcal{D}}$ is defined by
    \begin{equation*}
      h_I
      = \chi_{I^+} - \chi_{I^-},
      \qquad I\in\mathcal{D}
    \end{equation*}
    and we denote by $\mathcal{V}(\delta)$ its linear span.

  \item We let $(k_J:J\in \mathcal{D})$ be a copy of the Haar system $(h_I:I\in\mathcal{D})$ and define for $I,J\in \mathcal{D}$
    \begin{equation*}
      h_I\otimes k_J:[0,1)^2\to \mathbb{R},
      \quad (s,t)\to h_I(s)k_J(t).
    \end{equation*}
    We call $(h_I\otimes k_J:I,J\in\mathcal{D})$ the \emph{bi-parameter Haar system} and denote by $\mathcal{V}(\delta^2)$ its linear span.  More
    generally for $f,g:[0,1)\to \mathbb{R}$ we write
    \begin{equation*}
      f\otimes g: [0,1)^2\to \mathbb{R},
      \quad (s,t)\mapsto f(s)g(t).
    \end{equation*}
  
  \item For $f,g\in \mathcal{V}(\delta)$, or $f,g\in \mathcal{V}(\delta^2)$ we define
    \begin{equation*}
      \langle f,g \rangle
      = \int_0^1 f(s)g(s) \,d s,
      \quad\text{respectively}\quad
      \langle f,g \rangle
      = \int_0^1\int_0^1 f(s,t) g(s,t)\, \mathrm{d} s \mathrm{d} t.
    \end{equation*}
  \item A linear operator $D\colon \mathcal{V}(\delta)\to\mathcal{V}(\delta)$, or $D\colon \mathcal{V}(\delta^2)\to\mathcal{V}(\delta^2)$, is called a
    \emph{Haar multiplier} if every one-parameter Haar vector $h_I$, respectively if every bi-parameter Haar vector $h_I\otimes k_J$ is an eigenvector
    of $D$. We denote the space of all Haar multipliers by ${\text{\rm HM}}(\delta)$ and ${\text{\rm HM}}(\delta^2)$, respectively.  The eigenvalues
    of a Haar multiplier $D$ are called the {\em coefficients of $D$}.
  \end{enumerate}
\end{ntn}

For $1\leq p < \infty$ let $L^p$ denote the Banach space of $p$-integrable functions on $[0,1)$ with mean zero, i.e., the closure of
$\mathcal{V}(\delta)$ under the $\|\cdot\|_p$-norm.  For $1\leq p,q < \infty$ we define the mixed-norm Lebesgue space $L^p(L^q)$ to be the completion
of $\mathcal{V}(\delta^2)$ under the norm
\begin{equation}\label{eq:29}
  \|f\|_{L^p(L^q)}
  = \biggl(
  \int_0^1 \Big(
  \int_0^1|f(t,s)|^q\mathrm{d} s
  \Big)^{p/q}\mathrm{d} t
  \biggr)^{1/p}.
\end{equation}
More generally, for a given Banach space $E$ we define the Bochner-Lebesgue space $L^p(E)$ to consist of all Bochner-measurable $E$-valued functions
$f\colon [0,1]\to E$ for which
\begin{equation*}
  \|f\|_{L^p(E)}
  := \Bigl( \int_0^1 \| f(t) \|_E^p \mathrm{d} t \Bigr)^{1/p}
  < \infty.
\end{equation*}
Endowing $L^p(E)$ with the norm $\|\cdot\|_{L^p(E)}$ gives rise to a Banach space.  For $E = L^q$, we obtain~\eqref{eq:29}.  If $E = H^1$ we obtain
$L^1(H^1)$ where $H^1$ denotes the completion of $\mathcal{V}(\delta)$ under the norm
\begin{equation}\label{10-10-1}
  \Bigl\| \sum_{I\in \mathcal{D}} a_{I} h_I \Bigr\|_{H^1}
  = \int_{0}^{1} \cond \Bigl|\sum_{I\in \mathcal{D}} \sigma_I a_{I} h_I(s) \Bigr| \mathrm{d} s,
\end{equation}
where $\sum_{I\in \mathcal{D}} a_{I} h_I \in \mathcal{V}(\delta)$ and where $\cond$ denote the expectation with respect to the Haar measure over all
$\sigma = (\sigma_I)_{I\in\mathcal{D}}\in\{\pm1\}^\mathcal{D}$.

Next, we define the vector-valued dyadic Hardy space $H^1(L^1)$ as the completion of $\mathcal{V}(\delta^2)$ under the norm
\begin{equation}\label{eq:33}
  \Bigl\| \sum_{I,J\in \mathcal{D}} a_{I,J} h_I\otimes k_J \Bigr\|_{H^1(L^1)}
  = \int_{0}^{1} \int_{0}^{1} \cond \Bigl|
  \sum_{I,J\in \mathcal{D}} \sigma_I a_{I,J} h_I(s) k_J(t)
  \Bigr| \mathrm{d}t\mathrm{d}s.
\end{equation}
By evaluating the inner expectation with respect to $\sigma$ using Khintchine's inequality, we see that the above norm is equivalent to
\begin{equation*}
  \int_0^1\int_0^1 \biggl(
  \sum_{I\in \mathcal{D}} \Big(
  \sum_{J\in \mathcal{D}}a_{I,J}k_J(s)
  \Big)^2
  h^2_I(t)
  \biggr)^{1/2} \mathrm{d}t\mathrm{d}s.
\end{equation*}
We link the above expression to the literature on vector-valued Hardy spaces $H^1(E)$.  Given an $L^1(E)$-convergent dyadic $E$-valued martingale
$f = (f_j)_{j=0}^{\infty}$ we define its $H^1(E)$-norm to be
\begin{equation}\label{eq:32}
  \|f\|_{H^1(E)}
  = \cond \int_{0}^{1} \Bigl\|
  \sum_{j=0}^{\infty} \sigma_j (f_{j+1}(t) - f_j(t))
  \Bigr\|_E
  \mathrm{d}t,
\end{equation}
where now, $\cond$ denote the expectation with respect to the Haar measure over $ (\sigma_j)_{j\in\mathbb{N}}\in\{\pm1\}^\mathbb{N}$.  For $E = L^1$,
the expression in~\eqref{eq:32} coincides with the one in~\eqref{eq:33}.  We refer to~\cite{MR1091439}, \cite{MR2204960}
and~\cite{pisier:2016:martingales}.

We study the factorization properties of operators on such spaces.  Factors of the identity are by now classical and have been considered in varying
contexts, e.g., in \cite{dosev:johnson:2010}, \cite{laustsen:lechner:mueller:2015}, \cite{lechner:motakis:mueller:schlumprecht:2021}, and
\cite{kania:lechner:2022}. Specifically, in \cite{lechner:motakis:mueller:schlumprecht:2021} (and elsewhere), factors of the identity were used to
show that certain Banach spaces are primary. Our main result, \Cref{main theorem with spaces}, is a significant step towards the proof of primarity of
the spaces $L^p(L^1)$, $H^1(L^1)$, $L^1(H^1)$. This is explained in detail, using the language of \Cref{dfn:factorization-modes} in \Cref{factors
  remarks} and \Cref{scheme and justification old}.

\begin{dfn}\label{dfn:factorization-modes}
  Let $X$ be a Banach space.  Denote by $\mathcal{L}(X)$ the space of bounded linear operators on $X$ and denote by $\Id\colon X\to X$ the identity
  map.

  \begin{enumerate}[label=(\alph*)]
  \item\label{enu:dfn:factorization-modes:a} Let $S,T\in\mathcal{L}(X)$.  We say that \emph{$T$ is a $C$-factor of $S$ } if there exist operators
    $A,B\colon X\to X$ with $\|A\|\cdot \|B\|\leq C$ such that $S = ATB$.  In this case, we also say that $S$ $C$-factors through $T$.

  \item\label{enu:dfn:factorization-modes:b} Let $S,T\in\mathcal{L}(X)$.  We say that \emph{$T$ is a $C$-projectional factor with error $\eta\ge 0$ of
      $S$} if there exist operators $A,B\colon X\to X$ with $\|A\|\|B\|\leq C$, $AB = \Id$ and such that $\|S - ATB\|\leq \eta$.  Alternatively, we
    say that $S$ projectionally $C$-factors through $T$ with error $\eta$.
    
  \item\label{enu:dfn:factorization-modes: reduction} Let $\mathcal{A}$, $\mathcal{B}$ be subclasses of $\mathcal{L}(X)$, and $C>0$.  We say that
    \emph{$\mathcal{A}$ approximately $C$-projectionally reduces to $\mathcal{B}$} if for every $T$ in $\mathcal{A}$ and $\eta>0$ there exists $S$ in
    $\mathcal{B}$ such that $T$ is a $C$-projectional factor with error $\eta$ of $S$.

  \item\label{enu:dfn:factorization-modes:c} Let $\mathcal{A}$ be a subclass of $\mathcal{L}(X)$.  We say that \emph{$\mathcal{A}$ has the $C$-primary
      factorization property} if for any operator $T\in\mathcal{A}$ we have that either $T$ or $\Id-T$ is a $C$-factor of $\Id$.
    
  \item\label{enu:dfn:factorization-modes:d} We say that \emph{$X$ has the $C$-primary factorization property} if $\mathcal{L}(X)$ has the $C$-primary
    factorization property.
  \end{enumerate}
  If we omit the constant $C>0$ in any of the above properties, we understand this to mean that this property is satisfied for some $0 < C < \infty$.
\end{dfn}

The versatility of factors of the identity is reflected in the following definition, applicable to any Banach space $X$.  Put
$\mathcal{M}_X = \{T\in\mathcal{L}(X): \text{$T$ is not a factor of $\Id$}\}$.  This set is closed in the operator topology and under multiplication
from the right and the left by bounded operators.  If it happens to be closed under addition, then it is a maximal two-sided ideal in
$\mathcal{L}(X)$.  Note that items \ref{enu:dfn:factorization-modes:a} to \ref{enu:dfn:factorization-modes:d} in \Cref{dfn:factorization-modes} can be
stated for an arbitrary unital Banach algebra in the place of $\mathcal{L}(X)$.

Below is the main theorem of this paper for Banach spaces of functions of two parameters that involve $L^1$. It follows by combining
\Cref{thm:hsh-factor} and \Cref{thm:capon-projection}.
\begin{thm}\label{main theorem with spaces}
  Let $Z$ be one of the spaces $L^1(L^p)$, $L^p(L^1)$, $1\leq p<\infty$, $L^1(H^1)$, or $H^1(L^1)$ and denote by $\mathrm{HM}(Z)$ the unital
  subalgebra of $\mathcal{L}(Z)$ of all bounded Haar multipliers on $Z$.  Then, $\mathrm{HM}(Z)$ approximately $1$-projectionally reduces to the class
  $\{\lambda \Id:\lambda\in\mathbb{R}\}$ of scalar operators.  In particular, $\mathrm{HM}(Z)$ has the $C$-primary factorization property, for every
  $C > 2$.
\end{thm}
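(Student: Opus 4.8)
The plan is to read the theorem off the two results the paper has been building toward, namely the general factorization theorem \Cref{thm:hsh-factor} for Haar system Hardy spaces together with the boundedness dichotomy for Capon's projection \Cref{thm:capon-projection}; the primary factorization property then drops out by a short perturbation argument. First I would check that each listed space qualifies for \Cref{thm:hsh-factor}: every one of $L^1(L^p)$, $L^p(L^1)$, $L^1(H^1)$, $H^1(L^1)$ sits in the family $\mathcal{F}$, hence in $\mathcal{H}\mathcal{H}(\delta^2)$. Each of them moreover belongs to the subclass $\mathcal{G}$ (using $H^p = L^p$ for $1<p<\infty$ to place $L^p(L^1) = H^p(L^1)$ there, and noting $L^1(L^1)\in\mathcal{G}$ as the case $p=1$ of $L^1(L^p)$), so \Cref{thm:capon-projection} tells us that Capon's projection $\mathcal{C}$ fails to be bounded on $Z$. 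We are therefore in the unbounded branch of \Cref{thm:hsh-factor}.

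In that branch \Cref{thm:hsh-factor} supplies two facts: the iterated limits coincide, $\lambda := \lambda_\mathcal{U}(D) = \mu_\mathcal{U}(D)$, and $D$ is an approximate $1$-projectional factor of $\lambda\Id_Z$. Concretely, for each $\eta>0$ there are $A,B$ with $AB = \Id$ and $\|A\|\cdot\|B\|\le 1$ such that $\|ADB - \lambda\Id_Z\| < \eta$. Quantifying over all $D\in\mathrm{HM}(Z)$ and all $\eta>0$, this is verbatim the statement that $\mathrm{HM}(Z)$ approximately $1$-projectionally reduces to the scalar operators $\{\lambda\Id:\lambda\in\mathbb{R}\}$ in the sense of \Cref{dfn:factorization-modes}, which is the first assertion.

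For the \emph{in particular}, fix $D\in\mathrm{HM}(Z)$ and put $\lambda=\lambda_\mathcal{U}(D)$. Since $|\lambda|+|1-\lambda|\ge 1$, at least one of $|\lambda|\ge\tfrac12$ and $|1-\lambda|\ge\tfrac12$ holds. If $|\lambda|\ge\tfrac12$, I would fix a small $\eta$ and the corresponding $A,B$ from above; then $\lambda^{-1}ADB = \Id + R$ with $\|R\|\le\eta/|\lambda|\le 2\eta$, so $\Id+R$ is invertible by a Neumann series with $\|(\Id+R)^{-1}\|\le(1-2\eta)^{-1}$, and $\Id = \bigl(\lambda^{-1}(\Id+R)^{-1}A\bigr)DB$ exhibits $\Id$ as a factor of $D$ whose factorization constant is at most $\bigl(|\lambda|(1-2\eta)\bigr)^{-1}\le 2(1-2\eta)^{-1}$. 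If instead $|1-\lambda|\ge\tfrac12$, the same computation applied to $\Id-D\in\mathrm{HM}(Z)$ --- using $\lambda_\mathcal{U}(\Id-D)=1-\lambda$, which holds because $\sum_{(I,J)\in\mathcal{R}(i,j)}|I|\cdot|J|=1$ --- factors $\Id$ through $\Id-D$ with the same bound. Letting $\eta\to 0$, the factorization constant tends to $2$, so for any prescribed $C>2$ a small enough $\eta$ yields a $C$-factorization, uniformly in $D$.

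The genuine difficulty is entirely absorbed into \Cref{thm:hsh-factor} and \Cref{thm:capon-projection}; in the present derivation the only delicate point is the uniformity of the constant. The extremal value $\lambda=\tfrac12$, for which $|\lambda|=|1-\lambda|=\tfrac12$ pushes the factorization norm toward $2$ no matter whether one factors through $D$ or $\Id-D$, is exactly what makes the threshold $C>2$ sharp and prevents taking $C=2$.
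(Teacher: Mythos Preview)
Your proof is correct and follows the paper's own approach: the paper simply states that the theorem ``follows by combining \Cref{thm:hsh-factor} and \Cref{thm:capon-projection}'', and you have carried out exactly that combination, verifying along the way that each listed space falls under the hypotheses of \Cref{thm:capon-projection} so that the unbounded-$\mathcal{C}$ branch of \Cref{thm:hsh-factor} applies. Your Neumann-series derivation of the $C$-primary factorization property for $C>2$ makes explicit what the paper leaves implicit in the ``in particular'' clause of \Cref{thm:hsh-factor} (cf.\ \Cref{factors remarks}~\ref{scalar primary factorization}); the argument is standard and your bookkeeping of constants is accurate.
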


We make some elementary remarks that we then use to put our main result into context.
\begin{rem}\label{factors remarks}\hfill
  \begin{enumerate}[label=(\alph*)]
  \item In \Cref{dfn:factorization-modes}~\ref{enu:dfn:factorization-modes:b} note that $BA$ is a projection onto a subspace of $X$ that is isomorphic
    to $X$ and, thus, the term ``projectional factor'' is used.

  \item If $T$ is a $C$-projectional factor of $S$ with error $\eta\ge 0$ then also $\Id-T$ is a $C$-projectional factor of $\Id-S$ with error $\eta$.

  \item\label{factors remarks transitivity} By \cite[Proposition 2.3]{lechner:motakis:mueller:schlumprecht:2021}, the relation in
    \Cref{dfn:factorization-modes}~\ref{enu:dfn:factorization-modes: reduction} satisfies a certain transitivity property; if $\mathcal{A}_1$
    approximately $C$-projectionally reduces to $\mathcal{A}_2$ and $\mathcal{A}_2$ approximately $D$-projectionally reduces to $\mathcal{A}_3$, then
    $\mathcal{A}_1$ approximately $CD$-projectionally reduces to $\mathcal{A}_3$.

  \item\label{scalar primary factorization} It is straight forward that for any Banach space $X$ the class $\{\lambda \Id:\lambda\in\mathbb{R}\}$ of
    scalar operators has the $2$-primary factorization property.

  \item\label{pointwise reduction} For $X = L^p$, $1\leq p\leq \infty$, the class of pointwise multipliers on $X$,
    $\mathrm{PM}(X) = \{M_g\in \mathcal{L}(X) : g\in L^\infty\}$, approximately 1-projectionally reduces to the class of scalar operators.  This
    follows from the fact that for every measurable subset $A$ of $[0,1)$ of positive measure, $L^p(A,P|_A)$ is isometrically isomorphic to $L^p$ and
    1-complemented in $L^p$.
  \end{enumerate}
\end{rem}

Some explanation is due about the placement of \Cref{main theorem with spaces} within the context or primarity of Banach spaces.
\begin{rem}\label{scheme and justification old}
  Assume that a Banach space $X$ satisfies the following two properties.
  \begin{enumerate}[label=(\alph*)]
  \item\label{scheme and justification a} $X$ satisfies the accordion property, i.e., $X\simeq \ell^p(X)$, for some $1\leq p\leq \infty$, or $X\simeq c_0(X)$.

  \item\label{scheme and justification b} $X$ satisfies the primary factorization property.
  \end{enumerate}
  Then, $X$ is primary.
\end{rem}

Condition~\ref{scheme and justification a} was introduced by Pe{\l}czy{\'n}ski in \cite{pelczynski:1960} to prove that the Banach spaces $\ell^p$,
$1\leq p<\infty$, and $c_0$ are prime.  The observation that \ref{scheme and justification a} and \ref{scheme and justification b} yields primarity was
implicitly made by Lindenstrauss and Pe\l czy\'nski in \cite{lindenstrauss:pelczynski:1971} where they used it to deduce that $C[0,1]$ is primary.
For an argument justifying \Cref{scheme and justification old}, see, e.g., \cite[Proposition 2.4]{lechner:motakis:mueller:schlumprecht:2021}.

Sometimes, proving \ref{scheme and justification b} goes through several steps that involve the relation introduced in
\Cref{dfn:factorization-modes}~\ref{enu:dfn:factorization-modes: reduction}.  Thus, condition~\ref{scheme and justification b} is frequently replaced
with the following equivalent condition.
\begin{enumerate}
\item[(b$'$)]\label{scheme and justification new} there are classes of operators $\mathcal{L}(X) = \mathcal{A}_1$,\ldots,$\mathcal{A}_n$ in
  $\mathcal{L}(X)$ such that $\mathcal{A}_i$ approximately projectionally reduces to $\mathcal{A}_{i+1}$, for $1\leq i<n$.
\end{enumerate}
Condition~(b$'$) was used implicitly, e.g., by Maurey via Enflo~\cite{maurey:1975:1}, who proved that in $X = L^p$, $\mathcal{L}(X)$ approximately
1-projectionally reduces to the class of pointwise multipliers.  By \Cref{factors remarks}~\ref{pointwise reduction}, this class approximately
projectionally reduces to the class of scalar operators, which has the primary factorization property.  A similar process was implicitly used by
Alspach, Enflo, and Odell \cite{alspach:enflo:odell:1977} to give a simpler proof that $X = L^p$, $1<p<\infty$ is primary, by reducing
$\mathcal{L}(X)$ to the class of Haar multipliers.  They then used ideas of Gamlen and Gaudet from \cite{gamlen:gaudet:1973} and exploited unconditionality to
further reduce that class to the one of scalar operators.  Capon used more elaborate versions of this scheme for $\ell^p(L^1)$, $1<p<\infty$,
$L^p(\ell^r)$ and $L^p(L^r)$, $1<p,r<\infty$ \cite{capon:1980:2,capon:1982:1,capon:1982:2}.  The authors of this paper also followed this scheme in
\cite{lechner:motakis:mueller:schlumprecht:2021} to prove that $L^1(L^p)$, $1< p <\infty$, is primary.

We conclude this subsection with a comment on a potential proof of primarity of the spaces $L^p(L^1)$, $L^1(H^1)$, and $H^1(L^1)$.  Let $Z$ denote one
of them.  The last missing step is to prove that $\mathcal{L}(Z)$ approximately projectionally reduces to $\mathrm{HM}(Z)$.  Indeed, if this is the case, then,
by the transitivity property explained in \Cref{factors remarks} \ref{factors remarks transitivity}, $\mathcal{L}(Z)$ would approximately projectionally reduce
to the class of scalar operators.  Because $Z$ has the accordion property, by \Cref{scheme and justification old}, $Z$ would be primary.

\subsection{Haar system Hardy spaces}\label{29-9-1}
At this point, we introduce a new class of bi-parameter spaces called {\em Haar System Hardy Spaces}.  This is a broad class of independent interest,
and it includes $L^1(L^p)$, $L^p(L^1)$, $1\leq p < \infty$, $L^1(H^1)$, and $H^1(L^1)$; for the spaces in this class, we prove a generalization of \Cref{main
  theorem with spaces} (see \Cref{thm:hsh-factor}).

We recall the notion of Haar system spaces from \cite{lechner:motakis:mueller:schlumprecht:2021}.  This variation of classical rearrangement invariant
Banach spaces includes all such spaces for which the Haar system is a basis, as well as the Cantor space $C(\Delta)$, defined as the closure of
$\mathcal{V}(\delta)$ in $L^\infty $, equipped with the $L^\infty$-norm.
\begin{dfn}\label{dfn:HS-1d}
  A \emph{Haar system space $W$} is the completion of
  \begin{equation*}
    \mathcal{V}_1(\delta)
    = \langle \{\chi_{[0,1}\}\cup\{ h_I : I\in\mathcal{D}\} \rangle
    = \langle \{\chi_I : I\in\mathcal{D}\} \rangle
  \end{equation*}
  under a norm $\|\cdot\|$ that satisfies the following properties.
  \begin{enumerate}[label=(\alph*)]
  \item\label{dfn:HS-1d:1} If $f$, $g$ are in $\mathcal{V}_1(\delta)$ and $|f|$, $|g|$ have the same distribution then $\|f\| = \|g\|$.
  \item\label{dfn:HS-1d:2} $\|\chi_{[0,1)}\| = 1$.
  \end{enumerate}
  We denote the class of Haar system spaces by $\mathcal{H}(\delta)$.
\end{dfn}

The following proposition lists some basic properties of Haar system spaces. Properties~\ref{RI properties 1}, \ref{RI properties 4}, and~\ref{RI
  properties 2} were shown in \cite[Proposition 2.13]{lechner:motakis:mueller:schlumprecht:2021}, \ref{RI properties 3} follows from \ref{RI
  properties 2}, and \ref{RI properties 5} from \ref{RI properties 3}.
\begin{pro}\label{RI properties}
  Let $W$ be a Haar system space.
  \begin{enumerate}[label=(\roman*),leftmargin=23pt]
  \item\label{RI properties 1} For every $f\in Z = \langle\{\chi_I:I\in\mathcal{D}\}\rangle$ we have $\|f\|_{L_1} \leq \|f\| \leq \|f\|_{L_\infty}$.

  \item \label{RI properties 4} The Haar system, in the usual linear order, is a monotone Schauder basis of $W$.

  \item\label{RI properties 2} $Z = \langle\{\chi_I:I\in\mathcal{D}\}\rangle$ naturally coincides with a subspace of $W^*$ and its closure
    $\overline Z$ in $W^*$ is also a Haar system space.

  \item\label{RI properties 3} For $f\in W$, we have
    \begin{equation*}
      \|f\|
      = \sup_{g\in Z, \|g\|_*\le 1} \int f(x)g(x)\,dx
    \end{equation*}
    where $\|\cdot\|_*$ is the dual norm on $W^*$.

  \item\label{RI properties 5} If $f,g\in W$, $|f| \le |g|$ then $\|f\| \le \|g\|$.
  \end{enumerate}
\end{pro}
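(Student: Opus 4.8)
The plan is to take the first three assertions as given: the two-sided bound $\|f\|_{L_1}\le\|f\|\le\|f\|_{L_\infty}$, the fact that the Haar system in its usual order is a monotone Schauder basis of $W$, and the fact that $Z$ embeds naturally into $W^*$ with $\overline Z$ again a Haar system space. All three are recorded in \cite[Proposition 2.13]{lechner:motakis:mueller:schlumprecht:2021}, so I would simply quote them and concentrate on deducing the duality formula (iv) and the lattice property (v), using in addition only the defining rearrangement property \ref{dfn:HS-1d:1} of a Haar system space.

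For (iv), the inequality $\sup\{\int fg : g\in Z,\ \|g\|_*\le 1\}\le\|f\|$ is immediate from (iii), since such $g$ lie in the unit ball of $W^*$. For the reverse inequality I would exploit the monotonicity in (ii). For $n\in\mathbb{N}$ let $V_n=\langle\chi_I:I\in\mathcal{D}_n\rangle$ be the $2^n$-dimensional space of functions that are constant on the dyadic intervals of length $2^{-n}$; in the usual order these span an initial segment of the Haar basis, so the dyadic conditional expectation $P_n=\cond(\,\cdot\mid\mathcal{F}_n)$ is a norm-one projection of $W$ onto $V_n$. Since every $g\in V_n$ is $\mathcal{F}_n$-measurable, one has $\int fg=\int (P_nf)g$, and because $\{P_nf:\|f\|\le 1\}$ is exactly the unit ball of $V_n$, the $W^*$-dual norm $\|g\|_*$ coincides with the intrinsic dual norm of $g$ in the finite-dimensional space $(V_n,\|\cdot\|)$. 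Finite-dimensional duality then supplies, for each $f\in V_n$, a norming $g\in V_n\subseteq Z$, which gives (iv) on $\mathcal{V}_1(\delta)=\bigcup_n V_n$; the general case follows by density, since the right-hand side is $1$-Lipschitz in $f$ with respect to $\|\cdot\|$.

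To derive (v) from (iv), I would first reduce to nonnegative functions: property \ref{dfn:HS-1d:1} gives $\|f\|=\||f|\|$ and $\|g\|=\||g|\|$, while the same property applied to the Haar system space $\overline Z$ (whose norm is $\|\cdot\|_*$) gives $\|h\|_*=\||h|\|_*$ for $h\in Z$. Hence if $h\in Z$ with $\|h\|_*\le 1$ then $|h|\in Z$ with $\||h|\|_*\le 1$, so in the supremum (iv) for a nonnegative $f$ one may restrict to nonnegative test functions. Then $0\le|f|\le|g|$ forces $\int|f|h\le\int|g|h$ for every such $h\ge 0$, and taking suprema yields $\||f|\|\le\||g|\|$, that is $\|f\|\le\|g\|$.

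The main obstacle is the reverse inequality in (iv)—that the functionals coming from $Z$ already norm $W$. The finite-dimensional reduction above is the clean route, and its crux is the identification of the $W^*$-dual norm on $V_n$ with the intrinsic finite-dimensional dual norm, which rests entirely on $P_n$ being contractive, i.e.\ on the monotonicity in (ii). Once (iv) is secured, (v) is essentially formal; the only genuine point is the passage to nonnegative test functions, and that is exactly where the rearrangement invariance of $\overline Z$ supplied by (iii) is used.
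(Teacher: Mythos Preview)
Your approach matches the paper's terse proof (cite (i)--(iii), then derive (iv), then (v) from (iv)) and is essentially correct; the use of the norm-one basis projections $P_n$ and finite-dimensional duality on the $V_n$ is the natural way to flesh out the paper's one-line ``(iv) follows from (iii).'' The only wrinkle is in (v): the identity $\|f\|=\||f|\|$ is given by property \ref{dfn:HS-1d:1} only for $f\in Z=\mathcal{V}_1(\delta)$, so for general $f\in W$ you should either first establish (v) on $Z$ (which makes $|\cdot|$ $1$-Lipschitz there and hence continuously extendable to $W$) and then pass to the limit, or bypass the reduction entirely and bound $\int|g|\,|h|\le\|g\|$ directly by approximating $g$ in $W\hookrightarrow L^1$ by $g_n\in Z$.
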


Given a space $X\in\mathcal{H}(\delta)$, we can define a one-parameter Hardy space as follows.  Let $\cond$ denote the expectation with respect to the Haar measure
over all $(\sigma_I)_{I\in\mathcal{D}}$ in $\{-1,1\}^\mathcal{D}$.  The Haar system Hardy space $X(\boldsymbol{\sigma})$ is defined as the completion of
$\mathcal{V}(\delta)$ with the following norm.  If $f = \sum_{I\in\mathcal{D}}a_Ih_I\in\mathcal{V}(\delta)$ then
\begin{equation*}
  \|f\|_{X(\boldsymbol{\sigma})}
  = \Big\|\cond\big|\sum_{I\in\mathcal{D}}\sigma_Ia_Ih_I\big|\Big\|_X
  = \Big\|t\mapsto\cond\big|\sum_{I\in\mathcal{D}}\sigma_Ia_Ih_I(t)\big|\Big\|_X.
\end{equation*}

We define and investigate the two-parameter version that involves two spaces $X$, $Y$ in $\mathcal{H}(\delta)$.

\begin{dfn}\label{dfn:hsh-spaces}\hfill
  \begin{enumerate}[label=(\alph*)]
  \item For $X$, $Y\in\mathcal{H}(\delta)$ we define the {\em bi-parameter Haar system space} $X(Y)$ as the completion of $\mathcal{V}(\delta^2)$ under the following norm.  If
    $f = \sum_{I,J}a_{I,J}h_I\otimes k_J$ then
    \begin{equation*}
      \|f\|_{X(Y)}
      = \Bigl\|
      s\mapsto\Bigl\|
      t\mapsto
      \sum_{I,J\in\mathcal{D}} a_{I,J}h_I(s) k_J(t)
      \Bigr\|_Y
      \Bigr\|_X.
    \end{equation*}
    This is well defined, since, if $f\in\mathcal{V}(\delta^2)$ then $\|t\mapsto f(\cdot,t)\|_Y$ is in $\mathcal{V}_1(\delta)$, but, obviously, may
    not be in $\mathcal{V}(\delta)$.
  \item Let $\boldsymbol{\sigma}_{1,1} = \{\pm 1\}^\mathcal{D}\times \{\pm 1\}^\mathcal{D}$,
    $\boldsymbol{\sigma}_{0,0} = \{1\}^\mathcal{D}\times \{1\}^\mathcal{D}$,
    $\boldsymbol{\sigma}_{0,1} = \{1\}^\mathcal{D}\times \{\pm 1\}^\mathcal{D}$, and
    $\boldsymbol{\sigma}_{1,0} = \{\pm 1\}^\mathcal{D}\times \{1\}^\mathcal{D}$.  Denote
    $\boldsymbol{\Sigma} = \{\boldsymbol{\sigma}_{1,1},\boldsymbol{\sigma}_{0,0},\boldsymbol{\sigma}_{0,1},\boldsymbol{\sigma}_{1,0}\}$.  For a fixed
    $\boldsymbol{\sigma}\in\boldsymbol{\Sigma}$ let $\cond$ denote the expectation with respect to the Haar product measure over all
    $\sigma = \big((\sigma^{(1)}_I)_{I\in\mathcal{D}},(\sigma^{(2)}_J)_{J\in\mathcal{D}}\big)$ in $\boldsymbol{\sigma}$.

  \item For $X$, $Y\in\mathcal{H}(\delta)$, $\boldsymbol{\sigma}\in\boldsymbol{\Sigma}$ we define the {\em bi-parameter Haar system Hardy space}
    $Z = Z(\boldsymbol{\sigma},X,Y)$ as the completion of $\mathcal{V}(\delta^2)$ under the following norm.  If $f = \sum_{I,J}a_{I,J}h_I\otimes k_J$
    then
    \begin{align*}
      \|f\|_Z
      &= \Bigl\|\cond\Bigl|
        \sum_{I,J\in\mathcal{D}} \sigma^{(1)}_I\sigma^{(2)}_J a_{I,J}h_I\otimes k_J
        \Bigr|
        \Big\|_{X(Y)}\\
      &= \Bigl\|
        s\mapsto\Bigl\|
        t\mapsto\cond \bigl|
        \sum_{I,J\in\mathcal{D}} a_{I,J}\sigma^{(1)}_I\sigma^{(2)}_J h_I(s) k_J(t)
        \bigr|
        \Bigr\|_Y
        \Bigr\|_X.
    \end{align*}
  \end{enumerate}
  We denote by $\mathcal{HH}(\delta^2)$ the class of all bi-parameter Haar system Hardy spaces.
\end{dfn}

\begin{rem}\label{rem:HSH-classical-examples}
  By specializing the Haar system Hardy spaces, many classical spaces are contained in $\mathcal{HH}(\delta^2)$.  Let
  $Z = Z(\boldsymbol{\sigma},X,Y)\in\mathcal{HH}(\delta^2)$.
  \begin{itemize}
  \item If $\boldsymbol{\sigma} = \boldsymbol{\sigma}_{0,0} = \{1\}^\mathcal{D}\times \{1\}^\mathcal{D}$, then for $f\in Z$,
    \begin{equation*}
      \|f\|_{Z}
      = \Bigl\|
      \sum_{I,J\in\mathcal{D}} a_{I,J} h_I\otimes k_J
      \Bigr\|_{X(Y)}.
    \end{equation*}
    Thus, the two-parameter Haar system Hardy space $Z$ becomes the two-parameter Haar system space $X(Y)$.  Further specifying $X=L^p$, $Y=L^q$,
    $1\leq p,q < \infty$, we obtain the classical two parameter Lebesgue-space $L^p(L^q)$.
    
  \item If $\boldsymbol{\sigma} = \boldsymbol{\sigma}_{1,1} = \{\pm1\}^\mathcal{D}\times \{\pm1\}^\mathcal{D}$, then for $f\in Z$,
    \begin{equation*}
      \|f\|_{Z}
      = \Bigl\|
      \cond\bigl|
      \sum_{I,J\in\mathcal{D}}  \sigma_I^{(1)}\sigma_J^{(2)}a_{I,J} h_I\otimes k_J
      \bigr|
      \Bigr\|_{X(Y)}.
    \end{equation*}
    Specializing $X=L^p$, $Y=L^q$, $1\leq p,q < \infty$, we obtain the classical bi-parameter dyadic Hardy space $H^p(H^q)$.  In general, by the
    monotonicity of the norms in $X$, $Y$ (see \Cref{RI properties}~\ref{RI properties 5}) and Khintchine's inequality, we obtain that $\|\cdot\|_Z$
    is equivalent to the square norm given for $f = \sum_{I,J\in\mathcal{D}} a_{I,J} h_I\otimes k_J$ by
    \begin{equation*}
      \Bigl\|
      \Bigl(
      \sum_{I,J\in\mathcal{D}} a_{I,J}^2 h_I^2\otimes k_J^2
      \Bigr)^{1/2}
      \Bigr\|_{X(Y)}.
    \end{equation*}
    Further specializing $X=L^p$, $Y=L^q$, $1\leq p,q < \infty$, we obtain the classical bi-parameter dyadic Hardy space $H^p(H^q)$.

  \item If $\boldsymbol{\sigma} = \boldsymbol{\sigma}_{0,1} = \{1\}^\mathcal{D}\times \{\pm1\}^\mathcal{D}$, then for
    $f = \sum_{I,J\in\mathcal{D}} a_{I,J} h_I\otimes k_J$, we have
    \begin{equation*}
      \|f\|_{Z}
      = \Bigl\|
      \cond\bigl|
      \sum_{I,J\in\mathcal{D}}  a_{I,J} \sigma_J^{(2)} h_I\otimes k_J
      \bigr|
      \Bigr\|_{X(Y)}.
    \end{equation*}
    Specializing $X=L^p$, $Y=L^q$, $1\leq p,q < \infty$, we obtain the classical space $L^p(H^q)$.  In general, by Khintchine's inequality and the
    monotonicity of the norms in $X,Y$, we obtain that the norm $\|\cdot\|_{Z}$ is equivalent to the partial square function norm given by
    \begin{equation*}
      \Bigl\|
      \Bigl(
      \sum_{J\in\mathcal{D}} \bigl(
      \sum_{I\in\mathcal{D}} a_{I,J} h_I
      \bigl)^2\otimes k_J^2
      \Bigr)^{1/2}
      \Bigr\|_{X(Y)}.
    \end{equation*}

  \item If $\boldsymbol{\sigma} =\boldsymbol{\sigma}_{1,0} = \{\pm1\}^\mathcal{D}\times \{1\}^\mathcal{D}$, then for
    $f = \sum_{I,J\in\mathcal{D}} a_{I,J} h_I\otimes k_J$, we have
    \begin{equation*}
      \|f\|_{Z}
      = \Bigl\|\cond \bigl|
      \sum_{I,J\in\mathcal{D}} a_{I,J} \sigma_I^{(1)} h_I\otimes k_J
      \bigr|
      \Bigr\|_{X(Y)}.
    \end{equation*}
    Specializing $X=L^p$, $Y=L^q$, $1\leq p,q < \infty$, we obtain the classical space $H^p(L^q)$.  In general, using Khintchine's inequality and the
    monotonicity of the norms in $X,Y$, we obtain that $\|\cdot\|_{Z}$ is equivalent to the partial square function norm given by
    \begin{equation*}
      \Bigl\|
      \sum_{I\in\mathcal{D}} \Bigl(
      \bigl(
      \sum_{J\in\mathcal{D}} a_{I,J} k_J
      \bigl)^2\otimes h_I^2
      \Bigr)^{1/2}
      \Bigr\|_{X(Y)}.
    \end{equation*}
  \end{itemize}
\end{rem}

In \Cref{thm:capon-projection} below, we give necessary conditions for the unboundedness of the Capon projection.  In particular, the Capon projection
is unbounded on the following spaces: $L^1(L^p)$, $L^p(L^1)$, $L^1(H^1)$, $H^1(L^1)$.

\begin{thm}\label{thm:capon-projection}
  Let $Z = Z(\boldsymbol{\sigma},X,Y)\in\mathcal{HH}(\delta^2)$ and suppose that
  \begin{itemize}
  \item $(\sigma_I^{(1)})$ is constant and either $\|\cdot\|_X\sim \|\cdot\|_{L^1}$ or $\|\cdot\|_X\sim \|\cdot\|_{L^\infty}$, or
  \item $(\sigma_J^{(2)})$ is constant and either $\|\cdot\|_Y\sim \|\cdot\|_{L^1}$ or $\|\cdot\|_Y\sim \|\cdot\|_{L^\infty}$,
  \end{itemize}
  then $\mathcal{C}\colon Z\to Z$ is unbounded.
\end{thm}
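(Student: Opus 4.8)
The plan is to exhibit, for each admissible $Z$, a sequence of test functions $f_M\in\mathcal V(\delta^2)$ whose $Z$-norm stays of one order while $\|\mathcal C f_M\|_Z$ exceeds it by a factor tending to infinity. First I would cut down the number of cases. Since $\mathcal C$ is the orthogonal projection of $L^2([0,1)^2)$ onto $\langle h_I\otimes k_J:|I|\le|J|\rangle$, it is self-adjoint; hence boundedness of $\mathcal C$ on $Z$ entails boundedness on the dual Haar system Hardy space $\overline Z$ associated to $Z$ in the sense of \Cref{RI properties}~\ref{RI properties 2}, whose component spaces are the duals of $X,Y$. As the dual of an $L^\infty$-component is an $L^1$-component, the two ``$L^\infty$'' options reduce to the two ``$L^1$'' options. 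Finally, the case $(\sigma^{(1)})$ constant, $\|\cdot\|_X\sim\|\cdot\|_{L^1}$ is the transpose of the case $(\sigma^{(2)})$ constant, $\|\cdot\|_Y\sim\|\cdot\|_{L^1}$ (swap the two coordinates; $\mathcal C$ turns into the projection onto $|I|\ge|J|$, whose unboundedness is proved by the mirror-image construction). So I focus on $(\sigma^{(2)})$ constant and $\|\cdot\|_Y\sim\|\cdot\|_{L^1}$, with $X$ and $(\sigma^{(1)})$ arbitrary.

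Fix a large $M$. Put $\phi_k=\sum_{I\in\mathcal D_k}h_I$ (the scale-$k$ Rademacher function), and let $g\in\mathcal V(\delta)$ be the mean-zero part of $\chi_{[0,2^{-M})}$, so that $\|g\|_{L^1}\asymp 2^{-M}$ while its truncations $P_kg$ (the projection of $g$ onto $\langle k_J:|J|\ge 2^{-k}\rangle$) are the nested plateaus $P_kg\asymp 2^{k-M}\chi_{[0,2^{-k})}$, $0\le k\le M$. I set
\[
  f_M=\Bigl(\sum_{k=0}^{M}\phi_k\Bigr)\otimes g=\sum_{k=0}^{M}\phi_k\otimes g.
\]
Because $\phi_k$ is supported at scale exactly $k$ in the first variable, a term $h_I\otimes k_J$ of $\phi_k\otimes g$ survives $\mathcal C$ iff $|J|\ge|I|=2^{-k}$; hence $\mathcal C(\phi_k\otimes g)=\phi_k\otimes P_kg$ and
\[
  \mathcal C f_M=\sum_{k=0}^{M}\phi_k\otimes P_kg,
\]
which is no longer a tensor product. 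This breaking of the product structure --- replacing the single spike $g$ by the $M$ distinct plateaus $P_kg$ --- is the whole point.

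For the upper bound I use that the $Z$-norm of a tensor product factorizes, $\|u\otimes v\|_Z=\|u\|_{X(\boldsymbol\sigma^{(1)})}\,\|v\|_{Y(\boldsymbol\sigma^{(2)})}$, so that
\[
  \|f_M\|_Z=\Bigl\|\sum_{k=0}^{M}\phi_k\Bigr\|_{X(\boldsymbol\sigma^{(1)})}\,\|g\|_{Y}\asymp\Bigl\|\sum_{k=0}^{M}\phi_k\Bigr\|_{X(\boldsymbol\sigma^{(1)})}2^{-M}.
\]
Since $\sum_{k=0}^M\phi_k$ is a sum of $M+1$ independent Rademachers, $\|\sum_k\phi_k\|_{X(\boldsymbol\sigma^{(1)})}\asymp\sqrt{M}$ whenever $(\sigma^{(1)})$ randomizes (evaluate the square function), and more generally it is $o(M)$ for any $X\not\sim L^\infty$ (truncate the random walk at height $M^{3/4}$ and use that the fundamental function of $X$ vanishes at $0$). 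For the lower bound I discard everything above $L^1$: by \Cref{RI properties}~\ref{RI properties 1} one has $\|\cdot\|_X\ge\|\cdot\|_{L^1}$ and $\|\cdot\|_Y\ge\|\cdot\|_{L^1}$, and Khintchine's inequality in the $s$-variable (the $\phi_k$ being independent Rademachers) gives
\[
  \|\mathcal C f_M\|_Z\gtrsim\int_0^1\Bigl(\sum_{k=0}^{M}\bigl(P_kg(t)\bigr)^2\Bigr)^{1/2}dt\asymp\sum_{l=0}^{M}2^{-l}\cdot 2^{l-M}\asymp 2^{-M}M,
\]
because on the dyadic shell $t\in[2^{-l-1},2^{-l})$ exactly the terms $k\le l$ survive. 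Therefore $\|\mathcal C f_M\|_Z/\|f_M\|_Z\gtrsim M/\|\sum_k\phi_k\|_{X(\boldsymbol\sigma^{(1)})}\to\infty$, proving $\mathcal C$ unbounded.

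The main obstacle is the simultaneous two-sided control of the norm uniformly over the free factor $X$ and the signs $(\sigma^{(1)})$: the lower bound rests on the maximal-function mechanism (each of the $M$ scales contributes an equal share $2^{-M}$ because the plateaus $P_kg$ telescope over dyadic shells), while the upper bound requires that aligning all $M$ copies of $g$ into the single product $(\sum_k\phi_k)\otimes g$ costs only $o(M)$ --- this sublinear growth of Rademacher sums, which fails precisely when $X\sim L^\infty$ with constant signs, is exactly the corner I route through duality and the transpose. The remaining points (the tensor factorization of $\|\cdot\|_Z$, the mean-zero bookkeeping for $g$, and the constants hidden in $\asymp$) are routine.
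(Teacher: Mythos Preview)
Your test-function construction for the $L^1$ cases is correct and is essentially the paper's: the paper's test function is $f\otimes g$ with $f=\sum_{k=0}^n|I_k|^{-1}h_{I_k}$ a normalized spike along the left branch and $g=\sum_l a_l r_l$; your $f_M$ is this with the two factors swapped (matching your choice of which bullet to treat) and with $a_l\equiv1$. The paper leaves the $a_l$ free and deduces from boundedness of $\mathcal{C}$ an $\ell^1$-lower bound $\|\sum_l a_l r_l\|\gtrsim\sum|a_l|$ on the free component, which (citing Lindenstrauss--Tzafriri) characterizes $L^\infty$; you commit to $a_l=1$ and argue directly that $\|\sum_{k\le M}r_k\|_X=o(M)$ whenever $X\not\sim L^\infty$. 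Your truncation argument for this is fine: the key point, that $\phi_X(2^{-n})\to0$ characterizes $X\not\sim L^\infty$ among Haar system spaces, follows from \Cref{RI properties}~\ref{RI properties 5}.

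The genuine gap is in the $L^\infty$ bullets. First, the duality reduction is not justified: you cite \Cref{RI properties}~\ref{RI properties 2}, but that concerns one-parameter Haar system spaces; nothing in the paper asserts that the norming dual $Z'$ of $Z(\boldsymbol\sigma,X,Y)$ is again a Haar system Hardy space of the form $Z(\boldsymbol\sigma,\overline X,\overline Y)$, and for nontrivial $\boldsymbol\sigma$ this is doubtful (think of an $H^1$ component). Second, and more decisively, even granting such a duality your scheme is circular at the two corners $C(\Delta)(L^1)$ and $L^1(C(\Delta))$ with all signs constant: your main argument excludes the first as its unique bad corner; duality sends it to the second; the mirror argument excludes the second as \emph{its} unique bad corner; and duality sends you back. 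Neither space is ever treated directly. The paper avoids both problems by a separate direct construction for the $L^\infty$ cases: with $I_k=[0,2^{-k})$ it takes
\[
z=\sum_{k=1}^n\bigl(h_{I_{2k}}-h_{I_{2k-1}}\bigr)\otimes r_{2k},
\]
uses the pairwise disjoint supports of the functions $h_{I_{2k}}-h_{I_{2k-1}}$ together with $X\sim L^\infty$ to get $\|z\|_Z\lesssim1$, and observes that $\mathcal C$ deletes exactly the $h_{I_{2k-1}}$-terms, leaving $\sum_k h_{I_{2k}}\otimes r_{2k}$ with $\|\mathcal Cz\|_Z\gtrsim\sqrt n$.
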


Before proving the above theorem, we give an overview of classical examples and the boundedness of the Capon projection in the table below.  The rows
of the table correspond to the norm of the space $X$ and the letters ``c'' and ``i'' indicate whether the sequence $(\sigma_I^{(1)})$ is constant or
independent.  Similarly, the columns correspond to the norm of the space $Y$, and the letters ``c'' and ``i'' indicate whether the sequence
$(\sigma_J^{(2)})$ is constant or independent .  The entries ``B''/``U'' in the table below mean the Capon projection is bounded/unbounded on the space
$Z = Z(\mathbf{\sigma},X,Y)$.
\begin{longtable}[h]{c|c|c|c|c|c|c|}
  \cline{2-7}
  \vphantom{\LARGE L}                                 & $L^1$, c & $L^1$, i & $L^p$ & $L^q$ & $L^{\infty}$, i & $L^{\infty}$, c \\
  \hline
  \multicolumn{1}{|c|}{\vphantom{\LARGE L}$L^1$, c}   & U        & U        & U     & U     & U          & U          \\
  \hline
  \multicolumn{1}{|c|}{\vphantom{\LARGE L}$L^1$, i}   & U        & B        & B     & B     & B          & U          \\
  \hline
  \multicolumn{1}{|c|}{\vphantom{\LARGE L}$L^p$}      & U        & B        & B     & B     & B          & U          \\
  \hline
  \multicolumn{1}{|c|}{\vphantom{\LARGE L}$L^q$}      & U        & B        & B     & B     & B          & U          \\
  \hline
  \multicolumn{1}{|c|}{\vphantom{\LARGE L}$L^{\infty}$, i} & U        & B        & B     & B     & B          & U          \\
  \hline
  \multicolumn{1}{|c|}{\vphantom{\LARGE L}$L^{\infty}$, c} & U        & U        & U     & U     & U          & U          \\
  \hline
  \caption{Capon projection table.}
\end{longtable}
In the table above, ``$L^1$, i'' corresponds to $H^1$, and ``$L^{\infty}$, i'' corresponds to the norm closure of the Haar system in $SL^{\infty}$.

\begin{myproof}{Proof of \Cref{thm:capon-projection}}
  Given $Z = Z(\boldsymbol{\sigma},X,Y)\in\mathcal{HH}(\delta^2)$, we distinguish between the following $6$ cases:
  
  \begin{minipage}{1.0\linewidth}
    \begin{enumerate}[label=Case~(\arabic*)]
    \item\label{enu:proof:thm:capon-projection:1} $\|\cdot\|_X\sim \|\cdot\|_{L^1}$, $(\sigma_I^{(1)})$ is constant, and
      \begin{enumerate}[label=(\alph*)]
      \item\label{enu:proof:thm:capon-projection:1:a} $(\sigma_J^{(2)})$ is independent;
      \item\label{enu:proof:thm:capon-projection:1:c} $(\sigma_J^{(2)})$ is constant and $\|\cdot\|_Y\nsim \|\cdot\|_{L^{\infty}}$.
      \end{enumerate}
    \item\label{enu:proof:thm:capon-projection:2} $\|\cdot\|_Y\sim \|\cdot\|_{L^1}$, $(\sigma_J^{(2)})$ is constant, and
      \begin{enumerate}[label=(\alph*)]
      \item\label{enu:proof:thm:capon-projection:2:a} $(\sigma_I^{(1)})$ is independent;
      \item\label{enu:proof:thm:capon-projection:2:c} $(\sigma_I^{(1)})$ is constant and $\|\cdot\|_X\nsim \|\cdot\|_{L^{\infty}}$.
      \end{enumerate}
    \item\label{enu:proof:thm:capon-projection:3} $\|\cdot\|_X\sim \|\cdot\|_{L^\infty}$ and $(\sigma_I^{(1)})$ is constant;
    \item\label{enu:proof:thm:capon-projection:4} $\|\cdot\|_Y\sim \|\cdot\|_{L^\infty}$ and $(\sigma_J^{(2)})$ is constant.
    \end{enumerate}
  \end{minipage}

  Since the proof of~\ref{enu:proof:thm:capon-projection:2}~\ref{enu:proof:thm:capon-projection:2:a} and
  \ref{enu:proof:thm:capon-projection:2}~\ref{enu:proof:thm:capon-projection:2:c} are almost identical to the proofs of
  \ref{enu:proof:thm:capon-projection:1}~\ref{enu:proof:thm:capon-projection:1:a} and
  \ref{enu:proof:thm:capon-projection:1}~\ref{enu:proof:thm:capon-projection:1:c} (swapping the functions $f$ and $g$ defined below), we will omit
  them.  The situation for the proofs of \ref{enu:proof:thm:capon-projection:3} and \ref{enu:proof:thm:capon-projection:4} is similar.

  \begin{proofcase}[\ref{enu:proof:thm:capon-projection:1}, \ref{enu:proof:thm:capon-projection:1:a} and~\ref{enu:proof:thm:capon-projection:1:c}]
    We put $I_k = J_k = [0,2^{-k})$, $k\in \mathbb{N}_0$ and define our test functions
    \begin{equation}\label{eq:1}
      f = \sum_{k=0}^{n} |I_k|^{-1} h_{I_k} = |I_{n+1}|^{-1} \chi_{I_{n+1}} - \chi_{[0,1)},
      \qquad\text{and}\qquad
      g = \sum_{l=0}^{n} a_l r_l,
    \end{equation}
    where $r_l = \sum_{J\in \mathcal{D}_l} k_J$ and $(a_k)_{k=0}^n$ is a finite sequence of scalars.  On the one hand, we note that
    \begin{equation}\label{eq:37}
      \|f\otimes g\|_Z
      \sim \Bigl\|
      \bigl|
      \sum_{k=0}^{n} |I_k|^{-1} h_{I_k}
      \bigr|
      \Bigr\|_{L^1}
      \cdot \Bigl\|
      \cond \bigl|
      \sum_{l=0}^n a_l \sum_{J\in \mathcal{D}_l} \sigma_J^{(2)} k_J
      \bigr|
      \Bigr\|_Y
      \leq 2\cdot \Bigl\|
      \cond \bigl|
      \sum_{l=0}^n a_l \sum_{J\in \mathcal{D}_l} \sigma_J^{(2)} k_J
      \bigr|
      \Bigr\|_Y,
    \end{equation}
    while the other hand, using $\|\cdot\|_Y\geq \|\cdot\|_{L^1}$ (see \Cref{RI properties}~\ref{RI properties 1}) and Khintchine's inequality yields
    \begin{align*}
      \|\mathcal{C} f\otimes g\|_Z
      &\sim \Bigl\|
        s\mapsto \Bigl\|
        t\mapsto \cond \bigl|
        \sum_{l=0}^{n} a_l \sum_{J\in \mathcal{D}_l} \sigma_J^{(2)} k_J(t)
        \sum_{k=l}^{n} |I_k|^{-1} h_{I_k}(s)
        \bigr|
        \Bigr\|_Y
        \Bigr\|_{L^1}\\
      &\geq \Bigl\|
        s\mapsto \cond \int_{0}^{1} \Bigl|
        \sum_{l=0}^{n} a_l \sum_{J\in \mathcal{D}_l} \sigma_J^{(2)} k_J(t)
        \sum_{k=l}^{n} |I_k|^{-1} h_{I_k}(s)
        \Bigr|
        \mathrm{d} t
        \Bigr\|_{L^1}\\
      &= \Bigl\|
        s\mapsto \cond \int_{0}^{1} \Bigl|
        \sum_{l=0}^{n} a_l r_l(t)
        \sum_{k=l}^{n} |I_k|^{-1} h_{I_k}(s)
        \Bigr|
        \mathrm{d} t
        \Bigr\|_{L^1}
        \sim \Bigl\|
        \Bigl( 
        \sum_{l=0}^{n} a_l^2 
        \bigl(\sum_{k=l}^{n} |I_k|^{-1} h_{I_k}\bigr)^{2}
        \Bigr)^{1/2}
        \Bigr\|_{L^1}\\
      &= \Bigl\|
        \Bigl( 
        \sum_{l=0}^n a_l^2 \bigl( |I_{n+1}|^{-1} \chi_{I_{n+1}} - |I_l|^{-1} \chi_{I_l}\bigr)^2
        \Bigr)^{1/2}
        \Bigr\|_{L^1}.
    \end{align*}
    Using that $\bigl||I_{n+1}|^{-1} \chi_{I_{n+1}} - |I_l|^{-1} \chi_{I_l}\bigr| \geq |I_l|^{-1} \chi_{I_l^{-}}$ and the disjointness of the
    intervals $I_l^-$, $l\in \mathbb{N}_0$, we can further estimate
    \begin{align*}
      \|\mathcal{C} f\otimes g\|_Z
      &\gtrsim \Bigl\|
        s\mapsto \Bigl( 
        \sum_{l=0}^{n} a_l^2 |I_l|^{-2} \chi_{I_l^-}(s)
        \Bigr)^{1/2}
        \Bigr\|_{L^1}
        = \Bigl\| s\mapsto
        \sum_{l=0}^{n} |a_l| |I_l|^{-1} \chi_{I_l^-}(s)
        \Bigr\|_{L^1}
        = \frac{1}{2} \sum_{l=0}^{n} |a_l|.
    \end{align*}
    Suppose that $\mathcal{C}$ is bounded then the latter estimate together with~\eqref{eq:37} yields
    \begin{equation}\label{eq:5}
      \Bigl\|
      \cond \bigl|
      \sum_{l=0}^n a_l \sum_{J\in \mathcal{D}_l} \sigma_J^{(2)} k_J
      \bigr|
      \Bigr\|_Y
      \gtrsim \sum_{l=0}^{n} |a_l|,
    \end{equation}
    for all finite sequences of scalars $(a_l)_{l=0}^n$.  If $(\sigma_J^{(2)})$ is independent as per
    \ref{enu:proof:thm:capon-projection:1}~\ref{enu:proof:thm:capon-projection:1:a}, then~\eqref{eq:5} is cannot hold since
    \begin{equation*}
      \cond \Bigl| \sum_{l=0}^n a_l \sum_{J\in \mathcal{D}_l} \sigma_J^{(2)} k_J \Bigr|
      \leq \Bigl(\sum_{l=0}^{n} a_l^2\Bigr)^{1/2}.
    \end{equation*}
    On the other hand, if $(\sigma_J^{(2)})$ is constant according to \ref{enu:proof:thm:capon-projection:1}~\ref{enu:proof:thm:capon-projection:1:c},
    then~\eqref{eq:5} yields
    \begin{equation*}
      \Bigl\|
      \sum_{l=0}^n a_l r_l
      \Bigr\|_Y
      \gtrsim \sum_{l=0}^{n} |a_l|,
    \end{equation*}
    which is only possible if $\|\cdot\|_Y\sim \|\cdot\|_{L^{\infty}}$ (see the proof
    of~\cite[Proposition~2.c.10]{lindenstrauss:tzafriri:1979:partII}).
  \end{proofcase}

  \begin{proofcase}[\ref{enu:proof:thm:capon-projection:3}]
    In this case, our test functions are not simple tensor products, but of the following form:
    \begin{equation*}
      z = \sum_{k=1}^{n} (h_{I_{2k}} - h_{I_{2k-1}})\otimes \sum_{J\in \mathcal{D}_{2k}} k_J
      = \sum_{k=1}^{n} (h_{I_{2k}} - h_{I_{2k-1}})\otimes r_{2k}
    \end{equation*}
    where $I_k = [0,2^{-k})$, $k\in \mathbb{N}_0$.  First, observe that the functions $h_{I_{2k}}- h_{I_{2k-1}}$, $k\in \mathbb{N}$ are disjointly
    supported, and hence,
    \begin{equation}\label{eq:34}
      \begin{aligned}
        \|z\|_Z
        &\sim \esssup_s
          \Bigl\| t\mapsto
          \cond \bigl|
          \sum_{k=1}^{n} \bigl( h_{I_{2k}}(s) - h_{I_{2k-1}}(s) \bigr) \sum_{J\in \mathcal{D}_{2k}} \sigma_J^{(2)} k_J(t)
          \bigr|
          \Bigr\|_Y\\
        &= \max_{1\leq k\leq n} \bigl| h_{I_{2k}}(s) - h_{I_{2k-1}}(s)\bigr|
          \leq 2.
      \end{aligned}
    \end{equation}
    On the other hand, since $\|\cdot\|_Y\geq \|\cdot\|_{L^1}$ (see \Cref{RI properties}~\ref{RI properties 1}), we obtain
    \begin{align*}
      \|\mathcal{C} z\|_Z
      &\sim \esssup_s
        \Bigl\| t\mapsto
        \bigl|
        \sum_{k=1}^{n} h_{I_{2k}}(s) r_{2k}(t)
        \bigr|
        \Bigr\|_Y
        \geq \Bigl\| t\mapsto
        \bigl|
        \sum_{k=1}^{n} h_{I_{2k}}(0) r_{2k}(t)
        \bigr|
        \Bigr\|_Y\\
      &\geq \int_{0}^{1} 
        \Bigl|
        \sum_{k=1}^{n} r_{2k}(t)
        \Bigr|
        \mathrm{d} t
        \sim \sqrt{n},
    \end{align*}
    which in view of~\eqref{eq:34} proves that $\mathcal{C}$ is unbounded.\qedhere
  \end{proofcase}
\end{myproof}

A Haar system Hardy space $Z(\boldsymbol{\sigma},X,Y)$ can be identified isometrically with a subspace of $X(Y(L^1(\boldsymbol{\sigma})))$.  This
representation is useful in studying semi-stable Haar multipliers on $Z$ that we define later in \Cref{two-parametric setting}
\begin{dfn}\label{dfn:multiplier-space}
  Let $X,Y\in\mathcal{H}(\delta)$, $\boldsymbol{\sigma}\in\boldsymbol{\Sigma}$, and $Z = Z(\boldsymbol{\sigma},X,Y)$.
  \begin{enumerate}[label=(\alph*),leftmargin=19pt]
  \item Define $Z^\Omega = X(Y(L^1(\boldsymbol{\sigma})))$ as the completion of
    \begin{equation*}
      \mathcal{V}_1(\delta)\otimes\mathcal{V}_1(\delta)\otimes L^1(\boldsymbol{\sigma})
      = \langle \{f\otimes g\otimes a: f,g\in\mathcal{V}_1(\delta), a\in L^1(\boldsymbol{\sigma})\} \rangle
    \end{equation*}
    with the norm
    \begin{align*}
      \Bigl\|
      \sum_{j=1}^nf_j\otimes g_j\otimes a_j
      \Bigr\|_{Z^\Omega}
      &= \Bigl\|
        \cond\bigl|
        \sum_{j=1}^nf_j\otimes g_j\otimes a_j(\boldsymbol{\sigma})
        \bigr|
        \Bigr\|_{X(Y)}\\
      & = \Bigl\|
        s\mapsto \Bigl\|
        t\mapsto\cond
        \bigl|\sum_{j=1}^nf_j(s)g_j(t)a_j(\boldsymbol{\sigma})
        \bigr|
        \Bigr\|_Y
        \Bigr\|_X.
    \end{align*}
  \item Define the linear map $\mathcal{O}\colon Z\to Z^\Omega$ given by
    \begin{equation*}
      z = \sum_{(I,J)\in\mathcal{D}} a_{I,J} h_I\otimes k_J
      \mapsto \sum_{(I,J)\in\mathcal{D}} a_{I,J} h_I\otimes k_J\otimes \big(\sigma^{(1)}_I\otimes \sigma^{(2)}_J\big),
    \end{equation*}
    where, for each $I_0,J_0\in\mathcal{D}$, we denote by $\sigma_{I_0}^{(1)}\otimes\sigma_{J_0}^{(2)}\colon\boldsymbol{\sigma}\to\{-1,1\}$ the
    function given by $\big((\sigma_I^{(1)})_{I\in\mathcal{D}},(\sigma^{(2)}_J)_{J\in\mathcal{D}}\big)\mapsto \sigma_{I_0}^{(1)}\sigma_{J_0}^{(2)}$.
  \end{enumerate}
\end{dfn}
Note that for $f\in X$, $g\in Y$, and $a\in L^1(\boldsymbol{\sigma})$ we have $\|f\otimes g\otimes a\|_{Z^\Omega} = \|f\|_X\|g\|_Y\|a\|_{L^1}$.
Therefore, for every $w\in\mathcal{V}_1(\delta)\otimes\mathcal{V}_1(\delta)\otimes L^1(\boldsymbol{\sigma})$ the quantity $\|w\|_{Z^\Omega}$ is well
defined.  By the definition of $Z$, the operator $\mathcal{O}\colon Z\to Z^\Omega$ is an into isometry.

\begin{rem}\label{rem:pw-multiplier}
  Let $Z = Z(\boldsymbol{\sigma},X,Y)\in\mathcal{HH}(\delta^2)$ and $m\in L^\infty([0,1)^2\times\boldsymbol{\sigma})$ such that, for all
  $w\in Z^\Omega$, the pointwise product $mw$ is in $Z^\Omega$.  We define the pointwise multiplier $M\colon Z^\Omega \to Z^\Omega$ by putting
  \begin{equation*}
    (M w)(s,t,\sigma)
    =  m(s,t,\sigma)\cdot w(s,t,\sigma),
    \qquad s,t\in [0,1),\;\sigma\in\boldsymbol{\sigma}.
  \end{equation*}
  By the monotonicity of the Haar system spaces $X,Y$ (see \Cref{RI properties}~\ref{RI properties 5}), we obtain that $M$ is well-defined and
  $\|M\|_{\mathcal{L}(Z^\Omega)}\leq \|m\|_{L^\infty}$.
  
  For context, we point out two facts that we will not use.
  \begin{enumerate}[label=(\alph*)]
  \item $\|M\|_{\mathcal{L}(Z^\Omega)} = \|m\|_{L^\infty}$.

  \item Given $m\in L^\infty([0,1)^2\times\boldsymbol{\sigma})$ and if neither the $X$ norm nor the $Y$ norm is equivalent to the
    $\|\cdot\|_\infty$-norm then, automatically, for all $w\in Z^\Omega$, $mw\in Z^\Omega$.
  \end{enumerate}
\end{rem}

The main theorem splits, in fact, into two version for a space $Z$ in $\mathcal{HH}(\delta^2)$ depending on whether a certain idempotent Haar
multiplier, the Capon projection, extends to a bounded linear operator on $Z$.

\begin{dfn}
  The \emph{Capon projection} is the bi-parameter Haar multiplier $\mathcal{C}\colon \mathcal{V}(\delta^2)\to\mathcal{V}(\delta^2)$ given by
  \begin{equation*}
    \mathcal{C}(h_I\otimes k_J)
    = \left\{
      \begin{array}{ll}
        h_I\otimes k_J  & \mbox{if } I\in\mathcal{D}_i,\;J\in\mathcal{D}_j\text{ and } i\geq j, \\
        0 & \mbox{otherwise.} 
      \end{array}
    \right.
  \end{equation*}
  That is, $\mathcal{C}$ is a projection onto the space spanned by the ``lower triangular'' part of the bi-parameter Haar system.
\end{dfn}

The main result we prove for Haar system Hardy spaces is the following. It is a direct consequence of \Cref{two-parameter analytic main theorem} and
\Cref{10-10-5}
\begin{thm}\label{thm:hsh-factor}
  \label{general bi-parameter projectional reduction theorem}
  Let $Z \in \mathcal{HH}(\delta^2)$ and let $\mathrm{HM}(Z)$ denote the set of bounded Haar multipliers on $Z$.
  \begin{enumerate}[label=(\roman*)]
  \item If $\mathcal{C}$ is unbounded on $Z$ then $\mathrm{HM}(Z)$ approximately 1-projectionally reduces to the class of scalar operators.  In
    particular, $\mathrm{HM}(Z)$ has the $C$-primary factorization property, for every $C>2$.

  \item If $\mathcal{C}$ is bounded on $Z$ then $\mathrm{HM}(Z)$ approximately 1-projectionally reduces to the two-dimensional subalgebra
    $\{\lambda\mathcal{C}+\mu(\Id-\mathcal{C}):\lambda,\mu\in\mathbb{R}\}$.
  \end{enumerate}
\end{thm}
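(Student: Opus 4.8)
The plan is to derive \Cref{thm:hsh-factor} by combining the analytic factorization result \Cref{two-parameter analytic main theorem}, which manufactures the exterior projectional factors, with the coincidence of factorization functionals \Cref{10-10-5}, and then to do the bookkeeping against \Cref{dfn:factorization-modes} and \Cref{factors remarks}. Fix $Z \in \mathcal{HH}(\delta^2)$ and a bounded Haar multiplier $D \in \mathrm{HM}(Z)$ with array $(d_{I,J})$, and set $\lambda = \lambda_{\mathcal{U}}(D)$ and $\mu = \mu_{\mathcal{U}}(D)$ as in \eqref{29-9-2}. The analytic main theorem should provide, for every $\eta > 0$, two faithful Haar systems whose tensor products generate the operators $A,B$ of \eqref{eq:27} with $AB = \Id$, $\|A\|\cdot\|B\| \le 1$, and
\[
  \bigl\| ADB - \bigl(\lambda\mathcal{C} + \mu(\Id - \mathcal{C})\bigr)\bigr\| \le \eta .
\]
Everything then reduces to reading this estimate through the relevant definitions in each of the two cases.

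For part~(ii), when $\mathcal{C}$ is bounded on $Z$, the operator $S = \lambda\mathcal{C} + \mu(\Id - \mathcal{C})$ is a bounded element of the two-dimensional subalgebra $\{\lambda'\mathcal{C} + \mu'(\Id - \mathcal{C}) : \lambda',\mu' \in \mathbb{R}\}$. The displayed estimate, together with $AB = \Id$ and $\|A\|\cdot\|B\| \le 1$, says exactly that $D$ is a $1$-projectional factor of $S$ with error $\eta$, in the sense of \Cref{dfn:factorization-modes}~\ref{enu:dfn:factorization-modes:b}. Since $\eta > 0$ was arbitrary, \Cref{dfn:factorization-modes}~\ref{enu:dfn:factorization-modes: reduction} yields that $\mathrm{HM}(Z)$ approximately $1$-projectionally reduces to that subalgebra, which is~(ii).

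For part~(i), when $\mathcal{C}$ is unbounded on $Z$, I invoke \Cref{10-10-5} to get $\lambda = \mu =: \alpha$, whence $\lambda\mathcal{C} + \mu(\Id - \mathcal{C}) = \alpha\Id$ is a scalar operator and the same estimate reads $\|ADB - \alpha\Id\| \le \eta$; arguing as above gives the approximate $1$-projectional reduction of $\mathrm{HM}(Z)$ to $\{\alpha'\Id : \alpha' \in \mathbb{R}\}$. To upgrade this to the $C$-primary factorization property for $C > 2$, I combine the reduction with the $2$-primary factorization property of scalars (\Cref{factors remarks}~\ref{scalar primary factorization}) by a Neumann-series perturbation. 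Writing $A_0 D B_0 = \alpha\Id + E$ with $\|E\| \le \eta$, $A_0 B_0 = \Id$, $\|A_0\|\cdot\|B_0\| \le 1$: if $|\alpha| \ge 1/2$ and $\eta < |\alpha|$, then $\Id + \alpha^{-1}E$ is invertible, and setting $A = (\Id + \alpha^{-1}E)^{-1}\alpha^{-1}A_0$, $B = B_0$ gives $ADB = \Id$ with $\|A\|\cdot\|B\| \le (|\alpha| - \eta)^{-1}$; if instead $|\alpha| < 1/2$, then $|1 - \alpha| > 1/2$ and, using $A_0(\Id - D)B_0 = (1-\alpha)\Id - E$, the identical argument factors $\Id$ through $\Id - D$ with $\|A\|\cdot\|B\| \le (|1-\alpha| - \eta)^{-1}$. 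In either case the norm product tends to $2$ as $\eta \to 0$, so for any prescribed $C > 2$ a sufficiently small $\eta$ yields that $D$ or $\Id - D$ is a $C$-factor of $\Id$, which is \Cref{dfn:factorization-modes}~\ref{enu:dfn:factorization-modes:c}.

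I expect the genuine difficulty to lie entirely inside the two cited inputs rather than in this assembly: \Cref{two-parameter analytic main theorem} is where the faithful Haar systems and the norm-one tensor-product factors are constructed (via semi-stabilization of the array $(d_{I,J})$ and concentration of measure), and \Cref{10-10-5} is where the unboundedness of $\mathcal{C}$ is leveraged to force the interchange of the iterated limits defining $\lambda$ and $\mu$. Within the present argument the only subtle point is the perturbation step: one must track that the constant degrades only to $2$ (not worse), and that the thresholds $|\alpha| \ge 1/2$ versus $|1-\alpha| > 1/2$ exhaust all cases, so that the dichotomy ``$D$ or $\Id - D$'' is genuinely covered with the sharp bound $C > 2$.
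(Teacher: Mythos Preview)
Your proposal is correct and follows essentially the same route as the paper, which states only that the theorem ``is a direct consequence of \Cref{two-parameter analytic main theorem} and \Cref{10-10-5}''; you have simply spelled out the bookkeeping and the Neumann-series perturbation behind the primary factorization claim. One small imprecision: \Cref{10-10-5} does not itself assert $\lambda=\mu$ but rather that $Z$ is a $1$-Capon space, which is the hypothesis needed to invoke \Cref{two-parameter analytic main theorem} (whose part (ii) then yields $\lambda=\mu$ when $\mathcal{C}$ is unbounded); the logic is unaffected, and \Cref{10-10-5} is equally needed in part~(ii) to obtain the constant $C=1$.
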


From \Cref{thm:capon-projection} and \Cref{thm:hsh-factor} we can deduce \Cref{main theorem with spaces} about $L^1(L^p)$, $L^p(L^1)$, $L^1(H^1)$, and
$H^1(L^1)$.

\subsection{Stabilization of bi-parameter Haar multipliers}
In the remainder of this section, we will describe the process behind obtaining \Cref{general bi-parameter projectional reduction theorem}.  This
process includes reducing and solving one-parametric problems, and thus, we include some results in this setting.

\subsubsection{One-parametric stabilization}
\label{one-parametric setting}
We express the stability of a coefficient sequence $(d_I)$ over the dyadic tree $\mathcal{D}$ in terms of the following variational semi-norm, which
is similar in spirit to the characterization by Semenov and Uksusov~\cite{semenov:uksusov:2012} of the bounded Haar multipliers on $L^1$.
\begin{ntn}
  For a Haar multiplier $D\colon \mathcal{V}(\delta)\to \mathcal{V}(\delta)$, whose coefficients are $(d_I)$ define
  \begin{gather*}
    \|D\|_\infty
    = \sup_{I\in \mathcal{D}} |d_I|
    \qquad\text{and}\qquad
    \|D\|_\mathrm{T}
    = \sum_{I\in\mathcal{D}} \Big(
    |d_I - d_{I^+}| + |d_I - d_{I^-}|
    \Big)
    + |d_{[0,1)}|.
  \end{gather*}
  If $\|D\|_\infty < \infty$ we call $D$ \emph{$\ell^{\infty}$-bounded} and and denote the space of all $\ell^\infty$ bounded Haar multipliers by
  $\text{HM}^\infty(\delta)$.  We call $\|D\|_{\mathrm{T}}$ the tree variation norm of $D$ and if $\|D\|_{\mathrm{T}} < \infty$ we call $D$
  tree-stable.
    
  For a non-principal ultrafilter $\mathcal{U}$ on $\mathbb{N}$ let $\lambda_\mathcal{U}$ denote the norm-one linear functional on the space of all
  $\ell^\infty$-bounded Haar multipliers $D$
  \begin{equation*}
    \lambda_\mathcal{U}(D)
    = \lim_{i\to\mathcal{U}}\sum_{I\in\mathcal{D}_i}|I|d_I
    = \lim_{i\to\mathcal{U}} \big\langle \sum_{I\in\mathcal{D}_i} h_I, D\big( \sum_{I\in\mathcal{D}_i}h_I \big) \big\rangle.
  \end{equation*}
  Note that $\lambda_\mathcal{U}(\Id) = 1$.
\end{ntn}
In \Cref{Enflo-Maurey comparison} below, we elaborate on the connection between $\lambda_\mathcal{U}$ and a local version of the inner product
expression of $\lambda_\mathcal{U}$ used by Enflo via Maurey in~\cite{maurey:1975:1}.

\begin{dfn}\label{dfn:faithful}
  A sequence $\tilde H = (\tilde h_I)_{I\in \mathcal{D}}$ in $\mathcal{V}(\delta)$ is called a \emph{faithful Haar system} if the following conditions
  are satisfied.
  \begin{enumerate}[label=(\alph*)]
  \item\label{dfn:faithful i} For each $I\in\mathcal{D}$ there is a finite family $\mathcal{A}_I\subset\mathcal{D}$ of pairwise disjoint members of
    $\mathcal{D}$ and $(\varepsilon_K : K\in\mathcal{A}_I)\in\{\pm1\}^{\mathcal{A}_I}$ such that
    \begin{equation*}
      \tilde h_{I}
      = \sum_{K\in \mathcal{A}_I} \varepsilon_K h_K.
    \end{equation*}

  \item\label{dfn:faithful ii} If for all $I\in\mathcal{D}$ we put $\Gamma_I = \cup\{K:K\in\mathcal{A}_I\}$ then $\Gamma_{[0,1)} = [0,1)$ and
    \begin{equation*}
      \Gamma_{I^+}
      = \{\tilde h_I = 1\}
      \quad\text{and}\quad
      \Gamma_{I^-}
      = \{\tilde h_I = -1\}.
    \end{equation*}
  \end{enumerate}
  If, in addition, there exists a strictly increasing sequence $(m_i)_{i=0}^\infty$ in $\mathbb{N}_0$ such that
  \begin{enumerate}[label=(\alph*)]\setcounter{enumi}{2}
  \item\label{D:1.4.1} for all $i\in\mathbb{N}_0$ and $I\in\mathcal{D}_i$, $\mathcal{A}_I\subset \mathcal{D}_{m_i}$,
  \end{enumerate}
  then we call $(\tilde h_I)_{I\in \mathcal{D}}$ a \emph{faithful Haar system relative to the frequencies $(m_i)_{i=0}^\infty$}.
\end{dfn}

\begin{rem}\label{Enflo-Maurey comparison}\hfill
  \begin{enumerate}[label=(\arabic*)]
  \item\label{P:1.4.2} The following observation can be shown by induction.
    
    Let $(\tilde h_I)_{I\in \mathcal{D}} \subset \mathcal{V}(\delta)$ be a faithful Haar system.  Then for any
    $(a_I:I\in\mathcal{D})\in c_{00}(\mathcal{D})$, the functions $\sum_{I\in \mathcal{D}} a_I h_I$ and $\sum_{I\in \mathcal{D}} a_I \tilde h_I$ have
    the same distribution.

  \item\label{faithful short formulation} A collection $(\tilde h_I)_{I\in\mathcal{D}}$ is a faithful Haar system relative to frequencies
    $(m_i)_{i=0}^\infty$ if and only if $\supp(\tilde h_{[0,1)}) = [0,1)$ and for each $i\in\mathbb{N}_0$ and $I\in\mathcal{D}_{i}$ the following two
    conditions hold.
    \begin{enumerate}[label=(\alph*)]
    \item The vector $\tilde h_I$ is a linear combination of $(h_J)_{J\in\mathcal{D}_{m_i}}$ with coefficients in $\{-1,0,1\}$.

    \item For $\varepsilon\in\{\pm1\}$, $\supp(\tilde h_{I^\varepsilon}) = \{\tilde h_{I} = \varepsilon\}$.
    \end{enumerate}
  \end{enumerate}
\end{rem}
Faithful Haar systems will be used in the sequel to capture a stabilized behavior of a Haar multiplied $D$ on $\mathcal{V}(\delta)$ which, under
certain assumptions on a norm on $\mathcal{V}(\delta)$, leads to a factorization of a ``simple'' operator through $D$.  More precisely, we will only
use faithful Haar systems relative to some frequencies $(m_i)_{i=0}^\infty$. The additional restriction~\ref{D:1.4.1} in their definition allows a
more precise result: a diagonal operator $D$, under the right assumptions, can be expressed as an approximate projectional factor of the scalar
operator $\lambda_\mathcal{U}(D) I$, i.e., the scalar $\lambda_\mathcal{U}(D)$ is independent of the accuracy of the approximation.

\begin{ntn}\label{one-parameter AB notation}
  Let $D\colon \mathcal{V}(\delta)\to \mathcal{V}(\delta)$ be an $\ell^{\infty}$-bounded Haar multiplier, and
  $\tilde H = (\tilde h_I)_{I\in \mathcal{D}}$ be a faithful Haar system relative to the frequencies $(m_i)_{i=0}^\infty \subset \mathbb{N}$.

  Define the linear maps $B = B_{\tilde H}\colon \mathcal{V}(\delta)\to \mathcal{V}(\delta)$, and
  $A = A_{\tilde H}\colon \mathcal{V}(\delta)\to \mathcal{V}(\delta)$ by
  \begin{align*}
    Af
    &= \sum_{I\in \mathcal{D}}\frac{ h_I}{|I|} \langle\tilde  h_I, f\rangle,
      \quad\text{and}\quad
      Bf
      = \sum_{I\in \mathcal{D}} \Big\langle \frac{h_I}{|I|},f\Big\rangle \tilde h_I,
      \quad\text{for $f \in \mathcal{V}(\delta)$},
  \end{align*}
  and put $\tilde D = D|_{\tilde H} = A\circ D\circ B\colon \mathcal{V}(\delta)\to \mathcal{V}(\delta)$. Observe that $AB$ is the identity map, and
  hence $BA$ is a projection onto the linear span of $\tilde h_I$, $I\in\mathcal D$.  Actually, since $(\tilde h_I)$ is a faithful Haar system, $BA$
  is the conditional expectation with respect to the $\sigma$-algebra generated by $\tilde h_I$, $I\in \mathcal D$.
\end{ntn}

As it will be proved later (see \Cref{P:1.4.5}), for $D$ and $\tilde H$ as above, $D|_{\tilde H}$ is a diagonal operator with entries that come from
averaging entries of $D$. The theorem below is a restatement of \Cref{P:1.4.6}.
\begin{thm}\label{one-parameter algebraic stabilization theorem norm version}
  Let $D\colon \mathcal{V}(\delta)\to\mathcal{V}(\delta)$ be an $\ell^\infty$-bounded Haar multiplier.  Then, for every non-principal ultrafilter and
  $\eta>0$ there exists a faithful Haar system relative to some frequencies $(m_i)_{i=0}^\infty$ such that
  \begin{equation*}
    \Big\|D|_{\tilde H} - \lambda_\mathcal{U}(D)\Id\Big\|_{\mathrm{T}}
    < \eta
  \end{equation*}
  and $\lambda_\mathcal{U}(D) = \lambda_\mathcal{U}(D|_{\tilde H})$.
\end{thm}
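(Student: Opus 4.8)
The plan is to make the multiplier $D|_{\tilde H}$ completely explicit and then to build $\tilde H$ by refining a partition of $[0,1)$, with the frequencies $(m_i)$ growing extremely fast along $\mathcal{U}$. By \Cref{P:1.4.5} and a direct computation with the maps $A,B$ of \Cref{one-parameter AB notation} (using $Bh_J=\tilde h_J$, $Ah_K=\varepsilon_K|K|\,h_I/|I|$ for the unique $I$ with $K\in\mathcal{A}_I$, and $\varepsilon_K^2=1$), the operator $D|_{\tilde H}$ is again a Haar multiplier whose coefficient at $I\in\mathcal{D}_i$ is the convex average
\begin{equation*}
  \tilde d_I=\frac{1}{|I|}\sum_{K\in\mathcal{A}_I}|K|\,d_K,\qquad \mathcal{A}_I\subset\mathcal{D}_{m_i},\quad \bigcup\mathcal{A}_I=\Gamma_I,\quad|\Gamma_I|=|I|.
\end{equation*}
Since the collections $(\mathcal{A}_I)_{I\in\mathcal{D}_i}$ partition $\mathcal{D}_{m_i}$, summing a whole level yields the exact identity $\sum_{I\in\mathcal{D}_i}|I|\tilde d_I=\sum_{K\in\mathcal{D}_{m_i}}|K|d_K=:A_{m_i}$, so that $\lambda_\mathcal{U}(D|_{\tilde H})=\lim_{i\to\mathcal{U}}A_{m_i}$ for every admissible $\tilde H$.

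I would secure the exact equality $\lambda_\mathcal{U}(D|_{\tilde H})=\lambda_\mathcal{U}(D)=:\lambda$ first, since it only constrains the frequencies. As $\lambda=\lim_{m\to\mathcal{U}}A_m$, the set $\{m:|A_m-\lambda|<1/i\}$ belongs to $\mathcal{U}$ and is therefore infinite for each $i$, so I may choose $m_0<m_1<\cdots$ with $|A_{m_i}-\lambda|<1/i$; then $\lim_{i\to\mathcal{U}}A_{m_i}=\lim_i A_{m_i}=\lambda$, giving the desired equality independently of the remaining choices. For the tree-variation bound I group $\|D|_{\tilde H}-\lambda\Id\|_{\mathrm{T}}$ by level and use $|\tilde d_I-\tilde d_{I^\pm}|\le|\tilde d_I-\lambda|+|\tilde d_{I^\pm}-\lambda|$ together with $\#\mathcal{D}_i=2^i$; it then suffices to arrange $|\tilde d_I-\lambda|<\epsilon_i$ for all $I\in\mathcal{D}_i$, where $(\epsilon_i)$ is fixed in advance with $\epsilon_0+\sum_i 2^{i+1}(\epsilon_i+\epsilon_{i+1})<\eta$ (e.g.\ $\epsilon_i\approx\eta 2^{-2i}$). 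The $2^i$ weight forces the node averages to approach $\lambda$ super-geometrically, which will be paid for by large frequency gaps $m_{i+1}-m_i$.

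The construction is inductive: I refine $\{\Gamma_I:I\in\mathcal{D}_i\}$ by halving each $\Gamma_I$ into two equal-measure unions of $\mathcal{D}_{m_{i+1}}$-intervals, the sign pattern of $\tilde h_I$ recording the halving. A balanced halving keeps both child averages within $O(\|D\|_\infty/|\mathcal{A}_{I^\pm}|)=O(\|D\|_\infty\,2^{i+1-m_{i+1}})$ of the local population average $B_I$ of $(d_L)$ over the $\mathcal{D}_{m_{i+1}}$-intervals in $\Gamma_I$, which is made $<\epsilon_{i+1}/2$ by taking $m_{i+1}$ large. The difficulty, and the heart of the matter, is that $B_I$ need not be close to $\lambda$: as $m_{i+1}\to\mathcal{U}$ it converges to the $\mathcal{A}_I$-average of the refinement-consistent martingale $c_K:=\lim_{m\to\mathcal{U}}|K|^{-1}\sum_{L\subset K,\,|L|=2^{-m}}|L|d_L$, and this equals $\lambda$ only when $\mathcal{A}_I$ is balanced with respect to $c$. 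Thus keeping children $d$-balanced requires keeping each parent region $c$-balanced, and these two demands are a priori unrelated.

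I would resolve this coupling by carrying, as an inductive invariant, that every $\mathcal{A}_I$ is simultaneously balanced with respect to both $d$ and $c$, and propagate it by choosing at each step a frequency $m_{i+1}$ so large that $A_{m_{i+1}}$ and each $B_I$ are within $\epsilon_{i+1}$ of their $\mathcal{U}$-limits, followed by a halving of each $\Gamma_I$ that balances the level-$m_{i+1}$ values $(d_L)$ and $(c_L)$ at once. Because $c$ is refinement-consistent ($c_K$ is the average of the $c_L$ over the children $L\subset K$) and the root $\mathcal{A}_{[0,1)}=\mathcal{D}_{m_0}$ is automatically doubly balanced (as $\lambda=2^{-j}\sum_{K\in\mathcal{D}_j}c_K$ for every $j$), the invariant is self-propagating; each halving can be realized either by a deterministic balanced two-coloring of discrepancy independent of the group size or by a random balanced split controlled through concentration. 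The remaining step — checking that the accumulated errors sum below $\eta$ — is then routine given the fast growth of $(m_i)$. I expect the simultaneous two-scale ($d$ and $c$) balancing to be the genuine obstacle, exactly because naive single-scale balancing breaks down at the parent-to-child transition.
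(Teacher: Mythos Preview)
Your approach is correct but takes a more elaborate route than the paper's. The paper bounds the parent--child differences $|\tilde d_I-\tilde d_{I^\omega}|$ directly (which is exactly what $\|\cdot\|_{\mathrm T}$ measures), rather than the absolute distances $|\tilde d_I-\lambda|$. The point is that $\tilde d_{I^\omega}$ is an average of $(d_L)$ at level $m_{i+1}$ over half of $\Gamma_I$, while $\tilde d_I$ is an average at level $m_i$ over all of $\Gamma_I$; the probabilistic splitting (your ``balanced halving'') already makes the former close to your $B_I$, and the remaining term $|B_I-\tilde d_I|$ is just the difference of two level-averages \emph{over the same region} $\Gamma_I$. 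Since at stage $i$ there are only $2^i$ such regions, one may first pass to an infinite set of frequencies on which all $2^i$ sequences of level-averages are Cauchy, and then choose $m_{i+1}$ from that set. This single observation eliminates the need to compare $B_I$ to the global $\lambda$, and hence eliminates the auxiliary martingale $c$ and the simultaneous two-scale balancing altogether: only a single balancing of $d$ is required at each step.

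What your approach buys is a stronger pointwise statement ($|\tilde d_I-\lambda|<\epsilon_i$ for every node) at the cost of carrying the $c$-invariant and performing a two-parameter concentration argument. What the paper's approach buys is simplicity: the ``genuine obstacle'' you identify---that $B_I$ need not be close to $\lambda$---simply never arises, because $B_I$ is only ever compared to $\tilde d_I$, and both are averages over the same set at nearby (carefully chosen) scales. The identity $\lambda_{\mathcal U}(D|_{\tilde H})=\lambda_{\mathcal U}(D)$ is handled identically in both: restrict the frequencies to an infinite set $M$ along which $A_m\to\lambda$.
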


The following \Cref{one-parametric assumption} and \Cref{one-parameter analytic main theorem} are not used in our paper; however, they serve as a
stepping stone to a later introduction of an analogous, less obvious assumption in two parameters
\begin{asm}\label{one-parametric assumption}
  Assume that $X$ is the completion of $\mathcal{V}(\delta)$ under a norm $\|\cdot\|$ such that the following are satisfied.
  \begin{enumerate}[label=(\roman*)]
  \item\label{one-parametric assumption a}There exists $C>0$ such that for every faithful Haar system $\tilde H$ relative to some frequencies
    $(m_i)_{i=0}^\infty$ we have
    \begin{equation*}
      \|A_{\tilde H}\|_{\mathcal{L}(X)}\|B_{\tilde  H}\|_{\mathcal{L}(X)}
      \leq C,
    \end{equation*}
    where for a linear $T\colon \mathcal{V}(\delta)\to\mathcal{V}(\delta)$, $\|T\|_{\mathcal{L}(X)} = \sup\{\|Tf\|: \|f\|\leq 1\}$.

  \item\label{one-parametric assumption b} There exist a constant $\beta>0$ such that for every Haar multiplier
    $D\colon \mathcal{V}(\delta)\to\mathcal{V}(\delta)$ we have
    \begin{equation*}
      \|D\|_{\mathcal{L}(X)}
      \leq \beta\|D\|_{\mathrm{T}}.
    \end{equation*}
  \end{enumerate}
\end{asm}

In~\cite[Lemma 4.5 and Proposition 7.1]{lechner:speckhofer:2023} the first named author and Thomas Speckhofer showed that the spaces in
$\mathcal{H}\mathcal{H}(\delta)$ satisfy \Cref{one-parametric assumption}.  A direct application of \Cref{one-parameter algebraic stabilization
  theorem norm version} yields the following \Cref{one-parameter analytic main theorem} (see also \cite[Theorem 3.6]{lechner:speckhofer:2023}).
\begin{thm}\label{one-parameter analytic main theorem}
  Let $X$ satisfy \Cref{one-parametric assumption} for some constants $\beta$, and $C$. Then, for every $\eta>0$ and a non-principal ultrafilter
  $\mathcal{U}$ on $\mathbb{N}$ every bounded Haar multiplier $D\colon X\to X$ is a $C$-projectional factor of $\lambda_\mathcal{U}(D)\Id$ with error
  $\eta$.

  If additionally $\lambda_\mathcal{U}(D)\neq 0$, then for every $C'>C$, $D$ is a $C'$-factor of $\lambda_\mathcal{U}(D)\Id$.
\end{thm}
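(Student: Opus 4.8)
The plan is to read the theorem as a bookkeeping consequence of the algebraic stabilization result \Cref{one-parameter algebraic stabilization theorem norm version} together with the two clauses of \Cref{one-parametric assumption}, and then to upgrade the approximate projectional factorization to an exact factorization by a short Neumann-series argument. Throughout I fix the non-principal ultrafilter $\mathcal{U}$ and abbreviate $\alpha = \lambda_\mathcal{U}(D)$.

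First I would check that a bounded Haar multiplier $D\colon X\to X$ restricts to an $\ell^\infty$-bounded Haar multiplier on $\mathcal{V}(\delta)$: from $Dh_I = d_I h_I$ and boundedness one gets $|d_I|\,\|h_I\|_X \le \|D\|_{\mathcal{L}(X)}\|h_I\|_X$, so $\|D\|_\infty = \sup_I |d_I| \le \|D\|_{\mathcal{L}(X)} < \infty$. This makes \Cref{one-parameter algebraic stabilization theorem norm version} available. Given $\eta > 0$, I would apply it with accuracy $\eta/\beta$ to produce a faithful Haar system $\tilde H$ relative to some frequencies with $\|D|_{\tilde H} - \alpha\Id\|_{\mathrm{T}} < \eta/\beta$, where $A = A_{\tilde H}$, $B = B_{\tilde H}$ are the maps of \Cref{one-parameter AB notation}, so that $AB = \Id$ and $D|_{\tilde H} = ADB$ on $\mathcal{V}(\delta)$.

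Next I would transport these estimates to the operator norm on $X$. By \Cref{one-parametric assumption}~\ref{one-parametric assumption a} the maps $A, B$ extend boundedly to $X$ with $\|A\|_{\mathcal{L}(X)}\|B\|_{\mathcal{L}(X)} \le C$, and by density $ADB$ is the extension of the Haar multiplier $D|_{\tilde H}$. Since $D|_{\tilde H} - \alpha\Id$ is again a Haar multiplier, \Cref{one-parametric assumption}~\ref{one-parametric assumption b} gives
\[
  \|ADB - \alpha\Id\|_{\mathcal{L}(X)} = \|D|_{\tilde H} - \alpha\Id\|_{\mathcal{L}(X)} \le \beta\,\|D|_{\tilde H} - \alpha\Id\|_{\mathrm{T}} < \eta .
\]
Combined with $\|A\|\|B\| \le C$ and $AB = \Id$, this is exactly the assertion that $D$ is a $C$-projectional factor of $\lambda_\mathcal{U}(D)\Id$ with error $\eta$, settling the first claim.

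For the second claim I would assume $\alpha \neq 0$, fix $C' > C$, and choose $\eta$ small enough that $\eta < |\alpha|$ and $|\alpha| C/(|\alpha| - \eta) \le C'$. Running the construction above yields $A, B$ with $\|A\|\|B\| \le C$ and $Q := ADB$ satisfying $\|Q - \alpha\Id\| < \eta$. Writing $Q = \alpha(\Id + \alpha^{-1}(Q - \alpha\Id))$ with $\|\alpha^{-1}(Q-\alpha\Id)\| < \eta/|\alpha| < 1$, the Neumann series shows $Q$ is invertible with $\|Q^{-1}\| \le (|\alpha| - \eta)^{-1}$. Setting $A' = \alpha Q^{-1} A$ and $B' = B$ then gives the exact identity $A'DB' = \alpha Q^{-1}(ADB) = \alpha\Id = \lambda_\mathcal{U}(D)\Id$, while $\|A'\|\|B'\| \le |\alpha|\,\|Q^{-1}\|\,\|A\|\|B\| \le |\alpha|C/(|\alpha|-\eta) \le C'$, so $D$ is a $C'$-factor of $\lambda_\mathcal{U}(D)\Id$. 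The one genuinely analytic point is this last paragraph—keeping the perturbed $Q$ invertible while inflating the factorization constant arbitrarily little beyond $C$—whereas the real weight of the theorem sits in \Cref{one-parameter algebraic stabilization theorem norm version} and in the verification of \Cref{one-parametric assumption}, both imported from earlier.
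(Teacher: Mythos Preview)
Your argument is correct and is exactly the ``direct application'' the paper has in mind: apply \Cref{one-parameter algebraic stabilization theorem norm version} with accuracy $\eta/\beta$, use \Cref{one-parametric assumption}\ref{one-parametric assumption a} for the bound on $\|A\|\|B\|$ and \ref{one-parametric assumption b} to convert the tree-variation estimate into an operator-norm estimate, then run the Neumann-series perturbation for the second claim. The paper does not spell out a proof beyond this, so there is nothing further to compare.
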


\begin{rem}
  As previously mentioned, the a priori knowledge of the constant $\lambda_\mathcal{U}(D)$ is achieved using faithful Haar systems relative to some
  frequencies.  This can be seen as a parallel approach to the one by Enflo via Maurey in \cite{maurey:1975:1}, where they proved that every bounded
  linear operator on $L_p$, $1\leq p<\infty$, is, for every $\eta>0$, a projectional factor with error $\eta$ of a pointwise multiplier $M_g$ and $g$
  does not depend on the error $\eta$.  To do this, they considered local versions of the functional $\lambda_\mathcal{U}$, each defined for a
  measurable subset of the unit interval of positive measure, that collectively gave rise to a measure.  The derivative of this measure gave the
  pointwise multiplier in question.  In Haar system Hardy spaces bounded functions don't necessarily define bounded pointwise multipliers and, thus,
  the reduction of Haar multipliers to scalar operators is more appropriate in this setting.  Note here that pointwise multipliers preserving $H^ 1 $
  or its dual space are characterized by Stegenga \cite{stegenga:1976}, and the resulting conditions are quite restrictive.  See also Chapter VI in
  Garnett~\cite{garnett:2007}.  An explicit translation of the Stegenga condition to the martingale setting appears, for instance in the paper by
  Nakai and Sadasue \cite{nakai:sadasue:2014}.
\end{rem}

\subsubsection{Two-parametric setting}
\label{two-parametric setting}

\begin{ntn}\label{ntn:basics}
  For Haar multiplier $D$ on $\mathcal{V}(\delta_2)$ with coefficients $(d_{I,J})$ we put $\|D\|_\infty = \sup_{I,J\in \mathcal{D}} |d_{I,J}|$, and let
  $\text{\rm HM}^\infty(\delta^2)$ be the space of $\|\cdot\|_\infty$-bounded Haar multipliers on $\mathcal{V}(\delta^2)$.

  We list four special subsets of $\mathcal{D}\times\mathcal{D}$ as follows (see Figures~\ref{fig:matrix_parts-diagonal}
  to~\ref{fig:matrix_parts-upper_triangular}, below).
  \begin{itemize}
  \item The \emph{lower triangular part of $\mathcal{D}\times\mathcal{D}$} is the subset
    $\cup_{j=0}^\infty\cup_{i=j}^\infty\mathcal{D}_i\times\mathcal{D}_j$.
  \item The \emph{upper triangular part of $\mathcal{D}\times\mathcal{D}$} is the subset
    $\cup_{i=0}^\infty\cup_{j=i+1}^\infty\mathcal{D}_i\times\mathcal{D}_j$, further subdivided into
    \begin{itemize}
    \item \emph{left upper triangular part} $\cup_{i=1}^\infty\cup_{j=i+1}^\infty\mathcal{D}_i\times\{J\in\mathcal{D}_j : J\subset[0,1/2)\}$ and the
    \item \emph{right upper triangular part} $\cup_{i=1}^\infty\cup_{j=i+1}^\infty\mathcal{D}_i\times\{J\in\mathcal{D}_j : J\subset[1/2,1)\}$.
    \end{itemize}
    
  \item The \emph{diagonal part of $\mathcal{D}\times\mathcal{D}$} is the subset $\cup_{i=0}^\infty\mathcal{D}_i\times\mathcal{D}_i$.
  \item The \emph{superdiagonal part of $\mathcal{D}\times\mathcal{D}$} is the subset $\cup_{i=0}^\infty\mathcal{D}_{i}\times\mathcal{D}_{i+1}$.
  \end{itemize}
  
  \begin{minipage}{.5\linewidth}
    \begin{center}
      \includegraphics[scale=0.025]{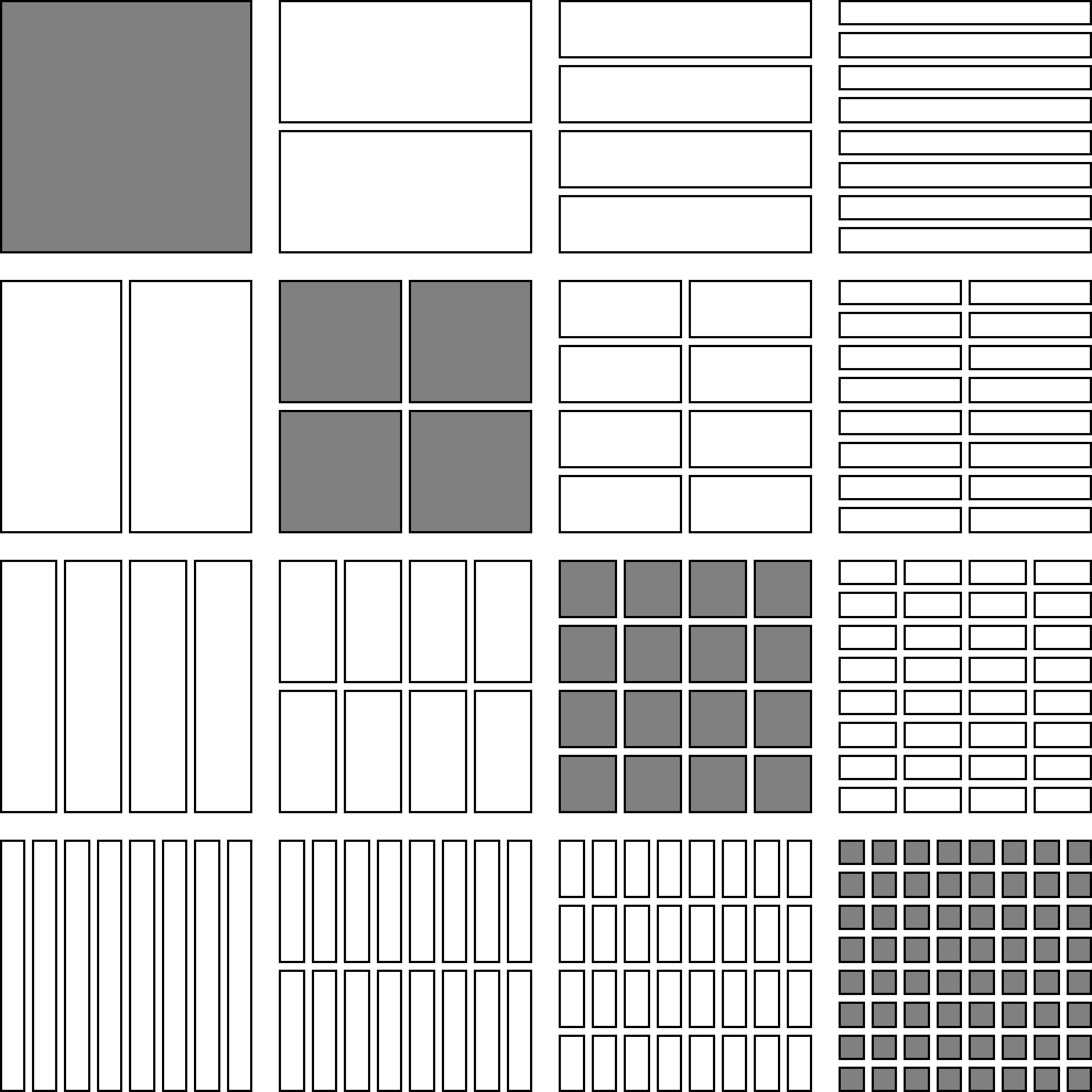}
    \end{center}
    \captionof{figure}{Diagonal, ``squares''.}
    \label{fig:matrix_parts-diagonal}

    \begin{center}
      \includegraphics[scale=0.025]{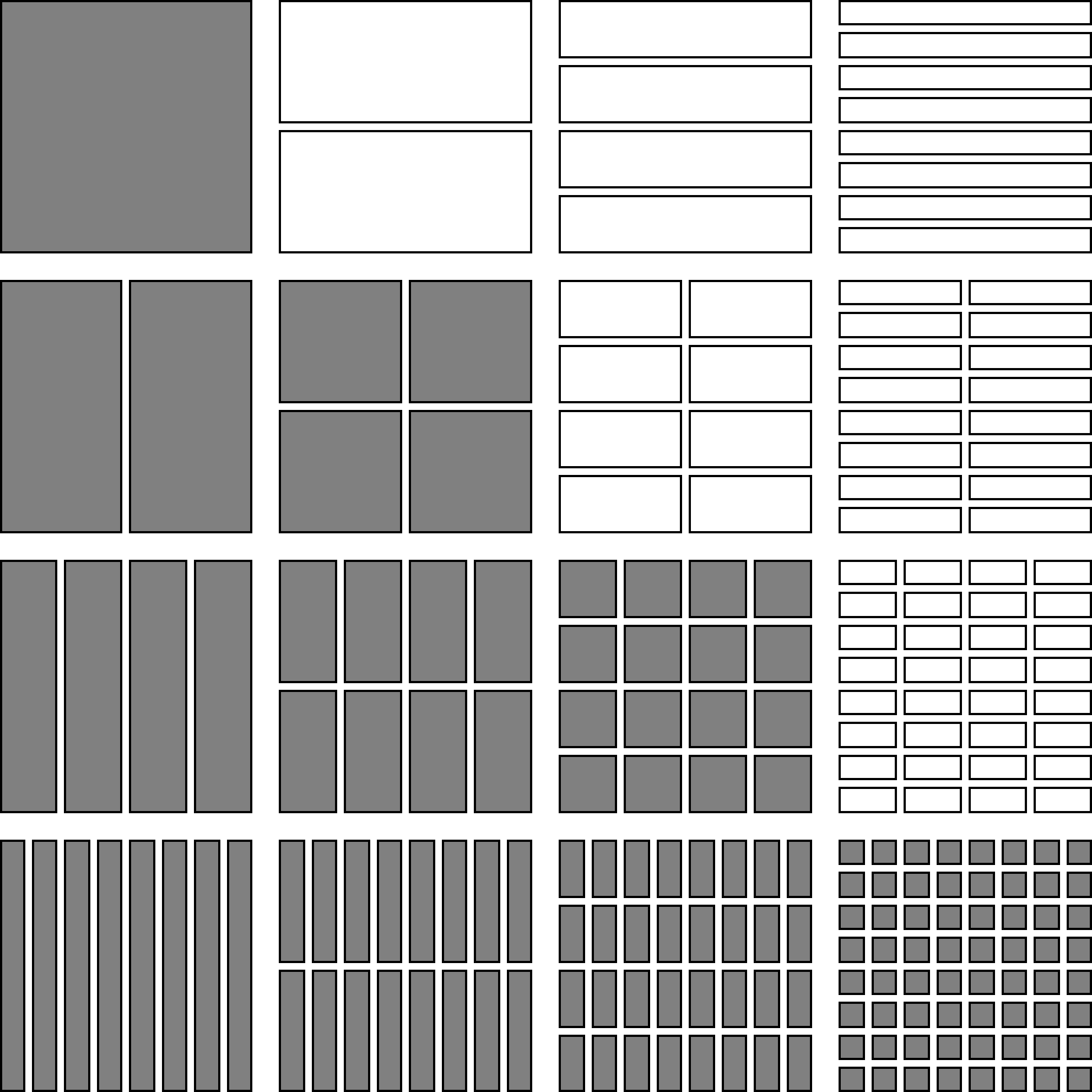}
    \end{center}
    \caption{Lower triangular, ``tall rectangles and squares''.}
    \label{fig:matrix_parts-lower_triangular}
  \end{minipage}
  \begin{minipage}{.5\linewidth}
    \begin{center}
      \includegraphics[scale=0.025]{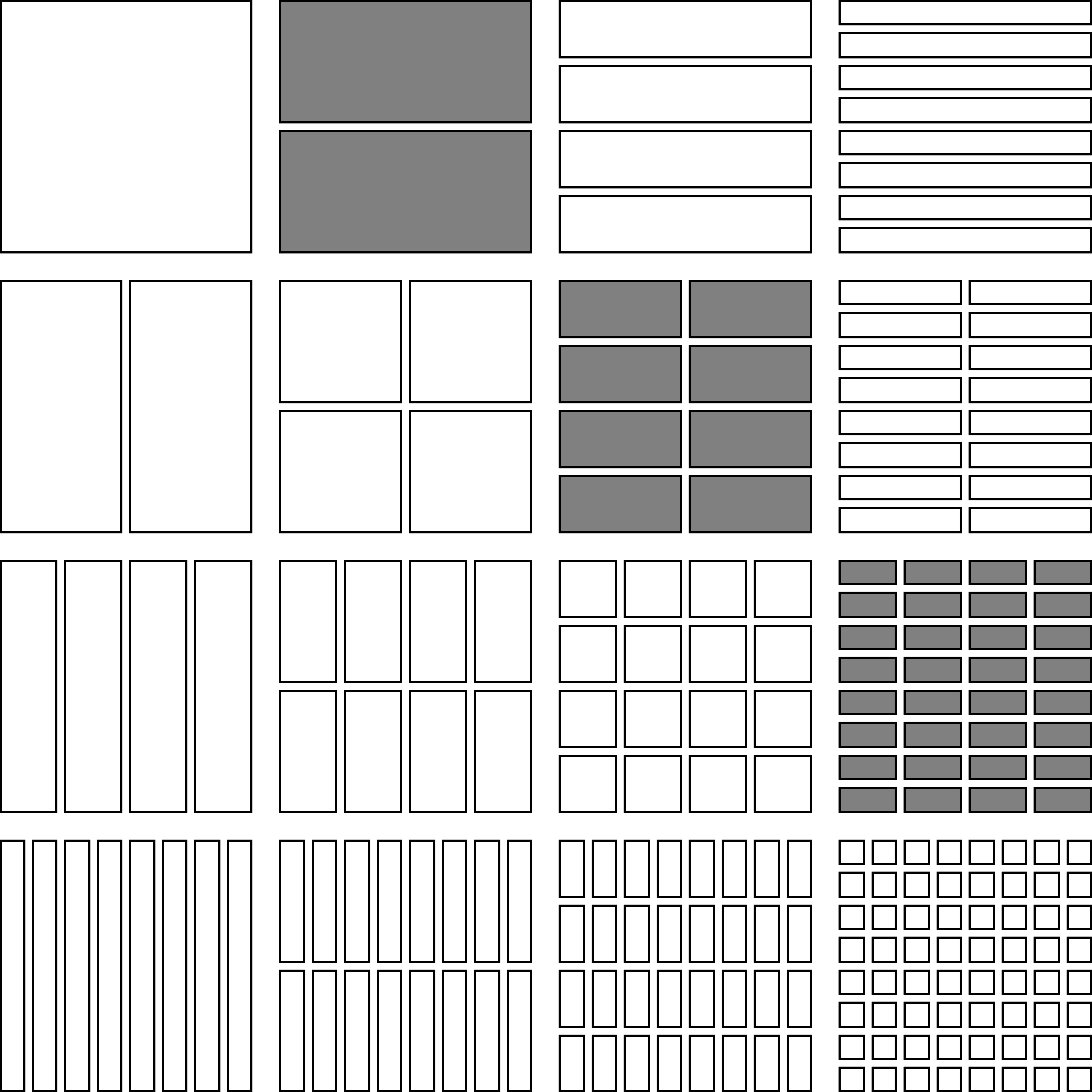}
    \end{center}
    \captionof{figure}{Superdiagonal, ``$2:1$ aspect ratio rectangles''.}
    \label{fig:matrix_parts-superdiagonal}

    \begin{center}
      \includegraphics[scale=0.025]{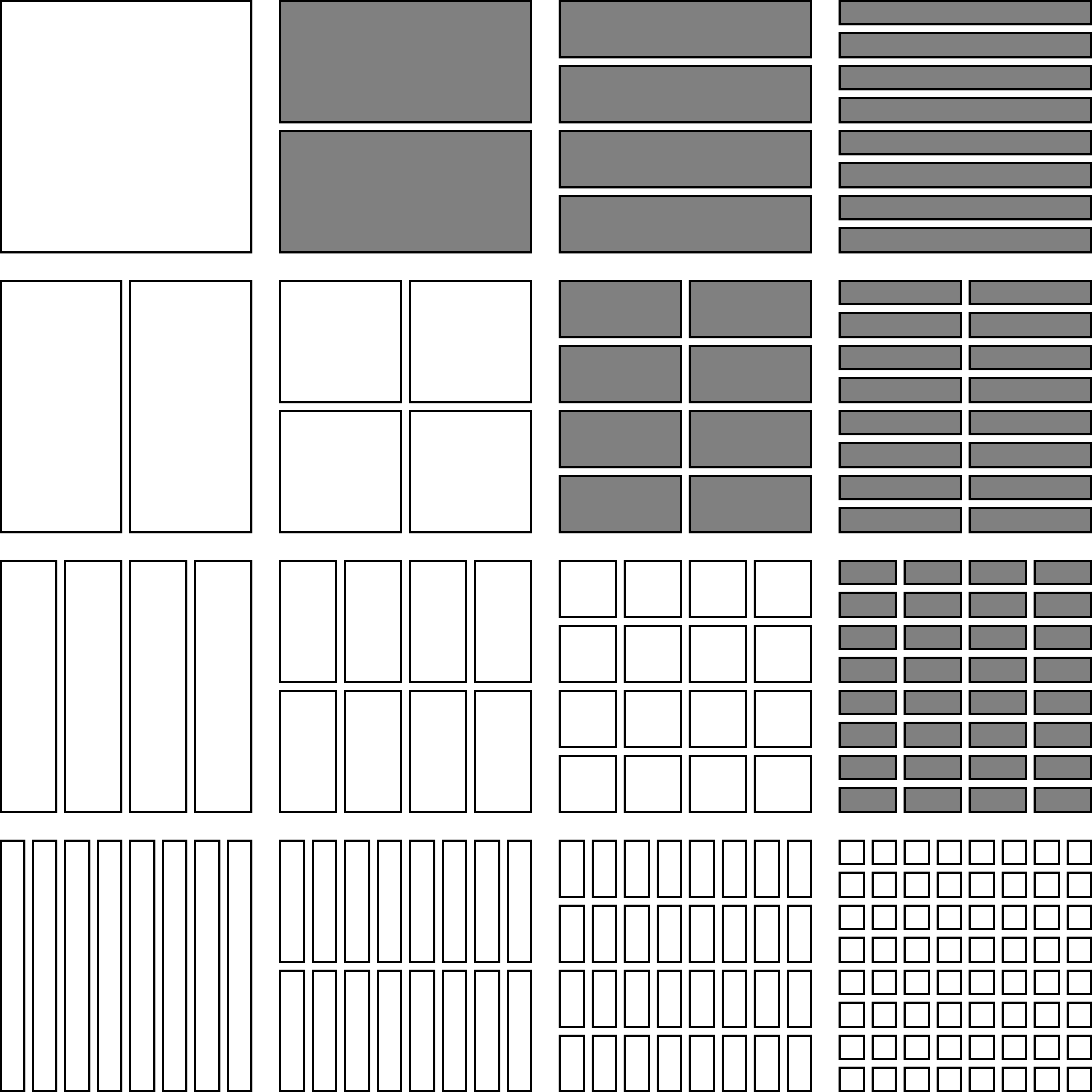}
    \end{center}
    \captionof{figure}{Upper triangular, ``wide rectangles''.}
    \label{fig:matrix_parts-upper_triangular}
  \end{minipage}

  \smallskip Based on these subsets of $\mathcal{D}\times\mathcal{D}$ we define the \emph{bi-tree variation semi-norm} on a subspace of
  $\mathrm{HM}(\delta^2)$ by letting for $D\in\mathrm{HM}(\delta^2)$
  \begin{align*}
    \|D\|_\mathrm{T^2S}
    &= \sum_{k=0}^{\infty} (k+1) \max_{\substack{I,J\in \mathcal{D}_k\\\omega,\xi\in\{\pm 1\}}} |d_{I,J} - d_{I^{\omega},J^{\xi}}|
    + \sum_{k=0}^{\infty} (k+1) \max_{\substack{I\in \mathcal{D}_k,\\J\in \mathcal{D}_{k+1}\\\omega,\xi\in\{\pm 1\}}} |d_{I,J} - d_{I^{\omega},J^{\xi}}|\\
    &\qquad + \sum_{i=0}^{\infty} \sum_{j=0}^i \max_{\substack{I\in \mathcal{D}_i\\J\in \mathcal{D}_j\\\omega\in\{\pm 1\}}} |d_{I,J} - d_{I^{\omega},J}|
    + \sum_{j=0}^{\infty} \sum_{i=0}^j \max_{\substack{I\in \mathcal{D}_i\\J\in \mathcal{D}_{j+1}\\\xi\in\{\pm 1\}}} |d_{I,J} - d_{I,J^{\xi}}|.
  \end{align*}
  Adding the roots $d_{[0,1),[0,1)}$, $d_{[0,1),[0,1/2)}$ and $d_{[0,1),[1/2,1)}$ to $\|\cdot\|_{T^2S}$ yields the \emph{bi-tree variation norm}
  $\|D\|_{\mathrm{T}^2}$ given by
  \begin{equation*}
    \|D\|_{\mathrm{T}^2}
    = \|D\|_{\mathrm{T}^2} + |d_{[0,1),[0,1)}| + |d_{[0,1),[0,1/2)}| + |d_{[0,1),[1/2,1)}|.
  \end{equation*}
  If $\|D\|_{\mathrm{T}^2S}<\infty$ we call $D$ \emph{bi-tree semi-stabilized} and we denote by $\mathrm{HM}^{\mathrm{T}^2}(\delta^2)$ the vector
  space of all bi-tree semi-stabilized Haar multipliers on $\mathcal{V}(\delta^2)$.

\item For a non-principal ultrafilter $\mathcal{U}$ let $\lambda_\mathcal{U}$, $\mu_\mathcal{U}$ denote the norm-one linear functionals on
  $\mathrm{HM}^\infty(\delta^2)$.
  \begin{align*}
    \lambda_\mathcal{U}(D)
    &= \lim_{j\to\mathcal{U}} \lim_{i\to\mathcal{U}} \langle r_i\otimes r_j, D r_i\otimes r_j \rangle = \lim_{j\to\mathcal{U}} \lim_{i\to\mathcal{U}} \sum_{I\in\mathcal{D}_i}\sum_{J\in\mathcal{D}_j}|I||J|d_{I,J}\\
    \mu_\mathcal{U}(D)
    &= \lim_{i\to\mathcal{U}} \lim_{j\to\mathcal{U}} \langle r_i\otimes r_j, D r_i\otimes r_j \rangle = \lim_{i\to\mathcal{U}} \lim_{j\to\mathcal{U}} \sum_{I\in\mathcal{D}_i}\sum_{J\in\mathcal{D}_j}|I||J|d_{I,J}.
  \end{align*}
  Note that $\lambda_\mathcal{U}(I) = \mu_\mathcal{U}(I) = 1$.
\end{ntn}

\begin{rem}\label{root to limit proximity}
  A telescoping argument (see, e.g., the proof of \Cref{pro:path-distance}) easily yields that, for $D\in\mathrm{HM}^\infty(\delta^2)$ with entries
  $(d_{I,J})_{I,J\in\mathcal{D}\times\mathcal{D}}$ and a non-principal ultrafilter $\mathcal{U}$ on $\mathbb{N}$,
  \begin{equation*}
    \Big|\lambda_\mathcal{U}(D) - d_{[0,1)[0,1)}\Big| + \Big|\mu_\mathcal{U}(D) - \frac{1}{2}\Big(d_{[0,1),[0,1/2)} + d_{[0,1),[1/2,1)}\Big)\Big|
    \leq \|D\|_{\mathrm{T}^2}.
  \end{equation*}
\end{rem}

\begin{rem}
  The kernel of $\|\cdot\|_{\mathrm{T}^2S}$ is the three-dimensional subspace of $\in\mathrm{HM}^{\mathrm{T}^2}(\delta^2)$ consisting of all $D$ with
  constant entries $\lambda$ on the lower triangular part, $\mu_1$ on the left upper triangular part, and $\mu_2$ on the right upper triangular part
  of $\mathcal{D}\times\mathcal{D}$. Although this object is implicitly approximated in our proofs, we are ultimately interested in its
  two-dimensional subspace $\{\lambda\mathcal{C}+\mu(\Id-\mathcal{C}):\lambda,\mu\in\mathbb{R}\}$, i.e., the subspace of all $D$ in
  $\mathrm{ker}(\|\cdot\|_{\mathrm{T}^2S})$ for which $\mu_1 = \mu_2 = \mu$ (see \Cref{two-parameter algebraic stabilization theorem norm version}
  below).
\end{rem}

\begin{dfn}
  Let $\tilde H=(\tilde h_I)_{I\in \mathcal{D}}$ and $ \tilde K=( \tilde k_J)_{J\in \mathcal{D}}$ be faithful Haar systems relative to the frequencies
  $(m_i)$ and $(n_j)$, respectively.  We call $\tilde H\otimes \tilde K=(\tilde h_I\otimes \tilde k_J: I,J\in\mathcal{D})$ a 2-parameter faithful Haar
  systems relative to the frequencies $(m_i)_{i=0}^\infty$ and $(n_j)_{j=0}^\infty$.
\end{dfn}

\begin{ntn}
  If $S,T\colon \mathcal{V}(\delta)\to \mathcal{V}(\delta)$ are linear operators we define
  $S\otimes T\colon \mathcal{V}(\delta^2)\to \mathcal{V}(\delta^2)$ given by
  \begin{equation*}
    S\otimes T\Big(\sum_{I,J\in\mathcal{D}} a_{I,J} h_I \otimes k_J\Big)
    = \sum_{I,J\in\mathcal{D}} a_{I,J} S(h_I)\otimes T(k_J).
  \end{equation*}
\end{ntn}

\begin{ntn}\label{bi-parameter B-Q notation}
  Let $\tilde H\otimes \tilde K= (\tilde h_I\otimes \tilde k_J: I,J\in\mathcal{D})$ be a faithful Haar system relative to the frequencies $(m_i)$ and
  $(n_j)$, and let $D\colon \mathcal{V}(\delta^2)\to \mathcal{V}(\delta^2)$ be a Haar multiplier.  Define
  \begin{align*}
    A
    &= A_{\tilde H\otimes \tilde K}
      = A_{\tilde H}\otimes A_{ \tilde K} \colon \mathcal{V}(\delta^2)\to \mathcal{V}(\delta^2),
      \quad Af = \sum_{I,J\in \mathcal{D}} \frac{h_I \otimes k_J}{|I|\cdot|J|}\langle \tilde h_I\otimes \tilde k_J , f\rangle\\
    B
    &= B_{\tilde H\otimes \tilde K}
      = B_{\tilde H}\otimes B_{ \tilde K}\colon \mathcal{V}(\delta^2)\to \mathcal{V}(\delta^2),
      \quad Bf =  \sum_{I,J\in \mathcal{D}} \Big\langle\frac{h_I\otimes k_J}{|I|\cdot |J|},f\Big\rangle \tilde h_I\otimes \tilde k_J,\\
    \tilde D
    &= D|_{\tilde H\otimes \tilde K}
      = A \circ D \circ B.
  \end{align*}
  Observe that $AB$ is the identity map.
\end{ntn}

As we will later see in \Cref{P:3.2}, $D|_{\tilde H\otimes \tilde K}$ is a Haar multiplier with entries that come from averaging those of $D$.

\begin{thm}\label{two-parameter algebraic stabilization theorem norm version}
  Let $D\colon \mathcal{V}(\delta^2)\to\mathcal{V}(\delta^2)$ be an $\ell^\infty$-bounded Haar multiplier. Then, for every non-principal ultrafilter
  $\mathcal{U}$ on $\mathbb{N}$ and $\eta>0$ there exists a bi-parameter faithful Haar system $\tilde H\otimes\tilde K$ relative to some frequencies
  $(m_i)_{i=0}^\infty$ and $(n_j)_{j=0}^\infty$ such that
  \begin{equation*}
    \Big\|
    D|_{\tilde H\otimes\tilde K} - \Big(\lambda_\mathcal{U}(D)\mathcal{C}+ \mu_\mathcal{U}(D)(I-\mathcal{C})\Big)
    \Big\|_{\mathrm{T}^2}
    < \eta
  \end{equation*}
  and $\lambda_\mathcal{U}(D) = \lambda_\mathcal{U}(D|_{\tilde H\otimes\tilde K})$,
  $\mu_\mathcal{U}(D) = \mu_\mathcal{U}(D|_{\tilde H\otimes\tilde K})$.
\end{thm}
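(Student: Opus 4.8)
The plan is to reduce the statement to the combinatorics of the blocked coefficients and then to engineer the block systems so that these coefficients are essentially constant on each of the two Capon regions. By \Cref{P:3.2}, if $\tilde H=(\tilde h_I)$ and $\tilde K=(\tilde k_J)$ are faithful relative to frequencies $(m_i)$ and $(n_j)$, with $\tilde h_I=\sum_{K\in\mathcal{A}_I}\varepsilon_K h_K$, $\mathcal{A}_I\subset\mathcal{D}_{m_i}$ for $I\in\mathcal{D}_i$, and $\tilde k_J=\sum_{L\in\mathcal{B}_J}\zeta_L k_L$, $\mathcal{B}_J\subset\mathcal{D}_{n_j}$ for $J\in\mathcal{D}_j$, then $\tilde D=D|_{\tilde H\otimes\tilde K}$ is again a Haar multiplier whose coefficient $\tilde d_{I,J}=\frac{1}{|I|\,|J|}\sum_{K\in\mathcal{A}_I}\sum_{L\in\mathcal{B}_J}|K|\,|L|\,d_{K,L}$ is a convex combination of the entries $d_{K,L}$ over the rectangle $\mathcal{A}_I\times\mathcal{B}_J\subset\mathcal{D}_{m_i}\times\mathcal{D}_{n_j}$ (after normalizing $\|D\|_\infty\le1$). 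Since $\lambda\mathcal{C}+\mu(\Id-\mathcal{C})$ lies in the kernel of $\|\cdot\|_{\mathrm{T}^2S}$, it suffices to force $\tilde d_{I,J}\approx\lambda=\lambda_\mathcal{U}(D)$ on the lower-triangular part $\{i\ge j\}$ (which contains the diagonal) and $\tilde d_{I,J}\approx\mu=\mu_\mathcal{U}(D)$ on the upper-triangular part $\{i<j\}$ (which contains the superdiagonal), with one-step oscillations decaying geometrically so that the weighted sums defining $\|\cdot\|_{\mathrm{T}^2}$ fall below $\eta$. A key point is that each of the four semi-norm terms stays strictly inside one region, so every one-step difference can be bounded by the two pertinent distances to the regional target.

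The device that manufactures the two distinct constants from a single pair of systems is a choice of \emph{interleaved} frequencies
\begin{equation*}
  n_0\ll m_0\ll n_1\ll m_1\ll n_2\ll m_2\ll\cdots,
\end{equation*}
all taken large. I would additionally draw every frequency from the $\mathcal{U}$-large sets on which the relevant row and column averages already lie within tolerance of their limits, so that $F_{n_j}:=\lim_{i\to\mathcal{U}}E_{i,n_j}\approx\lambda$ and $G_{m_i}:=\lim_{j\to\mathcal{U}}E_{m_i,j}\approx\mu$ for all of them. Then on $\{i\ge j\}$ the dominance $m_i\gg n_j$ makes the ambient average $E_{m_i,n_j}\approx F_{n_j}\approx\lambda$ — matching $\lambda_\mathcal{U}(D)=\lim_{j\to\mathcal{U}}\lim_{i\to\mathcal{U}}E_{i,j}$ — while on $\{i<j\}$ the dominance $n_j\gg m_i$ makes $E_{m_i,n_j}\approx G_{m_i}\approx\mu$. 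The interleaving is exactly calibrated so that the diagonal $i=j$ lands on the $\lambda$-side and the superdiagonal $j=i+1$ on the $\mu$-side, reproducing the Capon split. Picking the frequencies by a diagonal extraction that makes the iterated subsequential limits converge genuinely, together with the exact level-average identity $\tilde E_{i,j}=E_{m_i,n_j}$ (valid because $\{\mathcal{A}_I:I\in\mathcal{D}_i\}$ partitions $\mathcal{D}_{m_i}$), yields the claimed exact equalities $\lambda_\mathcal{U}(\tilde D)=\lambda$ and $\mu_\mathcal{U}(\tilde D)=\mu$, independently of $\eta$.

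The heart of the proof is the selection of the atoms so that every partial average $\tilde d_{I,J}$ simultaneously lies within a prescribed tolerance $\epsilon_{i,j}\sim\eta\,2^{-\max(i,j)}(\max(i,j)+1)^{-2}$ of the ambient average $E_{m_i,n_j}$. I would build the two faithful systems recursively, generation by generation: having fixed the support $\Gamma_I$ of a parent, split its level-$m_{i+1}$ refinement into two balanced halves $\Gamma_{I^+},\Gamma_{I^-}$ whose restricted averages still track the global average, and likewise for the $\tilde K$-system; the balance is obtained by a random selection together with concentration of measure for the resulting empirical averages, whose deviation probabilities are super-exponentially small in the (huge) sample sizes afforded by large frequency gaps. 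A union bound over the at most $2^{i+j}$ pairs $(I,J)$ at each level then leaves positive probability that all estimates hold at once, which is the required existence. \emph{This is the step I expect to be the main obstacle}: the system $\tilde H$ is shared across all columns and $\tilde K$ across all rows, so one pair of selections must control an entire coupled array of averages at once, and the atoms chosen at one generation constrain, through the nested supports, what remains available at the next. Controlling this recursion while keeping the accumulated errors summable against the weights of $\|\cdot\|_{\mathrm{T}^2S}$ is precisely what the bi-tree semi-stabilization method of \Cref{sec:semi-stabilization} is built for, and the persistence of the stability estimates under iterated blockings is what lets the recursion close.

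Finally I would assemble the norm bound by telescoping. Writing each one-step difference as $|\tilde d_{I,J}-\tilde d_{I^\omega,J^\xi}|\le|\tilde d_{I,J}-c|+|c-\tilde d_{I^\omega,J^\xi}|$ with $c\in\{\lambda,\mu\}$ the common regional target, and inserting $|\tilde d_{I,J}-c|\le\epsilon_{i,j}$, the weighted diagonal and superdiagonal sums contribute $\lesssim\sum_k(k+1)\epsilon_{k,k}$, the lower triangular double sum contributes $\lesssim\sum_i\sum_{j\le i}\epsilon_{i,j}$, and the upper triangular double sum $\lesssim\sum_j\sum_{i\le j}\epsilon_{i,j}$; by the geometric decay of $\epsilon_{i,j}$ each of these is finite, and shrinking the absolute constant in $\epsilon_{i,j}$ (via larger gaps and sample sizes) pushes each below $\eta/4$. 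The three root terms $|\tilde d_{[0,1),[0,1)}-\lambda|$, $|\tilde d_{[0,1),[0,1/2)}-\mu|$, $|\tilde d_{[0,1),[1/2,1)}-\mu|$ are bounded by the same per-level tolerances. Adding these contributions gives $\|\tilde D-(\lambda\mathcal{C}+\mu(\Id-\mathcal{C}))\|_{\mathrm{T}^2}<\eta$, and since the exactness of $\lambda_\mathcal{U}(\tilde D)$ and $\mu_\mathcal{U}(\tilde D)$ was already secured by the frequency extraction, the proof is complete.
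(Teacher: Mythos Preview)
Your overall strategy is sound and matches the paper's in its essential ingredients: interleaved frequencies $n_0\ll m_0\ll n_1\ll\cdots$, random splitting with concentration, the level-average identity $\tilde E_{i,j}=E_{m_i,n_j}$ to secure $\lambda_\mathcal{U}(\tilde D)=\lambda_\mathcal{U}(D)$ and $\mu_\mathcal{U}(\tilde D)=\mu_\mathcal{U}(D)$ exactly, and the final telescoping. The difference is in how the probabilistic core --- precisely the step you flag as ``the main obstacle'' --- is organized.

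You aim directly for $|\tilde d_{I,J}-E_{m_i,n_j}|<\epsilon_{i,j}$ for \emph{all} $(I,J)$ at once. This is stronger than needed and awkward to verify in one recursion because of the row/column coupling you identify. The paper sidesteps the coupling by working with one-step \emph{differences} rather than absolute proximity to a global target, and by decoupling into five separate passes (\Cref{dfn:semi-stable}): the lower-triangular condition $|\tilde d_{I^\omega,J}-\tilde d_{I,J}|$ small is, for each fixed $J$, a \emph{one-parameter} stabilization in the $\tilde H$-variable, and symmetrically for the upper-triangular condition; these are obtained by playing two interleaved copies of the one-parameter game of \Cref{stabilizing game} (\Cref{stabilization by one-parameter reduction}). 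The diagonal, superdiagonal, and balancing conditions are then achieved by further blockings using the genuinely two-parameter concentration of \Cref{L:3.3} (\Cref{two-parameter subdiagonal}--\ref{two-parameter balancing}). The monoidal-action identity of \Cref{blocking bi-parameter faithful haar} composes the passes, and the permanence result \Cref{pro:permanence} guarantees that each earlier difference condition survives the later blockings --- this is the precise form of the ``persistence'' you invoke, but note that what persists is the variational condition on adjacent differences, not proximity to a fixed constant. Once all five difference conditions hold, your telescoping paragraph is exactly \Cref{proximity in sup norm}, and proximity of every $\tilde d_{I,J}$ to $\lambda$ or $\mu$ falls out a posteriori rather than being the inductive hypothesis.
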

\Cref{two-parameter algebraic stabilization theorem norm version} will be proved in \Cref{sec:semi-stabilization} where we also restate it in
\Cref{T:3.4}. But we will first state conditions under which \Cref{two-parameter algebraic stabilization theorem norm version} is directly applicable
to the completion $Z$ of $\mathcal{V}(\delta^2)$ under an appropriate norm. As we will see eventually, every bi-parameter Haar system Hardy space
satisfies these conditions.

\begin{dfn}\label{two-parametric assumption}
  Let $Z$ be the completion of $\mathcal{V}(\delta^2)$ under a norm $\|\cdot\|$ such that the following are satisfied.
  \begin{enumerate}[label=(\alph*)]
  \item\label{two-parametric assumption a} There exists $C>0$ such that for every bi-parameter Haar system $\tilde H\otimes \tilde K$ relative to some
    frequencies $(m_i)_{i=0}^\infty$ and $(n_j)_{j=0}^\infty$ we have
    \begin{equation*}
      \|A_{\tilde H\otimes\tilde K}\|_{\mathcal{L}(X)}\|B_{\tilde  H\otimes\tilde K}\|_{\mathcal{L}(X)} \leq C,
    \end{equation*}
    where for a linear $T\colon \mathcal{V}(\delta^2)\to\mathcal{V}(\delta^2)$, $\|T\|_{\mathcal{L}(X)} = \sup\{\|Tf\|: \|f\|\leq 1\}$.

  \item\label{two-parametric assumption c} There exists a constant $\beta>0$ such that for every bounded Haar multiplier $D\colon Z\to Z$ and a
    non-principal ultrafilter $\mathcal{U}$ on $\mathbb{N}$
    \begin{equation*}
      \Big\|
      D - \Big(\lambda_\mathcal{U}(D)\mathcal{C} + \mu_\mathcal{U}(D)(I-\mathcal{C})\Big)
      \Big\|_{\mathcal{L}(Z)}
      \leq \beta\|D\|_{\mathrm{T^2}}.
    \end{equation*}
  \end{enumerate}
  Then, we say that $Z$ satisfies the \emph{Capon property} or that $Z$ is a \emph{Capon space}. If, additionally, we wish to be particular about the
  constant $C$ for which $Z$ satisfies \ref{two-parametric assumption a}, then we call $Z$ a $C$-Capon space.
\end{dfn}

\begin{rem}\label{alternative c definitions}
  Let us explain the content of condition \ref{two-parametric assumption c}.  Importantly, in it, we assume that $D\colon Z\to Z$ is already bounded.
  \begin{enumerate}[label=(\arabic*)]
  \item If we assume that, on $Z$, $\mathcal{C}$ is bounded, then it is not very hard to see that \ref{two-parametric assumption c} is equivalent to
    the following simpler condition.
    \begin{enumerate}[label=(\alph*')]
      \setcounter{enumii}{2}
    \item\label{two-parametric assumption c bounded} There exists $\beta'>0$ such that for every bounded Haar multiplier $D\colon Z\to Z$,
      \begin{equation*}
        \|D\|_{\mathcal{L}(Z)} \leq \beta'\|D\|{_{\mathrm{T^2}}}.
      \end{equation*}
    \end{enumerate}

  \item If we assume that $\mathcal{C}$ is unbounded on $Z$, then \ref{two-parametric assumption c} is equivalent to the following.
    \begin{enumerate}[label=(\alph*'')]
      \setcounter{enumii}{2}
    \item\label{two-parametric assumption c anybounded} There exists $\beta''>0$ such that for every bounded Haar multiplier $D\colon Z\to Z$ and
      non-principal ultrafilter $\mathcal{U}$ on $\mathbb{N}$
      \begin{equation*}
        \|D\|_{\mathcal{L}(Z)}
        \leq \beta''\|D\|_{\mathrm{T^2}}\text{ and }\lambda_\mathcal{U}(D)
        = \mu_\mathcal{U}(D).
      \end{equation*}
    \end{enumerate}

  \end{enumerate}
\end{rem}

The following theorem is an immediate application of \Cref{two-parameter algebraic stabilization theorem norm version}, \Cref{two-parametric
  assumption}, and, when $\mathcal{C}$ is unbounded, \Cref{alternative c definitions}.
\begin{thm}\label{two-parameter analytic main theorem}
  Let $Z$ be a $C$-Capon space.
  \begin{enumerate}[label=(\roman*)]
  \item For every $\eta>0$ and non-principal ultrafilter $\mathcal{U}$ on $\mathbb{N}$ every bounded Haar multiplier $D\colon X\to X$ is a
    $C$-projectional factor of $\lambda_\mathcal{U}(D)\mathcal{C} + \mu_\mathcal{U}(D)(\Id-\mathcal{C})$ with error $\eta$.
  
  \item If, additionally, $\mathcal{C}$ is unbounded on $Z$ then $\lambda_\mathcal{U}(D) = \mu_\mathcal{U}(D)$ and, thus, for every $\eta>0$, $D$ is a
    $C$-projectional factor with error $\eta$ of $\lambda_\mathcal{U}(D)\Id$.
  \end{enumerate}
  If, in particular, $D\colon Z\to Z$ is a bounded Haar multiplier such that, for some non-principal ultrafilter $\mathcal{U}$ on $\mathbb{N}$,
  $(\lambda_\mathcal{U}(D),\mu_\mathcal{U}(D))\notin\{(1,0),(0,1)\}$ then the identity on $Z$ factors through $D$ or $\Id-D$.
\end{thm}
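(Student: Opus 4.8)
The plan is to deduce the theorem from the algebraic stabilization result \Cref{two-parameter algebraic stabilization theorem norm version} together with the two defining properties of a $C$-Capon space (\Cref{two-parametric assumption}), so that the present statement becomes a matter of assembly plus a perturbation argument. First I would record that a bounded Haar multiplier is automatically $\ell^\infty$-bounded: from $D(h_I\otimes k_J) = d_{I,J}\,h_I\otimes k_J$, taking $Z$-norms gives $|d_{I,J}|\le \|D\|_{\mathcal{L}(Z)}$, so $D\in\mathrm{HM}^\infty(\delta^2)$ and the functionals $\lambda := \lambda_\mathcal{U}(D)$, $\mu := \mu_\mathcal{U}(D)$ are defined. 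Write $G = \lambda\mathcal{C} + \mu(\Id - \mathcal{C})$ for the target multiplier. Fixing $\eta>0$ and applying \Cref{two-parameter algebraic stabilization theorem norm version} with accuracy $\eta/\beta$ (where $\beta$ is the constant of the Capon property), I obtain a bi-parameter faithful Haar system $\tilde H\otimes\tilde K$ whose operators $A = A_{\tilde H\otimes\tilde K}$, $B = B_{\tilde H\otimes\tilde K}$ from \Cref{bi-parameter B-Q notation} satisfy $AB = \Id$, and such that $\tilde D := ADB = D|_{\tilde H\otimes\tilde K}$ obeys $\|\tilde D - G\|_{\mathrm{T}^2} < \eta/\beta$ while $\lambda_\mathcal{U}(\tilde D) = \lambda$, $\mu_\mathcal{U}(\tilde D) = \mu$.

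The second step converts this tree-variation smallness into an operator-norm estimate. Set $E = \tilde D - G$. Using $\lambda_\mathcal{U}(\mathcal{C}) = 1$, $\mu_\mathcal{U}(\mathcal{C}) = 0$ and linearity of the functionals, one computes $\lambda_\mathcal{U}(E) = \mu_\mathcal{U}(E) = 0$, so $\lambda_\mathcal{U}(E)\mathcal{C} + \mu_\mathcal{U}(E)(\Id - \mathcal{C}) = 0$. Provided $E$ is a bounded Haar multiplier on $Z$, the defining inequality of a Capon space applied to $E$ then yields $\|E\|_{\mathcal{L}(Z)}\le\beta\|E\|_{\mathrm{T}^2} < \eta$. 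Combined with $\|A\|\,\|B\|\le C$ (property \ref{two-parametric assumption a} of \Cref{two-parametric assumption}) and $AB = \Id$, this says exactly that $D$ is a $C$-projectional factor of $G$ with error $\eta$, which is assertion (i). To secure boundedness of $E$ I split on whether $\mathcal{C}$ is bounded: if it is, $G$ is bounded and so is $E = \tilde D - G$; if $\mathcal{C}$ is unbounded, I first invoke \Cref{alternative c definitions}, by which the Capon property is equivalent to its unbounded form forcing $\lambda_\mathcal{U}(D) = \mu_\mathcal{U}(D)$, i.e. $\lambda = \mu$. This is the first claim of (ii), and it makes $G = \lambda\,\Id$ bounded, hence $E$ bounded; assertion (ii) is then immediate from (i).

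For the final statement I would observe that $G$ is invertible as a bounded operator on $Z$ precisely when $\lambda\neq 0$ and $\mu\neq 0$, with bounded inverse $\lambda^{-1}\mathcal{C} + \mu^{-1}(\Id - \mathcal{C})$ when $\mathcal{C}$ is bounded and $G = \lambda\,\Id$ when $\mathcal{C}$ is unbounded; likewise $\Id - G = (1-\lambda)\mathcal{C} + (1-\mu)(\Id - \mathcal{C})$ is invertible precisely when $\lambda\neq 1$ and $\mu\neq 1$. A short case check shows that $(\lambda,\mu)\notin\{(1,0),(0,1)\}$ rules out the simultaneous failure of both, so at least one of $G$, $\Id - G$ is invertible. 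Say $G$ is, with $\|G^{-1}\|<\infty$. Choosing $\eta < \|G^{-1}\|^{-1}$ in the factorization of step two, the operator $ADB = \tilde D$ differs from the invertible $G$ by less than $\|G^{-1}\|^{-1}$, so $ADB$ is invertible by a Neumann-series argument; then $\Id = \big((ADB)^{-1}A\big)\,D\,B$ exhibits $\Id$ as a factor of $D$. If instead $\Id - G$ is invertible, I pass from $(D,G)$ to $(\Id - D,\Id - G)$ via \Cref{factors remarks} and run the same perturbation argument to factor $\Id$ through $\Id - D$; this matches \Cref{dfn:factorization-modes}.

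The genuinely hard work lies in \Cref{two-parameter algebraic stabilization theorem norm version} and in verifying the Capon property, so at this level the only real subtlety is the bookkeeping around boundedness: the Capon inequality applies to $E = \tilde D - G$ only once $G$ itself is known to be a bounded operator, and in the unbounded-$\mathcal{C}$ regime this becomes available only after establishing $\lambda = \mu$ through \Cref{alternative c definitions}. Keeping the logical order — first $\lambda = \mu$, then boundedness of $E$, then the norm estimate, then the invertibility perturbation — is the point that requires care, while everything else is a direct combination of the cited results with a standard perturbation-of-invertibility argument.
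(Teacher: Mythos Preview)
Your proposal is correct and follows essentially the same route as the paper, which simply declares the theorem to be ``an immediate application of \Cref{two-parameter algebraic stabilization theorem norm version}, \Cref{two-parametric assumption}, and, when $\mathcal{C}$ is unbounded, \Cref{alternative c definitions}.'' You have in fact supplied the details the paper omits: applying the Capon inequality to $E=\tilde D-G$ (rather than to $\tilde D$ itself, which would give the useless bound $\beta\|\tilde D\|_{\mathrm{T}^2}$), checking that $\lambda_\mathcal{U}(E)=\mu_\mathcal{U}(E)=0$, isolating the boundedness issue for $E$ and resolving it via \Cref{alternative c definitions} in the unbounded case, and closing with the Neumann-series perturbation for the final factorization statement.
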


\subsection{Proving that Haar system Hardy spaces are Capon spaces}\label{proximity Haar to pointwise}
In \Cref{sec:haar-system-hardy-1+2}, we prove that a Haar system Hardy space $Z$ satisfies property \Cref{two-parametric
  assumption}~\ref{two-parametric assumption a} for $C=1$ (\Cref{thm:basic-operators}).  Although these are delicate proofs, they follow from a direct
analysis of the Haar system Hardy space norm.  In contrast, the proof of property~\ref{two-parametric assumption c} involves pointwise multipliers on
the space $Z^\Omega$ and their proximity to Haar multipliers.  This theory is developed in \Cref{sec:mult-capons-proj}, which also serves as a bridge
between the pointwise multiplier approach (introduced by Enflo via Maurey~\cite{maurey:1975:1}, and heavily used by
Capon~\cite{capon:1983,capon:1980:2,capon:1980:1} or Enflo and Starbird~\cite{enflo:starbird:1979}, and the Haar multiplier approach based on Gamlen
Gaudet~\cite{gamlen:gaudet:1973} and the work of Alspach, Enflo, and Odell~\cite{alspach:enflo:odell:1977} which was extensively used by Bourgain,
Rosenthal and Schechtman~\cite{bourgain:rosenthal:schechtman:1981}, Bourgain~\cite{bourgain:book:1981}, Johnson, Maurey, Schechtman and
Tzafriri~\cite{johnson:maurey:schechtman:tzafriri:1979}, Lindenstrauss and Tzafriri~\cite{lindenstrauss:tzafriri:1979:partII} and the authors of the
present paper~\cite{mueller:1988,mueller:2012, laustsen:lechner:mueller:2015, lechner:2018:2-d, lechner:motakis:mueller:schlumprecht:2019,
  lechner:motakis:mueller:schlumprecht:2020, lechner:motakis:mueller:schlumprecht:2021}.

\begin{ntn}
  To each $D\in\mathrm{HM}^{\mathrm{T^2}}(\delta^2)$ we assign sequences of functions $m_{1,k}^D,m_{2,k}^D:[0,1)^2\to\mathbb{R}$, $k\in\mathbb{N}$, as
  follows:
  \begin{equation*}
    m_{1,k}^D
    = \sum_{(I,J)\in\mathcal{D}_k\times\mathcal{D}_k} d_{I,J} \chi_{I\times J}
    \quad\text{and}\quad
    m_{2,k}^D
    = \sum_{(I,J)\in\mathcal{D}_k\times\mathcal{D}_{k+1}} d_{I,J} \chi_{I\times J}.
  \end{equation*}
\end{ntn}

\begin{rem}
  Clearly, for each $k\in\mathbb{N}$, $m_{1,k}^D,m_{2,k}^D$ satisfy the assumption of \Cref{rem:pw-multiplier}. Therefore, they give rise to pointwise
  multipliers $M_{1,k}^D,M_{2,k}^D\colon Z^\Omega\to Z^\Omega$ defined as
  \begin{equation*}
    (M_{i,k}^D w)(s,t,\sigma)
    = m_{i,k}^D(s,t) w(s,t,\sigma),
    \qquad s,t\in [0,1),\;\sigma\in\boldsymbol{\sigma},
  \end{equation*}
  for $i=1,2$ and $k\in\mathbb{N}$, and hence, $\|M_{i,k}^D\|\leq \|m_{i,k}^D\|_{L^{\infty}} \leq \|D\|_{\infty}\leq\|D\|_{\mathrm{T^2}}$, $i=1,2$.
\end{rem}

\begin{rem}
  For each $D\in\mathrm{HM}^{\mathrm{T^2}}(\delta^2)$ and $i=1,2$, by the variational nature of $\|\cdot\|_{\mathrm{T^2}}$, a telescoping argument
  (see, e.g., the proof of \Cref{pro:path-distance}) yields
  \begin{equation*}
    \lim_{n\to\infty}\sup_{n\leq k,\ell} |m_{i,k}^D(s,t) - m_{i,\ell}^D(s,t)|
    = 0,
  \end{equation*}
  uniformly in $s,t\in[0,1)$. Define $m_1^D,m_2^D\in L^\infty[0,1)^2$ with
  \begin{equation}\label{eq:101}
    m_1^D(s,t) = \lim_{k\to \infty} d_{I_k(s),J_k(t)}
    \qquad\text{and}\qquad
    m_2^D(s,t) = \lim_{k\to \infty} d_{I_k(s),J_{k+1}(t)},
  \end{equation}
  where, for $k\in\mathbb{N}$, $I_k(s)$ and $J_k(t)$ are the unique dyadic intervals in $\mathcal{D}_k$ such that $s\in I_k(s)$ and $t\in
  J_k(t)$. Then $\lim_k\|m_{1,k}^D-m_{1}^D\|_{L^\infty} = 0$ and $\lim_k\|m_{2,k}^D-m_{2}^D\|_{L^\infty} = 0$ and, in particular, the sequences
  $(M^D_{1,k})_{k=1}^\infty$ $(M^D_{2,k})_{k=1}^\infty$ converge in the operator norm of $\mathcal{L}(Z^\Omega)$.
  
  We conclude that $D\in\mathrm{HM}^{\mathrm{T^2}}(\delta^2)$ gives rise, for $i=1,2$, to the pointwise multipliers
  $M_i^D\colon Z^{\Omega}\to Z^\Omega$ defined as
  \begin{equation}\label{eq:172}
    (M_i^D w)(s,t,\sigma)
    = m_i^D(s,t) w(s,t,\sigma),
    \qquad s,t\in [0,1),\;\sigma\in\boldsymbol{\sigma}.
  \end{equation}
  such that $\|M_i^D\| \leq \|m_i^D\|_{L^{\infty}} \leq \|D\|_{\infty}\leq\|D\|_{\mathrm{T^2}}$.
\end{rem}

\begin{rem}\label{rem:cvs}
  For $D\in\mathrm{HM}^{\mathrm{T^2}}(\delta^2)$, the definition of $\|\cdot\|_{\mathrm{T^2}\mathrm{S}}$ easily yields
  \begin{equation}\label{eq:7}
    |d_{[0,1),[0,1)} - m_1^D(s,t)|
    \leq \|D\|_{\mathrm{T^2}\mathrm{S}}
    \quad\text{ and}\quad
    |d_{[0,1),J_1(t)} - m_2^D(s,t)|
    \leq \|D\|_{\mathrm{T^2}\mathrm{S}}.
  \end{equation}
\end{rem}

\begin{rem}
  We don't use the following properties but mention them for context.
  \begin{enumerate}[label=(\roman*)]
  \item For any Haar system Hardy space $Z$ and $i=1,2$, the assignment $D\mapsto M_i^D$ is a norm-one linear operator from
    $\mathrm{HM}^{\mathrm{T^2}}(\delta^2)$ to $\mathcal{L}(Z^\Omega)$.
    
  \item For $D\in\mathrm{HM}^{\mathrm{T^2}}(\delta^2)$, the functions $m_1^D, m_2^D$ are continuous on the set $([0,1)\setminus\mathbb{D})^2$, where
    $\mathbb{D}$ denotes the dyadic rationals in $[0,1]$.
  \end{enumerate}
\end{rem}

Recall, for a Haar system Hardy space $Z$ there is a linear isometry $\mathcal{O}\colon Z\to Z^\Omega$ given in \Cref{dfn:multiplier-space} and, for
$D\in\mathrm{HM}^{\mathrm{T^2}}(\delta^2)$, we can define the linear operators
$\mathcal{O}D,M^D_1\mathcal{O},M^D_2\mathcal{O}:\mathcal{V}(\delta^2)\to Z^\Omega$.  Denote $\mathcal{V}_Z = (\mathcal{V}(\delta^2),\|\cdot\|_Z)$.

The following is included in the first main result of \Cref{sec:mult-capons-proj}, \Cref{thm:pw-multipliers:1} and \Cref{cor:pw-multipliers:2}.
\begin{thm}\label{pointwise approximation theorem}
  Let $Z = Z(\boldsymbol{\sigma},X,Y)$ be a Haar system Hardy space. For every $\ell^\infty$-bounded Haar multiplier
  $D\colon \mathcal{V}(\delta^2) \to \mathcal{V}(\delta^2)$ the following estimates hold.
  \begin{enumerate}[label=(\roman*)]
  \item\label{pointwise approximation theorem i}
    $\|(\mathcal{O}D-M_1^D\mathcal{O})\mathcal{C}\|_{\mathcal{L}(\mathcal{V}_Z,Z^\Omega)} \leq 4\|D\|_{\mathrm{T^2}\mathrm{S}}$ and
    $\|(\mathcal{O}D-M_2^D\mathcal{O})(I-\mathcal{C})\|_{\mathcal{L}(\mathcal{V}_Z,Z^\Omega)} \leq 4 \|D\|_{\mathrm{T^2}\mathrm{S}}$.

  \item\label{pointwise approximation theorem ii}
    $\|(M_1^D - M_2^D)\mathcal{O}\mathcal{C}\|_{\mathcal{L}(\mathcal{V}_Z,Z^\Omega)} \leq \|D\|_{\mathcal{L}(Z)} + \|D\|_\infty + 8
    \|D\|_{\mathrm{T^2}\mathrm{S}}$.
  \end{enumerate}
  Furthermore, $\mathcal{C}$ is bounded on $Z$ if and only if there exist a non-principal ultrafilter $\mathcal{U}$ on $\mathbb{N}$ and a bounded Haar
  multiplier $D\colon Z\to Z$ such that $\lambda_\mathcal{U}(D)\neq\mu_\mathcal{U}(D)$.
\end{thm}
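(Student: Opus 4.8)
I would fix $f\in\mathcal{V}(\delta^2)$ and read off $(\mathcal{O}D-M_1^D\mathcal{O})\mathcal{C}f$ rectangle by rectangle: on a lower‑triangular rectangle $I\times J$ with $I\in\mathcal{D}_i$, $J\in\mathcal{D}_j$, $i\ge j$, the embedded vector $h_I\otimes k_J\otimes(\sigma^{(1)}_I\otimes\sigma^{(2)}_J)$ is scaled by $d_{I,J}$ under $\mathcal{O}D$ and by the function $m_1^D(s,t)$ under $M_1^D\mathcal{O}$, so the entire operator is governed by the defect $d_{I,J}-m_1^D(s,t)$ for $(s,t)\in I\times J$. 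The plan is to telescope this defect along a monotone path in the bi‑tree, inserting the diagonal point $(I_j(s),J_j(t))$: using $m_1^D(s,t)=\lim_k d_{I_k(s),J_k(t)}$ and $I_i(s)=I$, $J_j(t)=J$ one gets
\begin{equation*}
  d_{I,J}-m_1^D(s,t)
  = \sum_{\ell=j}^{i-1}\bigl(d_{I_{\ell+1}(s),J}-d_{I_\ell(s),J}\bigr)
  + \sum_{k=j}^{\infty}\bigl(d_{I_k(s),J_k(t)}-d_{I_{k+1}(s),J_{k+1}(t)}\bigr),
\end{equation*}
where the first sum consists of lower‑triangular moves (refining the first coordinate, second fixed) and the second of diagonal moves, each bounded by the corresponding summand of $\|D\|_{\mathrm{T^2S}}$.

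Next I would reorganize by the level of the defect. The diagonal term of index $k$ enters as a pointwise multiplier $g_k(s,t)$ with $\|g_k\|_\infty\le\max_{I,J\in\mathcal{D}_k,\omega,\xi}|d_{I,J}-d_{I^\omega,J^\xi}|$ in front of the projection $P_k$ of $f$ onto all lower‑triangular rectangles whose second coordinate has generation $\le k$; the lower‑triangular term of indices $(\ell,j)$ enters as a pointwise multiplier $\phi_{\ell,j}(s,t)$ with $\|\phi_{\ell,j}\|_\infty\le\max_{I\in\mathcal{D}_\ell,J\in\mathcal{D}_j,\omega}|d_{I^\omega,J}-d_{I,J}|$ in front of the projection $R_{\ell,j}$ onto rectangles whose second coordinate has generation exactly $j$ and first coordinate has generation $\ge\ell+1$. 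Since $\mathcal{O}$ is an isometry and each symbol depends only on $(s,t)$, \Cref{rem:pw-multiplier} lets me pull the multiplier norms out, reducing everything to $\sum_k\|g_k\|_\infty\|P_k\|_{\mathcal{L}(Z)}+\sum_j\sum_{\ell\ge j}\|\phi_{\ell,j}\|_\infty\|R_{\ell,j}\|_{\mathcal{L}(Z)}$ times $\|f\|_Z$. \emph{The main obstacle is the uniform boundedness of the one‑directional projections $P_k$ and $R_{\ell,j}$ on $Z$ — which must hold even when $\mathcal{C}$ itself is unbounded.} I would establish this by writing $P_k=\sum_{j=0}^k Q_j$, with $Q_j$ fixing the second coordinate at generation $j$ and keeping the first at generations $\ge j$; each $Q_j$ and each $R_{\ell,j}$ is the product of a band projection in one variable and a tail projection in the other, and both are uniformly bounded because the Haar system is a monotone basis of $X$ and of $Y$ (\Cref{RI properties}~\ref{RI properties 4}). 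This yields $\|P_k\|_{\mathcal{L}(Z)}\le 4(k+1)$ and $\|R_{\ell,j}\|_{\mathcal{L}(Z)}\le 4$; the weight $(k+1)$ and the double sum $\sum_j\sum_{\ell\ge j}$ match precisely the diagonal and lower‑triangular parts of $\|D\|_{\mathrm{T^2S}}$, giving the bound $4\|D\|_{\mathrm{T^2S}}$. The estimate for $(\mathcal{O}D-M_2^D\mathcal{O})(\Id-\mathcal{C})$ is identical after replacing diagonal/lower‑triangular moves by superdiagonal/upper‑triangular ones and using $m_2^D(s,t)=\lim_k d_{I_k(s),J_{k+1}(t)}$.

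\textbf{Part (ii).} This is an algebraic consequence of~(i). Inserting $\mathcal{O}D\mathcal{C}$ gives $(M_1^D-M_2^D)\mathcal{O}\mathcal{C}=(M_1^D\mathcal{O}-\mathcal{O}D)\mathcal{C}+(\mathcal{O}D-M_2^D\mathcal{O})\mathcal{C}$, whose first summand is $\le 4\|D\|_{\mathrm{T^2S}}$ by~(i). For the second I would write $(\mathcal{O}D-M_2^D\mathcal{O})\mathcal{C}=(\mathcal{O}D-M_2^D\mathcal{O})-(\mathcal{O}D-M_2^D\mathcal{O})(\Id-\mathcal{C})$: the last term is again $\le 4\|D\|_{\mathrm{T^2S}}$ by~(i), while $\|\mathcal{O}D-M_2^D\mathcal{O}\|\le\|\mathcal{O}D\|+\|M_2^D\mathcal{O}\|\le\|D\|_{\mathcal{L}(Z)}+\|D\|_\infty$, using that $\mathcal{O}$ is an isometry (\Cref{dfn:multiplier-space}) and $\|M_2^D\|\le\|m_2^D\|_\infty\le\|D\|_\infty$ (\Cref{rem:pw-multiplier}). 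Adding the three bounds yields $\|D\|_{\mathcal{L}(Z)}+\|D\|_\infty+8\|D\|_{\mathrm{T^2S}}$.

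\textbf{The equivalence.} For the forward direction, if $\mathcal{C}$ is bounded then $D=\mathcal{C}$ is a bounded Haar multiplier, and a direct computation of the averages $E_{i,j}$ gives $\lambda_\mathcal{U}(\mathcal{C})=1\neq 0=\mu_\mathcal{U}(\mathcal{C})$. For the converse, let $D\colon Z\to Z$ be bounded (hence $\ell^\infty$‑bounded, since $|d_{I,J}|\le\|D\|_{\mathcal{L}(Z)}$) with $\lambda\neq\mu$, where $\lambda=\lambda_\mathcal{U}(D)$, $\mu=\mu_\mathcal{U}(D)$. Fix $\eta\in(0,|\lambda-\mu|/2)$ and apply \Cref{two-parameter algebraic stabilization theorem norm version}; using that $Z$ satisfies \Cref{two-parametric assumption}~\ref{two-parametric assumption a} with $C=1$, the resulting tensor‑product faithful Haar system gives contractive $A,B$ with $\tilde D=ADB$ satisfying $\|\tilde D-(\lambda\mathcal{C}+\mu(\Id-\mathcal{C}))\|_{\mathrm{T^2}}<\eta$ and $\lambda_\mathcal{U}(\tilde D)=\lambda$, $\mu_\mathcal{U}(\tilde D)=\mu$. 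The kernel operator $\lambda\mathcal{C}+\mu(\Id-\mathcal{C})$ has $m_1$‑symbol $\equiv\lambda$ and $m_2$‑symbol $\equiv\mu$, so by linearity of $D\mapsto m_i^D$ and $\|m_i^E\|_\infty\le\|E\|_\infty\le\|E\|_{\mathrm{T^2}}$ I get $\|m_1^{\tilde D}-\lambda\|_\infty<\eta$ and $\|m_2^{\tilde D}-\mu\|_\infty<\eta$, whence $M_1^{\tilde D}-M_2^{\tilde D}=(\lambda-\mu)\Id+E$ with $\|E\|\le 2\eta$. Applying~(ii) to $\tilde D$ and $\|\mathcal{O}\mathcal{C}g\|_{Z^\Omega}=\|\mathcal{C}g\|_Z$ gives, for all $g\in\mathcal{V}(\delta^2)$,
\begin{align*}
  |\lambda-\mu|\,\|\mathcal{C}g\|_Z
  &\le \|(M_1^{\tilde D}-M_2^{\tilde D})\mathcal{O}\mathcal{C}g\|_{Z^\Omega} + 2\eta\|\mathcal{C}g\|_Z\\
  &\le \bigl(\|\tilde D\|_{\mathcal{L}(Z)}+\|\tilde D\|_\infty+8\eta\bigr)\|g\|_Z + 2\eta\|\mathcal{C}g\|_Z.
\end{align*}
Since $\|\tilde D\|_{\mathcal{L}(Z)}\le\|D\|_{\mathcal{L}(Z)}$ and $\|\tilde D\|_\infty\le\|D\|_\infty\le\|D\|_{\mathcal{L}(Z)}$ are finite and $|\lambda-\mu|-2\eta>0$, this bounds $\|\mathcal{C}g\|_Z$ by a fixed multiple of $\|g\|_Z$, so $\mathcal{C}$ extends boundedly to $Z$. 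Thus the whole theorem rests on estimate~(i), and the single hard point is the uniform boundedness of the one‑directional band and tail projections recorded above.
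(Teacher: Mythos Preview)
Your proposal is correct and follows essentially the same route as the paper's proof (Theorem~5.1 and Corollary~5.2). The telescoping of $d_{I,J}-m_1^D(s,t)$ through the diagonal pivot $(I_j(s),J_j(t))$, the reorganization by defect level, and the observation that each resulting band-times-tail projection $(I-\cond_j)\otimes(\cond_{j+1}-\cond_j)$ is bounded by~$4$ via \Cref{pro:bi-monotone} are exactly what the paper does; your $P_k$ is precisely the paper's $\sum_{j=0}^k\mathcal{O}(I-\cond_j)\otimes(\cond_{j+1}-\cond_j)$, and the $(k+1)$ growth of $\|P_k\|$ is the origin of the weight $(k+1)$ in $\|D\|_{\mathrm{T^2S}}$.

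For part~(ii) your algebraic rearrangement differs slightly from the paper's: the paper starts from $\|\mathcal{O}Dz\|_{Z^\Omega}$, replaces $\mathcal{O}D\mathcal{C}$ and $\mathcal{O}D(\Id-\mathcal{C})$ by $M_1^D\mathcal{O}\mathcal{C}$ and $M_2^D\mathcal{O}(\Id-\mathcal{C})$ at cost $8\|D\|_{\mathrm{T^2S}}$, then regroups as $(M_1^D-M_2^D)\mathcal{O}\mathcal{C}+M_2^D\mathcal{O}$ and peels off $\|M_2^D\mathcal{O}\|\le\|D\|_\infty$. Your version inserts $\mathcal{O}D\mathcal{C}$ and then splits $(\mathcal{O}D-M_2^D\mathcal{O})\mathcal{C}$ via $\Id=(\Id-\mathcal{C})+\mathcal{C}$ applied backwards. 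Both routes are two applications of~(i) plus one crude triangle inequality, and both land on the same constant.

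Your proof of the equivalence is likewise essentially the paper's: stabilize $D$ to $\tilde D$ with $\|\tilde D\|_{\mathrm{T^2S}}<\eta$ via \Cref{two-parameter algebraic stabilization theorem norm version} and \Cref{thm:basic-operators}, observe that $m_i^{\tilde D}$ is $\eta$-close to the constant $\lambda$ (resp.~$\mu$), and absorb the $2\eta\|\mathcal{C}g\|_Z$ error into the left side. The paper's Corollary~5.2 writes the same inequality with $8\|\tilde D\|_{\mathrm{T^2S}}\|\mathcal{C}z\|$ on the right rather than $2\eta\|\mathcal{C}g\|_Z$, but this is the same bound once $\|\tilde D\|_{\mathrm{T^2S}}<\eta$.
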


For the space $Z$ on which the Capon projection is unbounded, we require an additional statement that may be of independent interest. It is proof is
given in \Cref{thm:pw-multipliers:2}
\begin{thm}\label{pointwise collapse theorem}
  Let $Z = Z(\boldsymbol{\sigma},X,Y)$ be a Haar system Hardy space on which the Capon projection $\mathcal{C}$ is unbounded. If
  $m\in L^\infty[0,1)^2$ is such that, for the induced pointwise multiplier $M$, $M\mathcal{OC}:(\mathcal{V}(\delta^2),\|\cdot\|_Z)\to Z^\Omega$ is
  bounded, then $M = 0$.
\end{thm}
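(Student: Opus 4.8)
The plan is to argue by contradiction: assuming $M\mathcal{OC}$ is bounded, say $\|M\mathcal{OC}\|_{\mathcal{L}(\mathcal{V}_Z,Z^\Omega)}\le\Lambda$, I will show $m=0$ by re-running the unboundedness examples of \Cref{thm:capon-projection} with the weight $m$ inserted and their corner relocated. Since $\mathcal{C}$ is unbounded, one of the four cases of \Cref{thm:capon-projection} applies; by the symmetry $s\leftrightarrow t$ (interchanging the roles of $X,(\sigma^{(1)}_I)$ and $Y,(\sigma^{(2)}_J)$) I may assume the ``bad variable'' is $s$, i.e. $(\sigma^{(1)}_I)$ is constant and $\|\cdot\|_X\sim\|\cdot\|_{L^1}$ or $\|\cdot\|_X\sim\|\cdot\|_{L^\infty}$. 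Writing $\phi(x)=\int_0^1|m(x,t)|\,\mathrm dt$, it suffices to prove $\phi(x_0)=0$ for a.e.\ $x_0\in[0,1)$, since this forces $m=0$.

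Fix a Lebesgue point $x_0$ of $\phi$, put $a=\phi(x_0)$, and let $(I_k)_{k\ge0}$ be the nested dyadic intervals with $I_k\in\mathcal{D}_k$ and $\bigcap_k I_k=\{x_0\}$. I build the extremal functions of \Cref{thm:capon-projection} with their corner at $x_0$: in the $L^1$ case the tensor $f_n\otimes g_n$ with $f_n=\sum_{k\le n}|I_k|^{-1}h_{I_k}$ and $g_n=\sum_{l\le n}a_l r_l$, and in the $L^\infty$ case the function $z_n=\sum_{k\le n}(h_{I_{2k}}-h_{I_{2k-1}})\otimes r_{2k}$. Because Haar system spaces are rearrangement invariant, relocating the corner leaves the upper bounds on $\|f_n\otimes g_n\|_Z$ and $\|z_n\|_Z$ from \Cref{thm:capon-projection} unchanged, so only the weight $m$ will depend on $x_0$.

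The lower estimate for $\|M\mathcal{OC}F_n\|_{Z^\Omega}$ starts, as in \Cref{thm:capon-projection}, from $\|\cdot\|_Y\ge\|\cdot\|_{L^1}$ (\Cref{RI properties}~\ref{RI properties 1}) together with the telescoping identity $\sum_{k\ge l}|I_k|^{-1}h_{I_k}=|I_{n+1}|^{-1}\chi_{I_{n+1}}-|I_l|^{-1}\chi_{I_l}$, which isolates the disjoint ``sibling'' strips $(I_l\setminus I_{l+1})\times[0,1)$. Two regimes occur. When $(\sigma^{(2)}_J)$ is independent, evaluating $\cond$ over $\sigma^{(2)}$ by Khintchine's inequality collapses the $t$-dependence before integration (each $\sum_{J\in\mathcal{D}_l}\sigma^{(2)}_Jk_J(t)$ reduces to a single independent sign), so the inner integral factors as $\phi(s)$ times a square function and one obtains, e.g.\ in the $L^1$ case, $\|M\mathcal{OC}(f_n\otimes g_n)\|_{Z^\Omega}\gtrsim\sum_l|a_l|\,\bar\phi_l$, where $\bar\phi_l$ is the average of $\phi$ over $I_l\setminus I_{l+1}$. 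Since $\bar\phi_l\to\phi(x_0)=a$, choosing $a_l=1$ for $L\le l\le n$ and $a_l=0$ otherwise yields a ratio $\gtrsim\sqrt{n-L}\,a$ against the bound $\Lambda$, forcing $a=0$.

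The remaining regime---$(\sigma^{(2)}_J)$ constant---is the main obstacle and contains the decisive space $L^1(L^p)$. Here no $\sigma$-averaging is available to decouple $t$, and the naive weighted Khintchine bound $\int_0^1 w(t)\,\bigl|\sum_l b_l r_l(t)\bigr|\,\mathrm dt\gtrsim(\sum_l b_l^2)^{1/2}$ is \emph{false} for weights $w$ concentrated where the lacunary sums are small. The plan is to extract only the dominant term $b_l r_l$ of $\sum_{l'\le l}b_{l'}r_{l'}$ and to exploit that, at fine scales, $m$ decouples from $r_l$: conditioning one level above the scale of $r_l$ and comparing the two halves of each dyadic cell, the error is controlled by the single-scale variation $\sum_{Q\in\mathcal{D}_{l-1}}|\langle|m(s,\cdot)|,h_Q\rangle|$, which is the $L^1(\mathrm dt)$-norm of a martingale difference of $|m(s,\cdot)|$ and hence tends to $0$ as $l\to\infty$ for a.e.\ $s$. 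This recovers the factor $\phi(s)$ in the tail $l\ge L$ and, after integrating in $s$ (in the $L^1$ case) respectively selecting a good column $s$ near $x_0$ (in the $L^\infty$ case, where only $\esssup_s$ is available and the decoupling must be arranged at a single $s$), gives the same divergent lower bound. The resulting contradiction with $\|M\mathcal{OC}\|\le\Lambda$ forces $a=\phi(x_0)=0$, and therefore $m=0$.
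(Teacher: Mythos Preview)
Your approach has a fundamental gap at the very first step. You write ``Since $\mathcal{C}$ is unbounded, one of the four cases of \Cref{thm:capon-projection} applies,'' but \Cref{thm:capon-projection} only gives \emph{sufficient} conditions for unboundedness, not a characterization. For a general Haar system Hardy space $Z(\boldsymbol{\sigma},X,Y)$ with $X,Y\in\mathcal{H}(\delta)$ arbitrary, there is no reason the unboundedness of $\mathcal{C}$ must be witnessed by the specific test functions you borrow from that proof; indeed, the paper leaves the characterization of boundedness of $\mathcal{C}$ as an open problem. Your entire strategy of re-running those explicit examples therefore covers only a subclass of spaces, not the full statement. A secondary issue: the $s\leftrightarrow t$ symmetry you invoke does not preserve $\mathcal{C}$ (it swaps $\mathcal{C}$ with $\Id-\mathcal{C}$, roughly), so the reduction to ``the bad variable is $s$'' needs real work on the $M\mathcal{OC}$ side, which you do not supply. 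Even granting all of this, the ``constant $(\sigma^{(2)}_J)$'' regime you flag as the main obstacle is only sketched, and the passage from pointwise martingale-difference decay to the integrated bound lacks the uniformity you need.

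The paper's proof avoids case analysis entirely and is both shorter and more robust. It argues that if $m$ is bounded below on some dyadic rectangle $I_0\times J_0$, then the boundedness of $M\mathcal{OC}$ forces $S_{I_0,J_0}\mathcal{C}$ to be bounded (after peeling off finitely many coarse-scale terms via the decomposition $\Id = \sum (\cond\text{-pieces})$). The key closing step is dyadic self-similarity: the rescaling operators $\Delta_{I_0,J_0}$ and $\Upsilon_{I_0,J_0}$ commute with $\mathcal{C}$, so boundedness of $\mathcal{C}$ on the rectangle transfers to boundedness on all of $Z$, contradicting the hypothesis. This argument never asks \emph{why} $\mathcal{C}$ is unbounded and hence works for every $Z\in\mathcal{HH}(\delta^2)$.
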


With the above two theorems at hand, we easily verify property~\ref{two-parametric assumption c} of \Cref{two-parametric assumption} for Haar system
Hardy spaces.

\begin{thm}\label{10-10-5}
  Every Haar system Hardy space $Z = Z(\boldsymbol{\sigma},X,Y)$ is a 1-Capon space.
\end{thm}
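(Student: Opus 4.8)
The plan is to verify the two defining requirements of a $1$-Capon space from \Cref{two-parametric assumption}: property~\ref{two-parametric assumption a} with constant $C=1$, and property~\ref{two-parametric assumption c}. The former is precisely \Cref{thm:basic-operators}, obtained by a direct analysis of the $X(Y)$-norm, so all the genuine work concentrates on~\ref{two-parametric assumption c}. I would fix a bounded Haar multiplier $D\colon Z\to Z$ and a non-principal ultrafilter $\mathcal{U}$; since the inequality in~\ref{two-parametric assumption c} is vacuous when $\|D\|_{\mathrm{T}^2}=\infty$, I may assume $D\in\mathrm{HM}^{\mathrm{T^2}}(\delta^2)$, so that the symbols $m_1^D,m_2^D\in L^\infty[0,1)^2$ and their pointwise multipliers $M_1^D,M_2^D$ on $Z^\Omega$ are available, with $\|M_i^D\|_{\mathcal{L}(Z^\Omega)}\le\|m_i^D\|_{L^\infty}\le\|D\|_\infty\le\|D\|_{\mathrm{T}^2}$ by \Cref{rem:pw-multiplier}. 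The computational backbone is the isometry $\mathcal{O}\colon Z\to Z^\Omega$, the splitting $f=\mathcal{C}f+(\Id-\mathcal{C})f$, and the two approximations of \Cref{pointwise approximation theorem}~\ref{pointwise approximation theorem i}, which together give
\begin{equation*}
  \mathcal{O}Df=M_1^D\mathcal{O}\mathcal{C}f+M_2^D\mathcal{O}(\Id-\mathcal{C})f+R,
  \qquad
  \|R\|_{Z^\Omega}\le 8\|D\|_{\mathrm{T^2}\mathrm{S}}\|f\|_Z.
\end{equation*}
I would then distinguish two cases according to whether $\mathcal{C}$ is bounded, invoking the reformulations of \Cref{alternative c definitions}.

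If $\mathcal{C}$ is bounded on $Z$, it suffices by \Cref{alternative c definitions} to check the simpler condition~\ref{two-parametric assumption c bounded}. Using the display above together with the isometric identities $\|\mathcal{O}\mathcal{C}f\|_{Z^\Omega}=\|\mathcal{C}f\|_Z$ and $\|\mathcal{O}(\Id-\mathcal{C})f\|_{Z^\Omega}=\|(\Id-\mathcal{C})f\|_Z$, I obtain
\begin{equation*}
  \|Df\|_Z=\|\mathcal{O}Df\|_{Z^\Omega}
  \le\|D\|_{\mathrm{T}^2}\bigl(\|\mathcal{C}\|_{\mathcal{L}(Z)}+\|\Id-\mathcal{C}\|_{\mathcal{L}(Z)}\bigr)\|f\|_Z+8\|D\|_{\mathrm{T^2}\mathrm{S}}\|f\|_Z,
\end{equation*}
which is~\ref{two-parametric assumption c bounded} with a constant $\beta'$ depending only on $\|\mathcal{C}\|_{\mathcal{L}(Z)}$.

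The genuinely interesting case is $\mathcal{C}$ unbounded, where \Cref{alternative c definitions} reduces~\ref{two-parametric assumption c} to condition~\ref{two-parametric assumption c anybounded}: I must produce a bound $\|D\|_{\mathcal{L}(Z)}\le\beta''\|D\|_{\mathrm{T}^2}$ and, in addition, the identity $\lambda_\mathcal{U}(D)=\mu_\mathcal{U}(D)$. The latter is immediate from the ``furthermore'' clause of \Cref{pointwise approximation theorem}, which asserts that $\mathcal{C}$ is bounded precisely when some bounded multiplier separates $\lambda_\mathcal{U}$ from $\mu_\mathcal{U}$. For the norm bound the boundedness of $\mathcal{C}$ is no longer at my disposal, and this is exactly where the rigidity of the unbounded regime enters: by \Cref{pointwise approximation theorem}~\ref{pointwise approximation theorem ii} the operator $(M_1^D-M_2^D)\mathcal{O}\mathcal{C}$ — the pointwise multiplier with symbol $m_1^D-m_2^D$ composed with $\mathcal{O}\mathcal{C}$ — is bounded on $\mathcal{V}_Z$, so \Cref{pointwise collapse theorem} forces $m_1^D-m_2^D=0$. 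With $m_1^D=m_2^D$ the two multiplier pieces in the display coincide and recombine, $\mathcal{O}Df=M_1^D\mathcal{O}\mathcal{C}f+M_1^D\mathcal{O}(\Id-\mathcal{C})f+R=M_1^D\mathcal{O}f+R$, whence
\begin{equation*}
  \|Df\|_Z\le\|m_1^D\|_{L^\infty}\|f\|_Z+8\|D\|_{\mathrm{T^2}\mathrm{S}}\|f\|_Z\le 9\|D\|_{\mathrm{T}^2}\|f\|_Z,
\end{equation*}
establishing~\ref{two-parametric assumption c anybounded}. Combining the two cases yields property~\ref{two-parametric assumption c}, and together with \Cref{thm:basic-operators} this shows that $Z$ is a $1$-Capon space. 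I expect the entire difficulty to lie in this unbounded case, and more precisely in the input \Cref{pointwise collapse theorem}: the recombination of the $\mathcal{C}$- and $(\Id-\mathcal{C})$-parts into a single pointwise multiplier is \emph{false} termwise and becomes available only because the unboundedness of $\mathcal{C}$ annihilates every nonzero symbol compatible with $\mathcal{O}\mathcal{C}$. Once the collapse theorem and the delicate estimate~\ref{pointwise approximation theorem ii} are granted, the present statement follows by the bookkeeping sketched above.
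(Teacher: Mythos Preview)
Your proposal is correct and follows essentially the same route as the paper's proof: property~\ref{two-parametric assumption a} via \Cref{thm:basic-operators}, and property~\ref{two-parametric assumption c} via \Cref{alternative c definitions}, splitting according to the boundedness of $\mathcal{C}$ and using \Cref{pointwise approximation theorem} together with \Cref{pointwise collapse theorem} in the unbounded case. The only cosmetic difference is that in the unbounded case the paper goes one step further, replacing $m_1^D$ by the constant $\lambda_\mathcal{U}(D)$ via \Cref{root to limit proximity} and \Cref{rem:cvs} (obtaining the constant $11$), whereas you stop at $\|m_1^D\|_{L^\infty}\le\|D\|_{\mathrm{T}^2}$ and obtain the constant $9$; both are valid for establishing~\ref{two-parametric assumption c anybounded}.
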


\begin{proof}
  For $Z$ with bounded Capon operator property~\ref{two-parametric assumption c} of \Cref{two-parametric assumption} follows almost
  immediately. Indeed, for a bounded Haar multiplier $D\colon Z\to Z$, we verify property~\ref{two-parametric assumption c bounded} of
  \Cref{alternative c definitions}.
  \begin{align*}
    \|D\|_{\mathcal{L}(Z)}
    &\leq \|D\mathcal{C}\|_{\mathcal{L}(Z)} + \|D(I-\mathcal{C})\|_{\mathcal{L}(Z)}
      \leq 8\|D\|_{\mathrm{T^2}} + (\|m_1^D\|_{L^\infty}+\|m_2^D\|_{L^\infty})\cdot \|\mathcal{C}\|_{\mathcal{L}(Z)}\\
    &\leq (8+\|\mathcal{C}\|_{\mathcal{L}(Z)})\cdot \|D\|_{\mathrm{T^2}}.
  \end{align*}

  Next assume that the Capon operator is unbounded on $Z$.  We proceed to verify property~\ref{two-parametric assumption c anybounded} of
  \Cref{alternative c definitions}.  For a bounded Haar multiplier $D\colon Z\to Z$, by \Cref{pointwise approximation theorem}, we have
  $\lambda_\mathcal{U}(D) = \mu_\mathcal{U}(D)$. To prove the remainder of~\ref{two-parametric assumption c anybounded}, we assume that
  $\|D\|_{\mathrm{T^2}} < \infty$.  By \Cref{pointwise approximation theorem}~\ref{pointwise approximation theorem ii} and \Cref{pointwise collapse
    theorem}, $M_1^D = M_2^D$ and, thus, by \Cref{pointwise approximation theorem}~\ref{pointwise approximation theorem i},
  $\|\mathcal{O}D - M_1^D\mathcal{O}\|_{\mathcal{V}_Z,Z^\Omega} \leq 8\|D\|_{\mathrm{T^2}\mathrm{S}}$.  By \Cref{root to limit proximity} and
  \Cref{rem:cvs} we have $\|m_1^D - \lambda_\mathcal{U}(D)\|_{L^\infty} \leq 2\|D\|_{\mathrm{T^2}\mathrm{S}}$.  Therefore,
  $\|D\|_{\mathcal{L}(Z)} \leq |\lambda_\mathcal{U}(D)| + 10 \|D\|_{\mathrm{T^2}\mathrm{S}}\leq 11 \|D\|_{\mathrm{T}^2}$.
\end{proof}

The following theorem exhibits an important connection between the pointwise multiplier function $m_1^D$ and $\lambda_{\mathcal{U}}(D)$ respectively
$m_2^D$ and $\mu_{\mathcal{U}}(D)$. See also \Cref{cor:pw-multipliers:2}.
\begin{thm}\label{thm:pointwise-int}
  Let $D\in\mathrm{HM}^{\mathrm{T^2}}(\delta^2)$.  Then the functions $m_1^D, m_2^D\colon [0,1)^2\to \mathbb{R}$ given by~\eqref{eq:101} satisfy, for
  any non-principal ultrafilter $\mathcal{U}$ on $\mathbb{N}$,
  \begin{equation*}
    \lambda_{\mathcal{U}}(D)
    = \int_{0}^{1} \int_{0}^{1} m_1^D(s,t) \mathrm{d} t \mathrm{d} s
    \qquad\text{and}\qquad
    \mu_{\mathcal{U}}(D)
    = \int_{0}^{1} \int_{0}^{1} m_2^D(s,t) \mathrm{d} t \mathrm{d} s.
  \end{equation*}
\end{thm}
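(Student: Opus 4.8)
The plan is to reduce everything to a comparison of the averages $E_{i,j}$ with their values on the diagonal $(k,k)$ and superdiagonal $(k,k+1)$, and then to identify those limits with the two integrals. First I would record the elementary identity that integrating the step functions $m_{1,k}^D,m_{2,k}^D$ recovers these averages: since $\int_0^1\int_0^1\chi_{I\times J}\,\mathrm{d}t\,\mathrm{d}s = |I|\,|J|$, one has $\int_0^1\int_0^1 m_{1,k}^D\,\mathrm{d}t\,\mathrm{d}s = \sum_{(I,J)\in\mathcal{D}_k\times\mathcal{D}_k}|I||J|d_{I,J} = E_{k,k}$ and likewise $\int_0^1\int_0^1 m_{2,k}^D\,\mathrm{d}t\,\mathrm{d}s = E_{k,k+1}$. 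Because $\|m_{i,k}^D-m_i^D\|_{L^\infty}\to 0$ (hence also in $L^1$ on the probability space $[0,1)^2$), this yields $\int\int m_1^D = \lim_k E_{k,k}$ and $\int\int m_2^D = \lim_k E_{k,k+1}$. Thus it suffices to prove $\lambda_\mathcal{U}(D) = \lim_k E_{k,k}$ and $\mu_\mathcal{U}(D) = \lim_k E_{k,k+1}$.

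The second ingredient is a family of telescoping increment estimates for $E_{i,j}$, in the spirit of the argument for \Cref{pro:path-distance}. Splitting each $I\in\mathcal{D}_i$ into its two children gives, for a horizontal step,
\[
  E_{i+1,j}-E_{i,j} = 2^{-(i+1)-j}\sum_{I\in\mathcal{D}_i}\sum_{J\in\mathcal{D}_j}\big[(d_{I^+,J}-d_{I,J})+(d_{I^-,J}-d_{I,J})\big],
\]
whence $|E_{i+1,j}-E_{i,j}|\le a_{i,j}$ with $a_{i,j} := \max_{I\in\mathcal{D}_i,J\in\mathcal{D}_j,\omega}|d_{I,J}-d_{I^\omega,J}|$. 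The analogous vertical, diagonal, and superdiagonal increments are bounded respectively by $b_{i,j} := \max_{I\in\mathcal{D}_i,J\in\mathcal{D}_j,\xi}|d_{I,J}-d_{I,J^\xi}|$, by $\max_{I,J\in\mathcal{D}_k,\omega,\xi}|d_{I,J}-d_{I^\omega,J^\xi}|$, and by the corresponding superdiagonal quantity. The key point is that these are precisely the terms summed in $\|D\|_{\mathrm{T^2}\mathrm{S}}$, so $D\in\mathrm{HM}^{\mathrm{T}^2}(\delta^2)$ makes all of them summable over the relevant ranges: rearranging the third sum gives $\sum_j\big(\sum_{i\ge j}a_{i,j}\big)<\infty$, rearranging the fourth gives $\sum_i\big(\sum_{m>i}b_{i,m}\big)<\infty$, and the first and second sums make the diagonal and superdiagonal increments summable in $k$.

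With these in hand I would treat $\lambda_\mathcal{U}$ and $\mu_\mathcal{U}$ symmetrically. For $\lambda_\mathcal{U}$: for each fixed $j$, summability of $(a_{i,j})_{i\ge j}$ shows $(E_{i,j})_{i\ge j}$ is Cauchy, so the ordinary limit $F(j):=\lim_{i\to\infty}E_{i,j}$ exists and therefore coincides with $\lim_{i\to\mathcal{U}}E_{i,j}$; moreover $|F(j)-E_{j,j}|\le T(j):=\sum_{i\ge j}a_{i,j}$. Since $\sum_j T(j)<\infty$ we get $T(j)\to 0$, and since $E_{j,j}\to\lim_k E_{k,k}$ (summability of the diagonal increments), we obtain $F(j)\to\lim_k E_{k,k}$. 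Hence $\lambda_\mathcal{U}(D)=\lim_{j\to\mathcal{U}}F(j)=\lim_k E_{k,k}=\int\int m_1^D$. For $\mu_\mathcal{U}$ I would run the same argument with the two coordinates exchanged: for fixed $i$ the ordinary limit $G(i):=\lim_{m\to\infty}E_{i,m}$ exists and equals $\lim_{j\to\mathcal{U}}E_{i,j}$, one has $|G(i)-E_{i,i+1}|\le S(i):=\sum_{m>i}b_{i,m}$ with $S(i)\to 0$, and $E_{i,i+1}\to\lim_k E_{k,k+1}$, giving $\mu_\mathcal{U}(D)=\lim_k E_{k,k+1}=\int\int m_2^D$.

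The only genuine obstacle is the interchange of the iterated $\mathcal{U}$-limits with the diagonal limit, i.e.\ justifying that the corner limits $F(j),G(i)$ approach the diagonal and superdiagonal values. This is exactly where the full strength of the bi-tree variation norm enters: it is not enough that each slice be Cauchy, one needs the quantitative tail decay $T(j),S(i)\to 0$, which is furnished by the summability of $\sum_{i\ge j}a_{i,j}$ over $j$ and of $\sum_{m>i}b_{i,m}$ over $i$. A pleasant by-product is that every limit in the argument is an honest (ordinary) limit, so the two identities hold for every non-principal ultrafilter $\mathcal{U}$ and are in fact independent of $\mathcal{U}$.
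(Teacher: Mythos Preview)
Your proof is correct and follows essentially the same approach as the paper: both arguments reduce to showing that the iterated $\mathcal{U}$-limits of $E_{i,j}$ coincide with the diagonal and superdiagonal limits $\lim_k E_{k,k}$ and $\lim_k E_{k,k+1}$, via telescoping increments controlled by the summands of $\|D\|_{\mathrm{T^2S}}$. The only organizational difference is that the paper telescopes at the level of individual entries $d_{I,J}$ along paths in $\mathcal{D}\times\mathcal{D}$ and then averages to compare $E_{i,j}$ with $E_{i,i}$, whereas you telescope directly on the averages $E_{i,j}$ and compare $F(j)=\lim_i E_{i,j}$ with $E_{j,j}$; both routes exploit exactly the same summability furnished by the bi-tree semi-norm.
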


\begin{proof}
  We will only verify the identities for $\lambda_{\mathcal{U}}(D)$, the other identity follow by similar arguments.  First, recall that
  $m_1^D(s,t) = \lim_k d_{I_k(s),J_k(t)}$ and $|m_1^D(s,t)|\leq \|D\|_{\infty} < \infty$ for all $s,t\in [0,1)$.  Next, let $j\leq i$,
  $J\in \mathcal{D}_j$ and $I,L\in \mathcal{D}_i$ be fixed, let $K_k,L_k\in \mathcal{D}_k$ be such that $K_j\supset K_{j+1}\supset \dots K_i = I$ and
  $J = L_j\supset L_{j+1}\supset \dots L_i = L$.  Thus,
  \begin{align*}
    |d_{I,J} - d_{K_j,J}|
    &\leq \sum_{k=j}^{i-1} |d_{K_{k+1},J} - d_{K_k,L}|
      \leq \sum_{k=j}^{\infty} \max_{\substack{K\in \mathcal{D}_k\\M\in \mathcal{D}_j\\\omega\in \{\pm 1\}}} |d_{K^{\omega},M} - d_{K,M}|,\\
    |d_{K_j,J} - d_{I,L}|
    &\leq \sum_{k=j}^{i-1} |d_{K_k,L_k} - d_{K_{k+1},L_{k+1}}|
      \leq \sum_{k=j}^{\infty} \max_{\substack{K,M\in \mathcal{D}_k\\\omega,\xi\in \{\pm 1\}}} |d_{K,M} - d_{K^{\omega},M^{\xi}}|.
  \end{align*}
  Adding the above inequalities yields
  \begin{equation*}
    |d_{I,J} - d_{I,L}|
    \leq \sum_{k=j}^{\infty} \Bigl(
    \max_{\substack{K\in \mathcal{D}_k\\M\in \mathcal{D}_j\\\omega\in \{\pm 1\}}} |d_{K^{\omega},M} - d_{K,M}|
    + \max_{\substack{K,M\in \mathcal{D}_k\\\omega,\xi\in \{\pm 1\}}} |d_{K,M} - d_{K^{\omega},M^{\xi}}|
    \Bigr).
  \end{equation*}
  We use the latter estimate and obtain
  \begin{align*}
    &\Bigl|
      \sum_{\substack{I\in \mathcal{D}_i\\J\in \mathcal{D}_j}} |I| |J| d_{I,J}
    - \sum_{I,L\in \mathcal{D}_i} |I| |L| d_{I,L}
    \Bigr|
    = \Bigl|
    \sum_{I\in \mathcal{D}_i} |I| \Bigl(
    \sum_{J\in \mathcal{D}_j} |J| d_{I,J}
    - \sum_{L\in \mathcal{D}_i} |L| d_{I,L}
    \Bigr)
    \Bigr|\\
    &\qquad = \Bigl|
      \sum_{I\in \mathcal{D}_i} |I| \Bigl(
      \sum_{J\in \mathcal{D}_j} \sum_{\substack{L\in \mathcal{D}_i\\L\subset J}} |L| (d_{I,J} -  d_{I,L})
    \Bigr)
    \Bigr|
    \leq \sum_{I\in \mathcal{D}_i} |I|
    \sum_{J\in \mathcal{D}_j} \sum_{\substack{L\in \mathcal{D}_i\\L\subset J}} |L| |d_{I,J} -  d_{I,L}|\\
    &\qquad \leq \sum_{I\in \mathcal{D}_i} |I|
      \sum_{J\in \mathcal{D}_j} \sum_{\substack{L\in \mathcal{D}_i\\L\subset J}} |L|
    \sum_{k=j}^{\infty} \Bigl(
    \max_{\substack{K\in \mathcal{D}_k\\M\in \mathcal{D}_j\\\omega\in \{\pm 1\}}} |d_{K^{\omega},M} - d_{K,M}|
    + \max_{\substack{K,M\in \mathcal{D}_k\\\omega,\xi\in \{\pm 1\}}} |d_{K,M} - d_{K^{\omega},M^{\xi}}|
    \Bigr)\\
    &\qquad = \sum_{k=j}^{\infty} \Bigl(
      \max_{\substack{K\in \mathcal{D}_k\\M\in \mathcal{D}_j\\\omega\in \{\pm 1\}}} |d_{K^{\omega},M} - d_{K,M}|
    + \max_{\substack{K,M\in \mathcal{D}_k\\\omega,\xi\in \{\pm 1\}}} |d_{K,M} - d_{K^{\omega},M^{\xi}}|
    \Bigr).
  \end{align*}
  Since $\|D\|_{\mathrm{T^2}\mathrm{S}} < \infty$, the latter expression tends to $0$ if $j\to \infty$, thus,
  \begin{align*}
    0
    &= \lim_{j\to \mathcal{U}} \lim_{i\to \mathcal{U}} \Bigl|
      \sum_{\substack{I\in \mathcal{D}_i\\J\in \mathcal{D}_j}} |I| |J| d_{I,J}
    - \sum_{I,L\in \mathcal{D}_i} |I| |L| d_{I,L}
    \Bigr|
    = \Bigl| \lambda_{\mathcal{U}}(D) - \lim_{i\to \mathcal{U}} \sum_{I,L\in \mathcal{D}_i} |I| |L| d_{I,L}
    \Bigr|\\
    &= \Bigl| \lambda_{\mathcal{U}}(D) - \lim_{k\to \infty} \int_{0}^{1} \int_{0}^{1} d_{I_k(s),J_k(t)} \mathrm{d} t \mathrm{d} s
      \Bigr|
      = \Bigl| \lambda_{\mathcal{U}}(D) -  \int_{0}^{1} \int_{0}^{1} m_1^D(s,t) \mathrm{d} t \mathrm{d} s
      \Bigr|
  \end{align*}
  as claimed.
\end{proof}

\subsection{Conclusions and open problems}
\Cref{thm:hsh-factor}, our main theorem on the class of Haar multipliers $\mathrm{HM}(Z)$ acting on a Haar system Hardy space
$Z\in\mathcal{HH}(\delta^2)$, gives the following factorization results:
\begin{enumerate}[label=(\roman*)]
\item If Capon's projection $\mathcal{C}$ is unbounded on $Z$ then $\mathrm{HM}(Z)$ approximately 1-projectionally reduces to the class of scalar
  operators, and hence, $\mathrm{HM}(Z)$ has the $C$-primary factorization property, for every $C>2$.

\item If $\mathcal{C}$ is bounded on $Z$ then $\mathrm{HM}(Z)$ approximately 1-projectionally reduces to the two-dimensional subalgebra
  $\{\lambda\mathcal{C}+\mu(\Id-\mathcal{C}):\lambda,\mu\in\mathbb{R}\}$.
\end{enumerate}
We now list questions and problems arising in connection with our current work.
\begin{question}
  Let $Z = Z(\boldsymbol{\sigma},X,Y)$ be a Haar system Hardy space.
  \begin{enumerate}
  \item Under which conditions on $Z$ is it true that $Z$ has the $C$-primary factorization property, for some $C < \infty$?  More specifically, when
    does the class of operators $\mathcal{L} (Z)$ approximately projectionally reduce to the class of scalar multiples of the identity?
             
  \item In view of our main result, \Cref{thm:hsh-factor}, we ask under which conditions on $Z$ is it true that every bounded linear operator on $Z$
    approximately projectionally reduces to a Haar multiplier in $Z ?$
  \end{enumerate}
\end{question}
If a space has the primary factorization property, and if Pe{\l}czy{\'n}ski's decomposition method is applicable, then the space $Z$ is primary. This
leads to our next set of questions.
\begin{question}
  Let $Z = Z(\boldsymbol{\sigma},X,Y)$ be a Haar system Hardy space.
  \begin{enumerate}
  \item Under which conditions is $Z$ and/or its dual space primary?
  \item Specifically, find a way to make Pe{\l}czy{\'n}ski's decomposition work in the context of Haar system Hardy spaces. Find a suitable substitute
    for the accordion property.
  \item As a special case of the above problem: Find all primary spaces in the family
    \begin{equation*}
      \mathcal{F}
      = \{ L^p(L^q),\ H^p (H^q),\ L^p(H^q),\ H^p(L^q) : 1 \le p,q < \infty \}. 
    \end{equation*}
  \end{enumerate}
\end{question}

Capon's projection plays a pivotal role in our factorization theorems. We suggest to investigate it more closely.  With the following problems we
intend to find intrinsic, and easily verifiable conditions for its boundedness on bi-parameter Haar system Hardy spaces.
\begin{question}
  Let $Z = Z(\boldsymbol{\sigma},X,Y)$ be a Haar system Hardy space.
  \begin{enumerate}
  \item Is it true that $\mathcal{C}$ is bounded on $Z$ if and only if the bi-parameter Haar system $(h_I\otimes h_J)_{I,J\in\mathcal{D}}$ is
    unconditional in $Z$?
  \item Is it true that $\mathcal{C}$ is bounded on $Z$ if and only if $Z$ has an unconditional basis?
  \item Is it true that $Z$ has an unconditional basis if and only if the bi-parameter Haar system is unconditional in $Z$?
  \end{enumerate}
\end{question}
Fix $Z = L^p(L^q) $ for $1< p,q < \infty$.  In 1982 Capon~\cite{capon:1982:2} proved that the two dimensional algebra of bounded operators
$\{\lambda\mathcal{C}+ \mu (\Id -\mathcal{C}) : \lambda,\mu\in\mathbb{R}\}$ acting on $Z$ approximately projectionally reduces to
$\{\mu \Id_Z : \mu \in \mathbb{R}\}$.  In 1994, the third named author~\cite{mueller:1994} observed that Capon's method applies to the space
$Z = H^1(H^1)$.  This gives rise to the next question:
\begin{question}
  Assume that the Capon projection $\mathcal{C}$ is bounded on a Haar system Hardy space $Z = Z(\boldsymbol{\sigma},X,Y)$.
  \begin{enumerate}
  \item Does the two dimensional sub-algebra $\{\lambda\mathcal{C}+ \mu(\Id -\mathcal{C}) : \lambda,\mu \in \mathbb{R}\}$ (of operators on $Z$)
    approximately projectionally reduce to $\{\mu \Id_Z : \mu \in \mathbb{R}\}$?
  \item What about the special cases when $Z = L^p(H^1)$ or $Z = H^1(L^p )$, $1 < p < \infty $?
  \end{enumerate}
\end{question}

We next point to several papers in which related questions are addressed.

In 1974, Casazza and Lin \cite{casazza:lin:1974} proved: If a Banach space $E$ has a sub-symmetric basis $(e_j)_{j=1}^{\infty}$, then $E$ satisfies
the primary factorization property. This led them to ask if every Banach space $E$ with a symmetric basis is primary.  To this date, the problem is
open.  Let $(e_j^{*})_{j=1}^{\infty}$ denote the biorthogonal functionals of the sub-symmetric basis $(e_j)_{j=1}^{\infty}$.  Assuming that
$(e_j^{*})_{j=1}^{\infty}$ is ``non-$\ell^1$-splicing'', the first named author showed that $E^{*}$ satisfies the primary factorization property
(see~\cite{lechner:2019:subsymm}).  In view of this result, we ask:
\begin{question}
  Is $E^{*}$ a primary space, whenever the biorthogonal functionals $(e_j^{*})_{j=1}^{\infty}$ are ``non-$\ell^1$-splicing''?
\end{question}

In 1977 Casazza~\cite{casazza:1977} obtained the, by now classical, result that the James space $J$ is primary. We point out that Casazza's
proof~\cite{casazza:1977} circumvented the accordion property.
 
The James space $J$ is the earliest example of a space admitting a conditional spreading basis.  It is not true that every space with such a basis is
primary (see, e.g., \cite[page 1235]{argyros:motakis:sari:2017}).  The subclass of equal signs additive (ESA) bases was introduced by Brunel and
Sucheston in 1976 (\cite{brunel:sucheston:1976}), and its isomorphically invariant version of convex block homogeneous bases was introduced and
studied in \cite{argyros:motakis:sari:2017}.  There it was shown that every space with a conditional spreading basis is isomorphic to the direct sum
of a space with an ESA basis and a space with an unconditional finite dimensional Schauder decomposition (UFDD).  Spaces $Z$ with an ESA basis are
natural candidates for being primary and the authors of \cite{argyros:motakis:sari:2017} attempted to prove this.  Arguments from
\cite{argyros:motakis:sari:2017} yield that such $Z$ has the primary factorization property.  As usual, attaining primariness stumbles on the lack of
an accordion-type property for certain complemented subspaces with a UFDD with a type of distributional subsymmetry.

As part of the pursuit of new methods to show primariness, we reiterate here a problem from~\cite[page 1235]{argyros:motakis:sari:2017}:
\begin{question}
  Is every Banach space with an ESA basis primary?
\end{question}

\section{Coefficient Stabilization}
\label{sec:semi-stabilization}
In this section, we prove \Cref{two-parameter algebraic stabilization theorem norm version}, i.e., that every $\ell^\infty$-bounded Haar multiplier
$D\colon \mathcal{V}(\delta^2) \to \mathcal{V}(\delta^2)$ can be reduced to one with bounded bi-tree variation norm that is, in fact, close to a
linear combination of $\mathcal{C}$ and $\Id -\mathcal{C}$.

\subsection{One-parameter faithful Haar systems}
We first develop the necessary algebraic and probabilistic language in the one-parameter setting.  Thereafter we derive its two-parametric
counterpart.
\begin{pro}\label{P:1.4.5}
  Let $D\colon \mathcal{V}(\delta) \to \mathcal{V}(\delta)$ be an $\ell^{\infty}$-bounded diagonal operator,
  $\tilde H = (\tilde h_I)_{I\in\mathcal{D}}$ be a faithful Haar system relative to the frequencies $(m_i)_{i=0}^\infty$, and let
  $\tilde D = D|_{\tilde H}\colon \mathcal{V}(\delta) \to \mathcal{V}(\delta)$ be defined as in \Cref{one-parameter AB notation}.  Then $\tilde D$ is
  a diagonal operator whose coefficients are
  \begin{align}\label{E:1.4.5.1a}
    \tilde d_I
    &= \sum_{K\in \mathcal{D}_{m_i}\atop K\subset \supp(\tilde h_I)} \frac{|K|}{|I|} d_K,
      \qquad\text{$i\geq 0$, $I\in\mathcal{D}_i$.}
  \end{align}
\end{pro}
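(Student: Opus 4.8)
The plan is to compute $\tilde D h_I = ADBh_I$ directly for each $i\ge 0$ and each $I\in\mathcal{D}_i$, using the defining formulas for $A$ and $B$ from \Cref{one-parameter AB notation} together with the orthogonality relations $\langle h_K,h_L\rangle = |K|\,\delta_{K,L}$ of the Haar system. Once this computation produces $\tilde D h_I = \tilde d_I\, h_I$ for the asserted scalar $\tilde d_I$, both the diagonality of $\tilde D$ and formula \eqref{E:1.4.5.1a} follow at once.

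First I would evaluate $Bh_I$. Since $\langle h_J/|J|, h_I\rangle = \tfrac{|I|}{|J|}\delta_{I,J} = \delta_{I,J}$, only the $J=I$ term of $B$ survives and $Bh_I = \tilde h_I$. Next, writing $\tilde h_I = \sum_{K\in\mathcal{A}_I}\varepsilon_K h_K$ via \Cref{dfn:faithful}~\ref{dfn:faithful i} and using that $D$ is a Haar multiplier with $Dh_K = d_K h_K$, I obtain $D\tilde h_I = \sum_{K\in\mathcal{A}_I}\varepsilon_K d_K h_K$. Here the frequency condition \Cref{dfn:faithful}~\ref{D:1.4.1} enters: every $K\in\mathcal{A}_I$ lies in $\mathcal{D}_{m_i}$, so it has length $2^{-m_i}$.

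The crux is to evaluate $Ah_K$ for a fixed $K\in\mathcal{A}_I$. By definition $Ah_K = \sum_{J\in\mathcal{D}}\frac{h_J}{|J|}\langle\tilde h_J, h_K\rangle$, and expanding $\tilde h_J = \sum_{L\in\mathcal{A}_J}\varepsilon_L h_L$ shows that $\langle\tilde h_J, h_K\rangle$ equals $\varepsilon_K|K|$ if $K\in\mathcal{A}_J$ and $0$ otherwise. The key step is to see that this is nonzero only for $J=I$. Two structural facts force this. For $J$ at a level $j\neq i$, the set $\mathcal{A}_J\subset\mathcal{D}_{m_j}$ consists of intervals of length $2^{-m_j}\neq 2^{-m_i} = |K|$, since $(m_i)$ is strictly increasing; hence $K\notin\mathcal{A}_J$. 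For $J\in\mathcal{D}_i$ with $J\neq I$, the supports $\Gamma_J = \supp(\tilde h_J)$ and $\Gamma_I$ are disjoint: by \Cref{dfn:faithful}~\ref{dfn:faithful ii} one has $\Gamma_{[0,1)} = [0,1)$ and $\Gamma_{I'} = \Gamma_{(I')^+}\sqcup\Gamma_{(I')^-}$, so an easy induction shows that $\{\Gamma_{I'}:I'\in\mathcal{D}_i\}$ partitions $[0,1)$; thus $K\subset\Gamma_I$ cannot meet $\Gamma_J$ and $K\notin\mathcal{A}_J$. Consequently $Ah_K = \frac{\varepsilon_K|K|}{|I|}\,h_I$.

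Finally I would assemble the pieces. Using $\varepsilon_K^2 = 1$,
\begin{equation*}
  \tilde D h_I
  = A(D\tilde h_I)
  = \sum_{K\in\mathcal{A}_I}\varepsilon_K d_K\, Ah_K
  = h_I\sum_{K\in\mathcal{A}_I}\frac{|K|}{|I|}\,d_K,
\end{equation*}
so $\tilde D$ is diagonal with the claimed eigenvalue. To match \eqref{E:1.4.5.1a} verbatim I identify $\mathcal{A}_I$ with $\{K\in\mathcal{D}_{m_i}:K\subset\supp(\tilde h_I)\}$: since $\Gamma_I = \bigcup_{K\in\mathcal{A}_I}K$ is a union of pairwise disjoint dyadic intervals of $\mathcal{D}_{m_i}$, the members of $\mathcal{A}_I$ are precisely the level-$m_i$ dyadic intervals contained in $\supp(\tilde h_I)$. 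The only genuine obstacle is the vanishing asserted in the crux step; it rests entirely on the disjointness of the supports $\Gamma_{I'}$ within a level together with the strict monotonicity of the frequencies $(m_i)$, and everything else is routine bookkeeping with Haar orthogonality.
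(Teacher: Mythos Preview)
Your proof is correct and follows essentially the same approach as the paper: both compute $\tilde D h_I = ADBh_I$ directly, first observing $Bh_I=\tilde h_I$, then using Haar orthogonality to show that only the $J=I$ term of $A$ survives. The only organizational difference is that the paper bundles the vanishing into the single statement $\langle\tilde h_J, D\tilde h_I\rangle=0$ for $J\neq I$, whereas you compute $Ah_K$ for each $K\in\mathcal{A}_I$ separately; your write-up actually spells out the reason for this vanishing (disjoint supports at the same level, different frequencies at different levels) more explicitly than the paper does.
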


\begin{proof}
  Suppose that $i\geq 0$ for $I\in \mathcal{D}_i$, the collection $\mathcal{A}_I$ is given by
  $\mathcal{A}_I = \{ K\in \mathcal{D}_{m_i} : K\subset \supp(\tilde h_I)\}$, and let $(\theta_K)\in \{\pm 1\}^{\mathcal{D}}$.  We then define
  \begin{equation*}
    \tilde h_I
    = \sum_{K\in \mathcal{A}_I} \theta_{K} h_K.
  \end{equation*}
  It follows for $I\neq J$ that
  \begin{equation*}
    \langle \tilde h_I, D(\tilde h_J)\rangle
    = \Big\langle \sum_{K\in \mathcal{A}_I} \theta_{K} h_K, \sum_{K'\in \mathcal{A}_J} \theta_{K'} d_{K'} h_{K'} \Big\rangle
    = 0,
  \end{equation*}
  and
  \begin{equation*}
    \langle \tilde h_I, D(\tilde h_I)\rangle
    = \Big\langle \sum_{K\in \mathcal{A}_I} \theta_{K} h_K , \sum_{K'\in \mathcal{A}_I} \theta_{K'} d_{K'} h_{K'} \Big\rangle
    = \sum_{K\in \mathcal{A}_I } d_K |K|.
  \end{equation*}
  Thus
  \begin{align*}
    \tilde D(h_I)
    &= (A D)(\tilde h_I)
      = \sum_{J\in\mathcal{D}} \frac{h_J}{|J|} \langle \tilde h_J, D\tilde h_I \rangle
      = \frac{h_I}{|I|} \langle \tilde h_I, D\tilde h_I \rangle\\
    &= \frac{h_I}{|I|} \Big\langle \sum_{K\in \mathcal{A}_I} \theta_{K} h_K, \sum_{K'\in \mathcal{A}_I} \theta_{K'} d_{K'} h_{K'} \Big\rangle
      = h_I \sum_{K\in\mathcal{D}_{m_i},\atop K\subset\supp(\tilde h_I)} \frac{|K|}{|I|} d_K.
  \end{align*}
\end{proof}

\begin{ntn}
  Let $\widetilde H^{(1)} = (\tilde h_I^{(1)})_{I\in\mathcal{D}}$ be a faithful Haar system relative to frequencies $(n_i)_{i=0}^\infty$ and let
  $\widetilde H^{(2)} = (\tilde h^{(2)}_I)_{I\in\mathcal{D}}$ be a faithful Haar system relative to frequencies $(m_i)_{i=0}^\infty$.  For each
  $I\in\mathcal{D}$, define $\tilde h^{(3)}_I = \sum_{J\in\mathcal{D}} |J|^{-1}\langle h_J, \tilde h^{(2)}_I \rangle \tilde h^{(1)}_J$ and denote
  $\widetilde H^{(2)} \ast \widetilde H^{(1)} = (h^{(3)}_I)_{I\in\mathcal{D}}$.
\end{ntn}

\begin{pro}\label{blocking faithful haar}
  Let $\widetilde H^{(1)} = (\tilde h_I^{(1)})_{I\in\mathcal{D}}$ be a faithful Haar system relative to frequencies $(n_i)_{i=0}^\infty$ and let
  $\widetilde H^{(2)} = (\tilde h^{(2)}_I)_{I\in\mathcal{D}}$ be a faithful Haar system relative to frequencies $(m_i)_{i=0}^\infty$.  Then,
  $\widetilde H^{(2)}\ast \widetilde H^{(1)}$ is a faithful Haar system relative to the frequencies $(n_{m_i})_{i=0}^\infty$.
\end{pro}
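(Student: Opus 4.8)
The plan is to check that $\widetilde H^{(3)}=\widetilde H^{(2)}\ast\widetilde H^{(1)}=(\tilde h^{(3)}_I)_{I\in\mathcal D}$ meets the short formulation of faithfulness from \Cref{Enflo-Maurey comparison}~\ref{faithful short formulation}, relative to the frequencies $(n_{m_i})_{i=0}^\infty$; this sequence is strictly increasing because $(m_i)$ and $(n_j)$ are. First I would rewrite $\tilde h^{(3)}_I$ in closed form. If $I\in\mathcal D_i$ and $\tilde h^{(2)}_I=\sum_{K\in\mathcal A^{(2)}_I}\varepsilon_K h_K$ with $\mathcal A^{(2)}_I\subset\mathcal D_{m_i}$, then orthogonality of the Haar system gives $|J|^{-1}\langle h_J,\tilde h^{(2)}_I\rangle=\varepsilon_J$ for $J\in\mathcal A^{(2)}_I$ and $0$ otherwise, whence
\[
  \tilde h^{(3)}_I=\sum_{K\in\mathcal A^{(2)}_I}\varepsilon_K\,\tilde h^{(1)}_K .
\]
So passing from $\widetilde H^{(2)}$ to $\widetilde H^{(3)}$ amounts to the substitution $h_K\mapsto\tilde h^{(1)}_K$.

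The key auxiliary step is a support calculus for $\widetilde H^{(1)}$. Write $\Gamma^{(1)}_K=\supp(\tilde h^{(1)}_K)$. By \Cref{dfn:faithful}~\ref{dfn:faithful ii}, the sets $\Gamma^{(1)}_{K^+}=\{\tilde h^{(1)}_K=1\}$ and $\Gamma^{(1)}_{K^-}=\{\tilde h^{(1)}_K=-1\}$ form a partition of $\Gamma^{(1)}_K$; consequently, for any set $S$ that is a finite union of dyadic intervals, the union $\bigcup_{K\in\mathcal B}\Gamma^{(1)}_K$ taken over a dyadic partition $\mathcal B$ of $S$ does not depend on $\mathcal B$ (two partitions admit a common refinement, and refining one interval into its two children leaves the union unchanged). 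Denote this set by $\Gamma^{(1)}(S)$; it is additive over disjoint unions, satisfies $\Gamma^{(1)}([0,1))=[0,1)$, and for $K\in\mathcal D_j$ the set $\Gamma^{(1)}_K$ is a union of intervals from $\mathcal D_{n_j}$ (faithfulness of $\widetilde H^{(1)}$ relative to $(n_j)$). In particular the sets $\Gamma^{(1)}_K$, with $K$ ranging over a fixed level, are pairwise disjoint.

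With this in hand the three conditions follow. For $I\in\mathcal D_i$, each summand $\varepsilon_K\tilde h^{(1)}_K$ (with $K\in\mathcal A^{(2)}_I\subset\mathcal D_{m_i}$) is a $\{-1,0,1\}$-combination of $(h_L)_{L\in\mathcal D_{n_{m_i}}}$, and since the supports $\Gamma^{(1)}_K$ are disjoint there is no overlap; hence $\tilde h^{(3)}_I$ is such a combination, giving the first condition at level $n_{m_i}$. The root support is $\supp(\tilde h^{(3)}_{[0,1)})=\Gamma^{(1)}(\Gamma^{(2)}_{[0,1)})=\Gamma^{(1)}([0,1))=[0,1)$. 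For the sign condition write $K^\eta$ for the child $K^+$ if $\eta=1$ and $K^-$ if $\eta=-1$. On $\Gamma^{(1)}_K$ we have $\tilde h^{(3)}_I=\varepsilon_K\tilde h^{(1)}_K$, which equals $\varepsilon$ exactly on $\{\tilde h^{(1)}_K=\varepsilon\varepsilon_K\}=\Gamma^{(1)}_{K^{\varepsilon\varepsilon_K}}$, so
\[
  \{\tilde h^{(3)}_I=\varepsilon\}=\bigcup_{K\in\mathcal A^{(2)}_I}\Gamma^{(1)}_{K^{\varepsilon\varepsilon_K}}.
\]
On the other hand $\supp(\tilde h^{(3)}_{I^\varepsilon})=\Gamma^{(1)}(\Gamma^{(2)}_{I^\varepsilon})=\Gamma^{(1)}(\{\tilde h^{(2)}_I=\varepsilon\})$, and the identity $\{\tilde h^{(2)}_I=\varepsilon\}=\bigcup_{K\in\mathcal A^{(2)}_I}K^{\varepsilon\varepsilon_K}$ together with additivity of $\Gamma^{(1)}$ shows that the two sets coincide, which is the second condition.

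I expect the main obstacle to be exactly this last bookkeeping: keeping the signs $\varepsilon_K$ and the two refinement scales aligned (the children $\mathcal D_{m_i}\to\mathcal D_{m_{i+1}}$ produced by $\widetilde H^{(2)}$ versus the induced $\mathcal D_{n_{m_i}}$-structure produced by $\widetilde H^{(1)}$), and, beneath it, ensuring that the support map $\Gamma^{(1)}$ is genuinely well defined independently of the chosen dyadic decomposition of $S$. Once the well-definedness and additivity of $\Gamma^{(1)}$ are in place, the remaining verifications are a direct expansion of the closed-form expression for $\tilde h^{(3)}_I$.
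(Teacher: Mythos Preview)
Your proof is correct. The closed-form expression $\tilde h^{(3)}_I=\sum_{K\in\mathcal A^{(2)}_I}\varepsilon_K\,\tilde h^{(1)}_K$, the well-definedness of the support map $\Gamma^{(1)}$, and the sign computation $\{\tilde h^{(3)}_I=\varepsilon\}=\bigcup_K\Gamma^{(1)}_{K^{\varepsilon\varepsilon_K}}$ are all sound, and the final matching via $\{\tilde h^{(2)}_I=\varepsilon\}=\bigcup_K K^{\varepsilon\varepsilon_K}$ plus additivity of $\Gamma^{(1)}$ closes the argument cleanly.

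The paper takes a different, more abstract route for the support and sign conditions. After writing the same double-sum formula for condition~(a), it observes that the map $B=B_1B_2$ (where $B_jh_I=\tilde h^{(j)}_I$) extends to $\langle\chi_{[0,1)}\rangle+\mathcal V(\delta)$ and preserves distributions by \Cref{Enflo-Maurey comparison}~\ref{P:1.4.2}; hence there is a measure-preserving $\phi$ on the dyadic algebra with $B(\chi_A)=\chi_{\phi(A)}$, and the identities $\supp(Bf)=\phi(\supp(f))$ and $\{Bf=\varepsilon\}=\phi(\{f=\varepsilon\})$ immediately give $\supp(\tilde h^{(3)}_{I^\varepsilon})=\{\tilde h^{(3)}_I=\varepsilon\}$. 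Your $\Gamma^{(1)}$ is essentially this $\phi$ built by hand for $B_1$, so the two arguments are morally the same; the paper trades your explicit sign bookkeeping for a one-line appeal to distribution preservation, while your version is more self-contained and avoids the (mildly nontrivial) step of extracting an algebra map from a distribution-preserving linear map.
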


\begin{proof}
  We will use \Cref{faithful short formulation}.  For $i\in\mathbb{N}_0$ and $I\in\mathcal{D}_i$ write
  \begin{align*}
    \tilde h^{(3)}_I
    = \sum_{J\in\mathcal{D}_{m_i}\atop J\subset\supp(\tilde h^{(2)}_I)}
    \sum_{L\in\mathcal{D}_{n_{m_i}}\atop L\subset\supp(\tilde h^{(1)}_J)}
    \theta^{(2)}_{J} \theta^{(1)}_{L} h_L
  \end{align*}
  This yields that $h^{(3)}_I$ is a linear combination of $(h_J)_{J\in\mathcal{D}_{n_{m_i}}}$ with coefficients in $\{-1,0,1\}$.  By taking $i=0$ we
  also easily obtain $\supp(\tilde h^{(3)}_{[0,1)}) = [0,1)$.

  Next, consider the maps $B_1, B_2\colon \mathcal{V}(\delta) \to \mathcal{V}(\delta)$ given by $B_1h_I = \tilde h_I^{(1)}$ and
  $B_2h_I = \tilde h_I^{(2)}$ which implies that for $B = B_1B_2$, $Bh_I = \tilde h^{(3)}_I$.  We extend the vector space $\mathcal{V}(\delta)$ by one
  dimension to $\tilde{\mathcal{V}} = \langle\chi_{[0,1)}\rangle + \mathcal{V}(\delta) = \langle \{\chi_I:I\in\mathcal{D}\}\rangle$ and extend $B$ on
  $\tilde{\mathcal{V}}$ by putting $B(\chi_{[0,1)}) = \chi_{[0,1)}$.  By \Cref{P:1.4.2} the map $B$ preserves distribution and therefore, if
  $\mathcal{A}(\mathcal{D})$ is the algebra generated by $\mathcal{D}$, there exists a measure preserving
  $\phi\colon\mathcal{A}(\mathcal{D})\to \mathcal{A}(\mathcal{D})$ such that for every $A\in\mathcal{A}(\mathcal{D})$, $B(\chi_A) = \chi_{\phi(A)}$.
  In particular, for every $f\in\tilde{\mathcal{V}}$ and $a\in\mathbb{R}$ we have $\{Bf = a\} = \phi(\{f = a\})$ and furthermore
  $\supp(Bf) = \phi(\supp(f))$.  Therefore, for every $I\in\mathcal{D}$ and $\varepsilon\in\{\pm1\}$ we have
  \begin{align*}
    \supp(\tilde h^{(3)}_{I^\varepsilon})
    = \supp(Bh_{I^\varepsilon})
    = \phi(\supp(h_{I^\varepsilon}))
    = \phi(\{h_I = \varepsilon\})
    = \{Bh_I = \varepsilon\}
    = \{\tilde h_I^{(3)} = \varepsilon\}.
  \end{align*}
\end{proof}

The above proof also implies that the set of all faithful Haar systems $\tilde H$ relative to some frequencies with the operation ``$\ast$'' is a
nonabelian monoid.  The next corollary's content is that $(\tilde H,D)\mapsto D|_{\tilde H}$ defines a left monoidal action.  Therefore, if $D$ and
$\widetilde H^{(1)},\ldots,\widetilde H^{(n)}$ then
\begin{equation*}
  D\stackrel{\widetilde H^{(1)}}{\longrightarrow}\tilde D_1\stackrel{\widetilde
    H^{(2)}}{\longrightarrow}\tilde D_2\stackrel{\widetilde H^{(3)}}{\longrightarrow}\cdots
  \stackrel{\widetilde H^{(n)}}{\longrightarrow}\tilde D_n
\end{equation*} coincides with
\begin{equation*}
  D \stackrel{\underrightarrow{\widetilde H^{(n)}\ast\cdots\ast \widetilde H^{(1)}}}{}\tilde D_n.
\end{equation*}
In other words, iterating the operation $D\stackrel{\tilde H}{\longrightarrow}\tilde D$ always yields a diagonal operator associated to the initial
$D$ and a faithful Haar system relative to some frequencies.

\begin{cor}\label{one-parameter monoidal action}
  Let $D\colon \mathcal{V}(\delta) \to \mathcal{V}(\delta)$ be an $\ell^{\infty}$-bounded diagonal operator and $\widetilde H^{(1)}$,
  $\widetilde H^{(2)}$ be faithful Haar systems relative to frequencies $(n_i)_{i=0}^\infty$ and $(m_i)_{i=0}^\infty$ respectively.  Then, we have
  \begin{equation*}
    (D|_{\widetilde H^{(1)}})|_{\widetilde H^{(2)}}
    = D|_{\widetilde H^{(2)} \ast \widetilde H^{(1)}}.
  \end{equation*}
\end{cor}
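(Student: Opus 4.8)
The plan is to reduce the identity to two separate factorization identities for the operators $A$ and $B$ from \Cref{one-parameter AB notation}, namely
\[
  B_{\widetilde H^{(2)}\ast\widetilde H^{(1)}} = B_{\widetilde H^{(1)}}\circ B_{\widetilde H^{(2)}}
  \qquad\text{and}\qquad
  A_{\widetilde H^{(2)}\ast\widetilde H^{(1)}} = A_{\widetilde H^{(2)}}\circ A_{\widetilde H^{(1)}}.
\]
Once these are in hand, unravelling the definition $D|_{\tilde H} = A_{\tilde H}\circ D\circ B_{\tilde H}$ gives
\[
  (D|_{\widetilde H^{(1)}})|_{\widetilde H^{(2)}}
  = A_{\widetilde H^{(2)}}A_{\widetilde H^{(1)}}\, D\, B_{\widetilde H^{(1)}}B_{\widetilde H^{(2)}}
  = A_{\widetilde H^{(2)}\ast\widetilde H^{(1)}}\, D\, B_{\widetilde H^{(2)}\ast\widetilde H^{(1)}}
  = D|_{\widetilde H^{(2)}\ast\widetilde H^{(1)}},
\]
which is the assertion. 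The $B$-identity is essentially already recorded inside the proof of \Cref{blocking faithful haar}: since $B_{\tilde H}h_I = \tilde h_I$ for any faithful Haar system $\tilde H$, applying $B_{\widetilde H^{(1)}}$ to the Haar expansion $\tilde h^{(2)}_I = \sum_{J}|J|^{-1}\langle h_J,\tilde h^{(2)}_I\rangle h_J$ yields $B_{\widetilde H^{(1)}}\tilde h^{(2)}_I = \sum_{J}|J|^{-1}\langle h_J,\tilde h^{(2)}_I\rangle\tilde h^{(1)}_J = \tilde h^{(3)}_I$, so that $B_{\widetilde H^{(1)}}B_{\widetilde H^{(2)}}h_I = B_{\widetilde H^{(1)}}\tilde h^{(2)}_I = \tilde h^{(3)}_I = B_{\widetilde H^{(2)}\ast\widetilde H^{(1)}}h_I$ for every $I\in\mathcal{D}$.

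The $A$-identity is where the only genuine point lies: each $A_{\tilde H}$ is defined on all of $\mathcal{V}(\delta)$, so I cannot simply evaluate on the vectors $\tilde h^{(3)}_I$, which span only a proper subspace in general. Instead I would observe that $A_{\tilde H}$ is the adjoint of $B_{\tilde H}$ with respect to the inner product $\langle\cdot,\cdot\rangle$ on $\mathcal{V}(\delta)$. Indeed, for $f,g\in\mathcal{V}(\delta)$ the defining formulas give
\[
  \langle A_{\tilde H}f,g\rangle
  = \sum_{I\in\mathcal{D}}\frac{\langle\tilde h_I,f\rangle\,\langle h_I,g\rangle}{|I|}
  = \Big\langle f,\sum_{I\in\mathcal{D}}\frac{\langle h_I,g\rangle}{|I|}\tilde h_I\Big\rangle
  = \langle f,B_{\tilde H}g\rangle,
\]
all sums being finite, whence $A_{\tilde H}=B_{\tilde H}^{*}$ for every faithful Haar system relative to some frequencies (in particular for $\widetilde H^{(1)}$, $\widetilde H^{(2)}$, and, by \Cref{blocking faithful haar}, for $\widetilde H^{(2)}\ast\widetilde H^{(1)}$). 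Taking adjoints of the $B$-identity and using $(ST)^{*}=T^{*}S^{*}$ then yields
\[
  A_{\widetilde H^{(2)}\ast\widetilde H^{(1)}}
  = \big(B_{\widetilde H^{(2)}\ast\widetilde H^{(1)}}\big)^{*}
  = \big(B_{\widetilde H^{(1)}}B_{\widetilde H^{(2)}}\big)^{*}
  = B_{\widetilde H^{(2)}}^{*}B_{\widetilde H^{(1)}}^{*}
  = A_{\widetilde H^{(2)}}A_{\widetilde H^{(1)}},
\]
which is exactly the $A$-identity.

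Combining the two identities as in the first display then completes the proof. The step I would flag as the crux is the $A$-identity: naively one expects it to mirror the $B$-identity, but because each $A_{\tilde H}$ is globally defined on $\mathcal{V}(\delta)$ rather than only on the span of the corresponding faithful system, the composition must be pinned down through the adjoint characterization $A_{\tilde H}=B_{\tilde H}^{*}$ rather than by evaluating on generators. Everything else is bookkeeping with the defining formulas from \Cref{one-parameter AB notation} and the structure of the operation $\ast$.
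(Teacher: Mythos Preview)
Your proof is correct and follows the same approach as the paper: both reduce the claim to the factorizations $B_{\widetilde H^{(2)}\ast\widetilde H^{(1)}}=B_{\widetilde H^{(1)}}B_{\widetilde H^{(2)}}$ and $A_{\widetilde H^{(2)}\ast\widetilde H^{(1)}}=A_{\widetilde H^{(2)}}A_{\widetilde H^{(1)}}$, which the paper simply asserts ``easily follow''. Your adjoint argument $A_{\tilde H}=B_{\tilde H}^{*}$ is a clean way to supply the detail the paper omits for the $A$-identity.
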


\begin{proof}
  If we associate $B_1,A_1$ to $\widetilde H^{(1)}$, $B_2,A_2$ to $\widetilde H^{(2)}$, and $B,A$ to $\widetilde H^{(2)} \ast \widetilde H^{(1)}$, it
  easily follows that $B_1B_2 = B$ and $A_2A_1 = Q$.  Therefore,
  \begin{equation*}
    D|_{\widetilde H^{(2)} \ast \widetilde H^{(1)}}
    = ADB
    = A_2(A_1DB_1)B_2
    = (D|_{\widetilde H^{(1)}})|_{\widetilde H^{(2)}}.
  \end{equation*}
\end{proof}

\subsection{Tree-stabilized operators in one-parameter}
Tree-stabilized diagonal operators have entries, in a strong sense, very close to each other.  As pointed out in \Cref{near scalar one par} below,
such an operator is in the $\ell^{\infty}$ sense close to a scalar operator, however, as we will see later, in the appropriate setting, this proximity
is achievable in operator norm as well.  The main result of this section is that for every $\ell^{\infty}$-bounded diagonal operator
$D\colon\mathcal{V}(\delta)\to\mathcal{V}(\delta)$ there exists a faithful Haar system $\tilde H$ relative to some frequencies $(m_i)_{i=0}^\infty$ such that
$D|_{\tilde H}$ is tree-stabilized.

\begin{dfn}\label{D:1.4.6}
  Let $D\colon \mathcal{V}(\delta)\to \mathcal{V}(\delta)$ be a diagonal operators whose coefficients are $(d_I : I\in \mathcal{D})$, let
  $\eta=(\eta_i)_{i=0}^\infty\subset (0,1)$, and let $k_0\in \mathbb{N}_0$, we call $D$ \emph{$\eta$-tree-stabilized from $k_0$ on} if
  \begin{equation*}
    |d_{L}- d_{L^{\omega}}|\le \eta_k,
    \qquad \text{for all $k\ge k_0$, $L\in \mathcal{D}_k$, and $\omega=\pm1$}.
  \end{equation*}
  If $k_0=0$ we simply say that $D$ is \emph{$\eta$-tree-stabilized.}
\end{dfn}
The notion of eventual tree-stability becomes, as we will see later, relevant in the two-parameter setting.

\begin{rem}\label{near scalar one par}
  Let $D\colon \mathcal{V}(\delta)\to \mathcal{V}(\delta)$ be a Haar multiplier and $\eta=(\eta_i)_{i=0}^\infty\subset (0,1)$.  If $D$ is $\eta$-tree-stabilized and $\sum_{i=0}^\infty2^{i}\eta_i <\infty$ then
  \begin{itemize}
  \item $\displaystyle{\lambda = \lim_i\sum_{i\in\mathcal{D}_i}|I|d_I}$ exists and
  \item $\displaystyle{ \big\|D - \lambda \Id\big\|_{\mathrm{T}} \leq \sum_{i=0}^\infty(2^{i+1}+1)\eta_i\leq 3\sum_{i=0}^\infty2^i\eta_i.}$
  \end{itemize}
\end{rem}

\begin{thm}\label{P:1.4.6}
  Let $D\colon \mathcal{V}(\delta)\to \mathcal{V}(\delta)$ be an $\ell^{\infty}$-bounded operator.  Then, for every sequence of positive numbers
  $\eta=(\eta_{j})_{j=0}^\infty$ and infinite $M\subset \mathbb{N}$ there exists a faithful Haar system $\tilde H=(\tilde h_I)_{I\in \mathcal{D}}$ relative to frequencies
  $(m_i)_{i=0}^\infty$ in $M$, such that the diagonal operator
  \begin{equation*}
    \tilde D= D|_{ \tilde H}
    \quad\text{is $\eta$-tree-stabilized.}
  \end{equation*}
  In particular, for every non-principal ultrafilter $\mathcal{U}$ on $\mathbb{N}$ we may choose $\tilde D$ such that $\lambda_\mathcal{U}(\tilde D) = \lambda_\mathcal{U}(D)$ and, thus,
  \begin{equation*}
    \|\tilde D - \lambda_\mathcal{U}(D)\Id\|_{\mathrm{T}}
    \leq 3 \sum_{i=0}^\infty2^{i}\eta_i.
  \end{equation*}
\end{thm}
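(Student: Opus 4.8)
The plan is to build the faithful Haar system top-down, one dyadic level at a time, using a single weak limit of the coefficient step functions to control the behaviour under refinement, and a greedy choice of signs to make the two halves of each support carry almost equal mass. Write $\phi_n = \sum_{K\in\mathcal{D}_n} d_K\chi_K\in L^\infty[0,1)$, so $\|\phi_n\|_\infty\le\|D\|_\infty$ and, by \Cref{P:1.4.5}, the coefficients of $\tilde D = D|_{\tilde H}$ are $\tilde d_I = |I|^{-1}\int_{\Gamma_I}\phi_{m_i}$ for $I\in\mathcal{D}_i$, where $\Gamma_I = \supp(\tilde h_I)$ has measure $|I|$. Since $(\phi_n)_{n\in M}$ is bounded in the reflexive space $L^2[0,1)$, I first extract an increasing sequence $N\subset M$ along which $\phi_n\rightharpoonup\psi$ weakly, with $\psi\in L^\infty$ and $\|\psi\|_\infty\le\|D\|_\infty$. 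Testing against the constant $1$ gives $\int_0^1\phi_n\to\int_0^1\psi$, and I choose $N$ inside the $\mathcal{U}$-neighbourhoods $\{n:|\int_0^1\phi_n-\lambda_\mathcal{U}(D)|<1/k\}$ so that this common value is $\lambda_\mathcal{U}(D)$.

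The induction maintains, at level $i$, the invariant $|\int_{\Gamma_I}\phi_{m_i} - \int_{\Gamma_I}\psi|\le\delta_i$ for all $I\in\mathcal{D}_i$, where $(\delta_i)$ is an auxiliary null sequence I am free to fix. Given the level-$i$ data $(\Gamma_I,m_i)$, I pass to level $i+1$ in two moves. First, for each $I\in\mathcal{D}_i$ the split $\Gamma_{I^+}=\{\tilde h_I=1\}$, $\Gamma_{I^-}=\{\tilde h_I=-1\}$ must, by the definition of a faithful Haar system (\Cref{dfn:faithful}), assign exactly one of the two level-$(m_i+1)$ children of each $K\in\mathcal{D}_{m_i}$, $K\subset\Gamma_I$, to $\Gamma_{I^+}$, i.e. amounts to a sign $\varepsilon_K$. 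Writing $b_K = \int_{K^+}\psi-\int_{K^-}\psi$, one has $\int_{\Gamma_{I^+}}\psi - \tfrac12\int_{\Gamma_I}\psi = \tfrac12\sum_K\varepsilon_K b_K$, and a greedy choice of signs keeps the partial sums bounded by $\max_K|b_K|\le 2^{-m_i}\|D\|_\infty$; hence the split is $\psi$-balanced up to $\rho_i:=2^{-m_i-1}\|D\|_\infty$. This fixes the sets $\Gamma_{I^\pm}$. Second, using weak convergence along $N$ for these finitely many now-fixed sets, I choose $m_{i+1}\in N$, $m_{i+1}>m_i$, large enough that $|\int_{\Gamma_{I^\omega}}\phi_{m_{i+1}}-\int_{\Gamma_{I^\omega}}\psi|\le\delta_{i+1}$ for all $I\in\mathcal{D}_i$ and $\omega=\pm1$, reinstating the invariant at level $i+1$.

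Chaining the three estimates (the invariant at level $i$, the $\psi$-balance, the invariant at level $i+1$) yields, for $I\in\mathcal{D}_i$,
\[
|\tilde d_I - \tilde d_{I^\omega}| = |I|^{-1}\Bigl|\int_{\Gamma_I}\phi_{m_i} - 2\int_{\Gamma_{I^\omega}}\phi_{m_{i+1}}\Bigr| \le 2^i\bigl(\delta_i + 2\rho_i + 2\delta_{i+1}\bigr).
\]
Since $\rho_i=2^{-m_i-1}\|D\|_\infty$ is at my disposal through the size of $m_i$ (chosen at the previous step) and $(\delta_i)$ is mine to pick, I can enforce $2^i(\delta_i+2\rho_i+2\delta_{i+1})\le\eta_i$ for every $i$ by a routine ``make the constants small and the frequencies large'' bookkeeping; this is the only place $\eta$ enters, so $\tilde D$ is $\eta$-tree-stabilized with frequencies in $M$. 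Because $\{\Gamma_I:I\in\mathcal{D}_i\}$ partitions $[0,1)$, one has $\sum_{I\in\mathcal{D}_i}|I|\tilde d_I=\int_0^1\phi_{m_i}\to\int_0^1\psi=\lambda_\mathcal{U}(D)$, whence $\lambda_\mathcal{U}(\tilde D)=\lambda_\mathcal{U}(D)$; feeding this into \Cref{near scalar one par} (applicable once $\sum_i 2^i\eta_i<\infty$) gives the stated bound $\|\tilde D-\lambda_\mathcal{U}(D)\Id\|_{\mathrm{T}}\le 3\sum_i 2^i\eta_i$.

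The step I expect to be the main obstacle is the rigidity of the splitting: the decomposition of $\Gamma_I$ into its two halves is not free but must respect the faithful-Haar structure — precisely one child of each level-$m_i$ interval — so equalizing the $\psi$-mass of the halves is exactly the delicate point, handled by the greedy sign argument with error $O(2^{-m_i})$, which is why the refinement frequencies must be chosen large enough to absorb the factor $|I|^{-1}=2^i$. A secondary subtlety is matching $\lambda_\mathcal{U}(\tilde D)$ to $\lambda_\mathcal{U}(D)$ exactly while keeping all frequencies inside $M$; this forces the weak-limit subsequence $N\subset M$ to be selected along the $\mathcal{U}$-neighbourhoods of $\lambda_\mathcal{U}(D)$, so that $\int_0^1\psi$ lands on the correct value.
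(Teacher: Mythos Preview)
Your proof is correct and takes a genuinely different route from the paper's. The paper does not pass through a weak limit at all; instead, it chooses the signs $\varepsilon_K$ \emph{probabilistically}. Its engine is \Cref{L:1.4.3}: if the signs are i.i.d.\ Rademacher, then the random average $X_\omega = \sum_{L\in\mathcal{D}_n,\,L\subset\Gamma^\omega}\frac{|L|}{|\Gamma|/2}d_L$ has mean $\sum_{L\subset\Gamma}\frac{|L|}{|\Gamma|}d_L$ and variance at most $2^{-m}|\Gamma|^{-1}\|D\|_\infty^2$, so Chebyshev gives a good realization. This is packaged as a two-player game (\Cref{stabilizing game}) in which Player~I may hand in, at each round $k$, a \emph{new} finite family of diagonal operators together with the error $\eta_k$ and an infinite set $M_k$, and Player~II still wins. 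That extra flexibility---stabilizing operators that were not fixed at the outset---is exactly what the two-parameter argument needs (see \Cref{stabilization by one-parameter reduction}, where the collections $\mathcal{F}_k$ depend on Haar blocks constructed in earlier rounds). Your single fixed weak limit $\psi$ gives a clean deterministic proof of the one-parameter statement, but it does not obviously accommodate operators introduced mid-construction, so it would not plug into the paper's bi-parameter machinery without modification.

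One small point on quantifiers: you treat the constraints ``frequencies in $M$'' and ``$\lambda_\mathcal{U}(\tilde D)=\lambda_\mathcal{U}(D)$'' as simultaneous, and try to take $N\subset M$ inside the $\mathcal{U}$-neighbourhoods. For arbitrary $M$ this can fail, since $M$ need not lie in $\mathcal{U}$. The paper (and the theorem as stated) reads the ``In particular'' as a separate clause in which $M$ is chosen depending on $\mathcal{U}$: one first picks $M$ so that $\lim_{i\in M}\sum_{I\in\mathcal{D}_i}|I|d_I=\lambda_\mathcal{U}(D)$, and then runs the construction with frequencies in that $M$. With that reading your argument goes through unchanged.
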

The proof of the above goes through iteratively applying the probabilistic \Cref{L:1.4.3}.  In fact, we will prove the following stronger statement,
that we also apply in the two-parameter scenario.

\begin{pro}\label{stabilizing game}
  Consider the following infinite round two-player game between Player (I) and Player (II).  Denote $N^{(-1)} = \mathbb{N}_0$.  For $k=0,1,2,\ldots$ in round $k$:
  \begin{enumerate}
  \item First, Player (I) chooses $\eta_k>0$, an infinite $M_k\subset N^{(k-1)}$, and a finite collection $\mathcal{F}_k$ of $\ell^{\infty}$-bounded diagonal operators on
    $\mathcal{V}(\delta)$.

  \item Then, Player (II) chooses an infinite $N^{(k)}\subset M_k$ and, for $n_k = \min(N^{(k)})$, vectors $(\tilde h_I)_{I\in\mathcal{D}_k}$ in
    $\langle\{h_J:J\in\mathcal{D}_{n_k}\}\rangle$.
  \end{enumerate}
  Then, Player (II) has a winning strategy to force the following outcome: $\tilde H = (\tilde h_I)_{I\in\mathcal{D}}$ is a faithful Haar system
  relative to the frequencies $(n_k)_{k=0}^\infty$ and for each $k_0\in\mathbb{N}$ and $D\in\mathcal{F}_{k_0}$, $\tilde D = D|_{\tilde H}$ is
  $(\eta_k)_{k=0}^\infty$-tree -stabilized from $k_0$ on.
\end{pro}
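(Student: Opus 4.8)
The plan is for Player (II) to grow the faithful Haar system one level per round, committing at each round to an infinite pool of admissible future frequencies. Faithfulness will be automatic: at round $k$, for every atom $\Gamma_I=\supp(\tilde h_I)$, $I\in\mathcal D_k$, already fixed at round $k-1$ (at the root, $\Gamma_{[0,1)}=[0,1)$), Player (II) fills $\Gamma_I$ with the scale-$n_k$ intervals $\mathcal A_I=\{K\in\mathcal D_{n_k}:K\subset\Gamma_I\}$, chooses signs $(\varepsilon_K)\in\{\pm1\}^{\mathcal A_I}$, and sets $\tilde h_I=\sum_{K\in\mathcal A_I}\varepsilon_K h_K$; the level-$(k+1)$ atoms are the equal-measure halves $\Gamma_{I^\pm}=\{\tilde h_I=\pm1\}$. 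By \Cref{faithful short formulation} this is a faithful Haar system relative to $(n_k)$, and by \Cref{P:1.4.5} the coefficients of $\tilde D=D|_{\tilde H}$ are $\tilde d_I=|\Gamma_I|^{-1}\sum_{K\in\mathcal A_I}|K|d_K=:\bar d^{(n_k)}_{\Gamma_I}$, the average of $D$'s entries over the intervals filling $\Gamma_I$. So the whole problem is to make these averages change by at most $\eta_k$ from a parent to its children, for each operator from the round it appears on. Since $\tfrac12(\tilde d_{I^+}+\tilde d_{I^-})=\bar d^{(n_{k+1})}_{\Gamma_I}$, the discrepancy $\tilde d_{I^\omega}-\tilde d_I$ decomposes into a \emph{refinement error} $\bar d^{(n_{k+1})}_{\Gamma_I}-\bar d^{(n_k)}_{\Gamma_I}$ (parent and children are sampled at different depths) and a \emph{split-imbalance} $\tilde d_{I^+}-\tilde d_{I^-}$.

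To control the refinement error I would trade depth for compactness. To each $\ell^\infty$-bounded $D$ attach $\phi^D_m=\sum_{K\in\mathcal D_m}d_K\chi_K\in L^\infty[0,1)$; this bounded sequence has, along any infinite set, a weak-$*$ convergent subsequence. At round $k$ Player (II) extracts from $M_k\cap(n_{k-1},\infty)$ an infinite $N^{(k)}$ along which $\phi^D_m\to m^D$ for every $D$ in the finite family $\mathcal F_{\le k}:=\mathcal F_1\cup\cdots\cup\mathcal F_k$; since $N^{(k+1)}\subset M_{k+1}\subset N^{(k)}$, the limit $m^D$ is unambiguous and \emph{all} later frequencies are bound to lie in $N^{(k)}$, whence $\bar d^{(m)}_\Gamma\to|\Gamma|^{-1}\int_\Gamma m^D$ along the whole frequency tail, for every fixed $\Gamma$. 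Player (II) then discards an initial segment and takes $n_k=\min N^{(k)}$ so large that $\bigl|\bar d^{(n_k)}_{\Gamma_I}-|\Gamma_I|^{-1}\int_{\Gamma_I}m^D\bigr|<\eta_k/3$ for all $I\in\mathcal D_k$, $D\in\mathcal F_{\le k}$ (only finitely many frozen atoms are involved). The analogous choice at round $k+1$, where $\eta_k$ is already known, makes $\bigl|\tilde d_{I^\omega}-|\Gamma_{I^\omega}|^{-1}\int_{\Gamma_{I^\omega}}m^D\bigr|<\eta_k/3$.

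It remains to pick the signs so the split is balanced against $m^D$, i.e. $|\Gamma_{I^\omega}|^{-1}\int_{\Gamma_{I^\omega}}m^D\approx|\Gamma_I|^{-1}\int_{\Gamma_I}m^D$. Because each $K\in\mathcal A_I$ sends one of its halves $K^\pm$ to each child according to $\varepsilon_K$, this is the requirement that $\sum_{K\in\mathcal A_I}\varepsilon_K\bigl(\int_{K^+}m^D-\int_{K^-}m^D\bigr)$ be small compared with $|\Gamma_I|$. Here lies the probabilistic core of the argument (the role of the forthcoming concentration lemma): drawing the $\varepsilon_K$ independently and uniformly, Hoeffding's inequality controls this sum, and a union bound over the finitely many $D\in\mathcal F_{\le k}$ and the $2^k$ atoms $I\in\mathcal D_k$ produces, with positive probability, a single deterministic sign pattern for which every imbalance is at most $\beta_k\lesssim\|m^D\|_\infty/\sqrt{|\mathcal A_I|}$. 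As $|\mathcal A_I|=2^{\,n_k-k}$, pushing $n_k$ further out --- which only reinforces the previous paragraph --- makes $\beta_k<\eta_k/3$. The triangle inequality now gives $|\tilde d_I-\tilde d_{I^\omega}|<\eta_k/3+\eta_k/3+\eta_k/3=\eta_k$ for all $k\ge k_0$ and $D\in\mathcal F_{k_0}$, which is $(\eta_k)$-tree-stabilization from $k_0$ on in the sense of \Cref{D:1.4.6}.

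The step I expect to be the main obstacle is the simultaneous, adaptive reconciliation of these two demands. Player (II) must commit to $N^{(k)}$, and thereby to every admissible future frequency, before the later data $\mathcal F_{k'},\eta_{k'}$ are revealed, yet the commitment has to keep all later frequencies both in a regime where the averages of every already-seen operator over every frozen atom have weak-$*$ stabilized and far enough out that the concentration bound beats $\eta_k$ uniformly over all $2^k$ atoms. The nesting $N^{(0)}\supset N^{(1)}\supset\cdots$ is exactly the device that makes ``pass to a good subsequence'' a legal move of the game; checking that one such choice serves every operator from its appearance onward --- so that the ``from $k_0$ on'' clause holds for each $D\in\mathcal F_{k_0}$ --- is the crux, after which only the error budget $\eta_k/3+\eta_k/3+\beta_k$ needs to be tracked.
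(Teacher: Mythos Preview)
Your proof is correct and takes a genuinely different route from the paper's, though both rest on the same concentration estimate (the paper's \Cref{L:1.4.3}, your Hoeffding). The divergence is in how one ensures that a single sign pattern $(\varepsilon_K)_{K\in\mathcal A_I}$ works for every admissible future frequency $n_{k+1}$.

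The paper (via \Cref{one par stab step}) first passes to a set on which the parent averages $\bar d^{(m)}_{\Gamma_I}$ are nearly constant in $m$, then for each individual candidate $n$ runs the probabilistic argument against the scale-$n$ data to produce a sign pattern $\kappa_{I,n}$; since there are only finitely many patterns, a pigeonhole gives an infinite $\Lambda$ on which $\kappa_{I,n}$ is constant, and that constant pattern defines $\tilde h_I$. You instead take a weak-$*$ limit $m^D$ of $\phi^D_m=\sum_{K\in\mathcal D_m}d_K\chi_K$ in $L^\infty$ and balance the split once, against $m^D$; because every later $n_{k+1}$ is drawn from $N^{(k)}$, where $\phi^D_{n_{k+1}}$ is already weak-$*$ close to $m^D$ on each fixed child atom $\Gamma_{I^\omega}$, no pigeonhole is needed. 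Your approach is conceptually appealing --- it makes direct contact with the pointwise-multiplier densities the paper invokes elsewhere (Capon's Step~3b in the introduction, and the functions $m_i^D$ in \Cref{proximity Haar to pointwise}) --- while the paper's pigeonhole is more elementary and keeps the argument purely combinatorial, with no functional-analytic compactness.

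One bookkeeping point worth making explicit: at round $k+1$ you must take $n_{k+1}$ far enough out that $\bigl|\bar d^{(n_{k+1})}_{\Gamma_J}-|\Gamma_J|^{-1}\int_{\Gamma_J}m^D\bigr|$ falls below both $\eta_{k+1}/3$ (for the first term at level $k{+}1$) and $\eta_k/3$ (for the third term of the level-$k$ estimate), i.e., below $\tfrac13\min(\eta_k,\eta_{k+1})$; both tolerances are known by then, so this is harmless.
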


\begin{proof}[Proof of \Cref{P:1.4.6} using \Cref{stabilizing game}]
  Let $D$ and $\eta = (\eta_j)_{j=0}^\infty$ be given and let $\lambda = \lambda_\mathcal{U}(D)$, for some non-principal ultrafilter $\mathcal{U}$ on
  $\mathbb{N}$.  Let $M$ be an infinite set such that $\lim_{i\in M}\sum_{i\in\mathcal{D}_i}|I|d_I = \lambda$.  In the game above, we assume the role
  of Player (I) and let Player (II) follow their winning strategy.  In round $k$, we chose as error the given $\eta_k$ and $\mathcal{F}_{k} = \{D\}$.
  The specific choice of $M_k$ only matters in the first round, in which we choose $M_0 = M$.

  The resulting faithful Haar system $\tilde H = (\tilde h_I)_{I\in\mathcal{D}}$ is relative to some frequencies $(m_i)_{i=0}^\infty$ with
  $\{m_i:i\in\mathbb{N}_0\}\subset M$ and $D|_{\tilde H}$ is $\eta$-tree-stabilized.  By \Cref{P:1.4.5}
  $\lim_i\sum_{I\in\mathcal{D}_i}|I|\tilde d_I = \lim_i\sum_{I\in\mathcal{D}_{m_i}}|I| d_I = \lambda$ and by \Cref{D:1.4.6},
  $\|\tilde D - \lambda I\|_\mathrm{T} \leq3 \sum_{i=0}^\infty2^i\eta_i.$
\end{proof}

The choice of the appropriate faithful Haar system $\tilde H$ is based on the following probabilistic lemma, which was first introduced
in~\cite{lechner:motakis:mueller:schlumprecht:2021} (see also~\cite[Lemma~8.3]{lechner:speckhofer:2023}).
\begin{lem}\label{L:1.4.3}
  Let $m< n$, $\Gamma\in\sigma(\mathcal{D}_m)$, $(d_I:I\in \mathcal{D}_n, I\subset \Gamma)\subset \mathbb{R}$ and put for
  $\theta=(\theta_{J}:J\in \mathcal{D}_m, J\subset \Gamma)\subset\{\pm1\}$
  \begin{equation*}
    \Gamma^\omega(\theta)
    = \Big \{ \sum_{J\in \mathcal{D}_m\atop J\subset \Gamma} \theta_{J} h_J=\omega \Big\},
    \qquad \text{for $\omega=\pm1$}.
  \end{equation*}
  Let $\Theta = (\Theta_{J}:J\in \mathcal{D}_m, J\subset \Gamma)$ be a Rademacher distributed family on a probability space $(\Omega,\Sigma,\prob)$,
  meaning that the family $(\Theta_{J}:J\in \mathcal{D}_m, J\subset \Gamma)$ is independent and
  \begin{equation*}
    \prob(\Theta_{J}=1)
    = \prob(\Theta_{J}=-1)
    = \frac{1}{2},
    \qquad J\in \mathcal{D}_m,\ J\subset \Gamma.
  \end{equation*}
  For $\omega=\pm1$ define the random variable
  \begin{equation*}
    X_\omega
    = \sum_{I\in \mathcal{D}_n,\atop I\subset \Gamma^\omega(\Theta)} d_I \frac{|I|}{| \Gamma^\omega(\Theta)|}
    = \sum_{I\in \mathcal{D}_n,\atop I\subset \Gamma^\omega(\Theta)} \frac{|I|}{|\Gamma|/2} d_I.
  \end{equation*}
  Then it follows for the expected value of $X_\omega$
  \begin{equation}\label{E:1.4.3.1}
    \cond(X_\omega)
    = \sum_{I\in \mathcal{D}_n,\atop I\subset \Gamma} \frac{|I|}{|\Gamma|} d_I,
  \end{equation}
  and for the variance of $X_\omega$
  \begin{align}\label{E:1.4.3.2}
    \var(X_\omega)
    &= \cond\Big(\Big(X_\omega-\cond(X_\omega)\Big)^2\Big)
      \le \frac{2^{-m}}{|\Gamma|} \max_{I\in \mathcal{D}_n, I\subset\Gamma}|d_I|^2.
  \end{align} 
\end{lem}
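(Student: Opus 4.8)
The plan is to reduce everything to the family of indicator variables $\xi_I = \charfun\{I\subset \Gamma^\omega(\Theta)\}$, indexed by $I\in\mathcal{D}_n$ with $I\subset\Gamma$, and to exploit the very simple dependence structure these inherit from the signs $\Theta$. The first point I would record is a purely deterministic observation: since $\Gamma\in\sigma(\mathcal{D}_m)$, the sum $\sum_{J\subset\Gamma}\theta_J h_J$ restricted to a single $J\in\mathcal{D}_m$ equals $\theta_J h_J$, which is $+\theta_J$ on $J^+$ and $-\theta_J$ on $J^-$; hence each such $J$ contributes exactly one of its two halves to $\Gamma^\omega(\theta)$, and therefore $|\Gamma^\omega(\theta)| = |\Gamma|/2$ identically in $\theta$ and $\omega$. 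This is what justifies rewriting $X_\omega = \frac{2}{|\Gamma|}\sum_{I\subset\Gamma}|I|d_I\xi_I$ with a constant normalization, so that all randomness sits in the $\xi_I$.

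Next I would isolate the key structural fact: the value of $\xi_I$ depends only on the single sign $\Theta_J$, where $J\in\mathcal{D}_m$ is the unique coarser interval containing $I$. Indeed, if $I\subset J^+$ then $\{I\subset\Gamma^\omega(\Theta)\}$ is precisely the event $\{\Theta_J = \omega\}$, and if $I\subset J^-$ it is $\{\Theta_J = -\omega\}$; in either case $\prob(\xi_I = 1) = 1/2$, so $\cond(\xi_I) = 1/2$. Linearity of expectation then gives $\cond(X_\omega) = \frac{1}{|\Gamma|}\sum_{I\subset\Gamma}|I|d_I$, which is exactly \eqref{E:1.4.3.1}.

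For the variance I would group the sum by these containing intervals, writing $\sum_{I\subset\Gamma}|I|d_I\xi_I = \sum_{J\subset\Gamma} Y_J$ with $Y_J = \sum_{I\subset J}|I|d_I\xi_I$. Since each $Y_J$ is a function of $\Theta_J$ alone and the $(\Theta_J)$ are independent, the $Y_J$ are independent and the variance is additive over $J$. A direct check shows that $Y_J$ takes only the two values $a_J = \sum_{I\subset J^+}|I|d_I$ and $b_J = \sum_{I\subset J^-}|I|d_I$, each with probability $1/2$ (the roles of $a_J$ and $b_J$ merely swap when $\omega$ changes sign), so that $\var(Y_J) = (a_J - b_J)^2/4$. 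Because each half $J^\pm$ is partitioned into intervals of total length $2^{-m-1}$, I can bound $|a_J|,|b_J|\le 2^{-m-1}\max_{I\subset J}|d_I|$, hence $(a_J - b_J)^2 \le 2^{-2m}\max_{I\subset J}|d_I|^2$.

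Finally I would sum over the $2^m|\Gamma|$ intervals $J\subset\Gamma$ to get $\sum_J\var(Y_J)\le \frac{1}{4}\,2^{-2m}\cdot 2^m|\Gamma|\max_{I\subset\Gamma}|d_I|^2$, and multiplying by the prefactor $(2/|\Gamma|)^2$ collapses this to exactly $\frac{2^{-m}}{|\Gamma|}\max_{I\subset\Gamma}|d_I|^2$, giving \eqref{E:1.4.3.2}. I do not expect a genuine obstacle here: the single conceptual step is recognizing the independence-by-containment structure of the $\xi_I$, and once that is in place both the exact mean and the sharp variance bound follow mechanically, with no concentration inequality required at this stage.
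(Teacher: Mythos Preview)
Your proof is correct and follows essentially the same approach as the paper: both decompose the sum according to the containing intervals $J\in\mathcal{D}_m$, use that the pieces $Y_J$ depend only on $\Theta_J$ and are therefore independent, and bound each $\var(Y_J)$ by $\max|d_I|^2\cdot|J|^2$ before summing. Your presentation via the indicators $\xi_I$ is slightly more explicit than the paper's direct manipulation of the sums, but the argument is the same.
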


\begin{proof}
  Without loss of generality we assume $\omega=1$.  We compute
  \begin{align*}
    \cond\Big(\sum_{I\in \mathcal{D}_n,\atop I\subset \Gamma^+(\Theta)} d_I \frac{|I|}{| \Gamma^+(\Theta)|}\Big)
    &= \!\sum_{J\in \mathcal{D}_m, \atop J\subset \Gamma} \cond\Big(\sum_{I\in \mathcal{D}_n,\atop I\subset J\cap\Gamma^+(\Theta)} d_I \frac{2|I|}{| \Gamma|}\Big)
      = \!\sum_{J\in \mathcal{D}_m, \atop J\subset \Gamma} \cond\Big(\sum_{I\in \mathcal{D}_n,\atop I\subset J^{\Theta_{J}}} d_I \frac{2|I|}{| \Gamma|}\Big)
      =\!\sum_{I\in \mathcal{D}_n,\atop I\subset \Gamma} d_I \frac{|I|}{|\Gamma|} ,
  \end{align*}
  where the last equality follows from the fact that for $J\in \mathcal{D}_m$, $J\subset \Gamma$, and $I\in \mathcal{D}_n$, with $I\subset J$ we have
  \begin{equation*}
    \prob(I\subset J\cap \Gamma^+(\Theta))
    = \prob(I\subset J^{\Theta_{J}})
    = \frac{1}{2}.
  \end{equation*}
  Since the family of random variables
  \begin{equation*}
    \Big(\sum_{I\in \mathcal{D}_n,\atop I\subset J^{\Theta_{J}}} d_I \frac{2|I|}{|\Gamma|}: J\in \mathcal{D}_m,\ J \subset \Gamma\Big)
  \end{equation*}
  is independent, we deduce that
  \begin{align*}
    \cond\Big(\sum_{I\in \mathcal{D}_n,\atop I\subset \Gamma^+(\Theta)}
    d_I \frac{|I|}{| \Gamma^+(\Theta)|}- \sum_{I\in \mathcal{D}_n,\atop I\subset \Gamma} d_I \frac{|I|}{|\Gamma|}\Big)^2
    &= \sum_{J\in \mathcal{D}_m, \atop J\subset \Gamma} \cond\Big(\sum_{I\in \mathcal{D}_n,\atop I\subset J^{ \Theta_{J}}} d_I \frac{2|I|}{|\Gamma|}-
      \sum_{I\in \mathcal{D}_n,\atop I\subset J} d_I \frac{|I|}{|\Gamma|} \Big)^2\\
    &= \sum_{J\in \mathcal{D}_m, \atop J\subset \Gamma} \cond\Big(\sum_{I\in \mathcal{D}_n,\atop I\subset J^{\Theta_{J}}} d_I \frac{|I|}{|\Gamma|}-
      \sum_{I\in \mathcal{D}_n,\atop I\subset J^{-\Theta_{J}}} d_I \frac{|I|}{|\Gamma|} \Big)^2\\
    &\le \frac{\max_{I\in \mathcal{D}_n, I\subset\Gamma}|d_I|^2}{|\Gamma|^2}\sum_{J\in \mathcal{D}_m\atop J\subset \Gamma} |J|^2
      = \frac{2^{-m}}{|\Gamma|}\max_{I\in \mathcal{D}_n, I\subset\Gamma}|d_I|^2.
  \end{align*}
\end{proof}

\begin{rem}\label{simultaneous probabilistic choice}
  From \Cref{L:1.4.3} it follows that, under the assumption stated in the Lemma, for any $\delta>0$,
  \begin{equation*}
    \mathbb{P}\Big(
    \max_{\omega=\pm1}\Big|X_\omega - \sum_{J\in\mathcal{D}_m\atop J\subset \Gamma} \frac{|J|}{|\Gamma|} d_J\Big|
    \leq  \delta + \Big|
    \sum_{J\in\mathcal{D}_m\atop J\subset \Gamma} \frac{|J|}{|\Gamma|} d_J - \sum_{L\in\mathcal{D}_n\atop L\subset \Gamma} \frac{|L|}{|\Gamma|} d_L
    \Big|
    \Big)
    \geq 1 - \frac{2^{-m}}{\delta^2|\Gamma|} \max_{I\in\mathcal{D}_n\atop I\subset\Gamma} d_I^2.
  \end{equation*}
  Indeed, because $X_1 + X_{-1} = 2\cond(X_1) = 2\cond(X_{-1})$ we have $|X_1 - \cond(X_1)| = |X_{-1} - \cond(X_{-1})|$ and thus
  \begin{align*}
    \max_{\omega=\pm1}\Big|X_\omega - \sum_{J\in\mathcal{D}_m\atop J\subset \Gamma} \frac{|J|}{|\Gamma|} d_J\Big|
    &\leq \max_{\omega=\pm 1} |X_\omega - \mathbb E(X_\omega)|
      + \Big|
      \sum_{J\in\mathcal{D}_m\atop J\subset \Gamma} \frac{|J|}{|\Gamma|} d_J
      - \sum_{L\in\mathcal{D}_n\atop L\subset \Gamma} \frac{|L|}{|\Gamma|} d_L
      \Big|\\
    &= |X_1 - \cond(X_1)|
      + \Big|
      \sum_{J\in\mathcal{D}_m\atop J\subset \Gamma} \frac{|J|}{|\Gamma|} d_J
      -  \sum_{L\in\mathcal{D}_n\atop L\subset \Gamma} \frac{|L|}{|\Gamma|} d_L
      \Big|.
  \end{align*}
  The conclusion follows from Chebyshev's inequality.

  Therefore, if we have fixed $m$, $n$, and $\Gamma$ and a varying, but finite, collection of sets of coefficients
  $(d_I^\alpha:I\in\mathcal{D}_n, I\subset \Gamma)$, $\alpha\in\mathcal{A}$, such that
  \begin{equation*}
    \frac{2^{-m/2}}{|\Gamma|^{1/2}}
    \max_{I\subset \mathcal{D}_n, I\subset\Gamma\atop\alpha\in\mathcal{A}} |d^\alpha_I|
    \leq \frac{\delta}{|\mathcal{A}|^{1/2}},
  \end{equation*}
  we can choose a common $(\theta_{J}: J\in \mathcal{D}_m, J\subset \Gamma)\subset\{\pm 1\}$ such that for $\omega\in\{\pm1\}$ and
  \begin{equation*}
    \Gamma^{\omega}
    = \big\{ \sum_{J\in\mathcal{D}_m, J\subset \Gamma} \theta_{J} h_J
    = \omega \big\}
  \end{equation*}
  it follows that for all $\alpha\in\mathcal{A}$,
  \begin{equation}\label{simultaneously close successor}
    \biggl|
    \sum_{I\in \mathcal{D}_n,\atop I\subset\Gamma^{\omega}} d^\alpha_I \frac{|I|}{|\Gamma^{\omega}|}
    - \sum_{J\in \mathcal{D}_m,\atop J\subset \Gamma}  d^\alpha_J \frac{|J|}{|\Gamma|}
    \biggr|
    \leq \delta + \biggl|
    \sum_{J\in\mathcal{D}_m\atop J\subset \Gamma} \frac{|J|}{|\Gamma|} d^\alpha_J
    - \sum_{L\in\mathcal{D}_n\atop L\subset \Gamma} \frac{|L|}{|\Gamma|} d^\alpha_L
    \biggr|.
  \end{equation}
\end{rem}

\begin{ntn}
  For $i\in\mathbb{N}$ and $I\in\mathcal{D}_i$ we denote $\pi(I)$ the unique $K\in\mathcal{D}_{i-1}$ such that $I = K^+$ or $I = K^-$.
\end{ntn}

\begin{lem}\label{one par stab conditions}
  Let $D\colon \mathcal{V}(\delta)\to \mathcal{V}(\delta)$ be an $\ell^{\infty}$-bounded operator with coefficients $(d_I)_{I\in\mathcal{D}}$,
  $d = \sup_{I\in \mathcal{D}} |d_{I}|$, $k_0\in\mathbb{N}_0$, and $\eta = (\eta_i)_{i=k_0}^\infty\subset(0,1)$.  If $k_0 >0$, additionally assume
  that we have an initially defined faithful Haar system $(\tilde h_I)_{I\in\mathcal{D}^{k_0-1}}$ relative to frequencies $(m_i)_{i=0}^{k_0 - 1}$.
  Let $\mathbb{N} \supset N^{(k_0)}\supset N^{(k_0+1)}\supset\cdots$ be such that, if $m_i = \min(N^{(k_0)})$, for $i\geq k_0$, then
  $(m_i)_{i=0}^\infty$ is strictly increasing and let $\theta^{(i)} = (\theta^{(i)}_{L}: L\in \mathcal{D}_{m_i})\subset \{\pm1\}$, $i\geq k_0$, be
  such that for $i\geq k_0$, $(\tilde h_I)_{I\in\mathcal{D}_i}$ are defined as below and the following are satisfied.
  \begin{enumerate}[label=(\roman*)]
  \item\label{initial} If $k_0 = 0$ then
    \begin{equation*}
      \tilde h_{[0,1)}
      = \sum_{L\in \mathcal{D}_{m_0}} \theta^{(0)}_{L} h_L
    \end{equation*}
    and for all $m,n\in N^{(0)}$
    \begin{equation*}
      \Big|\sum_{J\in\mathcal{D}_m} |J| d_J - \sum_{L\in\mathcal{D}_n} |L| d_L\Big|
      < \eta_0.
    \end{equation*}

  \item\label{continuing} For $i\geq \max\{1,k_0\}$, $I\in\mathcal{D}_i$, and $\omega \in\{\pm 1\}$ such that $I = \pi(I)^{\omega}$ we have
    \begin{equation*}
      \tilde h_I
      = \sum_{L\in \mathcal{D}_{m_i}\atop L\subset \{\tilde h_{\pi(I)}=\omega\}} \theta^{(i)}_{L} h_L.
    \end{equation*}

  \item\label{successor} For $i\geq k_0$, $I\in\mathcal{D}_i$ and $\omega\in\{\pm 1\}$
    \begin{equation*}
      \Big |
      \sum_{J\in \mathcal{D}_{m_i}\atop J \subset \supp(\tilde h_I)} \frac{|J|}{|I|} d_J
      - \sum_{L\in \mathcal{D}_{m_{i+1}},\atop L\subset \{\tilde h_I=\omega\}} \frac{|L|}{|I|/2} d_L
      \Big|
      < \eta_i.
    \end{equation*}
  \end{enumerate}
  Then for $\tilde H = (\tilde h_I)_{I\in\mathcal{D}}$ the diagonal operator $\tilde D = D|_{\tilde H}$ is $\eta$-tree-stabilized from $k_0$ on.
  Furthermore, if $k_0 = 0$ and $\lambda$ is a limit point of $(\sum_{J\in\mathcal{D}_n} |J| d_J)_{n\in N^{(0)}}$, then
  $|\tilde d_{[0,1)} - \lambda| \leq \eta_0$.
\end{lem}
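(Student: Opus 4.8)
The plan is to recognize that hypotheses~(i)--(iii) are engineered so that the conclusion drops out by matching the coefficient formula of \Cref{P:1.4.5} against the estimate in~(iii). So the first task is to certify that $\tilde H = (\tilde h_I)_{I\in\mathcal{D}}$ really is a faithful Haar system relative to the frequencies $(m_i)_{i=0}^\infty$, since this is what licenses \Cref{P:1.4.5}. For this I would invoke the short formulation of faithfulness (\Cref{faithful short formulation}): condition~(i) (or, when $k_0>0$, the pre-assigned initial system) gives $\supp(\tilde h_{[0,1)}) = [0,1)$ and writes $\tilde h_{[0,1)}$ as a $\{-1,0,1\}$-combination of $(h_L)_{L\in\mathcal{D}_{m_0}}$, while condition~(ii) writes each $\tilde h_I$, $I\in\mathcal{D}_i$, as a $\{-1,0,1\}$-combination of $(h_L)_{L\in\mathcal{D}_{m_i}}$. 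Moreover, since the sum in~(ii) runs over \emph{all} $L\in\mathcal{D}_{m_i}$ with $L\subset\{\tilde h_{\pi(I)}=\omega\}$, it yields the support identity $\supp(\tilde h_{I^\omega}) = \{\tilde h_I=\omega\}$ for every $I$ and $\omega\in\{\pm1\}$. These are exactly the two bullets of \Cref{faithful short formulation}, so $\tilde H$ is faithful relative to $(m_i)$; for $k_0>0$ the levels below $k_0$ are supplied by the initial system and~(ii) extends it consistently across the transition from level $k_0-1$ to $k_0$.

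With faithfulness established, \Cref{P:1.4.5} gives, for $I\in\mathcal{D}_i$,
\[
  \tilde d_I = \sum_{J\in\mathcal{D}_{m_i},\,J\subset\supp(\tilde h_I)} \frac{|J|}{|I|}\,d_J .
\]
Next I would compute the successor coefficient. Fixing $i\geq k_0$, $I\in\mathcal{D}_i$ and $\omega\in\{\pm1\}$, and using $|I^\omega| = |I|/2$ together with the support identity $\supp(\tilde h_{I^\omega}) = \{\tilde h_I=\omega\}$, the same formula (now at level $i+1$ with frequency $m_{i+1}$) becomes
\[
  \tilde d_{I^\omega} = \sum_{L\in\mathcal{D}_{m_{i+1}},\,L\subset\{\tilde h_I=\omega\}} \frac{|L|}{|I|/2}\,d_L .
\]
Subtracting, $|\tilde d_I - \tilde d_{I^\omega}|$ is verbatim the left-hand side of hypothesis~(iii), hence $<\eta_i$. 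Since this holds for every $i\geq k_0$, every $I\in\mathcal{D}_i$, and every $\omega\in\{\pm1\}$, the operator $\tilde D = D|_{\tilde H}$ is $\eta$-tree-stabilized from $k_0$ on in the sense of \Cref{D:1.4.6}.

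For the final assertion, take $k_0=0$. Here $\supp(\tilde h_{[0,1)}) = [0,1)$ and $|[0,1)|=1$, so the coefficient formula collapses to $\tilde d_{[0,1)} = \sum_{J\in\mathcal{D}_{m_0}}|J|\,d_J$, where $m_0 = \min(N^{(0)})\in N^{(0)}$. If $\lambda$ is a limit point of $(\sum_{J\in\mathcal{D}_n}|J|d_J)_{n\in N^{(0)}}$, I would select a subsequence in $N^{(0)}$ along which these averages converge to $\lambda$; applying the inequality in hypothesis~(i) with $m=m_0$ and passing to this limit upgrades the strict bound to $|\tilde d_{[0,1)} - \lambda|\leq\eta_0$, as claimed.

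As for the main obstacle, there is no genuine one: condition~(iii) is tailored precisely so that the tree-stabilization estimate is immediate once the formula of \Cref{P:1.4.5} is in force. The only point requiring care is the bookkeeping that certifies faithfulness and the support identity $\supp(\tilde h_{I^\omega})=\{\tilde h_I=\omega\}$, together with checking that for $k_0>0$ the recursive definition in~(ii) meshes correctly with the pre-assigned initial faithful system. The lemma is essentially a clean per-round invariant, which is exactly what one verifies when running the strategy of \Cref{stabilizing game}.
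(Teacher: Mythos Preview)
Your proof is correct and follows essentially the same approach as the paper: establish faithfulness from (i) and (ii), then invoke \Cref{P:1.4.5} so that $|\tilde d_I - \tilde d_{I^\omega}|$ becomes exactly the quantity in (iii), and read off the ``furthermore'' from the second part of (i). Your write-up is simply more explicit about the faithfulness bookkeeping and the limit argument for $\lambda$.
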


\begin{proof}
  It follows from~\ref{initial} and~\ref{continuing} that $(\tilde h_I:I\in \mathcal{D})$ is a faithful Haar system relative to the frequencies
  $(m_i)_{i=0}^\infty$.  For $i\geq k_0$, $I\in \mathcal{D}_i$, and $\omega\in\{\pm1\}$ we have, by \Cref{P:1.4.5}, that
  \begin{equation*}
    |\tilde d_I - \tilde d_{I^\omega}|
    = \Big|
    \sum_{J\in \mathcal{D}_{m_i}\atop J \subset \supp(\tilde h_I)} \frac{|J|}{|I|} d_J
    - \sum_{L\in \mathcal{D}_{m_{i+1}},\atop L\subset \{\tilde h_I=\omega\}} \frac{|L|}{|I|/2} d_L
    \Big|
    < \eta_i.
  \end{equation*}
  The ``furthermore'' statement is an immediate consequence of the second part of~\ref{initial}.
\end{proof}

For the purpose of the following lemma, an initially defined faithful Haar system relative to frequencies $(m_i)_{i=0}^{k_0}$ is a finite collection
$(\tilde h_I)_{I\in\mathcal{D}^{k_0}}$ that satisfies \Cref{dfn:faithful}~\ref{dfn:faithful i} for $I\in\mathcal{D}^{k_0}$, \ref{dfn:faithful ii} for
$I\in\mathcal{D}^{k_0-1}$, and \ref{D:1.4.1} for $i\leq k_0$.

\begin{lem}\label{one par stab step}
  Let $D_\alpha\colon \mathcal{V}(\delta)\to \mathcal{V}(\delta)$, $\alpha\in\mathcal{A}$, be a finite collection of $\ell^{\infty}$-bounded
  operators, each with coefficients $(d^\alpha_I)_{I\in\mathcal{D}}$.  Let $k_0\in\mathbb{N}_0$, $\delta > 0$, and $N$ and be an infinite subset of
  $\mathbb{N}$.  If $k_0 > 0$, let also $(\tilde h_I)_{I\in \mathcal{D}^{k_0-1}}$ be an initially defined faithful Haar system relative to frequencies
  $(m_i)_{i=0}^{k_0-1}$.  Then, there exist an infinite subset $N^{(k_0)}$ of $N$ and for $m_{k_0} = \min(N^{(k_0)})$ there exist
  $(\theta^{(k_0)}_{L} : L\in \mathcal{D}_{m_{k_0}})\subset \{\pm1\}$ such that $(\tilde h_I)_{I\in\mathcal{D}_{k_0}}$ are defined as follows and the
  following are satisfied.
  \begin{enumerate}[label=(\roman*)]
  \item\label{initial step} If $k_0 = 0$ then
    \begin{equation*}
      \tilde h_{[0,1)}
      = \sum_{L\in \mathcal{D}_{m_0}} \theta^{(0)}_{L} h_L
    \end{equation*}
    and for all $m,n\in N^{(0)}$ and $\alpha\in\mathcal{A}$
    \begin{equation*}
      \Big|\sum_{J\in\mathcal{D}_m} |J| d^\alpha_J - \sum_{L\in\mathcal{D}_n} |L| d^\alpha_L\Big|
      < \delta.
    \end{equation*}
  \item\label{continuing step} If $k_0\geq 1$ then $m_{k_0}>m_{k_0-1}$.  Also, for $I\in\mathcal{D}_{k_0}$ and $\omega \in\{\pm 1\}$ such that
    $I = \pi(I)^{\omega}$,
    \begin{equation*}
      \tilde h_I
      = \sum_{L\in \mathcal{D}_{m_{k_0}}\atop L\subset \{\tilde h_{\pi(I)}=\omega\}}
      \theta^{(k_0)}_{L} h_L.
    \end{equation*}

  \item\label{successor step} For $I\in\mathcal{D}_{k_0}$, $\omega\in\{\pm 1\}$, $n\in N^{(k_0)}\setminus\{m_{k_0}\}$, and $\alpha\in\mathcal{A}$
    \begin{equation*}
      \Big|
      \sum_{J\in \mathcal{D}_{m_{k_0}}\atop J \subset \supp(\tilde h_I)} \frac{|J|}{|I|} d^\alpha_J
      - \sum_{L\in \mathcal{D}_{n}\atop L\subset \{\tilde h_I=\omega\}} \frac{|L|}{|I|/2} d^\alpha_L
      \Big|
      < \delta.
    \end{equation*}
  \end{enumerate}
  In particular, $(\tilde h_I)_{I\in\mathcal{D}^{k_0}}$ is an initially defined faithful Haar system relative to the frequencies $(m_i)_{i=0}^{k_0}$.
\end{lem}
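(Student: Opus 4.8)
The plan is to carry out the single induction step probabilistically, with \Cref{L:1.4.3} and \Cref{simultaneous probabilistic choice} as the engine, together with a preliminary thinning of $N$ that allows one choice of signs at level $m_{k_0}$ to control \ref{successor step} \emph{simultaneously for every} admissible $n$. First I would identify the averages that must be stabilized. By \Cref{P:1.4.5}, once the signs $(\theta^{(k_0)}_L)$ are fixed and $\tilde h_I$ is defined via \ref{initial step} and \ref{continuing step}, the two quantities in \ref{successor step} are exactly the level-$m_{k_0}$ average of $(d^\alpha_J)$ over $\Gamma_I:=\supp(\tilde h_I)$ and the level-$n$ average of $(d^\alpha_L)$ over the half $\{\tilde h_I=\omega\}\subset\Gamma_I$. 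Here, for $I\in\mathcal{D}_{k_0}$ with $I=\pi(I)^{\omega'}$, the set $\Gamma_I=\{\tilde h_{\pi(I)}=\omega'\}$ (for $k_0=0$, $\Gamma_{[0,1)}=[0,1)$) is already determined by the initially defined system on $\mathcal{D}^{k_0-1}$ and is a finite union of dyadic intervals. For any such $\Gamma$ write $a^\alpha_n(\Gamma)=\frac{1}{|\Gamma|}\sum_{L\in\mathcal{D}_n,\,L\subset\Gamma}|L|d^\alpha_L$ for its level-$n$ average.

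The first move is a diagonal thinning. Each sequence $(a^\alpha_n(K))_{n\in N}$ is bounded by $\sup_I|d^\alpha_I|<\infty$, so a diagonal argument over the countably many $K\in\mathcal{D}$ and the finitely many $\alpha\in\mathcal{A}$ yields an infinite $N_0\subset N$ along which $a^\alpha_n(K)\to\ell^\alpha(K)$ for every dyadic $K$ and every $\alpha$; hence $a^\alpha_n(\Gamma)\to\ell^\alpha(\Gamma)$ for every finite union $\Gamma$ of dyadic intervals. When $k_0=0$ this already delivers the second requirement of \ref{initial step}: after thinning $N_0$ so that all its terms lie within $\delta/2$ of $\ell^\alpha([0,1))$, all pairwise differences are $<\delta$.

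Next I would fix the signs by a balancing argument. Choose $m_{k_0}\in N_0$ with $m_{k_0}>m_{k_0-1}$, large enough that the variance bound of \Cref{L:1.4.3} is small for every block $\Gamma_I$ (there are at most $2^{k_0}$ of them). Applying \Cref{simultaneous probabilistic choice} over the finitely many disjoint blocks $\Gamma_I$, $I\in\mathcal{D}_{k_0}$, and the finite family $\alpha\in\mathcal{A}$, at splitting level $m_{k_0}$ against a single auxiliary deep level $n^*\in N_0$ chosen so that $a^\alpha_{n^*}(\Gamma_I)$ and $a^\alpha_{m_{k_0}}(\Gamma_I)$ are both within $\delta/8$ of $\ell^\alpha(\Gamma_I)$, I obtain signs $(\theta^{(k_0)}_L)_{L\in\mathcal{D}_{m_{k_0}},\,L\subset\Gamma_I}$ for which $|a^\alpha_{n^*}(\{\tilde h_I=\omega\})-a^\alpha_{m_{k_0}}(\Gamma_I)|<\delta/2$ for all $I,\omega,\alpha$. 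These signs define $\tilde h_I$ for $I\in\mathcal{D}_{k_0}$, so \ref{continuing step} (respectively the first part of \ref{initial step}) holds and, by \Cref{faithful short formulation} together with \Cref{dfn:faithful}, $(\tilde h_I)_{I\in\mathcal{D}^{k_0}}$ is an initially defined faithful Haar system relative to $(m_i)_{i=0}^{k_0}$. Since each half $\{\tilde h_I=\omega\}$ is now a fixed finite union of level-$m_{k_0}$ dyadic intervals, $a^\alpha_n(\{\tilde h_I=\omega\})\to\ell^\alpha(\{\tilde h_I=\omega\})$ along $N_0$; taking $n^*$ large enough that $a^\alpha_{n^*}(\{\tilde h_I=\omega\})$ is within $\delta/8$ of this limit upgrades the estimate to $|\ell^\alpha(\{\tilde h_I=\omega\})-a^\alpha_{m_{k_0}}(\Gamma_I)|<\delta/2+\delta/8$.

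Finally I would propagate to all $n$ by a last thinning: let $N^{(k_0)}$ consist of $m_{k_0}$ together with those $n\in N_0$, $n>m_{k_0}$, for which $|a^\alpha_n(\{\tilde h_I=\omega\})-\ell^\alpha(\{\tilde h_I=\omega\})|<\delta/8$ for all $I,\omega,\alpha$; this is cofinite in $N_0\cap(m_{k_0},\infty)$, hence infinite, and $\min N^{(k_0)}=m_{k_0}$. A triangle inequality then gives, for every $n\in N^{(k_0)}\setminus\{m_{k_0}\}$, the bound $|a^\alpha_{m_{k_0}}(\Gamma_I)-a^\alpha_n(\{\tilde h_I=\omega\})|<\delta$, which is precisely \ref{successor step}. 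The main obstacle is exactly this uniformity in $n$: the concentration estimate of \Cref{L:1.4.3} controls only finitely many constraints, so one cannot union-bound over the infinitely many admissible $n$ at once. Passing first to the subsequential limits $\ell^\alpha(\cdot)$ of the dyadic averages is what lets the probabilistic choice of signs target a single fixed object, with the residual discrepancy absorbed by the final thinning; the remaining points are the routine verification of the faithfulness conditions, supplied by \Cref{P:1.4.5}, \Cref{faithful short formulation}, and the format of \Cref{one par stab conditions}.
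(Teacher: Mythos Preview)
Your argument is correct and reaches the same conclusion, but the mechanism you use to obtain uniformity in $n$ is genuinely different from the paper's. The paper first thins $N$ to an infinite $M$ on which the level averages over each support $\tilde I$ are pairwise $\delta/2$-close, then for \emph{each} $n\in M\setminus\{m_{k_0}\}$ invokes \Cref{simultaneous probabilistic choice} to produce a sign vector $\kappa_{I,n}$ making \ref{successor step} hold at that specific $n$; since the sign vectors live in the finite set $\{\pm1\}^{\mathcal{D}_{m_{k_0}}}$, a pigeonhole argument extracts an infinite $\Lambda\subset M$ on which $\kappa_{I,n}$ is constant, and $N^{(k_0)}=\{m_{k_0}\}\cup\Lambda$. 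You instead pass to subsequential limits $\ell^\alpha(K)$ of the dyadic averages, apply the probabilistic lemma once against a single deep level $n^*$, and then exploit that the resulting halves $\{\tilde h_I=\omega\}$ are fixed finite unions of intervals in $\mathcal{D}_{m_{k_0}+1}$, so their averages converge along $N_0$ and a final cofinite thinning delivers \ref{successor step} for all remaining $n$. Your route trades the pigeonhole step for a limit-and-thin step; it is slightly more analytic in flavor and avoids repeatedly invoking the concentration estimate, while the paper's route is purely finitary and sidesteps any discussion of limits.

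One point in your write-up should be tightened. You choose $n^*$ so that $a^\alpha_{n^*}(\Gamma_I)$ is close to $\ell^\alpha(\Gamma_I)$, obtain the signs, and then write ``taking $n^*$ large enough that $a^\alpha_{n^*}(\{\tilde h_I=\omega\})$ is within $\delta/8$ of this limit.'' As stated this is circular: the halves $\{\tilde h_I=\omega\}$ depend on the signs, which depend on $n^*$. The clean fix is to impose the stronger, sign-independent condition up front: choose $n^*\in N_0$ so large that $|a^\alpha_{n^*}(K)-\ell^\alpha(K)|<\delta/8$ for every $K\in\mathcal{D}_{m_{k_0}+1}$ and every $\alpha$ (a finite list of constraints). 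Then, whatever signs the probabilistic lemma returns, each half is a disjoint union of such $K$ and its level-$n^*$ average is automatically within $\delta/8$ of its limit by convexity. With that adjustment the argument goes through exactly as you outlined.
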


\begin{proof}
  The only notable difference in the case $k_0 = 0$ is that the second part of \ref{initial step} must be additionally achieved.  This can be easily
  achieved using the $\ell^{\infty}$-boundedness of the given diagonal operators.  Therefore, we assume that $k_0 \geq 1$.  For each
  $I\in\mathcal{D}_{k_0}$ denote $\tilde I = \{\tilde h_{\pi(I)} = \xi\}$, where $\xi\in\{\pm1\}$ is such that $I = \pi(I)^\xi$.  Choose an infinite
  $M\subset N$ with $m_{k_0} = \min(M) >m_{k_0-1}$ and
  \begin{equation*}
    2^{(k_0-m_{k_0})/2}\max_{\alpha\in\mathcal{A}}\|D_\alpha\|_\infty
    < \frac{\delta/2}{|\mathcal{A}|^{1/2}}
  \end{equation*} such that for all $I\in\mathcal{D}_{k_0}$ and $m,n\in M$
  we have
  \begin{equation}\label{appropriate proximity}
    \Big|
    \sum_{K\in \mathcal{D}_{m}\atop K\subset \tilde I } \frac{|K|}{|I|} d^\alpha_K
    - \sum_{L\in \mathcal{D}_{n}\atop L\subset \tilde I} \frac{|L|}{|I|} d^\alpha_L
    \Big|
    < \delta/2.
  \end{equation}

  We next wish to apply \Cref{simultaneous probabilistic choice} and specify parameters $m, n$ and $\Gamma$.  We take $m = m_{k_0}$ and, for
  $I\in\mathcal{D}_{k_0}$, take $\Gamma = \tilde I$.  We consider these parameters to be fixed and proceed with the argument for arbitrary
  $n\in M\setminus\{m_{k_0}\}$, which will lead to an infinite choice of appropriate $n$ to define our set $N^{(k_0)}$.

  By \eqref{appropriate proximity} and \eqref{simultaneously close successor} we can find
  $\kappa_{I,n} = (\theta_{n,L}: L\in\mathcal{D}_m,L\subset\tilde I)$ such that for $\omega\in\{\pm1\}$ and
  \begin{equation*}
    \tilde I_n^{\omega}
    = \big\{ \sum_{J\in\mathcal{D}_m, J\subset \tilde I} \theta_{n,J}h_J = \omega\big\}
  \end{equation*}
  it follows that for all $\alpha\in\mathcal{A}$,
  \begin{equation}\label{simultaneously close successor step}
    \biggl|
    \sum_{L\in \mathcal{D}_n,\atop L\subset\tilde I_n^{\omega}} \frac{|L|}{|I|/2} d^\alpha_L
    - \sum_{J\in \mathcal{D}_{m_{k_0}},\atop J\subset \tilde I} \frac{|J|}{|I|} d^\alpha_J
    \biggr|
    < \delta.
  \end{equation}
  By the pigeonhole principle, we may choose an infinite subset $\Lambda$ of $M$ such that for all $I\in\mathcal{D}_{k_0}$ and $n,n'\in \Lambda$ we
  have $\kappa_{I,n} = \kappa_{I,n'}$.  Define $N^{(k_0)} = \{m_{k_0}\}\cup \Lambda$ and concatenate the selected vectors of signs $\kappa_{I,n}$,
  $I\in \mathcal{D}_{k_0}$, and $n\in \Lambda$ by (slightly abusing the notation and) putting
  $\theta^{(k_0)} = (\kappa_{I,n} : I\in\mathcal{D}_{k_0})$.  For each $I\in\mathcal{D}_{k_0}$ define
  \begin{equation*}
    \tilde h_I
    = \sum_{L\in\mathcal{D}_{m_{k_0}}\atop L\subset\tilde I} \theta^{(k_0)}_{L} h_L
  \end{equation*}
  and thus, for $\omega\in\{\pm1\}$, $\{\tilde h_I = \omega\} = \tilde I_\omega$.  In other words, \eqref{simultaneously close successor step}
  yields~\ref{successor step}.
\end{proof}

\begin{proof}[Proof of \Cref{stabilizing game}]
  In this game, Player (II) recursively applies \Cref{one par stab step}, in each step $k$, to the finite collection $\cup_{i=0}^k\mathcal{F}_i$ and
  using the set $M_k$ and $\eta_k$ given by Player (I) to achieve the assumptions of \Cref{one par stab conditions} for each $\mathcal{F}_{k_0}$ from
  $k_0$ on.
\end{proof}

\subsection{The 2-parameter case}

\begin{pro}\label{P:3.2}
  Let $\tilde H\otimes \tilde K= (\tilde h_I\otimes \tilde k_J: I,J\in\mathcal{D})$ be a faithful Haar system relative to the frequencies $(m_i)$ and
  $(n_j)$, and let $D\colon \mathcal{V}(\delta^2)\to \mathcal{V}(\delta^2)$ be a diagonal operator.  Then $\tilde D = D|_{\tilde H\otimes \tilde K}$
  is a diagonal operator with coefficients $(\tilde d_{I,J}: I,J\in \mathcal{D})$ given by
  \begin{equation*}
    \tilde d_{ I,J}
    = \sum_{L\in \mathcal{D}_{m_i},\atop L\subset \supp(\tilde h_I)} \frac{|L|}{|I|}
    \sum_{M\in \mathcal{D}_{n_j},\atop M\subset \supp( \tilde k_J)} \frac{|M|}{|J|}
    d_{L,M}
  \end{equation*}
  for $i,j\in\mathbb{N}_0, I\in\mathcal{D}_i$ and $J\in\mathcal{D}_j$.
\end{pro}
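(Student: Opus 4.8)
The plan is to follow the one-parameter computation of \Cref{P:1.4.5} verbatim at the level of matrix entries, exploiting that $A = A_{\tilde H}\otimes A_{\tilde K}$ and $B = B_{\tilde H}\otimes B_{\tilde K}$ while keeping in mind that the bi-parameter multiplier $D$ is \emph{not} a tensor product of one-parameter operators, so $\tilde D$ cannot simply be factored as a tensor product. First I would record that $B(h_I\otimes k_J) = B_{\tilde H}(h_I)\otimes B_{\tilde K}(k_J) = \tilde h_I\otimes\tilde k_J$, using $\langle h_{I'}/|I'|, h_I\rangle = \delta_{I,I'}$. Writing, as in \Cref{dfn:faithful}, $\tilde h_I = \sum_{L\in\mathcal A_I}\theta_L h_L$ with $\mathcal A_I = \{L\in\mathcal D_{m_i}: L\subset\supp(\tilde h_I)\}$ and $\tilde k_J = \sum_{M\in\mathcal B_J}\phi_M k_M$ with $\mathcal B_J = \{M\in\mathcal D_{n_j}: M\subset\supp(\tilde k_J)\}$, I expand
\[
  \tilde h_I\otimes\tilde k_J
  = \sum_{L\in\mathcal A_I}\sum_{M\in\mathcal B_J}\theta_L\phi_M\, h_L\otimes k_M,
\]
and apply $D$ coordinatewise to obtain $D(\tilde h_I\otimes\tilde k_J) = \sum_{L\in\mathcal A_I}\sum_{M\in\mathcal B_J}\theta_L\phi_M d_{L,M}\, h_L\otimes k_M$.

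The crux is to compute $\langle \tilde h_{I'}\otimes\tilde k_{J'}, D(\tilde h_I\otimes\tilde k_J)\rangle$ and show it vanishes off the diagonal. Since $\langle h_{L'}\otimes k_{M'}, h_L\otimes k_M\rangle = |L|\,|M|\,\delta_{L,L'}\delta_{M,M'}$, this inner product collapses to a sum over $L\in\mathcal A_I\cap\mathcal A_{I'}$ and $M\in\mathcal B_J\cap\mathcal B_{J'}$. The key point, which must be argued carefully, is that $\mathcal A_I\cap\mathcal A_{I'} = \emptyset$ whenever $I\neq I'$: if $I\in\mathcal D_i$, $I'\in\mathcal D_{i'}$ with $i\neq i'$, then $\mathcal A_I\subset\mathcal D_{m_i}$ and $\mathcal A_{I'}\subset\mathcal D_{m_{i'}}$ consist of dyadic intervals of different lengths since $(m_i)$ is strictly increasing; and if $i = i'$ but $I\neq I'$, the supports $\supp(\tilde h_I)$ and $\supp(\tilde h_{I'})$ are disjoint by faithfulness (\Cref{dfn:faithful}~\ref{dfn:faithful ii}). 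The identical statement holds for $\mathcal B_J\cap\mathcal B_{J'}$. Hence the entry is $0$ unless $(I,J) = (I',J')$, in which case $\theta_L^2 = \phi_M^2 = 1$ yields
\[
  \langle \tilde h_I\otimes\tilde k_J, D(\tilde h_I\otimes\tilde k_J)\rangle
  = \sum_{L\in\mathcal A_I}\sum_{M\in\mathcal B_J} d_{L,M}\,|L|\,|M|.
\]

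Finally I would assemble the conclusion. Since $Af = \sum_{I',J'}\frac{h_{I'}\otimes k_{J'}}{|I'|\,|J'|}\langle \tilde h_{I'}\otimes\tilde k_{J'}, f\rangle$, applying $A$ to $D(\tilde h_I\otimes\tilde k_J)$ and using the off-diagonal vanishing leaves only the $(I',J') = (I,J)$ term, so that
\[
  \tilde D(h_I\otimes k_J)
  = A\,D\,B(h_I\otimes k_J)
  = \frac{h_I\otimes k_J}{|I|\,|J|}\sum_{L\in\mathcal A_I}\sum_{M\in\mathcal B_J} d_{L,M}\,|L|\,|M|.
\]
This exhibits $\tilde D$ as a diagonal operator with the asserted coefficients $\tilde d_{I,J} = \sum_{L\in\mathcal A_I}\frac{|L|}{|I|}\sum_{M\in\mathcal B_J}\frac{|M|}{|J|}d_{L,M}$. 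I expect the only genuine obstacle to be the disjointness bookkeeping in the second step: because $D$ does not split as a tensor product, the vanishing of the cross-terms cannot be inherited from the one-parameter result and must be verified directly from the bi-parameter expansion and the tree structure of the two faithful Haar systems.
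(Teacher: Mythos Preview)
Your proof is correct and follows essentially the same route as the paper's: expand $\tilde h_I\otimes\tilde k_J$ in the Haar basis, compute the inner products $\langle \tilde h_{I'}\otimes\tilde k_{J'}, D(\tilde h_I\otimes\tilde k_J)\rangle$, verify off-diagonal vanishing, and assemble $\tilde D(h_I\otimes k_J)$ via $A$. The only organizational difference is that the paper packages the second-coordinate part into auxiliary one-parameter multipliers $D_{L,\cdot}(h_M) = d_{L,M}h_M$ and then invokes \Cref{P:1.4.5} to get $\langle\tilde k_{J'}, D_{L,\cdot}(\tilde k_J)\rangle = 0$ for $J\neq J'$, whereas you argue the vanishing directly from $\mathcal A_I\cap\mathcal A_{I'} = \emptyset$ and $\mathcal B_J\cap\mathcal B_{J'} = \emptyset$; both arguments are equivalent in content and difficulty.
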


\begin{proof}
  For $I\in \mathcal{D}_i$ write $\tilde h_I$ and $\tilde k_I$ as
  \begin{equation*}
    \tilde h_I
    = \sum_{L\in \mathcal{A}_I} \theta_L h_L
    \qquad\text{and}\qquad
    \tilde k_I
    = \sum_{M\in \mathcal{B}_I } \varepsilon_M k_M,
  \end{equation*}
  with $\mathcal{A}_I\subset \mathcal{D}_{m_i}$, $\mathcal{B}_I\subset \mathcal{D}_{n_i}$,
  $(\theta_L: L\in \mathcal{A}_I), (\varepsilon_M: M\in \mathcal{B}_I)\subset \{\pm1\}$.  Let $i,i',j,j'\in \mathbb{N}_0$, and $I\in\mathcal{D}_i$,
  $I'\in \mathcal{D}_{i'}$, $J\in\mathcal{D}_j$ and $J'\in\mathcal{D}_{j'}$.  Then
  \begin{align*}
    \langle \tilde h_{I'}\otimes  \tilde k_{J'}, D(\tilde h_{I}\otimes \tilde k_{J}) \rangle
    &= \Big\langle \sum_{L'\in \mathcal{A}_{I'}} \theta_{L'} h_{L'}\otimes\tilde  k_{J'},
      D\Big( \sum_{L\in \mathcal{A}_I} \theta_L h_{L}\otimes \tilde k_J\Big)\Big\rangle\\
    &= \sum_{L'\in \mathcal{A}_{I'}, L\in \mathcal{A}_I} \theta_{L'}\theta_L \Big\langle h_{L'}\otimes \tilde k_{J'},  h_L\otimes D_{L,\cdot}( \tilde k_J)\Big\rangle.
  \end{align*}
  where $D_{L,\cdot}\colon \mathcal{V}(\delta)\to \mathcal{V}(\delta)$ is the $1$-parameter diagonal operator defined by
  \begin{align*}
    D_{L,\cdot}(h_M)
    = d_{L,M} h_M,
    \qquad L,M\in \mathcal{D}.
  \end{align*}
  Thus by \Cref{P:1.4.5}, we obtain for $J\neq J'$ that
  \begin{equation*}
    \langle \tilde h_{I'}\otimes  \tilde k_{J'}, D(\tilde h_{I}\otimes \tilde k_{J})\rangle
    = \sum_{\substack{L'\in \mathcal{A}(I')\\L\in \mathcal{A}(I)}} \theta_{I'}\theta_I \langle h_L,h_{L'}\rangle \langle \tilde k_{J'}, D_{L,\cdot}( \tilde k_J)\rangle\\
    = 0.
  \end{equation*}
  Similarly using \Cref{P:1.4.5} in the case that $J=J'$, and $I\neq I'$, we obtain that
  \begin{align*}
    \langle \tilde h_{I'}\otimes k_{L'}, D(\tilde h_{K}\otimes k_{L}) \rangle
    &= \sum_{\substack{L'\in \mathcal{A}_{I'}\\L\in \mathcal{A}_I}} \theta_{L'}\theta_L \langle h_L,h_{L'}\rangle \langle \tilde k_{J}, D_{L,\cdot}( \tilde k_J)\rangle\\
    &= \sum_{\substack{L'\in \mathcal{A}_{I'}\\L\in \mathcal{A}_I}} \theta_{L'}\theta_{L} \langle h_{L'},h_{L}\rangle \sum_{M\in \mathcal{B}_J} |M| d_{L,M} 
    = 0.
  \end{align*}
  In the case that $I=I'$ and $J=J'$, we obtain that
  \begin{align*}
    \langle \tilde h_{I'}\otimes k_{J'}, D(\tilde h_{I}\otimes k_{J}) \rangle
    &= \sum_{L\in \mathcal{A}_I} |L| \sum_{M\in \mathcal{B}_J} |M| d_{L,M}.
  \end{align*}
  It follows, therefore, that
  \begin{align*}
    A\circ D\circ B(h_I \otimes k_J)
    &= \sum_{I',J'\in \mathcal{D}}\frac{h_{I'} \otimes k_{J'} }{|I'|\cdot |J'|}  \langle \tilde h_{I'}\otimes \tilde k_{J'} ,  D(\tilde  h_{I}\otimes \tilde k_{J})\rangle\\
    &= \frac{h_{I} \otimes k_{J} }{|I|\cdot |J|} \sum_{L\in \mathcal{A}_I} |L| \sum_{M\in \mathcal{B}_J} |M| d_{L,M}\\
    &= h_I \otimes k_J\sum_{L\in \mathcal{D}_{m_i}\atop L\subset \supp(\tilde h_I)} \frac{|L|}{|I|}
      \sum_{M\in \mathcal{D}_{n_j}\atop M\subset \supp(\tilde k_J)} \frac{|M|}{|J|} d_{L,M}.
  \end{align*} 
\end{proof}

As in the one-parameter case, in the two-parameter case, starting with an $\ell^{\infty}$ diagonal operator and iterating the process of passing to
faithful Haar systems is in a sense stable.
\begin{ntn}
  \label{bi-parameter monoidal operation}
  Let $\widetilde H^{(1)}$, $\widetilde K^{(1)}$ be a faithful Haar system relative to frequencies $(m_i)_{i=0}^\infty$ and $(n_i)_{i=0}^\infty$
  respectively and let $\widetilde H^{(2)}$, $\widetilde K^{(2)}$ be a faithful Haar system relative to frequencies $(s_i)_{i=0}^\infty$ and
  $(t_i)_{i=0}^\infty$ respectively.  We define
  \begin{equation*}
    (\widetilde H^{(2)}\otimes \widetilde K^{(2)}) \ast (\widetilde H^{(1)} \otimes \widetilde K^{(1)})
    = (\widetilde H^{(2)}\ast \widetilde H^{(1)}) \otimes (\widetilde K^{(2)}\ast \widetilde K^{(1)}).
  \end{equation*}
\end{ntn}

The proof of the following is an easy adaptation of the proof \Cref{one-parameter monoidal action} using \Cref{bi-parameter B-Q notation} and
\Cref{bi-parameter monoidal operation}.
\begin{pro}\label{blocking bi-parameter faithful haar}
  Let $\widetilde H^{(1)}$, $\widetilde K^{(1)}$ be a faithful Haar system relative to frequencies $(m_i)_{i=0}^\infty$ and $(n_i)_{i=0}^\infty$
  respectively and let $\widetilde H^{(2)}$, $\widetilde K^{(2)}$ be a faithful Haar system relative to frequencies $(s_i)_{i=0}^\infty$ and
  $(t_i)_{i=0}^\infty$ respectively.  Then, for every $\ell^{\infty}$-bounded diagonal operator
  $D\colon\mathcal{V}(\delta^2)\to\mathcal{V}(\delta^2)$,
  \begin{equation*}
    (D|_{\widetilde H^{(1)}\otimes \widetilde K^{(1)}})|_{\widetilde H^{(2)} \otimes \widetilde K^{(2)}}
    = D|_{(\widetilde H^{(2)}\otimes \widetilde K^{(2)})\ast (\widetilde H^{(1)} \otimes \widetilde K^{(1)})}.
  \end{equation*}
\end{pro}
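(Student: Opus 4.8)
The plan is to imitate the one-parameter argument from the proof of \Cref{one-parameter monoidal action}, reducing the identity to two ingredients: the multiplicativity of the exterior factors under the operation $\ast$, which holds separately in each coordinate, and the compatibility of operator composition with the tensor product of operators. Since the assertion is purely algebraic (associativity of the conjugation $D\mapsto A\circ D\circ B$), the diagonal nature of $D$ will play no role beyond providing the setting.

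First I would record the relevant factors. Writing $A_i^H = A_{\widetilde H^{(i)}}$, $B_i^H = B_{\widetilde H^{(i)}}$ and $A_i^K = A_{\widetilde K^{(i)}}$, $B_i^K = B_{\widetilde K^{(i)}}$ for $i=1,2$ as in \Cref{one-parameter AB notation}, the definition of the bi-parameter factors in \Cref{bi-parameter B-Q notation} gives $A_{\widetilde H^{(i)}\otimes\widetilde K^{(i)}} = A_i^H\otimes A_i^K$ and $B_{\widetilde H^{(i)}\otimes\widetilde K^{(i)}} = B_i^H\otimes B_i^K$. The proof of \Cref{one-parameter monoidal action} shows, in each coordinate, that $A_{\widetilde H^{(2)}\ast\widetilde H^{(1)}} = A_2^H A_1^H$ and $B_{\widetilde H^{(2)}\ast\widetilde H^{(1)}} = B_1^H B_2^H$, with the analogous identities for the $K$-systems. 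Combining these with the definition of $\ast$ on bi-parameter systems (\Cref{bi-parameter monoidal operation}) and again the tensor splitting of \Cref{bi-parameter B-Q notation}, the exterior factors of the composite system, which I denote $A_{\mathrm{comp}}$ and $B_{\mathrm{comp}}$, satisfy
\[
  A_{\mathrm{comp}} = (A_2^H A_1^H)\otimes(A_2^K A_1^K),
  \qquad
  B_{\mathrm{comp}} = (B_1^H B_2^H)\otimes(B_1^K B_2^K).
\]

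The one genuinely computational step is the interchange law $(S_1 S_2)\otimes(T_1 T_2) = (S_1\otimes T_1)(S_2\otimes T_2)$ for one-parameter operators, which I would verify directly on a basis vector $h_I\otimes k_J$: applying $S_2\otimes T_2$ yields $S_2 h_I\otimes T_2 k_J$, and then $S_1\otimes T_1$, expanded by linearity over the Haar expansions of $S_2 h_I$ and $T_2 k_J$, produces $S_1 S_2 h_I\otimes T_1 T_2 k_J$, which is exactly $(S_1 S_2)\otimes(T_1 T_2)(h_I\otimes k_J)$. Applying this law to the displayed factors rewrites them as $A_{\mathrm{comp}} = (A_2^H\otimes A_2^K)(A_1^H\otimes A_1^K)$ and $B_{\mathrm{comp}} = (B_1^H\otimes B_1^K)(B_2^H\otimes B_2^K)$.

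Finally I would substitute these into $D|_{\mathrm{comp}} = A_{\mathrm{comp}}\circ D\circ B_{\mathrm{comp}}$ and regroup the middle block, exactly as in the one-parameter proof:
\begin{align*}
  A_{\mathrm{comp}}\, D\, B_{\mathrm{comp}}
  &= (A_2^H\otimes A_2^K)\bigl[(A_1^H\otimes A_1^K)\,D\,(B_1^H\otimes B_1^K)\bigr](B_2^H\otimes B_2^K)\\
  &= (D|_{\widetilde H^{(1)}\otimes\widetilde K^{(1)}})|_{\widetilde H^{(2)}\otimes\widetilde K^{(2)}},
\end{align*}
since the bracketed operator is $D|_{\widetilde H^{(1)}\otimes\widetilde K^{(1)}}$ by \Cref{bi-parameter B-Q notation}, and conjugating it by $A_2^H\otimes A_2^K$ on the left and $B_2^H\otimes B_2^K$ on the right is precisely restriction along $\widetilde H^{(2)}\otimes\widetilde K^{(2)}$. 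This is the claimed identity. I do not expect a real obstacle: the only points requiring care are the bookkeeping of the composition order (the $B$-factors multiply in the opposite order to the $A$-factors) and the routine verification of the interchange law, both of which are inherited coordinatewise from the one-parameter case.
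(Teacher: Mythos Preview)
Your proposal is correct and follows essentially the same approach as the paper, which simply states that the result is an easy adaptation of the one-parameter case (\Cref{one-parameter monoidal action}) using \Cref{bi-parameter B-Q notation} and \Cref{bi-parameter monoidal operation}. You have supplied precisely the details this adaptation requires: the coordinatewise multiplicativity of the $A$- and $B$-factors under $\ast$, the tensor interchange law, and the final regrouping.
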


\subsection{Bi-tree-semi-stabilized operators in two parameters}
This is a notion on diagonal operators on $\mathcal{V}(\delta^2)$ that means that they can be ``split'' into two pieces, an upper triangular and a
lower triangular one, each of which has essentially constant coefficients.

\begin{dfn}\label{dfn:semi-stable}
  (Bi-tree-semi-stabilized operators in two parameters) Let $D\colon \mathcal{V}(\delta^2)\to \mathcal{V}(\delta^2)$ be a diagonal operator whose
  coefficients we denote by $(d_{I,J} : I,J\in\mathcal{D})$ and let $\eta = (\eta_{i,j})_{i,j=0}^\infty\in (0,1)^{\mathbb{N}^2}$ and $\delta>0$.  We
  say that $D$ satisfies:
  \begin{enumerate}[label=(\alph*)]
  \item\label{matrix:lower} \emph{The lower triangular condition for $\eta$} if for all $i\geq j\geq 0$, $I\in\mathcal{D}_i$, $J\in\mathcal{D}_j$, and
    $\omega\in\{\pm1\}$
    \begin{equation*}
      |d_{I^\omega,J} - d_{I,J}|
      < \eta_{i,j}.
    \end{equation*}
    
  \item\label{matrix:upper} \emph{The upper triangular condition for $\eta$} if for all $j > i \geq 0$, $I\in\mathcal{D}_i$, $J\in\mathcal{D}_j$, and
    $\xi\in\{\pm1\}$
    \begin{equation*}
      |d_{I,J} - d_{I,J^\xi}|
      < \eta_{i,j}.
    \end{equation*}
    
  \item\label{matrix:diag} \emph{The diagonal condition for $\eta$} if for all $i\ge 0$, $I,J\in \mathcal{D}_i$, and $\omega,\xi\in\{\pm1\}$ then\\
    \begin{equation*}
      |d_{I,J} - d_{I^\omega,J^\xi}|
      \le \eta_{i,i}.
    \end{equation*}
    
  \item\label{matrix:sup} \emph{The superdiagonal condition for $\eta$} if for all $i\ge 0$, $I\in \mathcal{D}_{i}$ and $J\in \mathcal{D}_{i+1}$, and
    $\omega,\xi\in\{\pm1\}$ then
    \begin{equation*}
      |d_{I,J} -d_{I^\omega,J^\xi}|
      \le \eta_{i,i+1}.
    \end{equation*}

  \item\label{matrix:bal} \emph{The balancing condition for $\delta$} if we have $|d_{[0,1),[0,1/2)} - d_{[0,1),[1/2,1)}| < \delta$.
  \end{enumerate}
  If \ref{matrix:lower}-\ref{matrix:bal} are all satisfied then we call $D$ \emph{$(\eta,\delta)$-bi-tree-semi-stabilized}.  (For a visualization we
  refer to the figures below.)
\end{dfn}

\begin{minipage}{.5\linewidth}
  \begin{center}
    \includegraphics[scale=0.05]{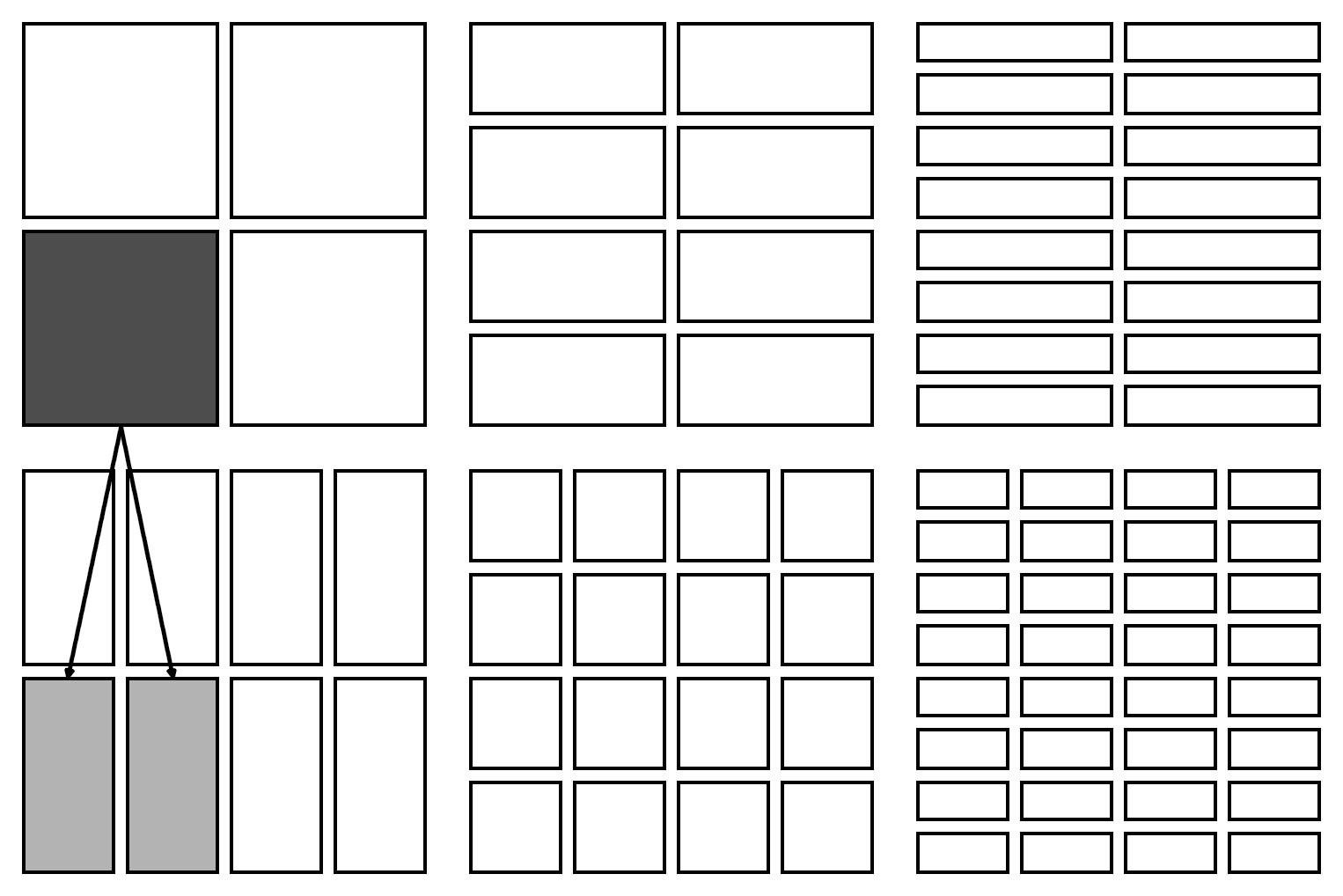}
  \end{center}
  \captionof*{figure}{\small Lower triangular condition~\ref{matrix:lower}.}
  \label{fig:bi-tree-stable-lower_triangular}

  \begin{center}
    \includegraphics[scale=0.05]{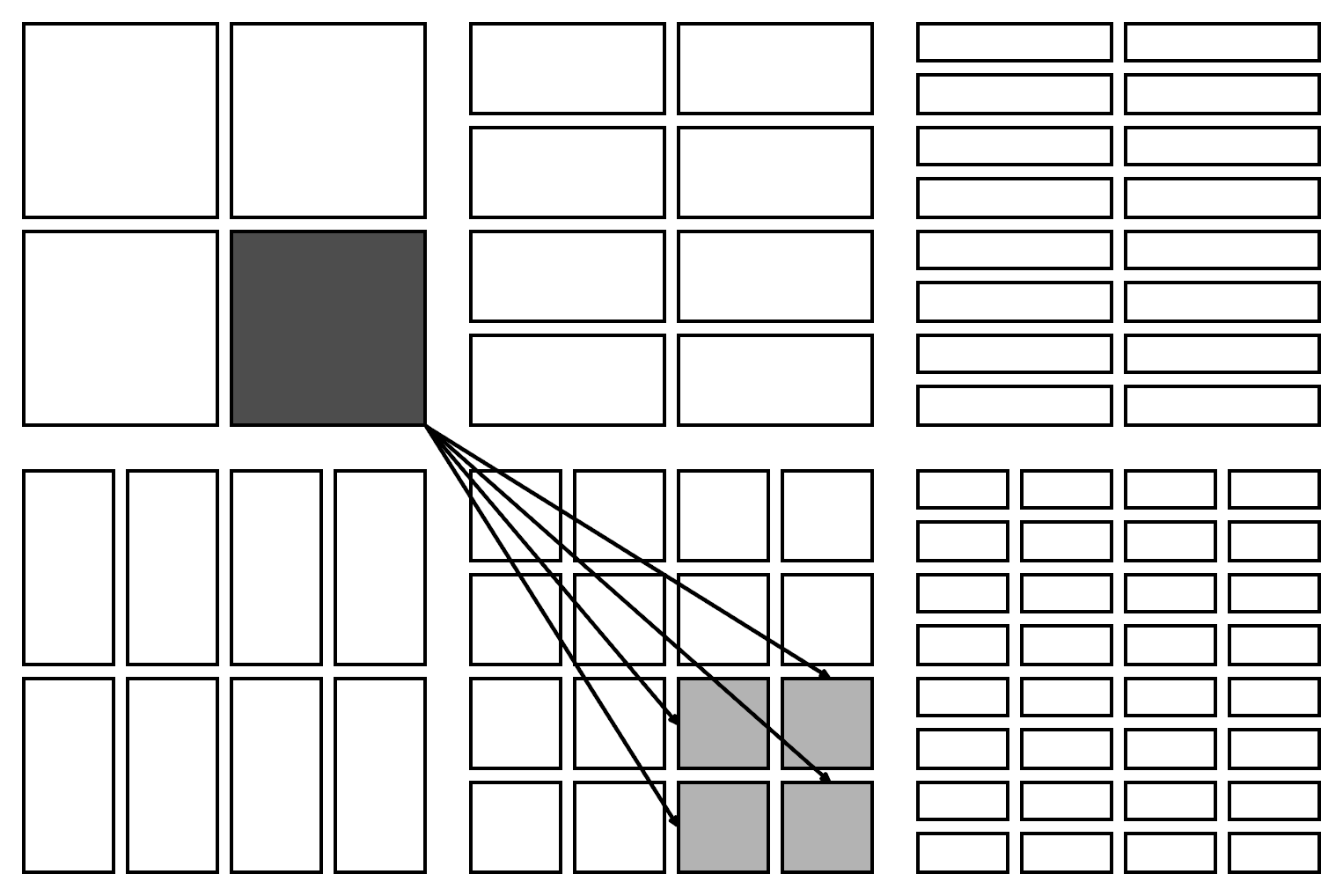}
  \end{center}
  \captionof*{figure}{\small Diagonal condition~\ref{matrix:diag}.}
  \label{fig:bi-tree-stable-diagonal}
\end{minipage}
\begin{minipage}{.5\linewidth}
  \begin{center}
    \includegraphics[scale=0.05]{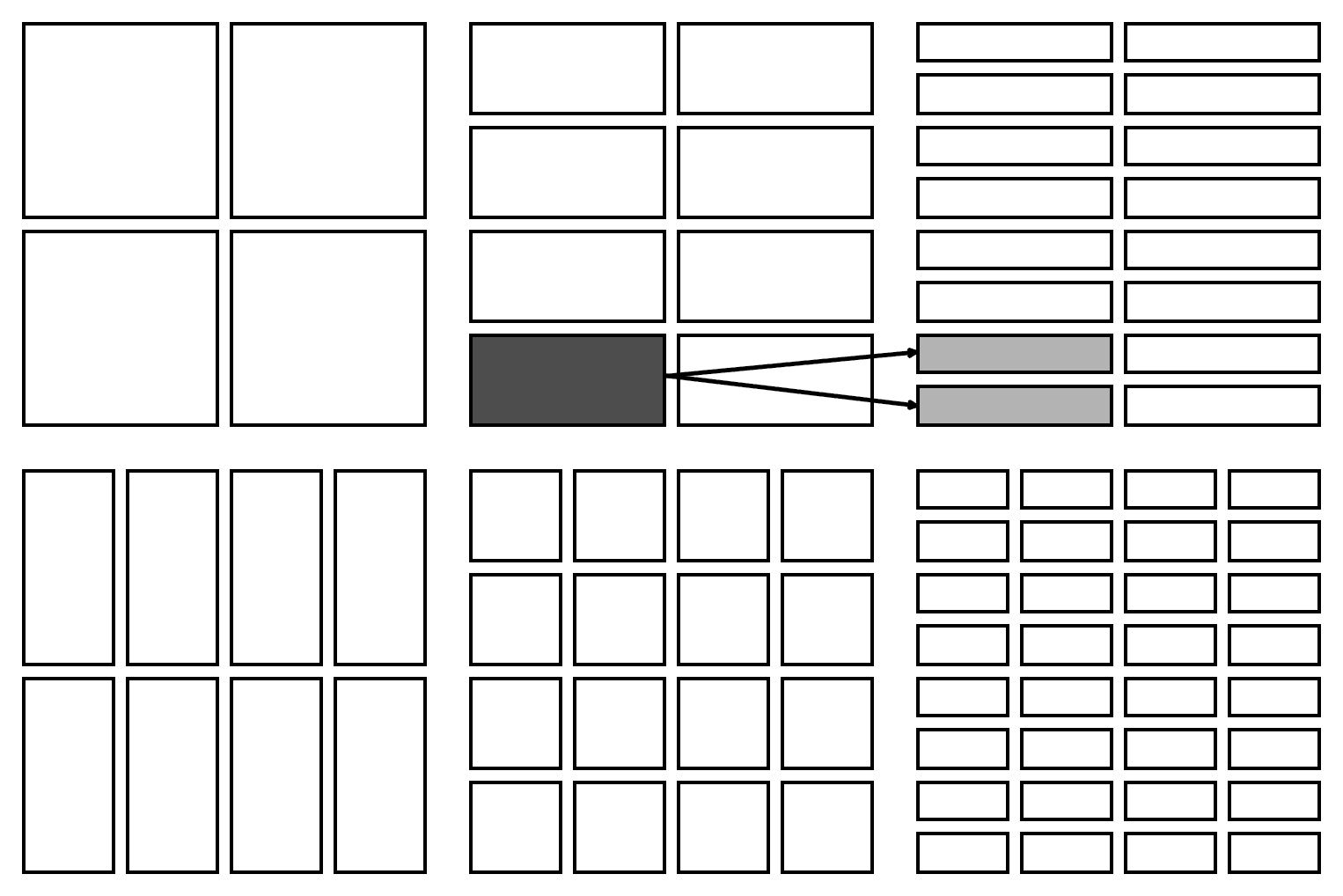}
  \end{center}
  \captionof*{figure}{\small Upper triangular condition~\ref{matrix:upper}.}
  \label{fig:bi-tree-stable-upper_triangular}

  \begin{center}
    \includegraphics[scale=0.05]{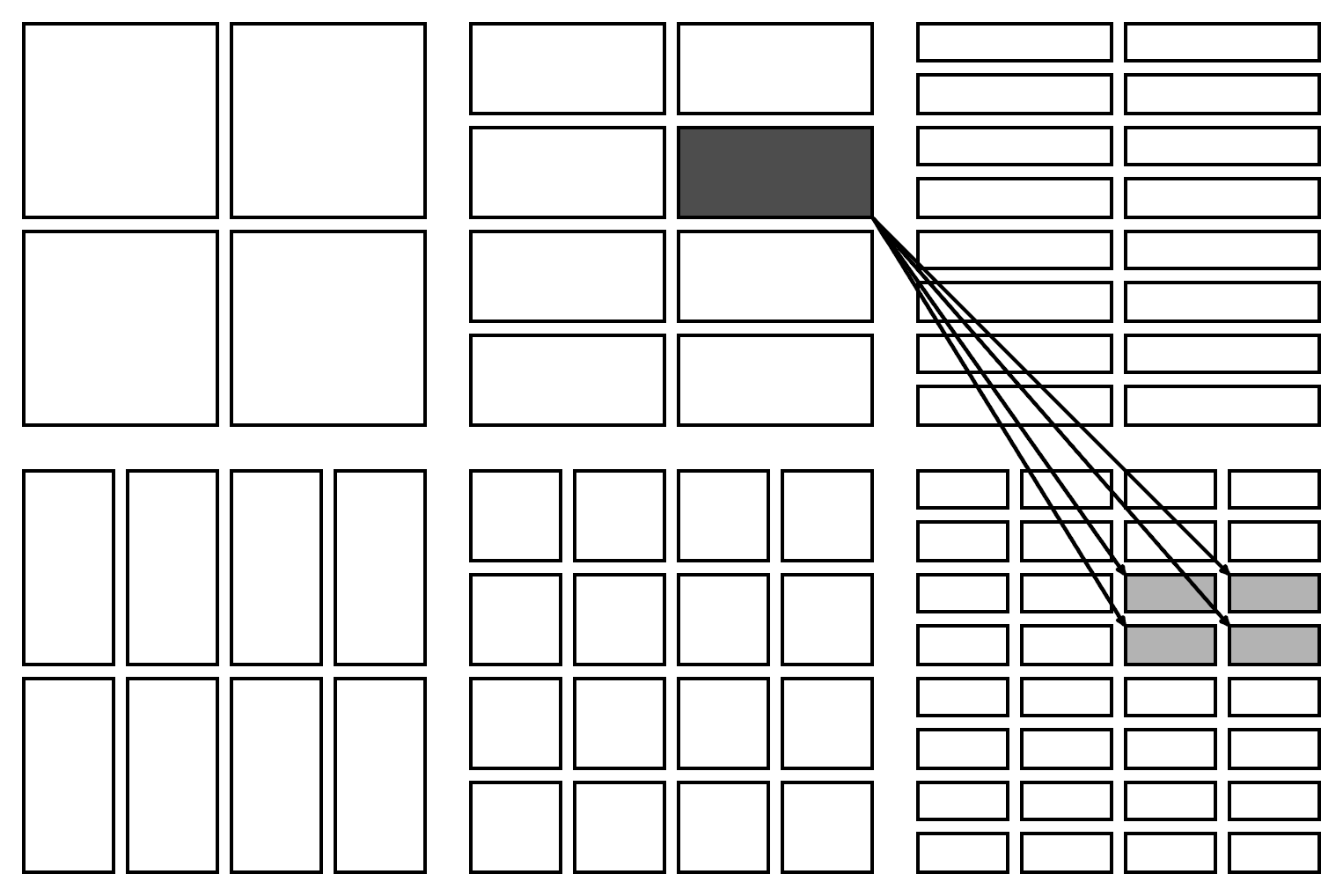}
  \end{center}
  \captionof*{figure}{\small Superdiagonal condition~\ref{matrix:sup}.}
  \label{fig:bi-tree-stable-superdiagonal}
\end{minipage}

\begin{rem}\label{equivalent upper and lower triangular}
  Conditions \ref{matrix:lower} and \ref{matrix:upper} of \Cref{dfn:semi-stable} may be reformulated as follows.
  \begin{enumerate}[label=(\alph*')]
  \item\label{matrix:upper:equiv} (The lower triangular condition) For $j\in\mathbb{N}_0$ and $J\in \mathcal{D}_j$, the diagonal operator
    $D_{(\cdot,J)}\colon \mathcal{V}(\delta)\to \mathcal{V}(\delta)$, whose coefficients are $(d_{I,J}: I\in \mathcal{D})$ is
    $(\eta_{i,j})_{i=0}^\infty$-stabilized from $j$ on.
  \item\label{matrix:lower:equiv} (The upper triangular condition) For $i\in\mathbb{N}_0$ and $I\in \mathcal{D}_i$, the diagonal operator
    $D_{(I,\cdot)}\colon \mathcal{V}(\delta)\to \mathcal{V}(\delta)$ whose coefficients are $(d_{I,J}: I\in \mathcal{D})$ is
    $(\eta_{i,j})_{j=0}^\infty$-stabilized from $i+1$ on.
  \end{enumerate}
  The above will be used later to apply one-parametric stabilization arguments to a bi-parameter Haar multiplier.
\end{rem}

The following is the bi-parameter version of \Cref{near scalar one par}.  Observe that, in this case, a bi-tree-semi-stabilized Haar multiplier is not
close to a scalar operator.
\begin{rem}\label{proximity in sup norm}
  Let $D\colon\mathcal{V}(\delta^2)\to \mathcal{V}(\delta^2)$ be a Haar multiplier, $\eta = (\eta_{i,j})_{i,j=0}^\infty\in (0,1)^{\mathbb{N}^2}$, and
  $\delta>0$.  Assume that $D$ is $(\eta,\delta)$-bi-tree-semi-stabilized and $\sum_{i,j=0}^\infty2^{i+j}\eta_{i,j}<\infty$.  Then,
  \begin{enumerate}[label=(\roman*)]
  \item $\displaystyle{\lambda(D) = \lim_{j\to\infty} \lim_{i\to\infty} \sum_{I\in\mathcal{D}_i}\sum_{J\in\mathcal{D}_j}|I||J|d_{I,J}}$ \quad and\quad
    $\displaystyle{\mu(D) = \lim_{i\to\infty} \lim_{j\to\infty} \sum_{I\in\mathcal{D}_i}\sum_{J\in\mathcal{D}_j}|I||J|d_{I,J}}$ exist and

  \item
    $\displaystyle{\Big\|D - \Big(\lambda(D)\mathcal{C} + \mu(D)(\Id-\mathcal{C})\Big)\Big\|_{\mathrm{T^2}}\leq \sum_{i=0}^\infty\sum_{j=0}^\infty
      (i+j+4)\eta_{i,j} + \delta}$.
  \end{enumerate}
  In particular, for any non-principal ultrafilter $\mathcal{U}$ on $\mathbb{N}$, $\lambda_\mathcal{U}(D) = \lambda(D)$ and
  $\mu_\mathcal{U}(D) = \mu(D)$.

  This estimate yields that a bi-tree-semi-stabilized diagonal operator is in $\|\cdot\|_{\mathrm{T^2}}$-norm close to a diagonal operator
  $\lambda\mathcal{C} + \mu (\Id - \mathcal{C})$.  If $Z$ is a Capon space, this proximity can also be achieved in operator norm (see
  \Cref{two-parameter analytic main theorem}).  The proof of the above goes along the same lines as the proof of \Cref{pro:path-distance} further
  below.
\end{rem}

The main theorem of this section states that for every $\ell^\infty$-bounded Haar multiplier $D$ there exist faithful Haar systems such that the
corresponding $\tilde D$ is bi-tree-semi-stabilized.  Furthermore, the values $\lambda$ and $\mu$ can, up to an arbitrarily small error, be determined
by the valuation of the linear functionals $\lambda_\mathcal{U}(\cdot),\mu_\mathcal{U}(\cdot)$ defined on the vector space of all
$\ell^\infty$-bounded bi-parameter Haar multipliers.
\begin{thm}\label{T:3.4}
  Let $D\colon \mathcal{V}(\delta^2)\to \mathcal{V}(\delta^2)$ be a diagonal operator whose coefficients we denote by $(d_{I,J} : I,J\in\mathcal{D})$
  which is $\ell^{\infty}$-bounded, $\eta = (\eta_{i,j} )_{i,j=0}^\infty\in (0,1)^{\mathbb{N}^2}$, and $\delta>0$.  Then for any infinite $M$,
  $N\subset\mathbb{N}$ there exist faithful Haar systems $\tilde H=(\tilde h_I:I\in \mathcal{D})$ and $ \tilde K=( \tilde k_J:J\in \mathcal{D})$
  relative to frequencies $(m_i)_{i=0}^\infty$ in $M$ and $(n_j)_{j=0}^\infty$ in $N$, such that
  \begin{equation*}
    \tilde D = D|_{\tilde H\otimes\tilde K}
    \quad\text{is $(\eta,\delta)$-bi-tree-semi-stabilized}.
  \end{equation*}
  In particular, for any $\mathcal{U}\in\beta \mathbb{N}\setminus \mathbb{N}$, we may choose $\tilde D$ such that
  $\lambda_\mathcal{U}(\tilde D) = \lambda_\mathcal{U}(D)$ and $\mu_\mathcal{U}(\tilde D) = \mu_\mathcal{U}(D)$ and, thus,
  \begin{equation*}
    \big\|\tilde D - \big(\lambda_\mathcal{U}(D)\mathcal{C}+\mu_\mathcal{U}(\Id-\mathcal{C})\big)\big\|{_{\mathrm{T^2}}}
    \leq \sum_{i=0}^\infty\sum_{j=0}^\infty (i+j+4)\eta_{i,j} + \delta.
  \end{equation*}
  Moreover, $|\tilde d_{[0,1),[0,1)} - \lambda(D)| < \delta$ and $|\tilde d_{[0,1),[0,1/2)} - \mu(D)| < \delta$.
\end{thm}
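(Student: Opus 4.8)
The plan is to build the two faithful Haar systems $\tilde H$ and $\tilde K$ by an inductive, level-by-level procedure that reduces each of the four coefficient conditions in \Cref{dfn:semi-stable} to one-parameter tree-stabilization, which is then supplied by the probabilistic machinery behind \Cref{one par stab step} (equivalently, by letting Player~(II) follow the winning strategy of \Cref{stabilizing game}). The bridge is the averaging formula of \Cref{P:3.2}. Once $\tilde K$ is fixed up to level $j$, each $J\in\mathcal{D}_j$ determines a one-parameter $\ell^\infty$-bounded diagonal operator $D^J$ on $\mathcal{V}(\delta)$ in the first variable, with coefficients $d^J_L = \sum_{M\in\mathcal{D}_{n_j},\,M\subset\supp\tilde k_J}(|M|/|J|)d_{L,M}$, and \Cref{P:3.2} gives $\tilde d_{I,J} = (D^J|_{\tilde H})_I$; symmetrically, once $\tilde H$ is fixed up to level $i$, each $I\in\mathcal{D}_i$ gives a second-variable operator $\tilde D^I$ with $f^I_M = \sum_{L\in\mathcal{D}_{m_i},\,L\subset\supp\tilde h_I}(|L|/|I|)d_{L,M}$ and $\tilde d_{I,J} = (\tilde D^I|_{\tilde K})_J$. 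By \Cref{equivalent upper and lower triangular}, condition \ref{matrix:lower} is exactly the assertion that each $D^J|_{\tilde H}$ is $(\eta_{i,j})_i$-stabilized from $j$ on, and condition \ref{matrix:upper} that each $\tilde D^I|_{\tilde K}$ is $(\eta_{i,j})_j$-stabilized from $i+1$ on, so these two are immediately handled by stabilizing the respective one-parameter operators.

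The crux is the diagonal and superdiagonal conditions \ref{matrix:diag}, \ref{matrix:sup}, which move both indices at once and hence couple the two systems. I would process the levels in the order ``$\tilde K$ first, then $\tilde H$'': at super-level $k$ I first choose $\tilde k_J$, $J\in\mathcal{D}_k$, refining the second variable, and only afterwards $\tilde h_I$, $I\in\mathcal{D}_k$. The diagonal difference is telescoped as $\tilde d_{I,J}-\tilde d_{I^\omega,J^\xi}=(\tilde d_{I,J}-\tilde d_{I,J^\xi})+(\tilde d_{I,J^\xi}-\tilde d_{I^\omega,J^\xi})$ and the superdiagonal one as $\tilde d_{I,J}-\tilde d_{I^\omega,J^\xi}=(\tilde d_{I,J}-\tilde d_{I^\omega,J})+(\tilde d_{I^\omega,J}-\tilde d_{I^\omega,J^\xi})$; in each case one summand is a second-variable step, controlled by stabilizing a (by then available) operator $\tilde D^{I'}$, and the other is a first-variable step, controlled by a $D^{J'}$. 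Concretely the diagonal uses $\tilde D^I$ (with $I\in\mathcal{D}_i$) and $D^{J^\xi}$ (with $J^\xi\in\mathcal{D}_{i+1}$), while the superdiagonal uses $D^J$ (with $J\in\mathcal{D}_{i+1}$) and $\tilde D^{I^\omega}$ (with $I^\omega\in\mathcal{D}_{i+1}$). The ordering is precisely what makes every one of these auxiliary operators available at the moment it is needed: when $\tilde h$ is refined at super-level $k$, all second-variable blocks up to level $k$ are in place, so $D^{J}$ and $D^{J^\xi}$ with indices in $\mathcal{D}_k$ already exist; and when $\tilde k$ is refined at super-level $k$, all first-variable blocks up to level $k-1$ are in place, so $\tilde D^I$ and $\tilde D^{I^\omega}$ with indices in $\mathcal{D}_{k-1}$ already exist. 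Thus at each of the two refinements of super-level $k$ I feed \Cref{one par stab step} the finite collection of all auxiliary operators whose current step $k-1\to k$ must be controlled --- the lower/upper-triangular ones from their prescribed levels and the diagonal/superdiagonal ones stabilized one level earlier --- with a common error $\delta_k$ smaller than every relevant $\eta_{i,j}$. The balancing condition \ref{matrix:bal} falls out of the same probabilistic step at the root: by \Cref{simultaneous probabilistic choice} the two half-averages $\tilde d_{[0,1),[0,1/2)}$ and $\tilde d_{[0,1),[1/2,1)}$ are each within $\delta$ of the full average $\tilde d_{[0,1),[0,1)}$, hence within $2\delta$ of each other.

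Granting the construction, $\tilde D = D|_{\tilde H\otimes\tilde K}$ is $(\eta,\delta)$-bi-tree-semi-stabilized, and the $\|\cdot\|_{\mathrm{T^2}}$-estimate is then immediate from \Cref{proximity in sup norm}. For the preservation of the functionals I would argue as in the proof of \Cref{P:1.4.6}: summing the formula of \Cref{P:3.2} against the partitions $\{\supp\tilde h_I\}_{I\in\mathcal{D}_i}$ and $\{\supp\tilde k_J\}_{J\in\mathcal{D}_j}$, which tile $[0,1)$ because the systems are faithful, collapses the weights and yields $\sum_{I\in\mathcal{D}_i,\,J\in\mathcal{D}_j}|I||J|\tilde d_{I,J}=\langle r_{m_i}\otimes r_{n_j},\,D(r_{m_i}\otimes r_{n_j})\rangle$, i.e.\ the level-$(i,j)$ average of $\tilde D$ is the frequency-$(m_i,n_j)$ average of $D$. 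Passing first to infinite subsets of $M$ and $N$ along which the iterated limits of these averages already stabilize to $\lambda_\mathcal{U}(D)$ and $\mu_\mathcal{U}(D)$, and drawing all frequencies from them (the game only thins these sets further), gives $\lambda_\mathcal{U}(\tilde D)=\lambda_\mathcal{U}(D)$ and $\mu_\mathcal{U}(\tilde D)=\mu_\mathcal{U}(D)$; choosing the two root frequencies $m_0,n_0$ large within these sets makes the root entries $\tilde d_{[0,1),[0,1)}$ and $\tilde d_{[0,1),[0,1/2)}$ lie within $\delta$ of $\lambda(D)$ and $\mu(D)$, which is the ``moreover''.

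The step I expect to be the genuine obstacle is the simultaneous treatment of conditions \ref{matrix:diag} and \ref{matrix:sup}. Each of them, taken in isolation, appears to demand an auxiliary operator that is not yet defined at the level where the coupling step occurs, and the two conditions pull the processing order in opposite directions; only the combination of the chosen order (``$\tilde K$ before $\tilde H$'') with telescopings that split each joint step into one first-variable and one second-variable step --- whose auxiliary operators are \emph{both} already in place --- makes the reduction to the one-parameter game go through. Verifying that a single ordering serves both conditions at once, and that the ``one level earlier'' stabilization budget is compatible with the finite per-round collections handed to \Cref{one par stab step}, is the delicate bookkeeping I would have to carry out with care.
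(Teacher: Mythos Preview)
Your treatment of the lower and upper triangular conditions \ref{matrix:lower}, \ref{matrix:upper} via the one-parameter game is correct and is exactly what the paper does in \Cref{stabilization by one-parameter reduction}. The preservation of $\lambda_\mathcal{U}$ and $\mu_\mathcal{U}$ via the collapsed averaging formula, and the handling of the root entries, are also fine.

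The gap is precisely where you suspected: the diagonal and superdiagonal conditions cannot be obtained by telescoping into two one-parameter steps. Take your own diagonal decomposition $\tilde d_{I,J}-\tilde d_{I^\omega,J^\xi}=(\tilde d_{I,J}-\tilde d_{I,J^\xi})+(\tilde d_{I,J^\xi}-\tilde d_{I^\omega,J^\xi})$ with $I,J\in\mathcal D_i$. The first summand requires $\tilde D^I$ (with $I\in\mathcal D_i$) to be fed into the $\tilde K$-game at round $i$, but with your order ``$\tilde K$ then $\tilde H$'' the vector $\tilde h_I$---hence $\tilde D^I$---is only produced \emph{after} round $i$ of the $\tilde K$-game. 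The second summand requires $D^{J^\xi}$ (with $J^\xi\in\mathcal D_{i+1}$, not $\mathcal D_i$) to be fed into the $\tilde H$-game at round $i$; but $\tilde k_{J^\xi}$ is only chosen at round $i+1$ of the $\tilde K$-game, again too late. Reversing the processing order or using the other telescoping does not help: one checks that for every ordering and every two-term telescoping of the joint step, at least one of the two auxiliary operators is born strictly after the round at which it must already be in Player (I)'s hand. The same obstruction applies to the superdiagonal.

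The paper resolves this not by a cleverer ordering but by abandoning the purely one-parameter reduction for \ref{matrix:diag} and \ref{matrix:sup}. It proves a genuinely two-parameter probabilistic lemma (\Cref{L:3.3}) computing the variance of a simultaneous random splitting in both coordinates, and uses it (via \Cref{bi-parameter probabilistic choice} and a two-dimensional Ramsey argument, \Cref{ramsey mod}) to choose the signs for $\tilde h_I$ and $\tilde k_J$ \emph{jointly} at each level. This yields \Cref{two-parameter subdiagonal} and \Cref{two-parameter diagonal}. The full proof is then modular: one pass for \ref{matrix:lower}+\ref{matrix:upper} (your argument), one pass each for \ref{matrix:sup}, \ref{matrix:diag}, \ref{matrix:bal}, composed via the monoidal action (\Cref{blocking bi-parameter faithful haar}); the earlier conditions survive later passes by the permanence results of \Cref{pro:permanence}, whose hypotheses dictate the order of the passes (triangular first, since permanence of the diagonal requires the lower triangular condition and permanence of the superdiagonal requires the upper).
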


\subsection{Permanence of stability under blocking}
To make the proof of \Cref{T:3.4} more tractable, we will show, separately, how each of properties \ref{matrix:lower}-\ref{matrix:bal} of
\Cref{dfn:semi-stable} can be achieved in isolation by appropriately blocking the Haar system in each parameter.  The permanence of
\ref{matrix:lower}-\ref{matrix:bal}, under appropriate blocking, guarantees that the resulting diagonal operator satisfies all of them.

We start with a summability assumption on sequences $(\eta_{i,j})_{i,j=1}^\infty$ with respect to which we consider semi-stability the purpose of
which is to improve notation.
\begin{asm}\label{R:3.5}
  Assume $\eta = (\eta_{i,j} )_{i,j=0}^\infty\in (0,1)^{\mathbb{N}^2}$ has the property
  \begin{equation}\label{E:3.4.1}
    \sum_{j=j_0+1}^\infty \eta_{i_0,j}
    < \frac{\eta_{i_0,j_0}}{3}
    \quad\text{and}\quad
    \sum_{i=i_0+1}^\infty \eta_{i,j_0}
    < \frac{\eta_{i_0,j_0}}{3},
    \qquad\text{for $i_0,j_0\in\mathbb{N}$}.
  \end{equation}
\end{asm}

\begin{pro}\label{pro:path-distance}
  Let $D\colon \mathcal{V}(\delta^2)\to\mathcal{V}(\delta^2)$ be an $\ell^\infty$-bounded Haar multiplier whose coefficients are
  $(\eta_{i,j})_{i,j=0}^\infty$-bi-tree-semi-stabilized and assume \eqref{E:3.4.1}. Let $(K_0,L_0)$, $(K_1,L_1)\in\mathcal{D}\times\mathcal{D}$ such
  that $K_0\supset K_1$ and $L_0\supset L_1$.
  \begin{enumerate}[label=(\roman*)]
  \item\label{pro:path-distance lower} If $|K_0|\leq |L_0|$ and $|K_1|\leq |L_1|$, i.e., both $(K_0,L_0)$ and $(K_1,L_1)$ are in the lower triangle
    part of $\mathcal{D}\times\mathcal{D}$, then
    \begin{equation*}
      |d_{(K_0,L_0)} - d_{(K_1,L_1)}|
      \leq \eta_{l_0-1,l_0-1},
    \end{equation*}
    where $2^{-l_0} = |L_0|.$

  \item\label{pro:path-distance upper} If $|K_0| > |L_0|$ and $|K_1| > |L_1|$, i.e., both $(K_0,L_0)$ and $(K_1,L_1)$ are in the upper triangle part
    of $\mathcal{D}\times\mathcal{D}$, then
    \begin{equation*}
      |d_{(K_0,L_0)} - d_{(K_1,L_1)}|
      \leq \eta_{k_0-1,k_0},
    \end{equation*}
    where $2^{-k_0} = |K_0|.$
  \end{enumerate}
\end{pro}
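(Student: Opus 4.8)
The plan is to connect the two index pairs $(K_0,L_0)$ and $(K_1,L_1)$ by an explicit finite path in $\mathcal D\times\mathcal D$ all of whose elementary steps are licensed by one of the stabilization conditions \ref{matrix:lower}--\ref{matrix:sup} of \Cref{dfn:semi-stable}, and then to telescope: the triangle inequality bounds $|d_{(K_0,L_0)}-d_{(K_1,L_1)}|$ by the sum of the per-step costs, and the summability hypothesis \eqref{E:3.4.1} collapses that sum to the single advertised term. Throughout I write $|A|=2^{-a}$ for $A\in\mathcal D_a$, and for a dyadic interval $A$ lying above $K_1$ or $L_1$ I let $A^{(m)}$ denote its ancestor of length $2^{-m}$; since $K_1\subset K_0$ and $L_1\subset L_0$, every interval occurring below is a nested member of the two fixed chains ending at $K_1$ and $L_1$, so in particular the whole path stays within a single left/right half and the balancing condition \ref{matrix:bal} is never invoked.

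For part \ref{pro:path-distance lower} (both pairs lower triangular, $k_0\ge l_0$ and $k_1\ge l_1$) I would route the path through the diagonal. First, keeping $L_0$ fixed, coarsen the first coordinate one level at a time from level $k_0$ up to level $l_0$; each such step is an instance of the lower triangular condition \ref{matrix:lower} (which covers $i\ge j$, including the diagonal) and stays in the lower triangle, so the cost is $\sum_{k=l_0}^{k_0-1}\eta_{k,l_0}$ and we arrive at the diagonal pair $(K_1^{(l_0)},L_0)$ at level $l_0$. Next, walk down the diagonal from level $l_0$ to level $l_1$ using the diagonal condition \ref{matrix:diag} at each level, for a cost $\sum_{l=l_0}^{l_1-1}\eta_{l,l}$, landing at $(K_1^{(l_1)},L_1)$. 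Finally, keeping $L_1$ fixed, refine the first coordinate from level $l_1$ down to $k_1$ by \ref{matrix:lower}, at cost $\sum_{k=l_1}^{k_1-1}\eta_{k,l_1}$, reaching $(K_1,L_1)$; degenerate pieces are empty sums.

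It then remains to bound the three sums by $\eta_{l_0-1,l_0-1}$, which is where \eqref{E:3.4.1} does the work. It is used in two guises: that any tail of a fixed row or column is at most one third of its leading term, and, crucially, that an off-diagonal shift lets one dominate a diagonal term by a strictly coarser one. Concretely, for $l>l_0$ one has $\eta_{l,l}\le\sum_{j>l_0}\eta_{l,j}<\tfrac13\eta_{l,l_0}$, whence $\sum_{l>l_0}\eta_{l,l}<\tfrac13\sum_{l>l_0}\eta_{l,l_0}<\tfrac19\eta_{l_0,l_0}$, so the diagonal sum is controlled by $\eta_{l_0,l_0}$; and applying the principle once more, $\eta_{l_0,l_0}<\tfrac13\eta_{l_0,l_0-1}<\tfrac19\eta_{l_0-1,l_0-1}$. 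The boundary sums $\sum_{k\ge l_0}\eta_{k,l_0}$ and $\sum_{k\ge l_1}\eta_{k,l_1}$ are handled the same way. Combining the estimates, the total is a fixed fraction well below $1$ of $\eta_{l_0-1,l_0-1}$, proving \ref{pro:path-distance lower}.

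Part \ref{pro:path-distance upper} is the mirror image under the symmetry exchanging the two coordinates and replacing the diagonal by the superdiagonal: now coarsen the second coordinate by the upper triangular condition \ref{matrix:upper} down to the superdiagonal pair at levels $(k_0,k_0+1)$, descend the superdiagonal from level $k_0$ to $k_1$ by the superdiagonal condition \ref{matrix:sup}, and finally refine the second coordinate back out by \ref{matrix:upper}; the identical summability bookkeeping collapses the cost to $\eta_{k_0-1,k_0}$. I expect the only real obstacle to be this last, telescoping step: one must choose the path so that each elementary move is covered by exactly one condition — in particular, inside a triangle the \emph{short} coordinate can be advanced only by the diagonal or superdiagonal double step — and then verify that the mixed diagonal/off-diagonal sums are dominated by the single coarse term. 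This is precisely what the assumed geometric decay of $(\eta_{i,j})$ across both rows and columns in \eqref{E:3.4.1} supplies; I would also note the mild implicit requirement $l_0\ge 1$ (resp.\ $k_0\ge 1$) ensuring the right-hand side is defined, which holds in all applications.
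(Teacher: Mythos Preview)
Your proposal is correct and follows essentially the same approach as the paper: you construct the identical three-segment path (vertical to the diagonal via \ref{matrix:lower}, along the diagonal via \ref{matrix:diag}, vertical out via \ref{matrix:lower}), telescope, and collapse the resulting sums using the geometric decay \eqref{E:3.4.1} in the same two-step fashion. Your remark on the mirror-image argument for \ref{pro:path-distance upper} via the superdiagonal, and on the implicit requirement $l_0\ge 1$ (resp.\ $k_0\ge 1$), are accurate and in line with the paper's treatment.
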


\begin{proof}
  We only prove~\ref{pro:path-distance lower}.  The proof of~\ref{pro:path-distance upper} is similar.  The most efficient way to obtain these
  estimates is by telescoping along a specific path contained in the lower triangular submatrix. Put $2^{-k_1} = |K_1|$, $2^{-l_1} = |L_1|$. The path
  is constructed by first traversing upwards from both points $(K_i,L_i)$, $i=0,1$ until they hit the diagonal and then join those two points on the
  diagonal by moving along the diagonal.  The vertical paths $P_i$, $i=0,1$ are given by
  \begin{equation*}
    P_i
    = \bigl\{ (K,L_i) : K_i' \supsetneq  K \supset K_i \bigr\},
  \end{equation*}
  where $K_i'$ defined as the unique dyadic interval containing $K_i$ such that $|K_i'| = |L_i| = 2^{-l_i}$ for $i=0,1$.  Now we join the above
  vertical paths along the diagonal:
  \begin{equation*}
    P_2
    = \bigl\{ (K,L) : K_0'\supsetneq K \supset K_1',\ L_0\supsetneq L \supset L_1,\ |K| = |L| \bigr\}.
  \end{equation*}
  We will now estimate $|d_{K_0,L_0} - d_{K_1,L_1}|$ by telescoping first along $P_0$, then $P_{2}$ and finally $P_1$:
  \begin{equation}\label{eq:8}
    |d_{K_0,L_0} - d_{K_1,L_1}|
    \leq \sum_{i=0}^1 \sum_{(K,L_i)\in P_i} |d_{(K,L_i)} - d_{(\pi(K),L_i)}|
    + \sum_{(K,L)\in P_2} |d_{(K,L)} - d_{(\pi(K),\pi(L))}|.
  \end{equation}
  By the lower triangular and diagonal condition (see \Cref{dfn:semi-stable}~\ref{matrix:lower} and~\ref{matrix:diag}), we obtain
  \begin{equation}\label{eq:11}
    \begin{aligned}
      |d_{(K,L_i)} - d_{(\pi(K),L_i)}|
      &\leq \eta_{k-1,l_i},
      && (K,L_i)\in P_i,\ |K|=2^{-k},\ i=0,1,\\
      |d_{(K,L)} - d_{(\pi(K),\pi(L))}|
      &\leq \eta_{k-1,k-1},
      && |K| = |L| = 2^{-k}
    \end{aligned}
  \end{equation}
  Inserting~\eqref{eq:11} into~\eqref{eq:8} and using~\eqref{E:3.4.1} yields
  \begin{equation}\label{eq:13}
    \begin{aligned}
      |d_{K_0,L_0} - d_{K_1,L_1}|
      &\leq \sum_{i=0}^1 \sum_{k=l_i+1}^{k_i}  \eta_{k-1,l_i}
        + \sum_{k=l_0+1}^{l_1} \eta_{k-1,k-1}
        \leq \sum_{i=0}^1 \frac{1}{3}\eta_{l_i-1,l_i} + \frac{1}{9}\eta_{l_0-1,l_0-1}\\
      &\leq \eta_{l_0-1,l_0-1}.
    \end{aligned}
  \end{equation}
\end{proof}

\begin{pro}\label{pro:permanence}
  Assume that $D\colon \mathcal{V}(\delta^2)\to \mathcal{V}(\delta^2)$ is a diagonal operator which is $\ell^{\infty} $-bounded and
  $\eta=(\eta_{i,j} )_{i,j=0}^\infty\in (0,1)^{\mathbb{N}^2}$ that satisfies \eqref{E:3.4.1}.  Let $\tilde H=(\tilde h_K:K\in \mathcal{D})$ and
  $ \tilde K=( \tilde k_L:L\in \mathcal{D})$ be faithful Haar systems relative to the frequencies $(m_i)_{i=0}^\infty$ and $(n_j)_{j=0}^\infty$
  respectively, such that, for $1\leq n_i < m_i < n_{i+1}$, $i\geq 0$.  Denote $\tilde D= D|_{\tilde H\otimes \tilde K}$.
  \begin{enumerate}[label=(\roman*)]
  \item\label{enu:pro:permanence:lower} If $D$ satisfies the lower triangular condition for $\eta$ then $\tilde D$ satisfies the lower triangular
    condition for $\eta$.
  \item\label{enu:pro:permanence:upper} If $D$ satisfies the upper triangular condition for $\eta$ then $\tilde D$ satisfies the upper triangular
    condition for $\eta$.
  \item\label{enu:pro:permanence:diagonal} If $D$ satisfies the lower triangular condition and the diagonal condition for $\eta$ then $\tilde D$
    satisfies the diagonal condition for $\eta$.
  \item\label{enu:pro:permanence:super} If $D$ satisfies the upper triangular condition and the superdiagonal condition for $\eta$ then $\tilde D$
    satisfies the superdiagonal condition for $\eta$.
  \item\label{enu:pro:permanence:balance} If $D$ satisfies the upper triangular condition and the superdiagonal condition for $\eta$ and the balancing
    condition for some $\delta>0$ then $\tilde D$ satisfies the balancing condition for $\tilde \delta = \delta
    +\sum_{i,j}\eta_{i,j}$.  \end{enumerate}
\end{pro}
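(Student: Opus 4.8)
The common engine is the explicit averaging formula of \Cref{P:3.2}: for $I\in\mathcal D_i$ and $J\in\mathcal D_j$ the blocked coefficient $\tilde d_{I,J}$ is the probability-weighted average of $d_{L,M}$ over the level-$m_i$ intervals $L\subset\supp(\tilde h_I)$ and the level-$n_j$ intervals $M\subset\supp(\tilde k_J)$. Since properties \ref{matrix:lower}--\ref{matrix:bal} of \Cref{dfn:semi-stable} each compare $\tilde d$ at two rectangles differing by a single dyadic refinement in one or both coordinates, I would first extract the one-parameter mechanism that all five cases share. The decisive structural fact is a \emph{sibling pairing}: writing $\tilde h_I=\sum_{K\in\mathcal A_I}\theta_K h_K$ with $\mathcal A_I\subset\mathcal D_{m_i}$, the function $\tilde h_I$ equals $\theta_K$ on $K^+$ and $-\theta_K$ on $K^-$, so by \Cref{dfn:faithful}~\ref{dfn:faithful ii} the sets $\Gamma_{I^+}=\{\tilde h_I=1\}$ and $\Gamma_{I^-}=\{\tilde h_I=-1\}$ pick out, for every $K\in\mathcal A_I$, its two halves $K^+$ and $K^-$, one going to each side.

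This pairing is exactly what lets the blocking preserve a stability modulus without degrading it. Concretely, for a one-parameter tree-stable operator with $|d_{L^\omega}-d_L|<\varepsilon_k$ for $L\in\mathcal D_k$, I would telescope each fine coefficient $d_{L'}$ (with $L'$ at level $m_{i+1}$ inside $\Gamma_{I^\omega}$) up to its level-$(m_i+1)$ ancestor, incurring error $\sum_{k>m_i}\varepsilon_k$; the resulting quantity is the average over $K\in\mathcal A_I$ of $d$ at the half of $K$ lying in $\Gamma_{I^\omega}$. Subtracting $\tilde d_I=|\mathcal A_I|^{-1}\sum_K d_K$ and invoking the sibling pairing collapses the difference to single-level successor differences $d_K-d_{K^{\pm}}$, each at most $\varepsilon_{m_i}$, so $|\tilde d_{I^\omega}-\tilde d_I|\le\sum_{k\ge m_i}\varepsilon_k$. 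The point, and the crux of the whole proposition, is that without this pairing one would be forced to compare $d$-values across the spread-out, tree-incoherent support $\Gamma_I$, accumulating error over all intermediate levels.

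With this in hand, cases \ref{enu:pro:permanence:lower} and \ref{enu:pro:permanence:upper} follow by holding one coordinate fixed and applying the mechanism to the other, using the reformulation in \Cref{equivalent upper and lower triangular}. For \ref{enu:pro:permanence:lower} I fix a column interval $M$ and run the argument on the row operator $D_{(\cdot,M)}$; since $i\ge j$ forces $m_i>n_i\ge n_j$, all rectangles stay in the lower triangle where \Cref{dfn:semi-stable}~\ref{matrix:lower} applies, and the tail $\sum_{k\ge m_i}\eta_{k,n_j}$ is dominated, by monotonicity of $\eta$ and $m_i\ge i+1$, by $\sum_{k\ge i+1}\eta_{k,j}<\tfrac13\eta_{i,j}$ from \eqref{E:3.4.1}; averaging over $M$ yields $|\tilde d_{I^\omega,J}-\tilde d_{I,J}|<\eta_{i,j}$. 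Case \ref{enu:pro:permanence:upper} is symmetric on the column operator $D_{(I,\cdot)}$, where now $j>i$ forces $n_j\ge n_{i+1}>m_i$, so that everything lies in the upper triangle. The interleaving hypothesis $n_i<m_i<n_{i+1}$ enters precisely here, guaranteeing the correct orientation of $|L|$ against $|M|$ in each of the two cases.

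The remaining cases move both indices, and here \Cref{pro:path-distance} supplies the estimate. By the averaging formula, every fine rectangle contributing to $\tilde d_{I^\omega,J^\xi}$ nests, through the sibling pairing applied in \emph{both} coordinates, inside a unique fine rectangle contributing to $\tilde d_{I,J}$, with matching weights. For \ref{enu:pro:permanence:diagonal} these nested pairs all lie in the lower-plus-diagonal region, so \Cref{pro:path-distance}~\ref{pro:path-distance lower}---whose proof uses only \Cref{dfn:semi-stable}~\ref{matrix:lower} and~\ref{matrix:diag}---bounds each pairwise difference, and the average inherits the bound; case \ref{enu:pro:permanence:super} is identical in the upper-plus-superdiagonal region through \Cref{pro:path-distance}~\ref{pro:path-distance upper}. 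For the balancing case \ref{enu:pro:permanence:balance} the rectangle is rooted in the first coordinate, so I would pair the second-coordinate halves at the root and combine \Cref{dfn:semi-stable}~\ref{matrix:bal} with the upper and superdiagonal stability: the root discrepancy contributes $\delta$ and the blocking contributes the telescoped $\sum_{i,j}\eta_{i,j}$, giving the claimed $\tilde\delta=\delta+\sum_{i,j}\eta_{i,j}$. Beyond the sibling-pairing idea I expect only bookkeeping difficulty, namely tracking the two frequency families so that every elementary successor difference is read against the correct triangular condition and every $\eta$-tail is absorbed by \eqref{E:3.4.1}.
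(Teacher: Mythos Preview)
Your approach is correct and essentially the same as the paper's: both express the blocked difference as a convex combination of fine-level differences $|d_{K_0,L_0}-d_{K,L}|$ over nested rectangles (your ``sibling pairing'' is exactly the paper's identity~\eqref{eq:3}) and then bound each term via path telescoping. The only cosmetic difference is that the paper invokes \Cref{pro:path-distance} uniformly for all four cases, whereas you telescope directly for \ref{enu:pro:permanence:lower} and \ref{enu:pro:permanence:upper}; since in those cases one coordinate is fixed, the diagonal segment $P_2$ in \Cref{pro:path-distance} is empty and the two arguments coincide.
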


\begin{myproof}
  Since the proof of~\ref{enu:pro:permanence:lower} and~\ref{enu:pro:permanence:upper} as well as the proof of~\ref{enu:pro:permanence:diagonal}
  and~\ref{enu:pro:permanence:super} are completely analogous, we will only present the proof for~\ref{enu:pro:permanence:lower}
  and~\ref{enu:pro:permanence:diagonal}. In all cases, the proof comes down to expressing a difference of successive entries of $\tilde D$ as an
  average of distances along paths and then evoking \Cref{pro:path-distance}. We do not prove \Cref{enu:pro:permanence:balance}, because we do not use
  it, but it follows with similar arguments.
  
  Let $(d_{K,L}:K,L\in \mathcal{D})$ be the coefficients of $D$ and $(\tilde d_{K,L}:K,L\in \mathcal{D})$ be the coefficients of
  $\tilde D=D|_{\tilde H\otimes\tilde K}$.
  \begin{proofcase}[Proof of~\ref{enu:pro:permanence:lower}]
    For $k\geq l$, $I\in \mathcal{D}_k$ and $J\in \mathcal{D}_l$, and $\theta\in\{\pm 1\}$ we have
    \begin{equation}\label{eq:2(isitnow)}
      \begin{aligned}
        |\tilde d_{I,J}-\tilde d_{I^\theta,J}|
        &= \Bigl|
          \sum_{K\in \mathcal{D}_{m_k}\atop K\subset \supp(\tilde h_I)}
          \sum_{L\in \mathcal{D}_{n_{l}}\atop L\subset \supp( \tilde k_J)}
          \frac{|K| |L|}{|I| |J|} d_{K,L}
          - \sum_{K\in \mathcal{D}_{m_{k+1}}\atop K\subset \supp(\tilde h_{I^{\theta}})}
          \sum_{L\in \mathcal{D}_{n_{l}}\atop L\subset \supp( \tilde k_J)}
          \frac{|K| |L|}{|I^{\theta}| |J|} d_{K,L}
          \Bigr|\\
        &\leq \sum_{L\in \mathcal{D}_{n_{l}}\atop L\subset \supp( \tilde k_J)}
          \frac{|L|}{|J|}
          \Bigl|
          \sum_{K\in \mathcal{D}_{m_k}\atop K\subset \supp(\tilde h_I)}
          \frac{|K|}{|I|} d_{K,L}
          - \sum_{K\in \mathcal{D}_{m_{k+1}}\atop K\subset \supp(\tilde h_{I^{\theta}})}
          \frac{|K|}{|I^{\theta}|} d_{K,L}
          \Bigr|.
      \end{aligned}
    \end{equation}
    Next, we record the identity
    \begin{equation}\label{eq:3}
      \sum_{K\in \mathcal{D}_{m_k}\atop K\subset \supp(\tilde h_I)}
      \frac{|K|}{|I|} d_{K,L}
      - \sum_{K\in \mathcal{D}_{m_{k+1}}\atop K\subset \supp(\tilde h_{I^{\theta}})}
      \frac{|K|}{|I^{\theta}|} d_{K,L}
      = \sum_{K_0\in \mathcal{D}_{m_k}\atop K_0\subset \supp(\tilde h_I)}
      \frac{|K_0|}{|I|} \Bigl(d_{K_0,L} - 2\cdot \sum_{K\in \mathcal{D}_{m_{k+1}}\atop K\subset K_0^{\theta}} \frac{|K|}{|K_{0}|} d_{K,L}\Bigr).
    \end{equation}
    Combining~\eqref{eq:2(isitnow)} and~\eqref{eq:3} yields
    \begin{equation}
      \label{eq:9}
      \begin{aligned}
        |\tilde d_{I,J}-\tilde d_{I^\theta,J}|
        &\leq \sum_{L\in \mathcal{D}_{n_{l}}\atop L\subset \supp( \tilde k_J)}
          \frac{|L|}{|J|}
          \sum_{K_0\in \mathcal{D}_{m_k}\atop K_0\subset \supp(\tilde h_I)}
          \frac{|K_0|}{|I|}\cdot \Bigl|
          d_{K_0,L} - 2\cdot \sum_{K\in \mathcal{D}_{m_{k+1}}\atop K\subset K_0^{\theta}} \frac{|K|}{|K_{0}|} d_{K,L}
          \Bigr|\\
        &\leq \sum_{L\in \mathcal{D}_{n_{l}}\atop L\subset \supp( \tilde k_J)}
          \frac{|L|}{|J|}
          \sum_{K_0\in \mathcal{D}_{m_k}\atop K_0\subset \supp(\tilde h_I)}
          \frac{|K_0|}{|I|} \sum_{K\in \mathcal{D}_{m_{k+1}}\atop K\subset K_0^{\theta}} \frac{|K|}{|K_{0}|}  |d_{K_0,L} - d_{K,L}|,
      \end{aligned}
    \end{equation}
    where, by \Cref{pro:path-distance}~\ref{pro:path-distance lower}, each
    $|d_{K_0,L} - d_{K,L}| \leq \eta_{n_l-1,n_l-1} \leq \eta_{l,l} \leq \eta_{k,l}$. Hence, since
    \begin{equation*}
      \sum_{L\in \mathcal{D}_{n_{l}}\atop L\subset \supp( \tilde k_J)}
      \frac{|L|}{|J|}
      \sum_{K_0\in \mathcal{D}_{m_k}\atop K_0\subset \supp(\tilde h_I)}
      \frac{|K_0|}{|I|} \sum_{K\in \mathcal{D}_{m_{k+1}}\atop K\subset K_0^{\theta}} \frac{|K|}{|K_{0}|}
      = 1,
    \end{equation*}
    our estimates \eqref{eq:9} yields $|\tilde d_{I,J}-\tilde d_{I^\theta,J}| \leq \eta_{k,l}$, as claimed.
  \end{proofcase}

  \begin{proofcase}[Proof of~\ref{enu:pro:permanence:diagonal}]
    For $k\geq 0$, $I,J\in \mathcal{D}_k$ and $\theta,\varepsilon\in\{\pm 1\}$ we have
    \begin{equation}\label{eq:2}
      \begin{aligned}
        |\tilde d_{I,J}-\tilde d_{I^\theta,J^\varepsilon}|
        &= \biggl|
          \sum_{K_0\in \mathcal{D}_{m_k}\atop K_0\subset \supp(\tilde h_I)}
          \sum_{L_0\in \mathcal{D}_{n_{k}}\atop L_0\subset \supp( \tilde k_J)}
          \frac{|K_0| |L_0|}{|I| |J|} d_{K_0,L_0}
          - 4\cdot \sum_{K\in \mathcal{D}_{m_{k+1}}\atop K\subset \supp(\tilde h_{I^{\theta}})}
          \sum_{L\in \mathcal{D}_{n_{k+1}}\atop L\subset \supp( \tilde k_{J^\varepsilon})}
          \frac{|K| |L|}{|I| |J|} d_{K,L}
          \biggr|\\
        &\leq \biggl|
          \sum_{K_0\in \mathcal{D}_{m_k}\atop K_0\subset \supp(\tilde h_I)}
          \frac{|K_0|}{|I|}
          \sum_{L_0\in \mathcal{D}_{n_{k}}\atop L_0\subset \supp( \tilde k_J)}
          \frac{|L_0|}{|J|}
          \Bigl( d_{K_0,L_0}
          - 4\cdot \sum_{K\in \mathcal{D}_{m_{k+1}}\atop K\subset K_0^{\theta}}
          \frac{|K|}{|K_0|}
          \sum_{L\in \mathcal{D}_{n_{k+1}}\atop L\subset L_0^{\varepsilon}}
          \frac{|L|}{|L_0|} d_{K,L}
          \Bigr)
          \biggr|.
      \end{aligned}
    \end{equation}
    Moreover, since
    \begin{equation*}
      4\cdot \sum_{K\in \mathcal{D}_{m_{k+1}}\atop K\subset K_0^{\theta}}
      \frac{|K|}{|K_0|}
      \sum_{L\in \mathcal{D}_{n_{k+1}}\atop L\subset L_0^{\varepsilon}}
      \frac{|L|}{|L_0|}
      = 1,
    \end{equation*}
    we further estimate~\eqref{eq:2} by
    \begin{align*}
      |\tilde d_{I,J}-\tilde d_{I^\theta,J^\varepsilon}|
      &\leq 4\cdot \biggl|
        \sum_{K_0\in \mathcal{D}_{m_k}\atop K_0\subset \supp(\tilde h_I)}
        \frac{|K_0|}{|I|}
        \sum_{L_0\in \mathcal{D}_{n_{k}}\atop L_0\subset \supp( \tilde k_J)}
        \frac{|L_0|}{|J|}
        \sum_{K\in \mathcal{D}_{m_{k+1}}\atop K\subset K_0^{\theta}}
        \frac{|K|}{|K_0|}
        \sum_{L\in \mathcal{D}_{n_{k+1}}\atop L\subset L_0^{\varepsilon}}
        \frac{|L|}{|L_0|}
        \bigl(d_{K_0,L_0} - d_{K,L}\bigr)
        \biggr|.
    \end{align*}
    Recall the by \Cref{pro:path-distance}~\ref{pro:path-distance lower} $|d_{K_0,L_0} - d_{K,L}|\leq \eta_{n_{k}-1,n_{k}-1} \leq \eta_{k,k}$, and
    thus by convexity $|\tilde d_{I,J}-\tilde d_{I^\theta,J^\varepsilon}| \leq \tilde{\eta}_{k,k}$.\qedhere
  \end{proofcase}
\end{myproof}

\subsection{Stabilizing two-parameter diagonal operators: the upper and lower diagonal conditions}
For a diagonal operator $D\colon\mathcal{V}(\delta^2)\to\mathcal{V}(\delta^2)$ and a Haar-type vector $\tilde h$, $\tilde k$ we wish to define
diagonal operators $D_{(\tilde h,\cdot)}$ and $D_{(\cdot,\tilde k)}\colon\mathcal{V}(\delta)\to\mathcal{V}(\delta)$.  For motivational purposes, we do
so indirectly by first giving a more general way of reducing $D$ to a one-parameter operator.

\begin{ntn}\hfill
  \begin{enumerate}[label=(\alph*)]
  \item For a linear operator $D\colon\mathcal{V}(\delta^2)\to\mathcal{V}(\delta^2)$ and $f^*,f\in\mathcal{V}(\delta)$ define
    $D_{(f^*,f)}\colon\mathcal{V}(\delta)\to\mathcal{V}(\delta)$ given by
    \begin{equation*}
      D_{(f^*,f)}(g)
      = \sum_{J\in\mathcal{D}}\frac{k_J}{|J|}\langle f^*\otimes k_J,D(f\otimes g)\rangle
    \end{equation*}
    If in particular, if $D\colon\mathcal{V}(\delta^2)\to\mathcal{V}(\delta^2)$ is diagonal with entries $(d_{I,J})_{I,J\in\mathcal{D}}$ then
    $D_{(f^*,f)}$ is diagonal as well with entries $\tilde d_J = \sum_{I\in\mathcal{D}}|I|^{-1}\langle f^*,h_I\rangle\langle h_I,f\rangle d_{I,J}$,
    $J\in\mathcal{D}$.

  \item For a diagonal linear operator $D\colon\mathcal{V}(\delta^2)\to\mathcal{V}(\delta^2)$ and $I\in\mathcal{D}$ take $f^* = h_I/|I|$ and $f = h_I$
    and denote $D_{(I,\cdot)} = D_{(f^*,f)}$.  This coincides with the definition of $D_{(I,\cdot)}$ in \Cref{equivalent upper and lower
      triangular}~\ref{matrix:upper:equiv}.

  \item For a diagonal linear operator $D\colon\mathcal{V}(\delta^2)\to\mathcal{V}(\delta^2)$ and a Haar-type vector of the form
    $\tilde h = \sum_{I\in\mathcal{A}}\varepsilon_I h_I$, where, for some $m\in\mathbb{N}$, $\mathcal{A}\subset\mathcal{D}_{m}$ and
    $(\varepsilon_I)_{I\in\mathcal{A}}\in\{\pm1\}^\mathcal{A}$, denote $D_{(\tilde h,\cdot)} = D_{(f^*,f)}$, for $f^* = \tilde h/|\cup\mathcal{A}|$
    and $f = \tilde h$.  Then,
    \begin{equation*}
      D_{(\tilde h,\cdot)}
      = |\mathcal{A}|^{-1}\sum_{I\in\mathcal{A}}D_{(I,\cdot)}.
    \end{equation*}

    The preceding definitions explain why the signs $(\varepsilon_I)_{I\in\mathcal{A}}$ don't appear in the formula for $D_{(\tilde h,\cdot)}$.

  \item For a diagonal linear operator $D\colon\mathcal{V}(\delta^2)\to\mathcal{V}(\delta^2)$ and a Haar-type vector of the form
    $\tilde k = \sum_{J\in\mathcal{D}}\theta_J\tilde k_J$, where, for some $n\in\mathbb{N}$, $\mathcal{B}\subset\mathcal{D}_n$ and
    $(\theta_J)_{J\in\mathcal{B}}\in\{\pm1\}^\mathcal{B}$, denote
    \begin{equation*}
      D_{(\cdot,\tilde k)}
      = |\mathcal{B}|^{-1}\sum_{J\in\mathcal{B}}D_{(\cdot,J)},
    \end{equation*} 
    where the $D_{(\cdot,J)}$ were defined in \Cref{equivalent upper and lower triangular}~\ref{matrix:lower:equiv}.

    Note that $D_{(\cdot,\tilde k)}$ could have also been defined indirectly, by taking
    $D^{(g^*,g)}(f) = \sum_{I\in\mathcal{D}}\frac{h_I}{|I|}\langle h_I\otimes g^* , D(f\otimes g)\rangle$ for appropriate $g^*$, $g$ in
    $\mathcal{D}(\delta)$.
  \end{enumerate}
\end{ntn}

\begin{lem}\label{restated triangular on blocking}
  Let $D\colon\mathcal{V}(\delta^2)\to\mathcal{V}(\delta^2)$ be an $\ell^{\infty}$-bounded diagonal operator,
  $\tilde H=(\tilde h_I)_{I\in \mathcal{D}}$ and $ \tilde K=( \tilde k_J)_{J\in \mathcal{D}}$ be faithful Haar systems relative to the frequencies
  $(m_i)_{i=0}^\infty$ and $(n_j)_{j=0}^\infty$ respectively and put $\tilde D = D|_{\tilde H\otimes \tilde K}$.  Then, for every $I,J\in\mathcal{D}$,
  \begin{equation}\label{shortcut tilde}
    D_{(\tilde h_I,\cdot)}|_{\tilde K}
    = \tilde D_{(I,\cdot)}\text{ and }D_{(\cdot,\tilde k_J)}|_{\tilde H}
    = \tilde D_{(\cdot,J)}.
  \end{equation}
  Let also $\eta = (\eta_{i,j})_{i,j=0}^\infty\subset(0,1)$.
  \begin{enumerate}[label=(\alph*)]
  \item\label{restated triangular on blocking lower} If for all $j\in\mathbb{N}_0$ and $J\in\mathcal{D}_j$ the operator
    $D_{(\cdot,\tilde k_J)}|_{\tilde H}$ is $(\eta_{i,j})_{i=0}^\infty$-stabilized from $j$ on then $D|_{\tilde H\otimes\tilde K}$ satisfies the lower
    triangular condition.

  \item\label{restated triangular on blocking upper} If for all $i\in\mathbb{N}_0$ and $I\in\mathcal{D}_i$ the operator
    $D_{(\tilde h_I,\cdot)}|_{\tilde K}$ is $(\eta_{i,j})_{j=0}^\infty$-stabilized from $i+1$ on then $D|_{\tilde H\otimes\tilde K}$ satisfies the
    upper triangular condition.
  \end{enumerate}
\end{lem}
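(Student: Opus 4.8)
The plan is to reduce the whole statement to the coefficient identity~\eqref{shortcut tilde}; once this is available, both implications~\ref{restated triangular on blocking lower} and~\ref{restated triangular on blocking upper} are immediate consequences of the reformulation of the triangular conditions given in \Cref{equivalent upper and lower triangular}. So I would first establish~\eqref{shortcut tilde} by a direct comparison of diagonal coefficients, and only then read off the two implications.

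First I would prove $D_{(\tilde h_I,\cdot)}|_{\tilde K} = \tilde D_{(I,\cdot)}$. Fix $i\in\mathbb{N}_0$ and $I\in\mathcal{D}_i$, and write $\tilde h_I = \sum_{L\in\mathcal{A}_I}\theta_L h_L$ with $\mathcal{A}_I\subset\mathcal{D}_{m_i}$; since $\tilde H$ preserves distribution, $\supp(\tilde h_I)$ has measure $|I|$, so $|\mathcal{A}_I| = |I|/|L| = |I|\,2^{m_i}$, whence $|\mathcal{A}_I|^{-1} = |L|/|I|$ for each $L\in\mathcal{A}_I$. Using the averaging formula $D_{(\tilde h_I,\cdot)} = |\mathcal{A}_I|^{-1}\sum_{L\in\mathcal{A}_I}D_{(L,\cdot)}$ together with the fact that $D_{(L,\cdot)}$ is the one-parameter diagonal operator with coefficients $(d_{L,M})_{M\in\mathcal{D}}$, I find that $D_{(\tilde h_I,\cdot)}$ is diagonal with coefficients $\sum_{L\in\mathcal{A}_I}\frac{|L|}{|I|}d_{L,M}$, $M\in\mathcal{D}$. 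Applying now the one-parameter blocking formula \Cref{P:1.4.5} with the faithful system $\tilde K$ relative to $(n_j)$, the $J$-th coefficient of $D_{(\tilde h_I,\cdot)}|_{\tilde K}$, for $J\in\mathcal{D}_j$, equals
\[
  \sum_{M\in\mathcal{D}_{n_j},\,M\subset\supp(\tilde k_J)}\frac{|M|}{|J|}\sum_{L\in\mathcal{A}_I}\frac{|L|}{|I|}d_{L,M},
\]
which is precisely the expression for $\tilde d_{I,J}$ supplied by \Cref{P:3.2}, i.e.\ the $J$-th coefficient of $\tilde D_{(I,\cdot)}$. The identity $D_{(\cdot,\tilde k_J)}|_{\tilde H} = \tilde D_{(\cdot,J)}$ follows by the symmetric computation, interchanging the roles of the two coordinates.

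With~\eqref{shortcut tilde} in hand, \ref{restated triangular on blocking lower} is immediate: by \Cref{equivalent upper and lower triangular}, the lower triangular condition for $\tilde D = D|_{\tilde H\otimes\tilde K}$ is equivalent to the statement that for every $j\in\mathbb{N}_0$ and $J\in\mathcal{D}_j$ the one-parameter operator $\tilde D_{(\cdot,J)}$ is $(\eta_{i,j})_{i=0}^\infty$-stabilized from $j$ on; but $\tilde D_{(\cdot,J)} = D_{(\cdot,\tilde k_J)}|_{\tilde H}$, which is exactly the hypothesis. Likewise \ref{restated triangular on blocking upper} follows from the reformulation of the upper triangular condition in \Cref{equivalent upper and lower triangular} together with $\tilde D_{(I,\cdot)} = D_{(\tilde h_I,\cdot)}|_{\tilde K}$.

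The only genuine work is the verification of~\eqref{shortcut tilde}, and within it the main bookkeeping obstacle is reconciling the averaging constant $|\mathcal{A}_I|^{-1}$ produced by $D_{(\tilde h_I,\cdot)}$ with the measure weights $|L|/|I|$ appearing in \Cref{P:3.2}, while keeping the two index ranges $\mathcal{A}_I\subset\mathcal{D}_{m_i}$ and $\{M\in\mathcal{D}_{n_j}:M\subset\supp(\tilde k_J)\}$ straight; once the coefficient formulas of \Cref{P:1.4.5} and \Cref{P:3.2} are lined up, the identity is a matter of rewriting a double sum. An alternative, essentially computation-free route would be to observe that each side of~\eqref{shortcut tilde} is obtained by applying the same one-parameter blocking maps in the two slots, so that the identity simply expresses the commutation of ``integrating out one coordinate'' with ``blocking the other''; this could be argued directly from the definitions of $A_{\tilde H}$, $B_{\tilde H}$ and their tensor products in \Cref{bi-parameter B-Q notation}. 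I would nonetheless keep the coefficient comparison as the primary argument, since the requisite formulas are already in place.
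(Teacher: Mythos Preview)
Your proposal is correct and follows essentially the same approach as the paper: both reduce \ref{restated triangular on blocking lower} and \ref{restated triangular on blocking upper} to~\eqref{shortcut tilde} via \Cref{equivalent upper and lower triangular}, and both verify~\eqref{shortcut tilde} by computing the diagonal coefficients of $D_{(\tilde h_I,\cdot)}|_{\tilde K}$ using \Cref{P:1.4.5} and comparing with the formula for $\tilde d_{I,J}$ from \Cref{P:3.2}. Your argument is in fact slightly more explicit than the paper's in justifying why $|\mathcal{A}_I|^{-1} = |L|/|I|$.
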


\begin{proof}
  Note that items~\ref{restated triangular on blocking lower} and~\ref{restated triangular on blocking upper} follow directly from~\eqref{shortcut
    tilde} and \Cref{equivalent upper and lower triangular}.  We show the first part of~\eqref{shortcut tilde}.  Let $(d_{I,J})_{I,J\in\mathcal{D}}$
  denote the entries of $D$.  For all $i,j\in\mathbb{N}_0$, $I\in\mathcal{D}_i$, and $J\in\mathcal{D}_j$ write
  $\mathcal{A}_I = \{L\in\mathcal{D}_{m_i}:L\subset\mathrm{supp}(\tilde h_I)\}$ and
  $\mathcal{B}_J = \{M\in\mathcal{D}_{n_j}:M\subset\mathrm{supp}(\tilde k_J)\}$.  Then, the operator $\tilde D$ has entries
  $(\tilde d_{I,J})_{I,J\in\mathcal{D}}$ with
  $\tilde d_{I,J} = \sum_{L\in\mathcal{A}_I} \frac{|L|}{|I|} \sum_{M\in\mathcal{B}_J} \frac{|M|}{|J|} d_{L,M}$, whenever $I\in\mathcal{D}_i$ and
  $J\in\mathcal{D}_j$.

  Fix now $i\in\mathbb{N}$ and $I\in\mathcal{D}_i$.  Then, $\tilde D_{(I,\cdot)}$ is the diagonal operator with entries
  $(\tilde d_{I,J})_{J\in\mathcal{D}}$.  By definition, $\tilde D_{(\tilde h_I,\cdot)}$ is the diagonal operator that has entries
  $\bigl(\sum_{L\in\mathcal{A}_I} \frac{|L|}{|I|} d_{L,J}\bigr)_{J\in\mathcal{D}}$.  We then deduce that $D_{(\tilde h_I,\cdot)}|_{\tilde K}$ is the
  diagonal operator such that, for each $J\in\mathcal{D}_j$, has entry
  $c_J = \sum_{M\in\mathcal{B}_J} \frac{|M|}{|J|}\sum_{L\in\mathcal{A}_I} \frac{|L|}{|I|} d_{L,M}$, which coincides with $\tilde d_{I,J}$, the
  corresponding entry of $\tilde D_{(I,\cdot)}$.
\end{proof}

\begin{pro}\label{stabilization by one-parameter reduction}
  Let $D\colon\mathcal{V}(\delta^2)\to\mathcal{V}(\delta^2)$ be an $\ell^{\infty}$-bounded diagonal operator.  Then, for every infinite subsets $M$,
  $N$ of $\mathbb{N}$ and $\eta=(\eta_{i,j} )_{i,j=0}^\infty\in (0,1)^{\mathbb{N}^2}$ there exist faithful Haar systems
  $\tilde H=(\tilde h_I:I\in \mathcal{D})$ and $ \tilde K=( \tilde k_J:J\in \mathcal{D})$ relative to frequencies $(m_i)_{i=0}^\infty\subset M$ and
  $(n_j)_{j=0}^\infty\subset N$ respectively, such that, for $i\geq 0$, $1\leq n_i < m_i < n_{i+1}$, and $D|_{\tilde H\otimes\tilde K}$ satisfies the
  lower and upper triangular conditions for $\eta$.
\end{pro}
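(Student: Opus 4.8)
The plan is to reduce this two–parameter statement to the one–parameter stabilization machinery of \Cref{stabilizing game} by slicing $D$ in each coordinate. By \Cref{restated triangular on blocking} it suffices to produce faithful Haar systems $\tilde H=(\tilde h_I)$ and $\tilde K=(\tilde k_J)$, relative to frequencies $(m_i)\subset M$ and $(n_j)\subset N$ with $n_i<m_i<n_{i+1}$, such that for every $j$ and $J\in\mathcal D_j$ the one–parameter operator $D_{(\cdot,\tilde k_J)}|_{\tilde H}$ is tree–stabilized from $j$ on, and for every $i$ and $I\in\mathcal D_i$ the operator $D_{(\tilde h_I,\cdot)}|_{\tilde K}$ is tree–stabilized from $i+1$ on. The shortcut identities $D_{(\cdot,\tilde k_J)}|_{\tilde H}=\tilde D_{(\cdot,J)}$ and $D_{(\tilde h_I,\cdot)}|_{\tilde K}=\tilde D_{(I,\cdot)}$ of \Cref{restated triangular on blocking} then translate these into the lower and upper triangular conditions of \Cref{dfn:semi-stable}. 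Each slice $D_{(\tilde h_I,\cdot)}$ (resp.\ $D_{(\cdot,\tilde k_J)}$) is an average of the $\ell^\infty$–bounded diagonal operators $D_{(L,\cdot)}$ (resp.\ $D_{(\cdot,M)}$), hence is itself $\ell^\infty$–bounded and a legitimate input to \Cref{stabilizing game}.

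The mechanism is to run two instances of the game of \Cref{stabilizing game} — one producing $\tilde H$ with frequencies in $M$, one producing $\tilde K$ with frequencies in $N$ — and to play them in the interleaved order $\mathrm{K}\text{--}0,\mathrm{H}\text{--}0,\mathrm{K}\text{--}1,\mathrm{H}\text{--}1,\dots$, acting as Player (I) in both while Player (II) follows the winning strategies the proposition supplies; my Player (I) moves in one game are allowed to depend on the data already revealed in the other. I would fix the common error sequences $\eta^H_i=\tfrac12\min_{0\le j\le i}\eta_{i,j}$ and $\eta^K_k=\tfrac12\min_{0\le i\le k-1}\eta_{i,k}$ (all positive), so that the single error sequence of each game dominates every doubly–indexed target active at the relevant level. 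At round $j$ of game K I feed in the finite collection $\{D_{(\tilde h_I,\cdot)}:I\in\mathcal D_{j-1}\}$, already defined because $\tilde h_I$, $I\in\mathcal D_{j-1}$, was produced in the preceding round $\mathrm{H}\text{--}(j-1)$; at round $i$ of game H I feed in $\{D_{(\cdot,\tilde k_J)}:J\in\mathcal D_i\}$, defined in the preceding round $\mathrm{K}\text{--}i$. Since the game stabilizes a collection introduced at round $k_0$ from level $k_0$ on, game K stabilizes $D_{(\tilde h_I,\cdot)}|_{\tilde K}$ from level $i+1$ on and game H stabilizes $D_{(\cdot,\tilde k_J)}|_{\tilde H}$ from level $j$ on, exactly the ranges required above.

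To realize the frequency interleaving $n_i<m_i<n_{i+1}$, I use that as Player (I) I may choose at each round an arbitrary infinite subset of the set revealed by Player (II) in the previous round of that game. Initializing game H inside $M$ and game K inside $N$, at round $j$ of K I pass to the tail $\{x\in N^{(j-1)}_K:x>m_{j-1}\}$ and at round $i$ of H to $\{x\in N^{(i-1)}_H:x>n_i\}$; because each produced frequency is at least the minimum of the chosen set, this forces $n_j>m_{j-1}$ and $m_i>n_i$, hence the chain $n_0<m_0<n_1<\cdots$, with $(m_i)\subset M$ and $(n_j)\subset N$ throughout. Combining the two game outcomes with $\eta^H_i<\eta_{i,j}$ for $j\le i$ and $\eta^K_k<\eta_{i,k}$ for $i\le k-1$ shows each slice operator is $(\eta_{i,j})$–tree–stabilized in the required range, so \Cref{restated triangular on blocking} delivers both the lower and the upper triangular conditions for $\eta$.

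The main obstacle is the circular dependence between the two systems: the operators governing the stabilization of $\tilde H$ depend on the already chosen columns $\tilde k_J$, while those governing $\tilde K$ depend on the rows $\tilde h_I$. This is precisely what the interleaved play of the two games resolves, provided the frequencies strictly interleave so that every collection handed to one game consists of operators built in a strictly earlier round of the other. The remaining work is bookkeeping: matching the single error sequence of each game to the doubly–indexed target $\eta_{i,j}$ through the minima above (and shrinking by the factor $\tfrac12$ to pass from the ``$\le$'' in the definition of tree–stability to the strict inequalities in \Cref{dfn:semi-stable}), together with the routine verification that the slice operators are $\ell^\infty$–bounded and that each supplied collection is fully determined before it is used.
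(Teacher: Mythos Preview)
Your proposal is correct and follows essentially the same approach as the paper: two interleaved instances of the game of \Cref{stabilizing game}, one for each coordinate, with sliced operators $D_{(\tilde h_I,\cdot)}$ and $D_{(\cdot,\tilde k_J)}$ fed in at the appropriate rounds, Player~(I)'s choice of infinite sets used to force the interleaving $n_i<m_i<n_{i+1}$, and the conclusion drawn via \Cref{restated triangular on blocking}. The only cosmetic difference is your choice of error sequences (the paper simply takes $\eta_k'=\min_{i,j\le k}\eta_{i,j}$ in both games), which is immaterial.
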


\begin{proof}
  We proceed with an application of the game in \Cref{stabilizing game}.  There will be two sets of the game being played simultaneously, one set
  $\mathrm{G}_1$ for defining $\tilde H = (\tilde h_I:I\in \mathcal{D})$ and another set $\mathrm{G}_2$ for defining
  $ \tilde K=( \tilde k_J:J\in \mathcal{D})$.  In both, we assume the role of Player I and let Player II follow their winning strategy.  As Player I,
  in each round $k$ and for $\mathrm{G}_1$ we will choose an appropriate error, a finite collection of $\ell^{\infty}$-bounded diagonal operators
  $\mathcal{F}_k^{1}$, and an infinite sets $M^1_k\subset N^1_{(k-1)}$.  Then, Player II will choose $N^1_{(k)}\subset M^1_k$, $n_k=\min( N^1_{(k)})$,
  and $(\tilde k_J)_{J\in\mathcal{D}_{{k}}}$ in $\langle\{k_L:L\in\mathcal{D}_{n_k}\}\rangle$.  Then we play the $k$'th round of $\mathrm{G}_2$ by
  choosing an error, $\mathcal{F}_k^{2}$, $M_k^2\subset N^2_{(k-1)}$ and let Player II chose $N^2_{(k)}\subset M^2_k$, $m_k = \min(N^2_{(k)})$, and
  $(\tilde h_I)_{I\in\mathcal{D}_{k}}$ in $\langle\{h_K:K\in\mathcal{D}_{m_k}\}\rangle$.
  
  In the zeroth round of $\mathrm{G}_1$ we choose $M^1_0 = N$, but our choice of operators or error does not really matter.  So we let Player II
  choose $n_0$ and $\tilde k_{[0,1)}$.  In the zeroth round of $G_2$, we choose error $\eta_0' = \eta_{0,0}$,
  $\mathcal{F}^{2}_0 = \{D_{(\cdot,\tilde k_{[0,1)})}\}$, while our choice of $M^2_{0}$ is a subset of $M$ with $\min(M^2_0) > n_0$.  Player II then
  chooses $N^2_{(0)}\subset M^2_0$, lets $m_0 = \min(N^2_{(0)})> n_0$, and chooses $\tilde h_{[0,1)}$.
  
  Assume that we have played rounds $0,1,\ldots,k-1$.  In round $k$ of $\mathrm{G}_1$, choose error $\eta_k' = \min_{i,j \leq k}\eta_{i,j}$,
  $\mathcal{F}_k^1 = \{D_{(\tilde h_I,\cdot)}:I\in\mathcal{D}_{k-1}\}$, and $M^1_k\subset N^1_{(k-1)}$ such that $m_{k-1}<\min(M^1_k)$.  Then, Player
  II chooses $N^1_{(k)}\subset M^1_k$, lets $n_k = \min(N^1_{(k)}) > m_{k-1}$, and chooses $(\tilde k_J)_{J\in\mathcal{D}_k}$ in
  $\langle\{k_L:L\in\mathcal{D}_{n_k}\}\rangle$.  In round $k$ of $\mathrm{G}_2$, choose error $\eta_k' = \min_{i,j\leq k}\eta_{i,j}$,
  $\mathcal{F}_k^2 = \{D_{(\cdot,\tilde k_J)}:J\in\mathcal{D}_{k}\}$, and $M^2_k\subset N^2_{(k-1)}$ such that $n_{k}<\min(M^2_k)$.  Then, Player II
  chooses $N^2_{(k)}\subset M^2_k$, lets $m_k = \min(N^1_{(k)}) > n_{k}$, and chooses $(\tilde h_I)_{I\in\mathcal{D}_k}$ in
  $\langle\{h_K:K\in\mathcal{D}_{m_k}\}\rangle$.
  
  Let $\tilde H = (\tilde h_I)_{I\in\mathcal{D}}$ and $\tilde K = (\tilde k_J)_{J\in\mathcal{D}}$ and $\eta' = (\eta'_k)_{k=0}^\infty$.  Then, for
  every $k\in\mathbb{N}$ and for every $I\in\mathcal{D}_{k-1}$, $D_{(\tilde h_I,\cdot)}|_{\tilde H}$ is $\eta'$-stabilized from $k$ on, thus, by
  \Cref{restated triangular on blocking}~\ref{restated triangular on blocking upper}, $D_{\tilde H\otimes \tilde K}$ satisfies the upper triangular
  condition.  Also, for every $k\in\mathbb{N}_0$, $J\in\mathcal{D}_k$, $D_{(\cdot,\tilde k_J)}|_{\tilde H}$ is $\eta'$-stabilized from $k$ on.  By
  \Cref{restated triangular on blocking}~\ref{restated triangular on blocking lower} $D_{\tilde H\otimes \tilde K}$ satisfies the lower triangular
  condition.
\end{proof}

\subsection{Stabilizing two-parameter diagonal operators: the diagonal and superdiagonal conditions}
In order to obtain faithful Haar systems $\tilde H\otimes \tilde K=(\tilde h_I\otimes \tilde k_J: I,J\in\mathcal{D})$ for which
$D|_{\tilde H\otimes \tilde K}$ satisfies the ``diagonal conditions'' \ref{matrix:diag} and~\ref{matrix:sup} of \Cref{dfn:semi-stable}, we will need
the 2-parameter version of the probabilistic \Cref{L:1.4.3}.

\begin{lem}\label{L:3.3}
  Let $D\colon\mathcal{V}(\delta^2)\to\mathcal{V}(\delta^2)$ be an $\ell^{\infty}$-bounded operator.  Let $i<k$, $j<l$ be in $\mathbb{N}_0$ and let
  $\Gamma\in \sigma(\mathcal{D}_i)$ and $\Delta\in \sigma(\mathcal{D}_j)$.  For $\varepsilon=(\varepsilon_{I}: I\in \mathcal{D}_i, I\subset \Gamma)$
  and $\theta=(\theta_J: J\in \mathcal{D}_j, J\subset \Delta)$, $\omega=\pm1$ and $\xi=\pm1$ put
  \begin{align*}
    \Gamma^\omega(\varepsilon)
    = \Big\{ \sum_{I\in\mathcal{D}_i\atop I\subset \Gamma} \varepsilon_{I}h_I=\omega\Big\}
    \qquad\text{and}\qquad
    \Delta^\xi(\theta)
    =\Big\{\sum_{J\in \mathcal{D}_J\atop J\subset \Delta}\theta_J k_J=\xi\Big\}.
  \end{align*} 
  Let $\mathcal{E}=(\mathcal{E}_{I}: I\in\mathcal{D}_i, I\subset \Gamma)$, $\Theta=(\Theta_J: J\in\mathcal{D}_j, J\subset \Delta)$ be two independent
  Rademacher families on some probability space and define the random variable
  \begin{equation*}
    X_{\omega,\xi}
    = \sum_{K\in \mathcal{D}_k\atop K\subset\Gamma^\omega(\mathcal{E})} \frac{|K|}{|\Gamma^\omega(\mathcal{E})|}
    \sum_{L\in \mathcal{D}_l\atop L\subset \Delta^\xi(\Theta)} \frac{|L|}{|\Delta^\xi(\Theta)|} d_{K,L}.
  \end{equation*}
  Then it follows, for the expectation and variance of $X_{\omega,\xi}$,
  \begin{equation*}\label{E:3.3.1}
    \cond(X_{\omega,\xi})
    = \sum_{K\in \mathcal{D}_k\atop K\subset\Gamma} \frac{|K|}{|\Gamma|} \sum_{L\in \mathcal{D}_l\atop L\subset \Delta} \frac{|L|}{|\Delta|} d_{J,L}
    \qquad\text{and}\qquad
    \var(X_{\omega,\xi})
    \le 4 \|D\|^2_\infty \Big( \frac{2^{-i}}{|\Gamma|} + \frac{2^{-j}}{|\Delta|} \Big).
  \end{equation*}
\end{lem}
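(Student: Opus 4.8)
The plan is to reduce the two-parameter statement to two applications of the one-parameter probabilistic lemma, \Cref{L:1.4.3}, using crucially that the families $\mathcal{E}$ and $\Theta$ are independent. The first thing I would record is that the selection sets have \emph{deterministic} measure: for every realization one has $|\Gamma^\omega(\mathcal{E})| = |\Gamma|/2$ and $|\Delta^\xi(\Theta)| = |\Delta|/2$, because $\Gamma^\omega(\mathcal{E})$ picks out exactly one of the two halves $I^{+},I^{-}$ of each $I\in\mathcal{D}_i$, $I\subset\Gamma$, and similarly for $\Delta$. Hence $X_{\omega,\xi}$ may be rewritten with constant normalizing weights $\tfrac{|K|}{|\Gamma|/2}$ and $\tfrac{|L|}{|\Delta|/2}$, whose inner and outer sums each equal $1$; in particular any partial average of the array $(d_{K,L})$ against these weights is bounded by $\|D\|_\infty$.

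For the expectation I would iterate \eqref{E:1.4.3.1} one variable at a time. Freezing $\mathcal{E}$ and applying \Cref{L:1.4.3} in the $\Theta$-variable (with base set $\Delta$, levels $j<l$, coefficients $(d_{K,L})_L$ for each fixed $K$) gives $\cond_\Theta(X_{\omega,\xi}\mid\mathcal{E}) = \sum_{K\subset\Gamma^\omega(\mathcal{E})}\tfrac{|K|}{|\Gamma|/2}\,c_K$, where $c_K = \sum_{L\subset\Delta}\tfrac{|L|}{|\Delta|}d_{K,L}$. A second application of \eqref{E:1.4.3.1}, now in the $\mathcal{E}$-variable with coefficients $(c_K)_K$, yields $\cond(X_{\omega,\xi}) = \sum_{K\subset\Gamma}\tfrac{|K|}{|\Gamma|}\sum_{L\subset\Delta}\tfrac{|L|}{|\Delta|}d_{K,L}$, which is the claimed identity (the symbol $d_{J,L}$ in the statement being a typo for $d_{K,L}$).

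For the variance I would use the orthogonal decomposition $X_{\omega,\xi} - \cond(X_{\omega,\xi}) = \bigl(X_{\omega,\xi} - g(\mathcal{E})\bigr) + \bigl(g(\mathcal{E}) - \cond(X_{\omega,\xi})\bigr)$, where $g(\mathcal{E}) = \cond_\Theta(X_{\omega,\xi}\mid\mathcal{E})$. The first summand has vanishing conditional $\Theta$-mean given $\mathcal{E}$, while the second is $\mathcal{E}$-measurable, so the two are uncorrelated and $\var(X_{\omega,\xi}) = \cond_\mathcal{E}\,\var_\Theta(X_{\omega,\xi}\mid\mathcal{E}) + \var_\mathcal{E}(g(\mathcal{E}))$. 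The second term is controlled by \eqref{E:1.4.3.2} in the $\mathcal{E}$-variable applied to $(c_K)_K$: since $|c_K|\le\|D\|_\infty$, one gets $\var_\mathcal{E}(g(\mathcal{E}))\le \tfrac{2^{-i}}{|\Gamma|}\|D\|_\infty^2$. For the first term, freezing $\mathcal{E}$ and setting $b_L = \sum_{K\subset\Gamma^\omega(\mathcal{E})}\tfrac{|K|}{|\Gamma|/2}d_{K,L}$ (again an average, so $|b_L|\le\|D\|_\infty$), the identity $X_{\omega,\xi} = \sum_{L\subset\Delta^\xi(\Theta)}\tfrac{|L|}{|\Delta|/2}b_L$ lets me invoke \eqref{E:1.4.3.2} in the $\Theta$-variable, giving $\var_\Theta(X_{\omega,\xi}\mid\mathcal{E})\le\tfrac{2^{-j}}{|\Delta|}\|D\|_\infty^2$; averaging over $\mathcal{E}$ preserves the bound. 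Adding the two contributions yields $\var(X_{\omega,\xi})\le\|D\|_\infty^2\bigl(\tfrac{2^{-i}}{|\Gamma|}+\tfrac{2^{-j}}{|\Delta|}\bigr)$, which is sharper than, and in particular implies, the stated bound carrying the harmless factor $4$.

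The point demanding care, rather than a genuine obstacle, is the legitimacy of applying \Cref{L:1.4.3} \emph{conditionally}: one must verify that with $\mathcal{E}$ fixed the object is literally of the form treated there, with deterministic coefficient array $(b_L)$ and base set $\Delta$, and symmetrically that $g(\mathcal{E})$ is the one-parameter random variable built from $(c_K)$ over $\Gamma$. The only conceptual ingredient beyond the one-parameter case is the uncorrelatedness of the two-term decomposition; once the weight normalizations are checked so that $c_K$ and $b_L$ are honest averages bounded by $\|D\|_\infty$, the estimate follows immediately.
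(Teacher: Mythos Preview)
Your argument is correct and in fact yields the sharper constant $1$ in place of the paper's $4$. The paper takes a different, more direct route: rather than conditioning on $\mathcal{E}$ and iterating \Cref{L:1.4.3}, it introduces indicator variables $Y_K=\mathbf{1}\{K\subset\Gamma^+(\mathcal{E})\}$ and $Z_L=\mathbf{1}\{L\subset\Delta^+(\Theta)\}$, writes $X$ as a bilinear sum in the $Y_KZ_L$, and then decomposes $X=\sum_{I,J}X(I,J)$ into blocks indexed by the coarse intervals $I\in\mathcal{D}_i$, $J\in\mathcal{D}_j$, where $X(I,J)$ depends only on $(\mathcal{E}_I,\Theta_J)$. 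The variance is then bounded by counting the nonvanishing covariances $\mathrm{cov}(X(I,J),X(I',J'))$, which are zero unless $I=I'$ or $J=J'$; this cardinality count is what produces the sum $\tfrac{2^{-i}}{|\Gamma|}+\tfrac{2^{-j}}{|\Delta|}$ and the extra factor $4$ from the crude bound $|\mathrm{cov}|\le 4\|X(I,J)\|_\infty^2$. Your law-of-total-variance reduction is both shorter and lossless, while the paper's block decomposition is self-contained (it does not invoke \Cref{L:1.4.3}) and makes the combinatorial source of the two terms visible.
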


\begin{proof}
  Without loss of generality, we assume $\omega=\xi=1$ and write $X = X_{\omega,\xi}$.  It follows that $|\Gamma^+(\varepsilon)|=\frac{|\Gamma|}2$ and
  $|\Delta^+(\theta)|=\frac{|\Delta|}2$ for all $\varepsilon=(\varepsilon_I: I\in \mathcal{D}_i, I\subset \Gamma)$ and
  $\theta=(\theta_J: J\in \mathcal{D}_j, J\subset \Delta)$.  For $K\in\mathcal{D}_k$ and $L\in\mathcal{D}_l$ we introduce two auxiliary random
  variables $Y_{K}$ and $Z_{L}$.  Let $I\in\mathcal{D}_i$, $J\in\mathcal{D}_j$ with $K\subset I$ and $L\subset J$.  Define
  \begin{align*}
    Y_K
    &= \langle|K|^{-1}\chi_K,\chi_{I^{\mathcal{E}_I}}\rangle
      =
      \begin{cases}
        1& \text{if } K\subset \Gamma^+(\mathcal{E}),\\
        0& \text{if } K\cap \Gamma^+(\mathcal{E}) = \emptyset,
      \end{cases}\\
    Z_L
    &= \langle|L|^{-1}\chi_L,\chi_{J^{\Theta_J}}\rangle
      =
      \begin{cases}
        1& \text{if } L\subset \Delta^+(\Theta),\\
        0& \text{if } L\cap \Delta^+(\Theta) = \emptyset,
      \end{cases}
  \end{align*}
  which are independent and $\cond(Y_K) = \cond(Z_L) = 1/2$.
  
  Thus, we compute
  \begin{align*}
    \cond(X)
    &= \sum_{I\in\mathcal{D}_{i}\atop I\subset \Gamma} \frac{2|I|}{|\Gamma|} \sum_{J\in\mathcal{D}_{j}\atop J\subset \Delta}  \frac{2|J|}{|\Delta|}
      \cond\Big( \sum_{K\in\mathcal{D}_{k}\atop K\subset I\cap \Gamma^+(\mathcal{E})} \frac{|K|}{|I|}
      \sum_{L\in\mathcal{D}_{l}\atop L\subset J\cap\Delta^+(\Theta)} \frac{|L|}{|J|}
      d_{J,L} \Big)\\
    &= 4\sum_{I\in\mathcal{D}_{i}\atop I\subset \Gamma} \frac{|I|}{|\Gamma|}
      \sum_{J\in\mathcal{D}_{j}\atop J\subset \Delta} \frac{|J|}{|\Delta|}
      \sum_{K\in\mathcal{D}_k, K\subset I} \frac{|K|}{|I|}\sum_{L\in\mathcal{D}_l, L\subset J} \frac{|L|}{|J|} d_{K,L}\cond(Y_KZ_L)\\
    &= \sum_{K\in \mathcal{D}_k\atop K\subset\Gamma} \frac{|K|}{|\Gamma|} \sum_{L\in \mathcal{D}_l\atop L\subset \Delta} \frac{|L|}{|\Delta|} d_{K,L}.
  \end{align*} 

  Define for $I\in \mathcal{D}_i$, with $I\subset \Gamma$, and $J\in\mathcal{D}_j$, with $J\subset\Delta$, the random variable
  \begin{align*}
    X(I,J)
    &= 4 \sum_{K\in \mathcal{D}_k, \atop K\subset I\cap \Gamma^+(\mathcal{E})} \frac{|K|}{|\Gamma|}
      \sum_{L\in \mathcal{D}_l\atop L\subset J\cap \Delta^+(\Theta)} \frac{|L|}{|\Delta|}
      d_{K,L}\\
    &= 4\sum_{K\in \mathcal{D}_k, K\subset I} \frac{|K|}{|\Gamma|}
      \sum_{L\in \mathcal{D}_l, L\subset J} \frac{|L|}{|\Delta|}
      d_{K,L}Y_KZ_L
  \end{align*}
  which only depends on the random variables $\mathcal{E}_I$ and $\Theta_J$, by the definition of $Y_K$, $Z_L$.  Therefore, if
  $I\neq I'\in\mathcal{D}_i$, $I,I'\subset \Gamma$ and $J \neq J'\in\mathcal{D}_j$, $J,J'\subset \Delta$ then
  \begin{equation}
    \label{independent covariance}
    \mathrm{cov}(X(I,J),X(I',J')) = \cond((X(I,J)-\cond(X(I,J)))(X(I',J')-\cond(X(I',J')))) = 0.
  \end{equation}
  Furthermore, since
  \begin{equation*}
    |\{K\in \mathcal{D}_k: K\subset I\cap \Gamma^+(\mathcal{E})\}|
    = 2^l 2^{-i}/2
    \quad\text{and}\quad
    |\{L\in \mathcal{D}_l: L\subset J\cap \Delta^+(\Theta)\}|
    = 2^l2^{-j}/2,
  \end{equation*}
  we observe that
  \begin{equation*}
    |X(I,J)|\le 2^{-i-j}\frac1{|\Gamma|\cdot|\Delta| }\cdot\|D\|_\infty.
  \end{equation*}
  and therefore, for arbitrary $I,I'\in\mathcal{D}_i$, $I,I'\subset \Gamma$ and $J,J'\in\mathcal{D}_j$, $J,J'\subset \Delta$ we have
  \begin{equation}
    \label{dependent covariance}
    \bigl|\mathrm{cov}(X(I,J),X(I',J'))\bigr|
    \leq 4\frac{2^{-2i-2j}}{|\Gamma|^2\cdot|\Delta|^2}\|D\|_\infty^2.
  \end{equation}
  Denote
  \begin{equation*}
    \mathscr{A}
    = \Big\{(I,I',J,J')\in \mathcal{D}_i\times\mathcal{D}_i\times\mathcal{D}_j\times\mathcal{D}_j: I,I'\subset \Gamma, J,J'\subset \Delta\text{ and }I = I'\text{ or }J = J'\Big\}.
  \end{equation*}
  and note that $\#\mathscr{A} = |\Gamma|2^i\cdot |\Delta|2^j(|\Gamma|2^i + |\Delta|2^j - 1)$.  To conclude, we estimate
  \begin{align*}
    \var(X)
    &= \var\Big(\sum_{I\in \mathcal{D}_i\atop I\subset \Gamma}\sum_{J\in \mathcal{D}_j \atop J\subset \Delta} X(I,J) \ \Big)
      = \sum_{I,I'\in \mathcal{D}_i\atop I,I'\subset \Gamma}\sum_{J,J'\in \mathcal{D}_j \atop J,J'\subset \Delta}\mathrm{cov}\Big(X(I,J),X(I',J')\Big)\\
    &\leq \#\mathscr{A} \cdot 4\frac{2^{-2i-2j}}{|\Gamma|^2\cdot|\Delta|^2}\|D\|_\infty^2
      = 4\|D\|_\infty^2\frac{2^{-i-j}}{|\Gamma|\cdot|\Delta|} \Big(|\Gamma|2^i + |\Delta|2^j - 1\Big)\\
    &< 4 \|D\|^2_\infty \Big( \frac{2^{-i}}{|\Gamma|} + \frac{2^{-j}}{|\Delta|} \Big).
  \end{align*}
\end{proof}

\begin{rem}\label{bi-parameter probabilistic choice}
  Under the assumptions of \Cref{L:3.3} put
  \begin{align*}
    d_{\Gamma,\Delta}^{i,j}
    &= \sum_{I\in\mathcal{D}_i\atop I\subset\Gamma}\frac{|I|}{|\Gamma|}
      \sum_{J\in\mathcal{D}_k\atop J\subset\Delta} \frac{|J|}{|\Delta|}
      d_{I,J}\\
    d_{\Gamma,\Delta}^{k,l}
    &= \sum_{K\in\mathcal{D}_k\atop K\subset\Gamma} \frac{|K|}{|\Gamma|}
      \sum_{L\in\mathcal{D}_l\atop L\subset\Delta}  \frac{|L|}{|\Delta|}
      d_{K,L}
      = \cond(X_{\omega,\xi}).
  \end{align*}
  Then an argument similar to \Cref{simultaneous probabilistic choice} yields that, for any $\delta>0$,
  \begin{equation*}
    \cond\Big(
    \max_{\omega,\xi=\pm1}\Big|X_{\omega,\xi} - d_{\Gamma,\Delta}^{i,j} \Big|
    \leq \delta + \Big|d_{\Gamma,\Delta}^{i,j} -  d_{\Gamma,\Delta}^{k,l} \Big|
    \Big)
    \geq 1 - \frac{16}{\delta^2}\|D\|^2_\infty \Big(\frac{2^{-i}}{|\Gamma|} + \frac{2^{-j}}{|\Delta|}\Big).
  \end{equation*}

  Therefore, if $(\Gamma_\alpha)_{\alpha\in\mathcal{A}}$ are disjoint subsets in $\sigma(\mathcal{D}_{i})$ and $(\Delta_\beta)_{\beta\in\mathcal{B}}$
  are disjoint subsets in $\sigma(\mathcal{D}_j)$ and
  \begin{equation*}
    2^{-i}\Big(\sum_{\alpha\in\mathcal{A}}\frac{|\mathcal{B}|}{|\Gamma_\alpha|}\Big) + 2^{-j}\Big(\sum_{\beta\in\mathcal{B}}\frac{|\mathcal{A}|}{|\Delta_\beta|}\Big)
    < \frac{\delta^2}{16\|D\|_\infty^{2}},
  \end{equation*}
  there exist choices of signs $(\varepsilon_I:I\in\mathcal{D}_i, I\subset\Gamma_\alpha)$, $\alpha\in\mathcal{A}$, and
  $(\theta_J:J\in\mathcal{D}_j,J\subset \Delta_\beta)$, $\beta\in\mathcal{B}$, such that for any $(\alpha,\beta)\in\mathcal{A}\times\mathcal{B}$ and
  $\omega,\xi\in\{\pm1\}$ if we put
  \begin{equation*}
    \Gamma_\alpha^\omega
    = \Big\{ \sum_{I\in\mathcal{D}_i\atop I\subset \Gamma_\alpha} \varepsilon_Ih_I=\omega \Big\},
    \qquad \Delta_\beta^\xi
    = \Big\{ \sum_{J\in \mathcal{D}_j\atop J\subset \Delta_\beta}\theta_J k_J=\xi \Big\},
  \end{equation*}
  it follows that
  \begin{equation}
    \Big|
    \sum_{K\in\mathcal{D}_k\atop K\subset\Gamma_\alpha^\omega} \frac{|K|}{|\Gamma_{\alpha}^{\omega}|}
    \sum_{L\in\mathcal{D}_l\atop L\subset\Delta_\beta^\xi} \frac{|L|}{|\Delta_{\beta}^{\xi}|}
    d_{K,L} - d^{i,j}_{\Gamma_\alpha,\Delta_\beta}
    \Big|
    \leq \delta + \Big| d_{\Gamma_\alpha,\Delta_\beta}^{i,j} -  d_{\Gamma_\alpha,\Delta_\beta}^{k,l} \Big|.
  \end{equation}
\end{rem}

\begin{rem}\label{ramsey mod}
  A straightforward modification of the proof of Ramsey's Theorem yields the following.  Let $M$, $N$ be infinite subset of $\mathbb{N}$ and assume
  that we are given a finite coloring of the set
  \begin{equation*} [M,N] := \{(m,n):m<n\text{ with }m\in M, n\in N\}.
  \end{equation*}
  Then, there exist infinite $M'\subset M$, $N'\subset N$ such that $[M',N']$ is monochromatic.
\end{rem}

\begin{pro}\label{two-parameter subdiagonal}
  Let $D\colon \mathcal{V}(\delta^2)\to\mathcal{V}(\delta^2)$ be an $\ell^{\infty}$-bounded diagonal operator, $M$, $N$ be infinite subsets of
  $\mathbb{N}$, $\eta = (\eta_{i,j})_{i,j=0}^\infty\subset(0,1)$.  Then, there exist faithful Haar systems $\tilde H = (h_I)_{I\in\mathcal{D}}$,
  $\tilde K = (k_J)_{J\in\mathcal{D}}$ relative to frequencies $(m_i)_{i=0}^\infty\subset M$ and $(n_j)_{i=0}^\infty\subset N$ with
  $1\leq n_i< m_i<n_{i+1}$, for $i\geq 0$, such that $D|_{\tilde H\otimes\tilde K}$ satisfies the superdiagonal condition for $\eta$.
\end{pro}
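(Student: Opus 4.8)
The plan is to build $\tilde H$ and $\tilde K$ by an iterative, level-by-level procedure in which the sign choices are made one abstract level at a time through the probabilistic mechanism of \Cref{bi-parameter probabilistic choice}, while a Ramsey pre-selection of the frequencies (via \Cref{ramsey mod}) guarantees that the averaged coefficients produced by \Cref{P:3.2} barely move upon refinement. Recall from \Cref{P:3.2} that the coefficients of $\tilde D=D|_{\tilde H\otimes\tilde K}$ are the double averages $\tilde d_{I,J}=d^{m_i,n_{i+1}}_{\Gamma_I,\Delta_J}$ for $I\in\mathcal D_i$, $J\in\mathcal D_{i+1}$, in the notation of \Cref{bi-parameter probabilistic choice}, where $\Gamma_I=\supp\tilde h_I$ and $\Delta_J=\supp\tilde k_J$ are pairwise disjoint sets of measure $|I|$ and $|J|$. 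The superdiagonal condition \Cref{dfn:semi-stable}~\ref{matrix:sup} at level $i$ reads $|\tilde d_{I,J}-\tilde d_{I^\omega,J^\xi}|\le\eta_{i,i+1}$, and $\tilde d_{I^\omega,J^\xi}$ is precisely the random variable $X_{\omega,\xi}$ of \Cref{L:3.3} attached to $\Gamma_I,\Delta_J$, split at levels $m_i,n_{i+1}$ and refined to $m_{i+1},n_{i+2}$. Hence \Cref{bi-parameter probabilistic choice} yields signs with $|\tilde d_{I^\omega,J^\xi}-\tilde d_{I,J}|\le\delta+|d^{m_i,n_{i+1}}_{\Gamma_I,\Delta_J}-d^{m_{i+1},n_{i+2}}_{\Gamma_I,\Delta_J}|$.

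This reduces the statement to two tasks at each level $i$: making the variance parameter $\delta$ small and making the displacement $|d^{m_i,n_{i+1}}_{\Gamma,\Delta}-d^{m_{i+1},n_{i+2}}_{\Gamma,\Delta}|$ small. The first is routine, since the hypothesis of \Cref{bi-parameter probabilistic choice} only asks that the split frequencies $m_i,n_{i+1}$ be large relative to $i$, which is arranged when they are chosen. The second is the genuine difficulty: as $D$ is merely $\ell^\infty$-bounded, the averages $d^{p,q}_{\Gamma,\Delta}$ need not converge as $p,q\to\infty$, so I must pre-select frequencies along which they stabilize. The device I would use is to stabilize the \emph{atomic} averages: for each $r$ the finitely many numbers $d^{p,q}_{K,L}$ with $K,L\in\mathcal D_{\le r}$ give a finite coloring of frequency pairs $(p,q)$, so by \Cref{ramsey mod} I pass to nested infinite sets $M\supset M_0\supset M_1\supset\cdots$ and $N\supset N_0\supset N_1\supset\cdots$ such that on $[M_r,N_r]$ every $d^{p,q}_{K,L}$ ($K,L\in\mathcal D_{\le r}$) varies by at most a prescribed $\varepsilon_r\downarrow0$. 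Since any $d^{p,q}_{\Gamma,\Delta}$ with $\Gamma,\Delta\in\sigma(\mathcal D_r)$ is a convex combination of atomic averages, it is $\varepsilon_r$-stable on $[M_r,N_r]$ as well.

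With these sets fixed, I would run the construction in rounds, after first choosing $\tilde k_{[0,1)}$ at some frequency $n_0\in N$. In round $i$ I define $\tilde h_I$ ($I\in\mathcal D_i$) and $\tilde k_J$ ($J\in\mathcal D_{i+1}$); their supports $\Gamma_I=\{\tilde h_{\pi(I)}=\omega\}$ and $\Delta_J=\{\tilde k_{\pi(J)}=\xi\}$ were already fixed in round $i-1$ and lie in $\sigma(\mathcal D_{r_i})$ with $r_i=\max(m_{i-1}+1,n_i+1)$, \emph{independently of the signs chosen now}. I then select $m_i<n_{i+1}<m_{i+1}<n_{i+2}$, taking $m_i,n_{i+1}$ large enough for the variance bound and all four inside $[M_{\rho_i},N_{\rho_i}]$, where $\rho_i\ge r_i$ is large enough that $\varepsilon_{\rho_i}\le\eta_{i,i+1}/2$; crucially I also require $m_{i+1},n_{i+2}\in[M_{m_i+1},N_{m_i+1}]$, which is legitimate because $m_i+1\ge\rho_i$ once $m_i$ is large. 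As both pairs $(m_i,n_{i+1})$ and $(m_{i+1},n_{i+2})$ then lie in $[M_{\rho_i},N_{\rho_i}]$ and $\Gamma_I,\Delta_J\in\sigma(\mathcal D_{\rho_i})$, the displacement term is at most $\eta_{i,i+1}/2$; with $\delta=\eta_{i,i+1}/2$ this gives the superdiagonal estimate. Invoking \Cref{bi-parameter probabilistic choice} now produces the signs defining $\tilde h_I,\tilde k_J$ and thereby the next-round supports $\Gamma_{I'},\Delta_{J'}\in\sigma(\mathcal D_{m_i+1})$; since $m_{i+1},n_{i+2}$ were already placed in $[M_{m_i+1},N_{m_i+1}]$, the same stabilization is available at round $i+1$, closing the recursion.

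The main obstacle is exactly this coordination: the refined frequencies $m_{i+1},n_{i+2}$ committed in round $i$ reappear as the coarse frequencies of round $i+1$ over the finer, not-yet-constructed supports $\Gamma_{I'},\Delta_{J'}$, so a per-round Ramsey selection over the specific supports would be circular. Stabilizing the atomic averages rather than the support averages is what breaks the circularity, since atomic stability at level $m_i+1$ is a property of $[M_{m_i+1},N_{m_i+1}]$ alone and is automatically inherited by every admissible support built one level later. It then remains to verify the formalities: that $\tilde H,\tilde K$ are genuine faithful Haar systems relative to $(m_i),(n_j)$ with $n_i<m_i<n_{i+1}$, which follows from \Cref{faithful short formulation} because at each round the new vectors are $\{-1,0,1\}$-combinations of the Haar functions at the prescribed frequency supported exactly on the halves cut out previously, and that $\lambda_\mathcal U,\mu_\mathcal U$ are preserved, as recorded in \Cref{T:3.4}.
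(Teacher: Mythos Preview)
Your approach is correct and genuinely different from the paper's. Both proofs hinge on \Cref{bi-parameter probabilistic choice} together with a Ramsey step, but they organise the Ramsey argument differently. The paper works \emph{per round}: once the round-$k$ supports $\Gamma_I,\Delta_J$ are known, it first applies \Cref{ramsey mod} to stabilise the averages $d^{p,q}_{\Gamma_I,\Delta_J}$ over the \emph{current} supports, then invokes \Cref{bi-parameter probabilistic choice} for \emph{every} candidate refined pair $(k,l)$, and finally applies \Cref{ramsey mod} a second time to the resulting \emph{sign vectors} so that one common choice of signs works for all future $(k,l)$. This yields the uniform condition~(e) in their induction, and the next round's frequencies may then be chosen freely from the surviving infinite sets. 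Your approach instead front-loads all the Ramsey work: you pre-stabilise the atomic averages $d^{p,q}_{K,L}$, $K,L\in\mathcal D_{\le r}$, once and for all, and then in each round you first commit to the refined pair $(m_{i+1},n_{i+2})$ and only afterwards invoke \Cref{bi-parameter probabilistic choice} once. This avoids the second Ramsey on signs entirely, at the cost of having to anticipate in round $i$ the Ramsey depth required at round $i+1$. Both approaches are valid; yours is arguably more economical in its use of Ramsey, while the paper's has the cleaner inductive invariant that no forward commitment of frequencies is needed.

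Two small points. First, your bookkeeping of the next-round support level is off: the supports $\Delta_{J'}$ of $\tilde k_{J'}$, $J'\in\mathcal D_{i+2}$, lie in $\sigma(\mathcal D_{n_{i+1}+1})$, not $\sigma(\mathcal D_{m_i+1})$, since $\tilde k_J$ was built at frequency $n_{i+1}>m_i$; so the Ramsey depth you must secure for $(m_{i+1},n_{i+2})$ is $n_{i+1}+1$ rather than $m_i+1$. This is harmless, since $n_{i+1}$ is already fixed when you choose $(m_{i+1},n_{i+2})$. Second, your remark that a per-round Ramsey over specific supports would be ``circular'' is not quite accurate: the paper does exactly that, and the circularity is broken not by atomic stabilisation but by the second Ramsey pass on signs. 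Your atomic-stabilisation device is a different, equally valid, way to break it.
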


\begin{proof}
  For each $(\Gamma,\Delta)\in\sigma(\mathcal{D}_{i_0})\times\sigma(\mathcal{D}_{j_0})$ and $i\geq i_0$, $j\geq j_0$ define
  \begin{equation*}
    d^{i,j}_{\Gamma,\Delta}
    = \sum_{I\in\mathcal{D}_i\atop I\subset \Gamma} \frac{|I|}{|\Gamma|}
    \sum_{J\in\mathcal{D}_j\atop J\subset \Delta} \frac{|J|}{|\Delta|}
    d_{I,J}.
  \end{equation*}
  Take $n_0\in N$ arbitrary and $\tilde k_{[0,1)} = \sum_{L\in\mathcal{D}_{n_0}}k_L$.  We will perform an induction on $k\in\mathbb{N}_0$ and in each
  step we will choose $m_k$, $n_{k+1}$, $M_k$, $N_{k+1}$, $(\tilde h_I)_{I\in\mathcal{D}_k}$, and $(\tilde k_J)_{J\in\mathcal{D}_{k+1}}$ such that the
  following are satisfied.
  \begin{enumerate}[label=(\alph*)]
  \item\label{subdiagonal refined} $M\supset M_0\supset\cdots\supset M_{k}$ and $N\supset N_0\supset\cdots\supset N_{k+1}$.

  \item\label{subdiagonal intertwined} $n_0<m_0<n_1<\cdots<m_k<n_{k+1}$ and $m_k =\min(M_k)$, $n_{k+1} = \min(N_{k+1})$.

  \item\label{subdiagonal the ks} If $k = 0$, for $I = [0,1)$, there are $(\varepsilon_K:K\in\mathcal{D}_{m_0})\in\{\pm1\}^{\mathcal{D}_{m_0}}$ such
    that
    \begin{equation*}
      \tilde h_{[0,1)}
      = \sum_{K\in\mathcal{D}_{m_0}}\varepsilon_Kh_K
    \end{equation*}
    whereas if $k\geq 1$, for $I\in\mathcal{D}_{k}$, with $I = \pi(I)^\omega$, for some $\omega\in\{\pm1\}$, and
    $\Gamma_I = \{\tilde h_{\pi(I)} = \omega\}$, then there are $(\varepsilon_K:K\in\mathcal{D}_{m_k},\;K\subset \Gamma_I)$ in $\{\pm1\}$ such that
    \begin{equation*}
      \tilde h_I
      = \sum_{K\in\mathcal{D}_{m_k}\atop K\subset\Gamma_I}\varepsilon_Kh_K.
    \end{equation*}

  \item\label{subdiagonal the hs} For $J\in\mathcal{D}_{k+1}$ with $J = \pi(I)^\xi$, for some $\xi\in\{\pm1\}$, and
    $\Delta_J = \{\tilde k_{\pi(J)} = \xi\}$, there are $\{\theta_L:L\in\mathcal{D}_{n_{k+1}},\;L\subset\Delta_J\}$ in $\{\pm1\}$ such that
    \begin{equation*}
      \tilde k_J
      = \sum_{L\in\mathcal{D}_{n_{k+1}}\atop L\subset\Delta_J}\theta_Lk_L.
    \end{equation*}

  \item\label{subdiagonal stability} For $I\in\mathcal{D}_{k}$, $J\in\mathcal{D}_{k+1}$, $\omega,\xi\in\{\pm1\}$, and for any
    $(i,j)\in[M_k\setminus\{m_k\},N_{k+1}\setminus\{n_{k+1}\}]$,
    \begin{equation*}
      \Big|d^{m_{k},n_{k+1}}_{\Gamma_I,\Delta_J} - d^{i,j}_{\{\tilde h_I=\omega\},\{\tilde k_J=\xi\}}\Big| < \eta_{k,k+1}.
    \end{equation*} 
  \end{enumerate}
  Let us quickly observe that these conditions yield the desired conclusion.  Indeed, by \ref{subdiagonal the ks} and \ref{subdiagonal the hs},
  $\tilde H = (\tilde h_I)_{I\in\mathcal{D}}$ and $\tilde K = (\tilde k_L)_{L\in\mathcal{D}}$ are faithful Haar systems relative to the frequencies
  $(m_i)_{i=0}^\infty$ and $(n_j)_{j=0}^\infty$ respectively and that $D|_{\tilde H\otimes\tilde K}$ is the diagonal operator with entries
  $((d^{m_i,n_j}_{I,J})_{I\in\mathcal{D}_{i},J\in\mathcal{D}_j})_{i,j=0}^\infty$.  Finally, by \ref{subdiagonal stability} and taking $i = m_{k+1}$,
  $l = n_{k+2}$, we obtain that for all $k\in\mathbb{N}_0$, $I\in\mathcal{D}_{k}$, $J\in\mathcal{D}_{k+1}$, and $\omega,\xi\in\{\pm1\}$,
  $|\tilde d_{I,J} - \tilde d_{I^\omega,L^\xi}| < \eta_{k,k+1}$.
  
  We will only describe the basis step, as it is the same as the general one.  Put $\Delta_{[0,1/2)} = \{\tilde k_{[0,1)}=1\}$ and
  $\Delta_{[1/2,1)} = \{\tilde k_{[0,1)}=-1\}$.  Using \Cref{ramsey mod}, pass to infinite $N'_1\subset N$, and $M'_0\subset M$ such that for any
  $(i,j)$, $(l,m)\in[M_0',N_1']$ and $J \in\mathcal{D}_1$ we have
  \begin{equation*}
    |d^{i,j}_{[0,1),\Delta_J} - d^{l,m}_{[0,1),\Delta_J}|
    < \eta_{0,1}/2.
  \end{equation*}
  Pick $m_0 \in M$ and $n_1 \in N$ sufficiently large to have
  \begin{equation*}
    2^{-m_0}\cdot 2+2^{-n_1}\cdot 2
    < \frac{1}{16} \|D\|_\infty^{-2} \big(\eta_{0,1}/2\big)^2.
  \end{equation*}
  Let $M_0'' = (m_0,\infty)\cap M_0'$ and $N_1'' = (n_1,\infty)\cap N_1'$.  By \Cref{bi-parameter probabilistic choice}, for any
  $(k,l)\in[M_0'',N_1'']$ we can find
  \begin{equation*}
    \varepsilon^{(k,l)}
    = (\varepsilon^{(k,l)}_K:K\in\mathcal{D}_{m_0},K\subset[0,1)),\text{ and  }\theta^{(k,l)}
    = (\theta^{(k,l)}_L:L\in\mathcal{D}_{n_1},L\subset\Delta_J, J\in\mathcal{D}_1)
  \end{equation*}
  such that for $J\in\mathcal{D}_1$ and $\omega,\xi\in\{\pm1\}$, if we put
  \begin{equation*}
    \Gamma_{[0,1)}^\omega(\varepsilon^{(k,l)})
    = \Big\{\sum_{K\in\mathcal{D}_{m_0}\atop K\subset[0,1)}\varepsilon^{(k,l)}_Lh_L = \omega\Big\}
    \quad\text{and}\quad
    \Delta_{J}^\xi(\theta^{(k,l)})
    = \Big\{\sum_{L\in\mathcal{D}_{n_1}\atop K\subset \Delta_J}\theta^{(k,l)}_L k_L = \xi\Big\}
  \end{equation*}
  then
  \begin{equation}
    \big| d^{m_1,n_0}_{\Gamma_I,[0,1)} - d^{j,l}_{\Gamma_I^\varepsilon(\theta_{(l,j)}), \Lambda_{[0,1)}^\delta(\phi_{(l,j)})} \big|
    < \frac{\eta_{1,0}}{2} + \big|d^{m_1,n_0}_{\Gamma_I,[0,1)} - d^{j,l}_{\Gamma_I,[0,1)}\big|
    < \eta_{1,0}.
  \end{equation}
  By \Cref{ramsey mod}, we may choose infinite $\tilde M_0\subset M_0''$ and $\tilde N_1\subset N_1''$ such that on $[\tilde M_0,\tilde N_1]$,
  $\varepsilon^{(k,l)}$ and $\theta^{(k,l)}$ are constant and equal to $\varepsilon$ and $\theta$.  Put $M_0 = \{m_0\}\cup \tilde M_0$,
  $N_1 = \{n_1\}\cup\tilde N_1$, $\tilde h_{[0,1)} = \sum_{K\in\mathcal{D}_{m_0}\atop K\subset[0,1)}\varepsilon_Lh_L$, and for $J\in\mathcal{D}_1$,
  $\tilde k_J = \sum_{L\in\mathcal{D}_{n_1}\atop J\subset\Delta_J}\theta_Jk_J$.  Then, all desired properties are satisfied.
\end{proof}

The proof of the following proposition is performed along the same lines as the proof of \Cref{two-parameter subdiagonal}.  The important difference
is that in each inductive step $k$ one works with $n_k$, $m_k$ (instead of $m_k$, $n_{k+1}$).
\begin{pro}
  \label{two-parameter diagonal}
  Let $D\colon\mathcal{V}(\delta^2)\to\mathcal{V}(\delta^2)$ be an $\ell^{\infty}$-bounded diagonal operator, $M$, $N$ be infinite subsets of
  $\mathbb{N}$, $\eta = (\eta_{i,j})_{i,j=0}^\infty\subset(0,1)$.  Then, there exist faithful Haar systems $\tilde H = (h_I)_{I\in\mathcal{D}}$,
  $\tilde K = (k_J)_{J\in\mathcal{D}}$ relative to frequencies $(m_i)_{i=0}^\infty\subset M$ and $(n_i)_{i=0}^\infty\subset N$, such that
  $D|_{\tilde H\otimes\tilde K}$ satisfies the diagonal condition for $\eta$.
\end{pro}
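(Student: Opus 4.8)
The plan is to mirror, essentially verbatim, the inductive construction in the proof of \Cref{two-parameter subdiagonal}, with the single structural change that at the $k$-th stage both coordinates are blocked at the \emph{same} index: I refine $(\tilde h_I)_{I\in\mathcal D_k}$ at frequency $m_k$ and $(\tilde k_J)_{J\in\mathcal D_k}$ at frequency $n_k$ simultaneously, rather than pairing generation $k$ in the first coordinate with generation $k+1$ in the second. Recall the averages
\[
d^{i,j}_{\Gamma,\Delta} = \sum_{I\in\mathcal D_i,\, I\subset\Gamma}\frac{|I|}{|\Gamma|}\sum_{J\in\mathcal D_j,\, J\subset\Delta}\frac{|J|}{|\Delta|}d_{I,J},
\]
and the computation of \Cref{P:3.2}, which gives that $\tilde D = D|_{\tilde H\otimes\tilde K}$ has entries $\tilde d_{I,J} = d^{m_i,n_j}_{\Gamma_I,\Delta_J}$ for $I\in\mathcal D_i$, $J\in\mathcal D_j$, where $\Gamma_I=\supp(\tilde h_I)$, $\Delta_J=\supp(\tilde k_J)$, and $\supp(\tilde h_{I^\omega}) = \{\tilde h_I=\omega\}$, $\supp(\tilde k_{J^\xi}) = \{\tilde k_J=\xi\}$. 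Thus the diagonal condition amounts to bounding, for $I,J\in\mathcal D_k$,
\[
|\tilde d_{I,J} - \tilde d_{I^\omega,J^\xi}| = |d^{m_k,n_k}_{\Gamma_I,\Delta_J} - d^{m_{k+1},n_{k+1}}_{\{\tilde h_I=\omega\},\{\tilde k_J=\xi\}}|
\]
by $\eta_{k,k}$.

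First I would run the same inductive scheme, recording at stage $k$ the sets $M_k, N_k$, the frequencies $m_k=\min(M_k)$, $n_k=\min(N_k)$, and the generation-$k$ vectors $(\tilde h_I)_{I\in\mathcal D_k}$, $(\tilde k_J)_{J\in\mathcal D_k}$, maintaining the nesting $M\supset M_0\supset\cdots$, $N\supset N_0\supset\cdots$ and the interlacing $n_{k-1}<m_{k-1}<n_k<m_k$. In place of the subdiagonal stability condition of \Cref{two-parameter subdiagonal} I would impose: for all $I,J\in\mathcal D_k$, $\omega,\xi\in\{\pm1\}$, and every $(i,j)\in[M_k\setminus\{m_k\},N_k\setminus\{n_k\}]$,
\[
|d^{m_k,n_k}_{\Gamma_I,\Delta_J} - d^{i,j}_{\{\tilde h_I=\omega\},\{\tilde k_J=\xi\}}| < \eta_{k,k}.
\]
Granting this, specializing $(i,j)=(m_{k+1},n_{k+1})$ yields exactly the diagonal condition. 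The inductive step itself unfolds in the same three moves as the basis step there: (i) a preliminary thinning of the current index sets via the modified Ramsey theorem \Cref{ramsey mod}, so that for all $I,J\in\mathcal D_k$ the coarse averages $d^{i,j}_{\Gamma_I,\Delta_J}$ oscillate by less than $\eta_{k,k}/2$ over the surviving pairs; (ii) a choice of $m_k,n_k$ large enough that the variance bound of \Cref{L:3.3} holds with room to spare for the entire finite family of disjoint sets $\{\Gamma_I : I\in\mathcal D_k\}$ and $\{\Delta_J : J\in\mathcal D_k\}$ inherited from stage $k-1$ (viewed at the coarse levels $m_k$, $n_k$); and (iii) an application of the bi-parameter probabilistic choice \Cref{bi-parameter probabilistic choice} to select, simultaneously over all these sets, signs defining $\tilde h_I$ and $\tilde k_J$ so that the displayed inequality holds for each probing pair $(i,j)$, followed by a second pass of \Cref{ramsey mod} to freeze the chosen signs into index-independent families, thereby fixing the definitive $M_k,N_k$ and the generation-$k$ vectors.

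The one genuine point requiring care — and the step I expect to be the main obstacle — is precisely that the two coordinates are now refined at the \emph{same} index, so I must keep the frequencies strictly interlaced as above to guarantee $m_{k-1}<m_k$ and $n_{k-1}<n_k$; this is exactly what makes \Cref{L:3.3} applicable (its hypothesis is coarse level strictly below fine level) and what renders the successor-support identities of faithful Haar systems valid, so that $\Gamma_I\in\sigma(\mathcal D_{m_k})$ and $\Delta_J\in\sigma(\mathcal D_{n_k})$ at the moment the probabilistic choice is invoked. This interlacing must be sustained while still ensuring, at every stage, that the probabilistic choice and both Ramsey passes operate on index sets that remain infinite; since all colorings and families in play are finite, the bookkeeping propagates verbatim from \Cref{two-parameter subdiagonal}. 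The closing verifications — that the resulting $\tilde H,\tilde K$ are faithful Haar systems relative to $(m_i),(n_i)$ and that $D|_{\tilde H\otimes\tilde K}$ carries the claimed entries $d^{m_i,n_j}_{I,J}$ — are identical to the cited proof, and I would merely indicate them.
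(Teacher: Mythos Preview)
Your proposal is correct and follows exactly the approach the paper indicates: the paper omits the proof and simply remarks that it goes along the lines of \Cref{two-parameter subdiagonal}, the only change being that in step $k$ one works with the pair $n_k,m_k$ rather than $m_k,n_{k+1}$ --- precisely your modification.

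One small bookkeeping slip worth fixing: with your interlacing $n_k<m_k$, the pair $(m_{k+1},n_{k+1})$ has $m_{k+1}>n_{k+1}$ and so does \emph{not} belong to $[M_k\setminus\{m_k\},N_k\setminus\{n_k\}]$ as defined in \Cref{ramsey mod} (pairs $(i,j)$ with $i<j$). Run the Ramsey argument on $[N_k\setminus\{n_k\},M_k\setminus\{m_k\}]$ instead, so the stability condition ranges over $(j,i)$ with $j<i$; then specializing $(j,i)=(n_{k+1},m_{k+1})$ is legitimate and gives the diagonal condition. This is cosmetic and does not affect the substance.
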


\begin{pro}
  \label{two-parameter balancing}
  Let $D\colon\mathcal{V}(\delta^2)\to\mathcal{V}(\delta^2)$ be an $\ell^{\infty}$-bounded diagonal operator, $M$, $N$ be infinite subsets of
  $\mathbb{N}$, $\delta > 0$.  Then, there exist faithful Haar systems $\tilde H = (h_I)_{I\in\mathcal{D}}$, $\tilde K = (k_J)_{J\in\mathcal{D}}$
  relative to frequencies $(m_i)_{i=0}^\infty\subset M$ and $(n_j)_{j=0}^\infty\subset N$, such that $D|_{\tilde H\otimes\tilde K}$ satisfies the
  balancing condition for $\delta$.
\end{pro}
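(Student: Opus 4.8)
The plan is to exploit that the balancing condition of \Cref{dfn:semi-stable}~\ref{matrix:bal} constrains only the topmost entries $\tilde d_{[0,1),[0,1/2)}$ and $\tilde d_{[0,1),[1/2,1)}$ of $\tilde D = D|_{\tilde H\otimes\tilde K}$; consequently it can be arranged independently of the rest of the two faithful Haar systems and reduces to a single one-parameter probabilistic balancing step. First I would write out these two entries via \Cref{P:3.2}. Taking $\tilde h_{[0,1)}$ at frequency $m_0$, $\tilde k_{[0,1)}$ at frequency $n_0$, and $\tilde k_{[0,1/2)},\tilde k_{[1/2,1)}$ at frequency $n_1$, and setting $\Delta^\pm = \{\tilde k_{[0,1)}=\pm1\}$ for $\tilde k_{[0,1)} = \sum_{M\in\mathcal{D}_{n_0}}\theta_M k_M$, \Cref{P:3.2} gives
\begin{equation*}
  \tilde d_{[0,1),[0,1/2)}
  = 2 \sum_{\substack{M\in\mathcal{D}_{n_1}\\ M\subset\Delta^+}} |M|\, e_M
  \qquad\text{and}\qquad
  \tilde d_{[0,1),[1/2,1)}
  = 2 \sum_{\substack{M\in\mathcal{D}_{n_1}\\ M\subset\Delta^-}} |M|\, e_M,
\end{equation*}
where $e_M = \sum_{L\in\mathcal{D}_{m_0}} |L|\, d_{L,M}$ are the entries of the first-coordinate average $D_{(\tilde h_{[0,1)},\cdot)}$. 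Since $\supp(\tilde h_{[0,1)})=[0,1)$ always, the quantities $e_M$ (hence both displayed entries) depend only on $m_0$, $n_1$ and the sign vector $\theta=(\theta_M)_{M\in\mathcal{D}_{n_0}}$, and \emph{not} on the signs of $\tilde h_{[0,1)}$ nor on any further data of $\tilde H$ or $\tilde K$.

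Next I would fix frequencies $n_0<m_0<n_1$ with $n_0\in N$, $m_0\in M$, $n_1\in N$, choosing $n_0$ so large that $2^{-n_0}\|D\|_\infty^2 < (\delta/2)^2$ (the case $D=0$ being trivial), and apply the one-parameter probabilistic \Cref{L:1.4.3} with $\Gamma=[0,1)$, $m=n_0$, $n=n_1$, and coefficients $(e_M)_{M\in\mathcal{D}_{n_1}}$, treating $\theta$ as the Rademacher family $\Theta$. Under this identification the variables $X_\pm$ of \Cref{L:1.4.3} are precisely $\tilde d_{[0,1),[0,1/2)}$ and $\tilde d_{[0,1),[1/2,1)}$. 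Because $\Delta^+,\Delta^-$ partition $[0,1)$ into halves we have $X_+ + X_- = 2\cond(X_+)$, whence $|X_+ - X_-| = 2|X_+ - \cond(X_+)|$ as in \Cref{simultaneous probabilistic choice}; and \eqref{E:1.4.3.2} gives $\var(X_+)\le 2^{-n_0}\|D\|_\infty^2$, using $|e_M|\le\|D\|_\infty$. Chebyshev's inequality then yields
\begin{equation*}
  \prob\bigl( |X_+ - X_-| \ge \delta \bigr)
  = \prob\bigl( |X_+ - \cond(X_+)| \ge \delta/2 \bigr)
  \le \frac{2^{-n_0}\|D\|_\infty^2}{(\delta/2)^2}
  < 1,
\end{equation*}
so some concrete sign vector $\theta$ realizes $|X_+ - X_-| < \delta$, i.e.\ the balancing condition for $\delta$.

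Finally, fixing that $\theta$ determines $\tilde k_{[0,1)}$ and the halves $\Delta^\pm$; I would let $\tilde k_{[0,1/2)},\tilde k_{[1/2,1)}$ be any faithful successors supported on $\Delta^+,\Delta^-$ at frequency $n_1$ (their signs being irrelevant to the two entries above), set $\tilde h_{[0,1)}=\sum_{L\in\mathcal{D}_{m_0}} h_L$, and extend both $\tilde H$ and $\tilde K$ to full faithful Haar systems relative to interleaved frequencies $n_0<m_0<n_1<m_1<n_2<\cdots$ in $M$ and $N$, exactly as in the recursive construction of \Cref{two-parameter subdiagonal}. Since $\tilde d_{[0,1),[0,1/2)}$ and $\tilde d_{[0,1),[1/2,1)}$ are frozen once $m_0$, $n_1$, $\theta$ and the supports $\Delta^\pm$ are fixed, the completed $\tilde D = D|_{\tilde H\otimes\tilde K}$ still satisfies the balancing condition. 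As balancing touches only the root of the upper-triangular part, I do not expect a genuine obstacle here; the only point requiring care is the bookkeeping — verifying that the entries produced by \Cref{P:3.2} coincide with the variables $X_\pm$ of \Cref{L:1.4.3} under the correct level identification ($n_0$ for the cut, $n_1$ for the summation index, $m_0$ for the first-coordinate average), and confirming that the extension neither changes these two entries nor breaks the frequency interleaving. This is the step I would write out most carefully.
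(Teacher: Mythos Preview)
Your proposal is correct and follows essentially the same route as the paper: fix $n_0<m_0<n_1$ with $n_0$ large enough, average in the first coordinate to get scalars $e_M$ (the paper's $D_L$), apply \Cref{L:1.4.3} at levels $n_0,n_1$ to the random split $\Delta^\pm$, use Chebyshev to find a sign vector $\theta$ realizing $|\tilde d_{[0,1),[0,1/2)}-\tilde d_{[0,1),[1/2,1)}|<\delta$, and then extend arbitrarily to full faithful Haar systems with interleaved frequencies. Your explicit use of $X_++X_-=2\cond(X_+)$ to reduce to a single Chebyshev bound is exactly the observation in \Cref{simultaneous probabilistic choice}, and your emphasis that the two root entries depend only on $m_0,n_1,\theta,\Delta^\pm$ (not on later choices) matches the paper's closing remark that the remaining components are unconstrained beyond the frequency interleaving.
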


\begin{proof}
  Fix $1\leq n_0 < m_0 < n_1$, such that $m_0\in M$, $n_0,n_1\in N$, and
  \begin{equation*}
    2^{-n_0}
    < \frac{\delta^2}{8\|D\|_\infty^2}.
  \end{equation*}
  Define $\tilde h_{[0,1)} = \sum_{K\in\mathcal{D}_{m_0}}h_K$, for $L\in\mathcal{D}_{n_1}$ let
  \begin{equation*}
    D_L
    = \sum_{K\in\mathcal{D}_{m_0}}|K|d_{K,L},
  \end{equation*}
  and note that $|D_J|\leq \|D\|_\infty$.
  
  For $\theta = (\theta_L)_{L\in\mathcal{D}_{n_0}}\in\{\pm1\}^{\mathcal{D}_{n_0}}$ define
  \begin{equation*}
    \tilde k_{[0,1)}^\theta
    = \sum_{L\in\mathcal{D}_{n_0}}\theta_Lk_L,
  \end{equation*}
  \begin{align*}
    \Delta_{[0,1/2)}(\theta)
    &= \{\tilde k_{[0,1)}^\theta = 1\},&\tilde k_{[0,1/2)}^\theta
    &= \sum_{L\in\mathcal{D}_{n_1}\atop K\subset \Delta_{[0,1/2)}(\theta)}k_L,\\
    \Delta_{[1/2,1)}(\theta)
    &= \{\tilde k_{[0,1)}^\theta = -1\},&\tilde k_{[1/2,1)}^\theta
    &= \sum_{L\in\mathcal{D}_{n_1}\atop L\subset \Delta_{[1/2,1)}(\theta)}k_L,
  \end{align*}
  \begin{align*}
    d_{[0,1),[0,1/2)}(\theta)
    &= 2\cdot \sum_{K\in\mathcal{D}_{m_0}\atop K\subset[0,1)} |K|
      \sum_{L\in\mathcal{D}_{n_1}\atop L\subset\Delta_{[0,1/2)}} |L| d_{K,L}
      = 2\cdot \sum_{L\in\mathcal{D}_{n_1}\atop L\subset\Delta_{[0,1/2)}(\theta)} |L| D_L,\text{ and}\\
    d_{[0,1),[0,1/2)}(\theta)
    &= 2\cdot \sum_{K\in\mathcal{D}_{m_0}\atop K\subset[0,1)} |K|
      \sum_{L\in\mathcal{D}_{n_1}\atop L\subset\Delta_{[1/2,1)}} |L| d_{K,L}
      = 2\cdot \sum_{L\in\mathcal{D}_{n_1}\atop L\subset\Delta_{[1/2,1)}(\theta)} |L| D_L.
  \end{align*}

  The goal is to pick $\theta$ such that $|d_{[0,1),[0,1/2)}(\theta) - d_{[0,1),[1/2,1)}(\theta)| < \delta$.  By \Cref{L:1.4.3}, we obtain
  \begin{equation*}
    \cond\big(d_{[0,1),[0,1/2)}(\theta)\big)
    = \cond\big(d_{[0,1),[1/2,1)}(\theta)\big)
    = \sum_{K\in\mathcal{D}_{m_0}} |K|
    \sum_{L\in\mathcal{D}_{n_1}} |L| d_{K,L}
  \end{equation*}
  as well as
  \begin{equation*}
    \var\big(d_{[0,1),[0,1/2)}(\theta)\big)\vee
    \var\big(d_{[0,1),[1/2,1)}(\theta)\big)
    \leq 2^{-n_0}\max_{L\in\mathcal{D}_{n_1}}|D_L|^2
    \leq 2^{-n_0}\|D\|_\infty^2.
  \end{equation*}
  The choice of $n_0$ and Chebyshev's inequality yield the desired $\theta$.  Put $\tilde k_{[0,1)} = \tilde k_{[0,1)}^\theta$,
  $\tilde k_{[0,1/2)} = \tilde k_{[0,1/2)}^\theta$, and $\tilde k_{[1/2,1)} = \tilde k_{[1/2,1)}^\theta$.  The choice of the remaining components of
  $\tilde H$, $\tilde K$ only needs to respect the restriction $n_k <m_k<n_{k+1}$ and $m_k\in M$, $n_k\in N$.
\end{proof}

\begin{proof}[Proof of \Cref{T:3.4}]
  Let $D\colon\mathcal{V}(\delta^2)\to\mathcal{V}(\delta^2)$ be an $\ell^{\infty}$-bounded diagonal operator and denote
  \begin{align*}
    \lambda(D)
    &= \lim_{n\to\mathcal{U}} \lim_{m\to\mathcal{U}} \Big( \sum_{(I,J)\in\mathcal{D}_m\times\mathcal{D}_n} |I| |J| d_{I,J}\Big),\\
    \mu(D)
    &= \lim_{m\to\mathcal{U}} \lim_{n\to\mathcal{U}} \Big( \sum_{(I,J)\in\mathcal{D}_m\times\mathcal{D}_n} |I| |J| d_{I,J}\Big).
  \end{align*}
  For $\delta>0$, there exist infinite $M,N\subset \mathbb{N}$ (not necessarily in $\mathcal{U}$) such that
  \begin{enumerate}
  \item for every $(n,m)\in[N,M]$, $\Big|\sum_{(I,J)\in\mathcal{D}_m\times\mathcal{D}_n} |I| |J| d_{I,J} - \lambda(D)\Big| < 1/n$,

  \item and for every $(m,n)\in[M,N]$, $\Big|\sum_{(I,J)\in\mathcal{D}_m\times\mathcal{D}_n} |I| |J| d_{I,J} - \mu(D)\Big| < 1/m$.
  \end{enumerate}
  We first apply \Cref{stabilization by one-parameter reduction}, to find faithful Haar systems $\widetilde H^{(1)}$ and $\widetilde K^{(1)}$ relative
  to frequencies $(m_i^{(1)})_{i=0}^\infty\subset M$ and $(n_j^{(1)})_{i=0}^\infty\subset N$, such that
  $D|_{\widetilde H^{(1)} \otimes \widetilde K^{(1)}}$ satisfies the upper and lower triangular conditions for appropriate $\eta$.  We next apply, in
  that order \Cref{two-parameter subdiagonal}, \ref{two-parameter diagonal}, and \ref{two-parameter balancing} to find
  $\tilde H = \widetilde H^{(4)} \ast \widetilde H^{(3)} \ast \widetilde H^{(2)}\ast \widetilde H^{(1)}$,
  $\tilde K = \widetilde K^{(4)} \ast \widetilde K^{(3)} \ast \widetilde K^{(2)}\ast \widetilde K^{(1)}$, such that $D_{\tilde H\otimes \tilde K}$ is
  $(\eta,\delta)$-stabilized.

  Regardless of the frequencies associated to $\widetilde H^{(i)}$, $\widetilde K^{(i)}$, $i=2,3,4$, by \Cref{blocking faithful haar}, $\tilde H$ is a
  faithful Haar system relative frequencies $(m_i)_{i=0}^\infty\subset M$, and $\tilde K$ is a faithful Haar system relative to frequencies
  $(n_j)_{j=0}^\infty\subset N$.  But then, by \Cref{P:3.2}
  \begin{align*}
    \lambda_\mathcal{U}(\tilde D)
    &= \lim_{j\to\infty} \lim_{i\to\infty} \sum_{I\in\mathcal{D}_i}\sum_{J\in\mathcal{D}_j}|I||J|\tilde d_{I,J}
      = \lim_{j\to\infty} \lim_{i\to\infty} \sum_{I\in\mathcal{D}_{m_i}}\sum_{J\in\mathcal{D}_{n_j}}|I||J| d_{I,J}
      = \lambda_\mathcal{U}(D)\text{ and}\\
    \mu_\mathcal{U}(\tilde D)
    &= \lim_{i\to\infty} \lim_{j\to\infty} \sum_{I\in\mathcal{D}_i}\sum_{J\in\mathcal{D}_j}|I||J|\tilde d_{I,J}
      = \lim_{i\to\infty} \lim_{j\to\infty} \sum_{I\in\mathcal{D}_{m_i}}\sum_{J\in\mathcal{D}_{n_j}}|I||J| d_{I,J}
      = \mu_\mathcal{U}(D).
  \end{align*}
\end{proof}

\section{Haar system Hardy spaces in two parameters}
\label{sec:haar-system-hardy-1+2}
We prove the basic important properties of Haar system Hardy spaces. In particular, we prove that they satisfy \Cref{two-parametric
  assumption}~\ref{two-parametric assumption a} for $C=1$, as well as the well boundedness of certain coordinate projections.  Before we do so, we
prove and collect some elementary facts about Haar system Hardy spaces.

Let $Z = Z(\boldsymbol{\sigma},X,Y)\in\mathcal{HH}(\delta^2)$, and for each $I,J\in\mathcal{D}$, let the linear functional
$\ell_{I,J}\colon Z\to\mathbb{R}$ be given by
\begin{equation*}
  \ell_{I,J}(z)
  = \int \int h_I(s)k_J(t) z(s,t)\mathrm{d} t\mathrm{d} s.
\end{equation*}

We define the space $Z'$ as the closure of $\langle \{ \ell_{I,J} : I,J\in\mathcal{D} \} \rangle$ in $Z^*$ and equip it with the inherited norm, i.e.,
\begin{equation}\label{eq:69}
  \bigl\|\sum_{I,J\in\mathcal{D}} a_{I,J}\ell_{I,J}\bigr\|_{Z'}
  = \sup_{\|z\|_Z\leq 1}
  \Bigl|\int \int \sum_{I,J\in\mathcal{D}} a_{I,J} h_I(s)k_J(t) z(s,t)\mathrm{d} t\mathrm{d} s\Bigr|.
\end{equation}
Now we define the scalar product $\langle\cdot,\cdot\rangle\colon Z'\times Z\to\mathbb{R}$ by first defining it as the linear extension of
\begin{equation}\label{eq:70}
  \langle \ell_{I,J}, z\rangle
  = \int \int h_I(s)k_J(t) z(s,t)\mathrm{d} t\mathrm{d} s,
  \qquad I,J\in\mathcal{D},\text{ and } z\in Z,
\end{equation}
and then uniquely extending it to all of $Z'$.  We will as usual identify each $\ell_{I,J}$ with $h_I\otimes k_J$ and we will write
$\langle h_I\otimes k_J, z\rangle$ instead of $\langle \ell_{I,J}, z\rangle$.

In the following \Cref{pro:scalar-product}, we calculate the norm of $\ell_{I,J}$.
\begin{pro}\label{pro:scalar-product}
  Let $I,J\in\mathcal{D}$ and recall that $\ell_{I,J}$ is defined in~\eqref{eq:70}.  Then we have that
  \begin{equation}\label{eq:71}
    \sup_{\|z\|_Z\leq 1} |\langle \ell_{I,J}, z\rangle|
    = \|\ell_{I,J}\|_{Z^*}
    = \frac{|I||J|}{\|h_I\|_X\|k_J\|_Y}.
  \end{equation}
\end{pro}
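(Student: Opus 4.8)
The plan is to prove the two inequalities "$\ge$" and "$\le$" separately; the first equality in \eqref{eq:71} is just the definition of the dual norm, so only the value $\frac{|I||J|}{\|h_I\|_X\|k_J\|_Y}$ has to be identified. For the lower bound I would simply test $\ell_{I,J}$ against $z_0=h_I\otimes k_J$. Since $|\sigma_I^{(1)}\sigma_J^{(2)}h_I(s)k_J(t)|=\chi_I(s)\chi_J(t)$ does not depend on $\sigma$, the norm of \Cref{dfn:hsh-spaces} collapses to $\|z_0\|_Z=\bigl\|\,s\mapsto\chi_I(s)\|\chi_J\|_Y\,\bigr\|_X=\|\chi_I\|_X\|\chi_J\|_Y=\|h_I\|_X\|k_J\|_Y$, using $|h_I|=\chi_I$, $|k_J|=\chi_J$ and the rearrangement invariance of \Cref{dfn:HS-1d}. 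As $\langle\ell_{I,J},z_0\rangle=\bigl(\int h_I^2\bigr)\bigl(\int k_J^2\bigr)=|I||J|$, this already gives $\|\ell_{I,J}\|_{Z^*}\ge |I||J|/(\|h_I\|_X\|k_J\|_Y)$.

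For the upper bound, writing $z=\sum_{K,L}a_{K,L}h_K\otimes k_L\in\mathcal V(\delta^2)$ (a dense subset), Haar orthogonality gives $\langle\ell_{I,J},z\rangle=a_{I,J}|I||J|$, so it suffices to show that the single-coordinate projection $P_{I,J}z=a_{I,J}\,h_I\otimes k_J$ is \emph{contractive} on $Z$: then $|a_{I,J}|\,\|h_I\|_X\|k_J\|_Y=\|P_{I,J}z\|_Z\le\|z\|_Z$ delivers the reverse estimate. I would factor $P_{I,J}=P^{(1)}_IP^{(2)}_J$ into the commuting one-parameter coordinate projections acting on the $s$- and $t$-variable, and prove each is a contraction; by symmetry (interchanging the roles of $X$ and $Y$) it is enough to treat $P^{(1)}_I$, the projection retaining only the first index $I$.

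The contractivity of $P^{(1)}_I$ is where the real work lies. For a measure-preserving tree automorphism $\tau$ of $[0,1)$ (one that maps dyadic intervals to dyadic intervals at every level) let $U_\tau$ act in the $s$-variable by $h_K\otimes k_L\mapsto(h_K\circ\tau^{-1})\otimes k_L=\pm h_{\tau(K)}\otimes k_L$. I claim each such $U_\tau$ is an \emph{isometry} of $Z$: substituting into the norm of \Cref{dfn:hsh-spaces}, the induced relabelling of the indices $K$ is either invisible — when $(\sigma^{(1)}_K)$ is an independent Rademacher family, a permutation of the indices together with the attached $\pm$ signs is a measure-preserving transformation of the sign space and disappears under $\cond$ — or trivial, when $(\sigma^{(1)}_K)$ is constant and no signs are present; what remains of the integrand is $G\circ\tau^{-1}$ where $G$ is the inner $X$-integrand of $\|z\|_Z$, and rearrangement invariance of $X$ gives $\|U_\tau z\|_Z=\|z\|_Z$. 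Writing $E^{(1)}$ for the conditional expectation in $s$ onto the $\sigma$-algebra whose atoms are $I^+$, $I^-$ and the points of $[0,1)\setminus I$, one checks $E^{(1)}=|\mathcal T_I|^{-1}\sum_{\tau\in\mathcal T_I}U_\tau$, where $\mathcal T_I$ is the finite group of tree automorphisms fixing $[0,1)\setminus I$ and acting within $I^+$ and within $I^-$; since this group acts transitively on the dyadic subintervals of each half, $E^{(1)}$ is a contraction which kills exactly the coefficients $a_{K,L}$ with $K\subsetneq I$. Finally, with $R_I=U_{\tau_I}$ the swap $I^+\leftrightarrow I^-$ (again a tree automorphism), a direct computation yields $P^{(1)}_Iz=\tfrac12\bigl(E^{(1)}z-R_IE^{(1)}z\bigr)$, whence $\|P^{(1)}_Iz\|_Z\le\|E^{(1)}z\|_Z\le\|z\|_Z$.

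The main obstacle — indeed the only genuinely delicate point — is verifying that the spatial operators $U_\tau$ remain isometric once the randomizing signs $\sigma^{(1)},\sigma^{(2)}$ appear in the norm; this is precisely where one exploits that in a Haar system Hardy space each sign family is either fully independent or identically $1$, so that a tree automorphism can always be transported through $\cond$. Everything else — the algebraic identities expressing $E^{(1)}$ and $P^{(1)}_I$, the commutation of $P^{(1)}_I$ and $P^{(2)}_J$, and the passage from the dense subspace $\mathcal V(\delta^2)$ to all of $Z$ — is routine.
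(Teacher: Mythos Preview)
Your argument is correct and follows a genuinely different path from the paper. Both proofs share the lower bound (testing against $h_I\otimes k_J$), and both ultimately exploit that the swap of $I^+$ and $I^-$ induces a $Z$-isometry; the difference lies in how the single coefficient $a_{I,J}$ is isolated. The paper first invokes the bi-monotonicity of the bi-parameter Haar basis (\Cref{pro:bi-monotone}) to truncate $z$ to $\sum_{K\le I}h_K\otimes b_K$, then uses one equimeasurability/convexity step --- which is exactly your swap $R_I$ in disguise --- to extract $h_I\otimes b_I$, and repeats the procedure in the $t$-variable. You instead replace the monotone truncation by an average over the group $\mathcal T_I$ of tree automorphisms acting inside $I^+$ and $I^-$: this is contractive because each $U_\tau$ is a $Z$-isometry, and it annihilates precisely the $K\subsetneq I$ terms; one further swap $R_I$ removes the remaining $K\neq I$. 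Your route is more self-contained --- it does not rely on the separate Schauder-monotonicity proposition --- and it makes transparent that the only structural input is the dichotomy ``each sign family $\sigma^{(i)}$ is either fully independent or identically one'', which is exactly what makes the $U_\tau$ isometries. The paper's argument is shorter once \Cref{pro:bi-monotone} is in hand. One point worth making explicit in your write-up: the group $\mathcal T_I$ is only finite after truncating the tree at a depth $N$ large enough that every nonzero coefficient of $z$ lives above level $N$; since $z\in\mathcal V(\delta^2)$ this is harmless, but it should be said.
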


\begin{ntn}
  We define the bijective function $\iota\colon\mathcal{D}\to\mathbb{N}$ by
  \begin{equation*}
    \Big[\frac{i-1}{2^j},\frac{i}{2^j}\Big)
    \overset{\iota}{\mapsto} 2^j + i-1.
  \end{equation*}
  The function $\iota $ defines a linear order on $\mathcal{D}$ that maps each $\mathcal{D}_n$ to $\{2^n,\ldots,2^{n+1}-1\}$. We will sometimes write
  $I\leq J$ if $\iota(I)\leq\iota(J)$, $I,J\in{\mathcal{D}}$.
\end{ntn}
Henceforth, whenever we write $\sum_{I\in\mathcal{D}}$ we will always mean that the sum is taken with this linear order $\iota$.

\begin{proof}
  The lower estimate is easily established by considering $\langle\ell_{I,J},h_I\otimes k_J\rangle$.  For the upper estimate, let
  $z = \sum_{K,L\in\mathcal{D}} a_{K,L} h_K\otimes k_L\in \langle \{h_I\otimes k_J : I,J\in\mathcal{D}\} \rangle$ and define
  \begin{equation}\label{eq:68}
    b_K(t)
    = \sum_{L\in\mathcal{D}} \sigma_K^{(1)}\sigma_L^{(2)}a_{K,L} k_L(t),
    \qquad t\in [0,1),\ K\in \mathcal{D}.
  \end{equation}
  As in the proof of \Cref{pro:bi-monotone} (see~\eqref{eq:96} and~\eqref{eq:97}), the functions
  \begin{align*}
    s &\mapsto \Bigl\|
        t\mapsto \cond \Bigl|
        b_I(t) h_I(s) + \sum_{K < I} b_K(t) h_K(s)
        \Bigr|
        \Bigr\|_Y,\\
    s &\mapsto \Bigl\|
        t\mapsto \cond \Bigl|
        b_I(t) h_I(s) - \sum_{K < I} b_K(t) h_K(s)
        \Bigr|
        \Bigr\|_Y
  \end{align*}
  are equimeasurable, and therefore they have the same norm in $X$.  Thus, the convexity of the norm in $Z$ yields
  \begin{equation*}
    \bigl\| h_I\otimes b_I \bigr\|_Z
    \leq \frac{1}{2} \bigl\|h_I\otimes b_I + \sum_{K < I} h_K\otimes b_K \bigr\|_Z
    + \frac{1}{2} \bigl\|h_I\otimes b_I - \sum_{K < I} h_K\otimes b_K \bigr\|_Z
    = \bigl\|\sum_{K\leq I} h_K\otimes b_K\bigr\|_Z.
  \end{equation*}
  By \Cref{pro:bi-monotone} and recalling~\eqref{eq:68}, we obtain
  \begin{equation}\label{eq:72}
    \bigl\| h_I\otimes b_I \bigr\|_Z
    \leq \bigl\|\sum_{K\leq I} h_K\otimes b_K\bigr\|_Z
    \leq \bigl\|\sum_{K\in\mathcal{D}} h_K\otimes b_K\bigr\|_Z
    = \bigl\|\sum_{K,L\in\mathcal{D}} a_{K,L} h_K\otimes k_L\bigr\|_Z
    = \|z\|_Z.
  \end{equation}
  Using \Cref{pro:bi-monotone} again gives us
  \begin{equation}\label{eq:73}
    \|h_I\otimes b_I\|_Z
    = \|h_I\|_X
    \Bigl\|
    \cond \Bigl|
    \sum_{L\in\mathcal{D}} \sigma_L^{(2)} a_{I,L} k_L
    \Bigr|
    \Bigr\|_Y
    \geq \|h_I\|_X \Bigl\|
    \int_0^1 \Bigl|
    \sum_{L\leq J} \sigma_L^{(2)} a_{I,L} k_L
    \Bigr|
    \Bigr\|_Y.
  \end{equation}
  Since the functions
  \begin{equation*}
    \cond \Bigl|
    \sigma_J^{(2)} a_{I,J} k_J + \sum_{L < J} \sigma_L^{(2)} a_{I,L} k_L
    \Bigr|
    \qquad\text{and}\qquad
    \cond \Bigl|
    \sigma_J^{(2)} a_{I,J} k_J - \sum_{L < J} \sigma_L^{(2)} a_{I,L} k_L
    \Bigr|
  \end{equation*}
  are equimeasurable, thus, they have the same norm in $X$.  Using the convexity of the norm in $Y$ yields
  \begin{align*}
    |a_{I,J}| \|k_J\|_Y
    &= \|a_{I,J} k_J\|_Y
      \leq \frac{1}{2} \Bigl\|
      \int_0^1 \Bigl|
      \sigma_J^{(2)} a_{I,J} k_J + \sum_{L < J} \sigma_L^{(2)}(v) a_{I,L} k_L
      \Bigr|
      \Bigr\|_Y\\
    &\qquad\qquad\qquad\qquad\qquad + \frac{1}{2} \Bigl\| \cond \Bigl|
      \sigma_J^{(2)} a_{I,J} k_J - \sum_{L < J} \sigma_L^{(2)} a_{I,L} k_L
      \Bigr|
      \Bigr\|_Y\\
    &= \Bigl\|
      \cond \Bigl|
      \sum_{L\leq J} \sigma_L^{(2)} a_{I,L} k_L
      \Bigr|
      \Bigr\|_Y.
  \end{align*}
  Combining the latter estimate with~\eqref{eq:73} and~\eqref{eq:72} yields
  \begin{equation*}
    |a_{I,J}| \|h_I\|_X \|k_J\|_Y
    \leq \|z\|_Z,
  \end{equation*}
  and noting that $|\langle \ell_{I,J}, z\rangle| = |I| |J| |a_{I,J}|$ establishes~\eqref{eq:71}.
\end{proof}

\begin{rem}\label{rem:dual-Z}
  Let $Z = Z(\boldsymbol{\sigma},X,Y)\in\mathcal{HH}(\delta^2)$.  By \Cref{pro:scalar-product}, identifying $h_I\otimes k_J$ with $\ell_{I,J}$ in
  $Z'$, we can represent any $z'\in Z'$ as
  \begin{equation*}
    z' = \sum_{I,J\in\mathcal{D}} a_{I,J} h_I\otimes k_J,
  \end{equation*}
  where the convergence of the series is understood in the norm $\|\cdot\|_{Z'}$ given by
  \begin{equation*}
    \|z'\|_{Z'}
    = \sup_{\|z\|_Z\leq 1} \Bigl|\int \int z'(s,t) z(s,t)\mathrm{d} t\mathrm{d} s\Bigr|.
  \end{equation*}
  Altogether, $\langle Z', Z, \langle\cdot,\cdot\rangle\rangle$ is a dual pair of Banach spaces and by the definition of $Z'$ and
  \Cref{pro:bi-monotone} we have
  \begin{equation*}
    |\langle z', z\rangle|
    \leq \|z'\|_{Z'} \|z\|_Z,
    \ z'\in Z,\ z\in Z
    \qquad\text{and}\qquad
    \|z\|_Z
    = \sup_{\|z'\|_{Z'}\leq 1} \langle z', z\rangle,
    \ z\in Z.
  \end{equation*}
\end{rem}

\begin{rem}\label{rem:norm-product}
  If we define $X'$ and $Y'$ analogously to $Z'$, then by \Cref{pro:scalar-product} and the fact that $\|h_{[0,1)}\|_X = \|k_{[0,1)}\|_Y = 1$ (see
  \Cref{dfn:HS-1d}) we have
  \begin{equation*}
    \|h_I\otimes k_J\|_{Z'}\|h_I\otimes k_J\|_Z
    = |I| |J|,
    \qquad
    \|h_I\|_{X'}\|h_I\|_X
    = |I|,
    \qquad
    \|k_J\|_{X'}\|k_J\|_X
    = |J|,
  \end{equation*}
  for all $I,J\in\mathcal{D}$.
\end{rem}

\subsection{Basic operators on Haar system Hardy spaces}
\label{sec:haar-system-hardy-1}
Here, we collect estimates for bi-parameter basis projections, the sub-restriction operator and dyadic scaling operators.
\begin{pro}\label{pro:bi-monotone}
  Let $Z = Z(\mathbf{\sigma},X,Y)\in\mathcal{HH}(\delta^2)$.  Given $I\in\mathcal{D}$, we define the projection $P_{\leq I}, P_{\geq I}\colon H\to H$
  by
  \begin{equation*}
    P_{\leq I} \Bigl(\sum_{K\in\mathcal{D}} a_K h_K\Bigr)
    = \sum_{K\leq I} a_K h_K
    \qquad\text{and}\qquad
    P_{\geq I} \Bigl(\sum_{K\in\mathcal{D}} a_K h_K\Bigr)
    = \sum_{K\geq I} a_K h_K.
  \end{equation*}
  Then for all $I,J\in\mathcal{D}$, we have the estimates
  \begin{align*}
    \|P_{\leq I}\otimes P_{\leq J}\colon  Z\to Z\|
    &\leq 1,
    & \|P_{\geq I}\otimes P_{\geq J}\colon Z\to Z\|
    &\leq 4,\\
    \|P_{\leq I}\otimes P_{\geq J}\colon Z\to Z\|
    &\leq 2,
    &\|P_{\geq I}\otimes P_{\leq J}\colon Z\to Z\|
    &\leq 2.
  \end{align*}
\end{pro}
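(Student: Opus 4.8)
The plan is to reduce the four bi-parameter estimates to the single statement that each \emph{one-coordinate} partial-sum projection is a contraction, and to obtain the others by complementation. Since tensor projections acting in distinct coordinates commute, one can factor $P_{\leq I}\otimes P_{\leq J}=(P_{\leq I}\otimes\Id)(\Id\otimes P_{\leq J})$, so everything follows from
\begin{equation*}
  \|P_{\leq I}\otimes\Id\colon Z\to Z\|\leq 1\qquad\text{and}\qquad\|\Id\otimes P_{\leq J}\colon Z\to Z\|\leq 1,
\end{equation*}
together with $P_{\geq I}=\Id-P_{<I}$, where $P_{<I}=P_{\leq I'}$ for the $\iota$-predecessor $I'$ of $I$ (and $P_{<I}=0$ if $I=[0,1)$). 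Indeed, these yield $\|P_{\geq I}\otimes\Id\|\leq 1+1=2$ and $\|\Id\otimes P_{\geq J}\|\leq 2$, whence the four products have norms at most $1\cdot1$, $2\cdot2$, $1\cdot2$ and $2\cdot1$, exactly matching the asserted bounds.

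To prove $\|P_{\leq I}\otimes\Id\|\leq 1$ I would argue by monotonicity along the order $\iota$. It suffices to show that adjoining the next first-coordinate Haar block never decreases the $Z$-norm: for $f\in\mathcal{V}(\delta^2)$ and any $K'$ with $\iota$-predecessor $K$,
\begin{equation*}
  \|(P_{\leq K}\otimes\Id)f\|_Z\leq\|(P_{\leq K'}\otimes\Id)f\|_Z,
\end{equation*}
and then telescope along $\iota$ (a finite process, since $f$ is finitely supported). Put $y=(P_{\leq K}\otimes\Id)f$ and let $w=h_{K'}\otimes g_{K'}$ with $g_{K'}=\sum_L a_{K',L}k_L$ be the block added in passing from $P_{\leq K}$ to $P_{\leq K'}$, so $(P_{\leq K'}\otimes\Id)f=y+w$. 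By convexity, $\|y\|_Z=\|\tfrac12[(y+w)+(y-w)]\|_Z\leq\tfrac12\|y+w\|_Z+\tfrac12\|y-w\|_Z$, and so it is enough to establish the \emph{sign-flip identity} $\|y+w\|_Z=\|y-w\|_Z$.

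This identity is the technical heart, and I would prove it using the isometry $\mathcal{O}\colon Z\to Z^\Omega=X(Y(L^1(\boldsymbol\sigma)))$ of \Cref{dfn:multiplier-space}. For $s\notin K'$ one has $h_{K'}(s)=0$, hence $\mathcal{O}(w)(s,\cdot)=0$ and the two inner profiles agree there. For $s\in K'$ the decisive observation is that among the first-coordinate indices $P\leq K$ present in $y$, only the ancestors $P\supsetneq K'$ act non-trivially on $K'$, and each such $h_P$ is \emph{constant} on $K'$; moreover every ancestor of $K'$ has smaller generation and therefore precedes $K'$ in $\iota$, so all of them are indeed retained in $y$. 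Consequently $\mathcal{O}(y)(s,t,\sigma)$ is constant in $s$ across $K'$, say equal to $\gamma(t,\sigma)$, while $\mathcal{O}(w)(s,t,\sigma)=\sigma^{(1)}_{K'}h_{K'}(s)\,c(t,\sigma)$ with $c$ independent of $s$ on $K'$. Setting $P=\|t\mapsto\mathbb{E}_\sigma|\gamma+c|\|_Y$ and $M=\|t\mapsto\mathbb{E}_\sigma|\gamma-c|\|_Y$, the function $s\mapsto\|t\mapsto\mathbb{E}_\sigma|\mathcal{O}(y+w)|\|_Y$ takes the values $(P,M)$ on the halves $(K')^+,(K')^-$, whereas for $y-w$ it takes $(M,P)$; since $|(K')^+|=|(K')^-|$ these two functions of $s$ are equimeasurable and agree off $K'$. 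By the rearrangement invariance of $X$ (\Cref{dfn:HS-1d}~\ref{dfn:HS-1d:1}) their $X$-norms coincide, which is precisely $\|y+w\|_Z=\|y-w\|_Z$.

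The estimate $\|\Id\otimes P_{\leq J}\|\leq 1$ is entirely analogous, flipping a single second-coordinate block $k_{J'}$; here the equimeasurability occurs in the $t$-variable for each fixed $s$ — on $J'$ only the $t$-ancestors of $J'$ survive in $y$ and are constant in $t$ — so one invokes the rearrangement invariance of $Y$ instead, obtaining equality of the inner $Y$-norm pointwise in $s$ and hence in $Z$. I expect the main obstacle to be essentially bookkeeping: verifying carefully that, because $\iota$ orders the dyadic intervals generation by generation, truncating \emph{within} a generation still leaves all ancestors of the flipped interval in place, so that the flipped block sees only a function constant on its support. Once this is secured, the convexity/equimeasurability mechanism together with the complementation $P_{\geq}=\Id-P_{<}$ assembles into the four stated inequalities; the same equimeasurability computation is the one reused in the subsequent proof of \Cref{pro:scalar-product}.
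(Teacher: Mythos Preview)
Your approach is correct and essentially the same as the paper's: both proofs establish the contractivity of the one-coordinate partial-sum projections via the sign-flip/convexity argument (the paper phrases this as $\|P_{\leq I}\otimes P_{\leq J}z\|\leq\|I_X\otimes P_{\leq J}z\|\leq\|z\|$, which is your factorization unwound), then obtain the remaining bounds by complementation. Your write-up is in fact more explicit than the paper's about \emph{why} the equimeasurability holds---the paper simply asserts that the two functions of $s$ are equimeasurable, whereas you justify it by noting that only ancestors of $K'$ survive in $y$ on $K'$ and are constant there; the paper also works directly with the $Z$-norm formula rather than routing through $\mathcal{O}$, but that is purely cosmetic.
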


\begin{rem}\label{rem:cond}
  Given $I\in\mathcal{D}$, we introduce the following additional notation: We define $P_{< I} = 0$, if $I=[0,1)$, and $P_{< I} = P_{\leq J}$, if
  $\iota(J) = \iota(I) + 1$ and put $P_{\geq I} = I - P_{< I}$, $P_{> I} = I - P_{\leq I}$ as well as $p_I = P_{\leq I} - P_{< I}$.  Moreover, given
  $k\in\mathbb{N}$, we define $\cond_k = P_{\leq I}$, where $I\in\mathcal{D}$ is the largest (with respect to the standard linear order) dyadic
  interval with $|I|\geq 2^{-k+1}$.
  
  Thus, by \Cref{pro:bi-monotone}, the projections $P_{\geq I}$, $P_{> I}$, $p_I$ are all bounded by $2$ and $\cond_k$ is bounded by $1$ on any
  one-parameter Haar system Hardy space.  Additionally, we note that tensoring any two of these operators yields a projection on $Z$ bounded by either
  $1$, $2$ or $4$.
\end{rem}

\begin{proof}[Proof of \Cref{pro:bi-monotone}]
  It suffices to estimate on a vector $z = \sum_{K,L\in\mathcal{D}} a_{K,L} h_K\otimes k_L\in \mathcal{V}(\delta^2)$, with $a_{K,L}\in\mathbb{R}$.
  Observe that
  \begin{equation}\label{eq:96}
    \begin{aligned}
      \|P_{\leq I}\otimes P_{\leq J}z\|_{Z}
      &= \Bigl\|
        s\mapsto \Bigl\|
        t\mapsto\cond \Bigl|
        \sum_{K\leq I}\sum_{L\leq J} \sigma_K^{(1)}\sigma_L^{(2)} a_{K,L} h_K(s) k_L(t)
        \Bigr|
        \Bigr\|_Y
        \Bigr\|_X\\
      &= \Bigl\|
        s\mapsto \Bigl\|
        t\mapsto \cond \Bigl|
        \sum_{K\leq I} h_K(s) b_K(t)
        \Bigr|
        \Bigr\|_Y
        \Bigr\|_X
        =\bigl\|\sum_{K\leq I}h_K\otimes b_K\bigl\|_{Z},
    \end{aligned}
  \end{equation}
  where we defined
  \begin{equation}\label{eq:97}
    b_K(t)
    = \sum_{L\leq J} \sigma_K^{(1)}\sigma_L^{(2)} a_{K,L} k_L(t),
    \qquad t\in [0,1),\  K\in\mathcal{D}.
  \end{equation}
  Note that the functions
  \begin{align*}
    s&\mapsto \Bigl\|
       t\mapsto \cond \Bigl|
       \sum_{K < I} h_K(s)b_K(t)  +  h_I(s)b_I(t)
       \Bigr|
       \Bigr\|_Y,\\
    s&\mapsto \Bigl\|
       t\mapsto \cond \Bigl|
       \sum_{K < I} h_K(s)b_K(t)  - h_I(s)b_I(t) 
       \Bigr|
       \Bigr\|_Y
  \end{align*}
  are equimeasurable, and therefore they have the same norm in $X$.  Thus, by convexity of the norm in $Z$, \eqref{eq:96} and~\eqref{eq:97}, we obtain
  \begin{equation*}
    \begin{aligned}
      \bigl\|\sum_{K < I} h_K\otimes b_K \bigr\|_{Z}
      &\leq \frac{1}{2} \bigl\|\sum_{K < I} h_K\otimes b_K  + h_K\otimes b_I\bigr\|_{Z}
        + \frac{1}{2} \bigl\|\sum_{K < I}h_K\otimes b_K - h_K\otimes b_I \bigr\|_{Z}\\
      &= \bigl\|\sum_{K\leq I}h_K\otimes b_K\bigr\|_{Z}.
    \end{aligned}
  \end{equation*}
  Iterating the latter inequality yields
  \begin{equation}\label{eq:74}
    \begin{aligned}
      \|P_{\leq I}\otimes P_{\leq J}z\|_{Z}%
      &\leq \bigl\|\sum_{K\in\mathcal{D}^{1}}h_K\otimes b_K \bigr\|_{Z^\Omega}%
        = \bigl\|\sum_{K\in\mathcal{D}^{1}} \sum_{L\leq J} a_{K,L} h_K\otimes k_L  \bigr\|_{Z}%
        = \|I_X\otimes P_{\leq J} z\|_{Z}.%
    \end{aligned}
  \end{equation}

  Similarly, we note that
  \begin{equation}\label{eq:99}
    \begin{aligned}
      \|I_X\otimes P_{\leq J} w\|_{Z}
      &= \Bigl\|
        s\mapsto \int_0^1 \Bigl\|
        t\mapsto \cond \Bigl|
        \sum_{K\in\mathcal{D}} \sum_{L\leq J} \sigma_K^{(1)}\sigma_L^{(2)}a_{K,L} h_K(s) k_L(t) \Bigr|
        \Bigr\|_Y
        \Bigr\|_X\\
      &= \Bigl\|
        s\mapsto \Bigl\|
        t\mapsto \cond \Bigl|
        \sum_{L\leq J} c_L(s) k_L(t)
        \Bigr|
        \Bigr\|_Y
        \Bigr\|_X
        = \Bigl\| \sum_{L\leq J} k_L \otimes c_L  \Bigl\|_Z,
    \end{aligned}
  \end{equation}
  where we put
  \begin{equation}\label{eq:144}
    c_L(s)
    = \sum_{K\in\mathcal{D}} h_K(s) \sigma_K^{(1)}\sigma_L^{(2)}a_{K,L},
    \qquad s\in [0,1),\ L\in\mathcal{D}.
  \end{equation}
  
  Then for fixed $s\in [0,1)$, the functions
  \begin{align*}
    t\mapsto \cond \Bigl|
    \sum_{L < J}k_L(t) c_L(s) +k_J(t) c_J(s) 
    \Bigr|,\quad
    t\mapsto \cond \Bigl|
    \sum_{L < J} k_L(t) c_L(s) - k_J(t)c_J(s) 
    \Bigr|,
  \end{align*}
  are equimeasurable and therefore have the same norm in $Y$.  Again, using the convexity of the norm in $Z$ yields
  \begin{equation*}
    \bigl\|\sum_{L < J} c_L k_L\bigr\|_Z
    \leq \frac{1}{2} \bigl\|\sum_{L < J} k_L\otimes c_L + k_J\otimes c_J\bigr\|_Z
    + \frac{1}{2} \bigl\|\sum_{L < J} k_L\otimes c_L - k_J\otimes c_J\bigr\|_Z
    \leq \bigl\|\sum_{L\leq J} k_L\otimes c_L\bigr\|_Z.
  \end{equation*}
  Recalling~\eqref{eq:74}, \eqref{eq:99} and iterating the above inequality, we obtain
  \begin{equation*}
    \|P_{\leq I}\otimes P_{\leq J}z\|_Z
    \leq \bigl\|\sum_{L\in\mathcal{D}} k_L\otimes c_L\bigr\|_Z
    = \bigl\|
    \sum_{K,L\in\mathcal{D}} a_{K,L}h_K\otimes k_L
    \bigr\|_Z
    = \|z\|_Z.
  \end{equation*}

  The estimate for $I_X\otimes P_{\leq J}$ follows by choosing $I\in\mathcal{D}$ so large that
  $I_X\otimes P_{\leq J} z = P_{\leq I}\otimes P_{\leq J} z$, and the estimate for $P_{\leq I}\otimes I_Y$ follows by choosing $J\in\mathcal{D}$ so
  large that $P_{\leq I}\otimes I_Yz = P_{\leq I}\otimes P_{\leq J}z$.  The other estimates follow immediately from the above estimates.
\end{proof}

\begin{pro}\label{pro:restriction-operators}
  Let $Z = Z(\mathbf{\sigma},X,Y)\in\mathcal{HH}(\delta^2)$ and let $K_0,L_0\in\mathcal{D}$.  Define the sub-restriction operator
  $S_{K_0, L_0}\colon Z\to Z$ by
  \begin{equation*}
    S_{K_0, L_0} \Bigl(
    \sum_{K,L\in\mathcal{D}} a_{K,L} h_K\otimes k_L
    \Bigr)
    = \sum_{\substack{K\subset K_0\\L\subset L_0}}
    a_{K,L} h_K\otimes k_L.
  \end{equation*}
  Then $S_{K_0, L_0}$ is a Haar multiplier with $\|S_{K_0, L_0}\colon Z\to Z\| \leq 4$.
\end{pro}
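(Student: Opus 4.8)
The plan is to dispose of the easy part first: $S_{K_0,L_0}$ is visibly a Haar multiplier, since $S_{K_0,L_0}(h_K\otimes k_L)$ equals $h_K\otimes k_L$ when $K\subseteq K_0$ and $L\subseteq L_0$, and $0$ otherwise; thus only the norm bound requires work. For the estimate I would factor
$S_{K_0,L_0}=(R_{K_0}\otimes I_Y)\circ(I_X\otimes R_{L_0})$, where $R_{K_0}\colon\mathcal V(\delta)\to\mathcal V(\delta)$ denotes the one-parameter sub-restriction $\sum_K a_K h_K\mapsto\sum_{K\subseteq K_0}a_K h_K$ (and likewise $R_{L_0}$). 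Because the two parameters enter the Haar system Hardy norm symmetrically (with the roles of $X$ and $Y$ interchanged), it suffices to prove the single estimate $\|R_{K_0}\otimes I_Y\colon Z\to Z\|\le 2$; the companion bound $\|I_X\otimes R_{L_0}\|\le 2$ follows by the same argument, and multiplying gives $\|S_{K_0,L_0}\|\le 4$.

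The crux is therefore $\|R_{K_0}\otimes I_Y\|\le 2$, and the main obstacle here is the family of \emph{ancestors} of $K_0$, i.e.\ the Haar functions $h_K$ with $K\supsetneq K_0$: these are constant and nonzero on $K_0$, so the sign-flip/equimeasurability mechanism of \Cref{pro:bi-monotone} cannot be used to peel them off while the finer functions supported inside $K_0$ are retained. I would eliminate all of them at once by a conditional expectation. Let $|K_0|=2^{-n_0}$ and set $G=\cond_{n_0}\otimes I_Y$, where $\cond_{n_0}$ is the first-variable martingale projection of \Cref{rem:cond}; then $G$ retains exactly the functions $h_K\otimes k_L$ with $|K|>|K_0|$, and being of the form $P_{\le I}\otimes I_Y$ it satisfies $\|G\|\le 1$ by \Cref{pro:bi-monotone} and \Cref{rem:cond}. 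Since every $K\subseteq K_0$ has $|K|\le|K_0|$, the coarse functions retained by $G$ are annihilated by $R_{K_0}$, so $(R_{K_0}\otimes I_Y)G=0$ and hence $R_{K_0}\otimes I_Y=(R_{K_0}\otimes I_Y)(\Id-G)$. As $\|\Id-G\|\le 1+\|G\|\le 2$, the whole estimate reduces to showing that $R_{K_0}\otimes I_Y$ is a contraction on the range of $\Id-G$.

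On $\operatorname{range}(\Id-G)$ only indices with $|K|\le|K_0|$ occur, so no ancestor of $K_0$ survives, and the indices $K$ that $R_{K_0}$ still has to discard are precisely those disjoint from $K_0$. These I would peel off one at a time in order of decreasing level (finest first), exactly as in the proof of \Cref{pro:bi-monotone}: at the moment $h_K$ is removed, every index strictly inside $K$ is again disjoint from $K_0$ and has therefore already been discarded, while every retained index $K'\subseteq K_0$ is disjoint from $K$; consequently flipping the sign of the coefficient of $h_K$ is realized by the measure-preserving swap of the two halves of $K$, which fixes all remaining summands. The two resulting integrands are then equimeasurable in the first variable, so convexity of the $Z$-norm makes each removal a contraction, and iterating over the finitely many such $K$ (for a fixed $z\in\mathcal V(\delta^2)$) yields $\|(R_{K_0}\otimes I_Y)(\Id-G)z\|_Z\le\|(\Id-G)z\|_Z$. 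Combining, $\|R_{K_0}\otimes I_Y\|\le\|\Id-G\|\le 2$, and the factorization above gives $\|S_{K_0,L_0}\|\le 4$. The one delicate book-keeping point, which I would verify as in \Cref{pro:bi-monotone}, is that in the independent-sign case the half-swap of $K$ must be accompanied by the corresponding relabeling of the sign variables; this relabeling is absorbed by the expectation $\cond$, which is exactly what makes the two integrands genuinely equimeasurable.
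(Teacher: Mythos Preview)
Your proof is correct but takes a different route from the paper's. The paper does not factor $S_{K_0,L_0}$ into one-parameter pieces; instead it observes a direct pointwise domination: writing $|K_0|=2^{-k_0}$, $|L_0|=2^{-l_0}$, one has for every $(s,t,\sigma)$
\[
\Bigl|\sum_{\substack{K\subset K_0\\L\subset L_0}} \sigma_K^{(1)}\sigma_L^{(2)}a_{K,L} h_K(s) k_L(t)\Bigr|
\;\le\;
\Bigl|\sum_{\substack{|K|\le 2^{-k_0}\\|L|\le 2^{-l_0}}} \sigma_K^{(1)}\sigma_L^{(2)}a_{K,L} h_K(s) k_L(t)\Bigr|,
\]
because the left side equals the right side when $(s,t)\in K_0\times L_0$ (the extra terms on the right vanish there) and is zero otherwise. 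Monotonicity of the $X$- and $Y$-norms (\Cref{RI properties}~\ref{RI properties 5}) then gives $\|S_{K_0,L_0}z\|_Z\le\|((I_X-\cond_{k_0})\otimes(I_Y-\cond_{l_0}))z\|_Z\le 4\|z\|_Z$ by \Cref{pro:bi-monotone}. So the paper replaces your entire peeling argument by a one-line pointwise inequality, at the cost of invoking the lattice monotonicity of Haar system spaces; your approach avoids that property and stays entirely within the equimeasurability/convexity machinery of \Cref{pro:bi-monotone}, which makes it longer but arguably more self-contained. One small point: in your peeling step you only explicitly check that the retained indices $K'\subset K_0$ are disjoint from $K$, but you should also note that the not-yet-removed indices $K''\supsetneq K$ are handled too, since $h_{K''}$ is constant on $K$ --- this is true and implicit in the mechanism of \Cref{pro:bi-monotone}, but worth stating.
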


\begin{proof}
  Consider $z = \sum_{K,L\in\mathcal{D}} a_{K,L} h_K\otimes k_L\in \mathcal{V}(\delta^2)$ with $a_{K,L}\in\mathbb{R}$.  Note that
  \begin{equation*}
    \|S_{K_0, L_0} z\|_{Z}
    = \Bigl\|
    s\mapsto \Bigl\|
    t\mapsto \cond \Bigl|
    \sum_{\substack{K\subset K_0\\L\subset L_0}} \sigma_K^{(1)}\sigma_L^{(2)}a_{K,L} h_K(s) k_L(t)
    \Bigr|
    \Bigr\|_Y
    \Bigr\|_X.
  \end{equation*}
  Suppose that $|K_0| = 2^{-k_0}$ and $|L_0| = 2^{-l_0}$, then for fixed $s,t\in [0,1)$, we have
  \begin{equation*}
    \Bigl|
    \sum_{\substack{K\subset K_0\\L\subset L_0}} \sigma_K^{(1)}\sigma_L^{(2)}a_{K,L} h_K(s) k_L(t)
    \Bigr|
    \leq \Bigl|
    \sum_{\substack{|K|\leq 2^{-k_0}\\|L|\leq 2^{-l_0}}} \sigma_K^{(1)}\sigma_L^{(2)}a_{K,L} h_K(s) k_L(t)
    \Bigr|.
  \end{equation*}
  Taking the expectation $\cond$ on both sides of the above inequality and using \Cref{RI properties}~\ref{RI properties 5} twice yields
  \begin{align*}
    \|S_{K_0, L_0}z\|_{Z}
    &\leq \Bigl\|
      s\mapsto \Bigl\|
      t\mapsto \cond \Bigl|
      \sum_{\substack{|K|\leq 2^{-k_0}\\|L|\leq 2^{-l_0}}} \sigma_K^{(1)}\sigma_L^{(2)}a_{K,L} h_K(s) k_L(t)
    \Bigr|
    \Bigr\|_Y
    \Bigr\|_X\\
    &= \Bigl\|
      \sum_{\substack{|K|\leq 2^{-k_0}\\|L|\leq 2^{-l_0}}} a_{K,L}\otimes h_K\otimes k_L
    \Bigr\|_{Z}
    = \bigl\| \bigl((I_X - \cond_{k_0})\otimes (I_Y - \cond_{l_0})\bigr) z \bigr\|_{Z}.
  \end{align*}
  Applying \Cref{pro:bi-monotone} (see \Cref{rem:cond} for the definition of $\cond_{k_0}$, $\cond_{l_0}$) proves the estimate for $S_{K_0, L_0}$.
\end{proof}

\begin{pro}\label{pro:down-up-scale}
  Let $Z = Z(\mathbf{\sigma},X,Y)\in\mathcal{HH}(\delta^2)$, $I_0,J_0\in\mathcal{D}$ with $|I_0| = |J_0| = 2^{-l_0}$, and let
  $\rho_0\colon [0,1)\to I_0$ and $\tau_0\colon [0,1)\to J_0$ denote the unique increasing, affine linear and bijective functions.  Then the operators
  $\Delta_{I_0, J_0}, \Upsilon_{I_0, J_0}\colon Z\to Z$ defined by
  \begin{align*}
    \Delta_{I_0, J_0} \Bigl( \sum_{I,J\in\mathcal{D}} a_{I,J} h_I\otimes k_J\Bigr)
    &= \sum_{I,J\in\mathcal{D}} a_{I,J} h_{\rho_0(I)}\otimes k_{\tau_0(J)},\\
    \Upsilon_{I_0, J_0} \Bigl( \sum_{I,J\in\mathcal{D}} a_{I,J} h_I\otimes k_J\Bigr)
    &= \sum_{\substack{I\subset I_0\\J\subset J_0}}
    a_{I,J} h_{\rho_0^{-1}(I)}\otimes k_{\tau_0^{-1}(J)},
  \end{align*}
  satisfy the estimates
  \begin{equation*}
    \|\Delta_{I_0, J_0} z\|_Z
    \leq \|z\|_Z
    \quad\text{and}\quad
    \|\Upsilon_{I_0, J_0} z\|_Z
    \leq 4^{l_0} \|S_{I_0, J_0} z\|_Z
    \leq 4^{l_0+1} \|z\|_Z,
    \qquad z\in Z.
  \end{equation*}
  Moreover, we have that $\Upsilon_{I_0, J_0} \Delta_{I_0, J_0} = I_Z$ and $S_{I_0, J_0}\Delta_{I_0, J_0} = \Delta_{I_0, J_0}$.
\end{pro}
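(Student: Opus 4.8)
The plan is to reduce every assertion to a finitely supported $z=\sum_{I,J}a_{I,J}h_I\otimes k_J\in\mathcal V(\delta^2)$ (extending to $Z$ by density and boundedness) and to track the action of $\Delta_{I_0,J_0}$ and $\Upsilon_{I_0,J_0}$ on the \emph{inner function} produced by the sign-average in the definition of $\|\cdot\|_Z$. First I would record that $I\mapsto\rho_0(I)$ and $J\mapsto\tau_0(J)$ are injections of $\mathcal D$ onto the dyadic subintervals of $I_0$, $J_0$ under which the sign families are invariant in distribution: whether the first component of $\boldsymbol\sigma$ is constant or an independent Rademacher family, $(\sigma^{(1)}_{\rho_0(I)})_{I\in\mathcal D}$ has the same law as $(\sigma^{(1)}_I)_{I\in\mathcal D}$, and similarly in the second coordinate, the two being independent. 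Combined with $h_{\rho_0(I)}(\rho_0(s'))=h_I(s')$, this yields the two pointwise identities that drive the proof. Writing $F(s',t')=\cond\bigl|\sum a_{I,J}\sigma^{(1)}_I\sigma^{(2)}_J h_I(s')k_J(t')\bigr|$ for the integrand of $\|z\|_Z$, the integrand $G$ of $\|\Delta_{I_0,J_0}z\|_Z$ is supported on $I_0\times J_0$ and satisfies $G(\rho_0(s'),\tau_0(t'))=F(s',t')$; and, writing $w=S_{I_0,J_0}z$ with integrand $W$ supported on $I_0\times J_0$, the integrand $U$ of $\|\Upsilon_{I_0,J_0}z\|_Z$ satisfies $U(s',t')=W(\rho_0(s'),\tau_0(t'))$. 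Thus $\Delta_{I_0,J_0}$ \emph{compresses} the inner function into $I_0\times J_0$, while $\Upsilon_{I_0,J_0}$ \emph{dilates} the restricted inner function back to the unit square.

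The analytic core is then two one-dimensional dilation estimates for a Haar system space $W$, each applied once in the $Y$-variable and once in the $X$-variable. For $\phi\in\mathcal V_1(\delta)$, let $C\phi=(\phi\circ\tau_0^{-1})\charfun_{J_0}$ be its compression into a dyadic $J_0$ with $|J_0|=2^{-l_0}$, and for $\psi$ supported on $J_0$ let $E\psi=\psi\circ\tau_0$ be its expansion to $[0,1)$. The \textbf{compression bound} $\|C\phi\|_W\le\|\phi\|_W$ I would obtain by noting $(C\phi)^*(u)=\phi^*(u/|J_0|)\le\phi^*(u)$ pointwise (since $\phi^*$ is decreasing and $u/|J_0|\ge u$), whence rearrangement invariance and monotonicity (\Cref{RI properties}\ref{RI properties 5}) give the estimate. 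The \textbf{expansion bound} $\|E\psi\|_W\le 2^{l_0}\|\psi\|_W$ is the delicate one: $E\psi$ has exactly the same distribution as the disjoint sum $\sum_{k=0}^{2^{l_0}-1}\psi_k$ of the $2^{l_0}$ translates of $\psi$ tiling $[0,1)$ (each level set scales by the factor $2^{l_0}$ in both cases), so by rearrangement invariance and the triangle inequality $\|E\psi\|_W=\bigl\|\sum_k\psi_k\bigr\|_W\le 2^{l_0}\|\psi\|_W$, using $\|\psi_k\|_W=\|\psi\|_W$.

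With these in hand the two norm estimates assemble quickly. For $\Delta$, fixing $s$ the inner function obeys $\|G(s,\cdot)\|_Y\le\|F(\rho_0^{-1}(s),\cdot)\|_Y$ for $s\in I_0$ and vanishes otherwise, so setting $f(s')=\|F(s',\cdot)\|_Y$ we get the pointwise domination $\|G(\cdot,\cdot)\|_Y\le Cf$; monotonicity in $X$ and the compression bound then give $\|\Delta_{I_0,J_0}z\|_Z=\|\,\|G(\cdot,\cdot)\|_Y\|_X\le\|Cf\|_X\le\|f\|_X=\|z\|_Z$. For $\Upsilon$, with $h(s)=\|W(s,\cdot)\|_Y$ the expansion bound in $Y$ yields $\|U(s',\cdot)\|_Y\le 2^{l_0}h(\rho_0(s'))=2^{l_0}(Eh)(s')$, and the expansion bound in $X$ gives $\|\Upsilon_{I_0,J_0}z\|_Z\le 2^{l_0}\|Eh\|_X\le 4^{l_0}\|h\|_X=4^{l_0}\|S_{I_0,J_0}z\|_Z$; combining with $\|S_{I_0,J_0}\|\le4$ from \Cref{pro:restriction-operators} produces $4^{l_0+1}\|z\|_Z$. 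The algebraic identities are then checked on basis vectors: $\Delta_{I_0,J_0}z$ is supported on rectangles $\rho_0(I)\times\tau_0(J)\subset I_0\times J_0$, so $S_{I_0,J_0}\Delta_{I_0,J_0}=\Delta_{I_0,J_0}$, and since $\Upsilon_{I_0,J_0}$ relabels $\rho_0(I)\mapsto I$, $\tau_0(J)\mapsto J$ it inverts $\Delta_{I_0,J_0}$, giving $\Upsilon_{I_0,J_0}\Delta_{I_0,J_0}=I_Z$.

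The step I expect to be the main obstacle is the expansion estimate underlying the $\Upsilon$ bound, since a dilation that spreads out the support can genuinely inflate a rearrangement-invariant norm and no pointwise domination is available. The resolution is the equidistribution-with-a-tiling observation, which replaces the single stretched copy by a sum of $2^{l_0}$ equinormed, disjointly supported copies and thereby reduces the bound to the triangle inequality. A secondary point requiring care is the invariance of the sign-average under the index bijections $\rho_0,\tau_0$; this is where the product structure of $\boldsymbol\sigma$ and the injectivity of $\rho_0,\tau_0$ enter, and it is what legitimizes the two inner-function identities on which everything rests.
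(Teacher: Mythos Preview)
Your proposal is correct and follows essentially the same route as the paper: identify how the inner sign-averaged function transforms under $\Delta_{I_0,J_0}$ and $\Upsilon_{I_0,J_0}$, then push a one-variable compression/expansion estimate through $Y$ and then $X$, with the expansion handled by tiling plus the triangle inequality. The only cosmetic difference is in the compression step, where you invoke the pointwise inequality $(C\phi)^*\le\phi^*$ of decreasing rearrangements, while the paper instead exhibits a concrete set $A$ with $|A\cap I|=2^{-l_0}|I|$ so that $\chi_A\phi$ is equimeasurable with $C\phi$; both arguments encode the same distributional fact and yield the same bound.
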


\begin{proof}
  Let $N\in\mathbb{N}$ and $z = \sum_{\substack{I,J\in\mathcal{D}\\|I|,|J|\geq 2^{-N+1}}} a_{I,J} h_I\otimes k_J$.
  \begin{proofcase}[Estimate for $\Delta_{I_0, J_0}$]
    For $s,t s,t\in [0,1)$ we define
    \begin{align*}
      f_1(s,t)
      &= \cond \Bigl|
        \sum_{I,J\in\mathcal{D}}
        \sigma_I^{(1)}\sigma_J^{(2)}a_{I,J} h_I(s) k_J(t)
        \Bigr|,
        \text{ and }  f_2(s,t)
      &= \cond \Bigl|
        \sum_{I,J\in\mathcal{D}} \sigma_{\rho_0(I)}^{(1)}\sigma_{\tau_0(J)}^{(2)}a_{I,J} h_I(s) k_J(t)
        \Bigr|,
    \end{align*}
    and observe that
    \begin{equation*}
      \|z\|_Z
      = \Bigl\|
      s\mapsto \Bigl\|
      t\mapsto f_1(s,t)
      \Bigr\|_Y
      \Bigr\|_X
      = \Bigl\|
      s\mapsto \Bigl\|
      t\mapsto f_2(s,t)
      \Bigr\|_Y
      \Bigr\|_X.
    \end{equation*}
    Now, we pick a measurable set $A$ such that $|A\cap I| = 2^{-l_0} |I|$ for all $I\in\mathcal{D}$ with $|I|\geq 2^{-N}$ and observe that by
    \Cref{RI properties}~\ref{RI properties 5}.
    \begin{equation*}
      \|z\|_Z
      \geq \Bigl\|
      s\mapsto \Bigl\|
      t\mapsto \chi_A(t)f_2(s,t)
      \Bigr\|_Y
      \Bigr\|_X.
    \end{equation*}
    Next, we define the function
    \begin{equation*}
      f_3(s,t)
      = \cond \Bigl|
      \sum_{I,J\in\mathcal{D}} \sigma_{\rho_0(I)}^{(1)}\sigma_{\tau_0(J)}^{(2)}a_{I,J} h_{I}(s) k_{\tau_0(J)}(t)
      \Bigr|,
      \qquad s,t\in [0,1)
    \end{equation*}
    and note that for fixed $s$ the functions $t\mapsto \chi_A(t) f_2(s,t)$ and $t\mapsto f_3(s,t)$ are equimeasurable.  Thus, by \Cref{RI
      properties}~\ref{RI properties 5}
    \begin{equation*}
      \|z\|_Z
      \geq \Bigl\|
      s\mapsto \Bigl\|
      t\mapsto f_3(s,t)
      \Bigr\|_Y
      \Bigr\|_X
      \geq \Bigl\|
      s\mapsto \int_0^1 \Bigl\|
      t\mapsto \chi_A(s) f_3(s,t)
      \Bigr\|_Y
      \Bigr\|_X.
    \end{equation*}
    Next, we define for $s\in [0,1)$
    \begin{align*}
      f_4(s)
      &= \Bigl\|
        t\mapsto \chi_A(s) f_3(s,t)
        \Bigr\|_Y,
        \text{ and }
        f_5(s)
        = \Bigl\|
        t\mapsto \cond \Bigl|
        \sum_{I,J\in\mathcal{D}} \sigma_{\rho_0(I)}^{(1)}\sigma_{\tau_0(J)}^{(2)}a_{I,J} h_{\rho_0(I)}(s) k_{\tau_0(J)}(t)
        \Bigr|
        \Bigr\|_Y,
    \end{align*}
    and observe that $f_4$ and $f_5$ are equimeasurable, which yields
    \begin{equation*}
      \|z\|_Z
      \geq \|f_5\|_X.
    \end{equation*}
    Finally, we note that $\|f_5\|_X = \|\Delta_{I_0, J_0} z\|_Z$ and conclude $\|\Delta_{I_0, J_0} z\|_Z\leq \|z\|_Z$.
  \end{proofcase}

  \begin{proofcase}[Estimate for $\Upsilon_{I_0, J_0}$]
    First, we pick a measurable partition $A_k$, $1\leq k \leq 2^{l_0}$, of $[0,1)$ such that $|A_k\cap I| = 2^{-l_0} |I|$ for all $I\in\mathcal{D}$
    with $|I|\geq 2^{-l_0}$ and $1\leq k \leq 2^{l_0}$.  Secondly, we define the functions
    \begin{align*}
      g_1(s,t)
      &= \cond \Bigl|
        \sum_{\substack{I\subset I_0\\J\subset J_0}} \sigma_{\rho_0^{-1}(I)}^{(1)}\sigma_{\tau_0^{-1}(J)}^{(2)} a_{I,J} h_{\rho_0^{-1}(I)}(s) k_{\tau_0^{-1}(J)}(t)
      \Bigr|,
      && s,t\in [0,1)\\
      g_2(s,t)
      &= \cond \Bigl|
        \sum_{\substack{I\subset I_0\\J\subset J_0}} \sigma_{I}^{(1)}\sigma_{J}^{(2)} a_{I,J} h_{\rho_0^{-1}(I)}(s) k_{\tau_0^{-1}(J)}(t)
      \Bigr|,
      && s,t\in [0,1)
    \end{align*}
    and note that
    \begin{equation*}
      \|\Upsilon_{I_0, J_0}z\|_Z
      = \Bigl\|
      s\mapsto \Bigl\|
      t\mapsto g_1(s,t)
      \Bigr\|_Y
      \Bigr\|_X
      = \Bigl\|
      s\mapsto \Bigl\|
      t\mapsto g_2(s,t)
      \Bigr\|_Y
      \Bigr\|_X.
    \end{equation*}
    Since the $A_k$, $1\leq k\leq 2^{l_0}$ are a measurable partition of the unit interval, we have
    \begin{align*}
      \|\Upsilon_{I_0, J_0}z\|_Z
      &= \Bigl\|
        s\mapsto \Bigl\|
        t\mapsto \sum_{k=1}^{2^{l_0}} \chi_{A_k}(t) g_2(s,t)
        \Bigr\|_Y
        \Bigr\|_X\\
      &\leq \sum_{k=1}^{2^{l_0}} \Bigl\|
        s\mapsto \Bigl\|
        t\mapsto \chi_{A_k}(t) g_2(s,t)
        \Bigr\|_Y
        \Bigr\|_X.
    \end{align*}
    Next, we put
    \begin{equation*}
      g_3(s,t)
      = \cond \Bigl|
      \sum_{\substack{I\subset I_0\\J\subset J_0}}
      \sigma_I^{(1)}\sigma_J^{(2)}a_{I,J} h_{\rho_0^{-1}(I)}(s) k_{J}(t)
      \Bigr|
      \qquad s,t\in [0,1),
    \end{equation*}
    and observe that for fixed $s\in [0,1)$ and $1\leq k\leq 2^{l_0}$, the functions $t\mapsto \chi_{A_k}(t) g_2(s,t)$ and $t\mapsto g_3(s,t)$ are
    equimeasurable.  Hence, by \Cref{RI properties}~\ref{RI properties 5} we obtain
    \begin{equation*}
      \|\Upsilon_{I_0, J_0}z\|_Z
      \leq 2^{l_0} \Bigl\|
      s\mapsto \Bigl\|
      t\mapsto g_3(s,t)
      \Bigr\|_Y
      \Bigr\|_X.
    \end{equation*}
    Again, since the $A_k$, $1\leq k\leq 2^{l_0}$ are a measurable partition of the unit interval
    \begin{align*}
      \|\Upsilon_{I_0, J_0}z\|_Z
      &\leq 2^{l_0} \Bigl\|
        s\mapsto \Bigl\|
        t\mapsto \sum_{k=1}^{2^{l_0}} \chi_{A_k}(s) g_3(s,t)
        \Bigr\|_Y
        \Bigr\|_X\\
      &\leq 2^{l_0} \sum_{k=1}^{2^{l_0}} \Bigl\|
        s\mapsto \Bigl\|
        t\mapsto \chi_{A_k}(s) g_3(s,t)
        \Bigr\|_Y
        \Bigr\|_X.
    \end{align*}
    Now, we define the functions
    \begin{align*}
      g_4^{(k)}(s)
      &= \Bigl\|
        t\mapsto \chi_{A_k}(s) g_3(s,t)
        \Bigr\|_Y,
      &&  s\in [0,1),\ 1\leq k\leq 2^{l_0},\\
      g_5(s)
      &= \Bigl\|
        t\mapsto \cond \Bigl|
        \sum_{\substack{I\subset I_0\\J\subset J_0}} \sigma_{I}^{(1)}\sigma_{J}^{(2)}a_{I,J} h_{I}(s) k_{J}(t)
      \Bigr| \mathrm{d} v \mathrm{d} u_2,
      && s\in [0,1),
    \end{align*}
    and observe that $g_4^{(k)}$ and $g_5$ are equimeasurable for all $1\leq k\leq 2^{l_0}$.  Again, using \Cref{RI properties}~\ref{RI properties 5}
    yields
    \begin{equation*}
      \|\Upsilon_{I_0, J_0}z\|_Z
      \leq 4^{l_0}  \|g_5\|_X.
    \end{equation*}
    Finally, we note that $\|g_5\|_X = \|S_{I_0, J_0} z\|_Z$ and appeal to \Cref{pro:restriction-operators} to conclude
    $\|\Upsilon_{I_0, J_0}z\|_Z\leq 4^{l_0} \|S_{I_0, J_0} z\|_Z\leq 4^{l_0+1} \|z\|_Z$.
  \end{proofcase}

  The additional claims that $\Upsilon_{I_0, J_0} \Delta_{I_0, J_0} = I_Z$ and $S_{I_0, J_0}\Delta_{I_0, J_0} = \Delta_{I_0, J_0}$ are obvious.
\end{proof}

\subsection{Block bases and projections on Haar system Hardy spaces}
Here, we prove that Haar system Hardy spaces satisfy \Cref{two-parametric assumption}~\ref{two-parametric assumption a} for $C = 1$.

\begin{thm}\label{thm:basic-operators}
  Let $Z = Z(\mathbf{\sigma},X,Y)\in\mathcal{HH}(\delta^2)$ and let $(\widetilde h_I : I\in\mathcal{D})$ and $(\widehat k_J : J\in\mathcal{D})$ be
  faithful Haar systems in $X$ and $Y$, respectively.  Let $B,A\colon Z\to Z$ be given by
  \begin{equation}\label{eq:thm:basic-operators}
    Bz
    = \sum_{I,J\in\mathcal{D}}
    \frac{\langle h_I\otimes k_J, z \rangle}
    {|I| |J|}
    \widetilde h_I\otimes \widehat k_J
    \qquad\text{and}\qquad
    Az
    = \sum_{I,J\in\mathcal{D}}
    \frac{\langle \widetilde h_I\otimes \widehat k_J, z \rangle}{|I| |J|}
    h_I\otimes k_J.
  \end{equation}
  Then the operators $A,B$ satisfy
  \begin{equation*}
    I_Z = AB
    \qquad\text{and}\qquad
    \|B\|, \|A\|
    \leq 1.
  \end{equation*}
\end{thm}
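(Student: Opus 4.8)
The plan is to separate the two parameters and handle each substitution one variable at a time. Writing $A = A_{\widetilde H}\otimes A_{\widehat K}$ and $B = B_{\widetilde H}\otimes B_{\widehat K}$ as in \Cref{bi-parameter B-Q notation}, the identity $AB = I_Z$ is purely formal and is already recorded there: $B$ sends $h_I\otimes k_J$ to $\widetilde h_I\otimes\widehat k_J$, while faithfulness forces $\langle\widetilde h_I,\widetilde h_{I'}\rangle = |I|\delta_{I,I'}$ and $\langle\widehat k_J,\widehat k_{J'}\rangle = |J|\delta_{J,J'}$ (distribution preservation, \Cref{Enflo-Maurey comparison}~\ref{P:1.4.2}, matches the $L^2$ norms with those of the genuine Haar system, whose squares are $\sum a_I^2|I|$), so $A$ returns $\widetilde h_I\otimes\widehat k_J$ to $h_I\otimes k_J$. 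Hence the whole content of the theorem is the two norm estimates.

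For $\|B\|\le 1$ I would factor $B = B_1B_2$, where $B_1 = B_{\widetilde H}\otimes I_Y$ acts only in the first variable and $B_2 = I_X\otimes B_{\widehat K}$ only in the second, and show each is an isometry of $Z$. The mechanism is that a faithful substitution is \emph{distribution preserving}: by \Cref{Enflo-Maurey comparison}~\ref{P:1.4.2} and the Boolean measure-preserving map built in the proof of \Cref{blocking faithful haar}, the one-parameter map $B_{\widetilde H}$ (extended by $\chi_{[0,1)}\mapsto\chi_{[0,1)}$) is induced by a single measure-preserving point transformation $\tau$ of $[0,1)$, uniformly across all dyadic levels, so that $\widetilde h_I = h_I\circ\tau$ simultaneously for every $I$. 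Setting $F(s,t) = \cond\bigl|\sum_{I,J}\sigma_I^{(1)}\sigma_J^{(2)}a_{I,J}h_I(s)k_J(t)\bigr|$, one has $\|z\|_Z = \bigl\| s\mapsto \|t\mapsto F(s,t)\|_Y\bigr\|_X$, and since $\widetilde h_I(s) = h_I(\tau(s))$ the corresponding integrand for $B_1z$ is exactly $F(\tau(s),t)$. Thus the inner $Y$-profile $G(s) = \|t\mapsto F(s,t)\|_Y$ is merely precomposed with $\tau$; as $\tau$ is measure-preserving, $G\circ\tau$ and $G$ are equimeasurable, whence $\|G\circ\tau\|_X = \|G\|_X$ by the rearrangement-invariance axiom \Cref{dfn:HS-1d}~\ref{dfn:HS-1d:1}, giving $\|B_1z\|_Z = \|z\|_Z$. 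The estimate $\|B_2z\|_Z = \|z\|_Z$ is symmetric, now using a measure-preserving transformation of the $t$-variable and the rearrangement invariance of $Y$ applied for each fixed $s$. Composing, $B$ is an isometry, so $\|B\|\le 1$.

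For $\|A\|\le 1$ I would use that $A$ is a left inverse of the isometry $B$. On $\mathrm{range}(B)$, i.e.\ on $w = Bz$, one has $Aw = ABz = z$ and $\|w\|_Z = \|Bz\|_Z = \|z\|_Z$, so $A$ is isometric there. For general $z$, observe that $BA = (B_{\widetilde H}A_{\widetilde H})\otimes(B_{\widehat K}A_{\widehat K})$ is the bi-parameter conditional expectation onto the $\sigma$-algebra generated by $(\widetilde h_I\otimes\widehat k_J)$, each factor being a one-parameter conditional expectation (\Cref{one-parameter AB notation}); conditional expectations onto faithful sub-$\sigma$-algebras are contractive on Haar system spaces by the averaging/monotonicity content of \Cref{RI properties} (cf.\ \cite[Proposition~2.13]{lechner:motakis:mueller:schlumprecht:2021}), and averaging first in $t$ (in $Y$) and then in $s$ (in $X$) yields $\|BAz\|_Z\le\|z\|_Z$ by the same equimeasurability-and-convexity scheme as in \Cref{pro:bi-monotone}. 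Since $BAz\in\mathrm{range}(B)$ and $A(BAz) = (AB)Az = Az$, isometry of $A$ on $\mathrm{range}(B)$ gives $\|Az\|_Z = \|BAz\|_Z\le\|z\|_Z$, i.e.\ $\|A\|\le 1$; together with $AB = I_Z$ this forces $\|A\| = \|B\| = 1$.

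The main obstacle is the lifting step in the second paragraph: upgrading the one-parameter distributional identity to a genuine isometry of the coupled, sign-averaged iterated norm $\|\cdot\|_Z$. Slicewise equimeasurability is too weak here, because the $X(Y)$-norm is \emph{not} determined by the individual $t$-slice distributions; what rescues the argument is that the faithful substitution is implemented by one measure-preserving transformation acting uniformly in the complementary variable, so it commutes with both the inner $Y$-norm and the sign average $\cond$, and reshuffles only the outer variable measure-preservingly. Establishing the existence of this uniform transformation (equivalently, the joint equimeasurability across all levels at once, intertwined with $\cond$) is the technical heart, and the very same care is what secures the contractivity of the two-parameter conditional expectation $BA$.
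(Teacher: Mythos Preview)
Your argument for $\|B\|=1$ is correct and is essentially what the paper does, reorganised around the existence of a single measure-preserving point map $\tau$ with $\widetilde h_I = h_I\circ\tau$; the paper carries out the same equimeasurability step directly. One point you gloss over (and flag only in your last paragraph): the integrand for $B_1z$ is literally $\cond\bigl|\sum_{I,J}a_{I,J}\sigma_{K_I(s)}^{(1)}\sigma_J^{(2)}\widetilde h_I(s)k_J(t)\bigr|$, with $K_I(s)$ the unique element of $\mathcal A_I$ containing $s$, and you need the relabelling $(\sigma_{K_I(s)}^{(1)})_I\overset{d}{=}(\sigma_I^{(1)})_I$ before you can write this as $F(\tau(s),t)$. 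The paper makes this step explicit.

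For $\|A\|\le 1$, your reduction to $\|BA\|\le 1$ is valid but buys nothing: since $B$ is an isometry, $\|Az\|=\|BAz\|$ identically, so the two estimates are the same statement. The genuine gap is your justification of $\|BA\|\le 1$. You invoke contractivity of the one-parameter conditional expectations $\cond^{\mathcal F}$, $\cond^{\mathcal G}$ on $X$, $Y$ (which is exactly \Cref{lem:conditional-expectation}), but this does not transfer to $Z$ by ``averaging first in $t$ then in $s$'': the obstruction is that the embedding $\mathcal O\colon Z\to Z^\Omega$ and the conditional expectation $\cond^{\mathcal F\otimes\mathcal G}$ do not commute. Concretely, $(BAz)$ has Haar expansion supported on $\mathcal A\times\mathcal B$, so $\mathcal O(BAz)$ carries the signs $\sigma_K^{(1)}\sigma_L^{(2)}$ for $K\in\mathcal A_I$, $L\in\mathcal B_J$, whereas applying $\cond^{\mathcal F\otimes\mathcal G}$ to $\mathcal O z$ would average the \emph{same} sign configuration; these are different random functions. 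The ``uniform point transformation'' mechanism you use for $B$ does not apply here, because $BA$ is a genuine averaging, not a substitution.

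The paper's resolution is precisely this missing step: split $z = f_1 + g$ with $f_1$ supported on $\mathcal A\times\mathcal B$; replace the signs on $f_1$ by an \emph{independent copy} $(\sigma')$ (which leaves $\cond'|\,\cdot\,|$ unchanged by equimeasurability); on $f_1$ alone, relabel $\sigma_{K}'^{(1)}\to\sigma_I'^{(1)}$, $\sigma_L'^{(2)}\to\sigma_J'^{(2)}$ using the ``at most one $K\in\mathcal A_I$ contains $s$'' observation; only \emph{then} apply $\cond^{\mathcal F}_s\cond^{\mathcal G}_t$ and Jensen, which annihilates $g$ and converts $f_1$ into exactly $\mathcal O(Az)$. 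The independent copy is what decouples the relabelling on $f_1$ from the untouched signs on $g$, and this device is absent from your sketch.
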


In order to prove \Cref{thm:basic-operators}, we need the following lemma (we refer to~\cite[Lemma~4.3]{lechner:speckhofer:2023} for a proof).
\begin{lem}\label{lem:conditional-expectation}
  Let $X\in\mathcal{H}(\delta)$ and let $\mathcal{F}$ denote a $\sigma$-algebra generated by the finite unions of dyadic intervals $A_i$,
  $1\leq i\leq m$.  By $\cond^{\mathcal{F}}$ we denote the conditional expectation with respect to $\mathcal{F}$.  Then
  \begin{equation*}
    \|\cond^{\mathcal{F}} x\|_X
    \leq \|x\|_X,
    \qquad x\in \langle \{h_I : I\in\mathcal{D}\} \rangle.
  \end{equation*}
\end{lem}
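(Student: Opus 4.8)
The plan is to prove the classical fact that conditional expectation onto a finite $\sigma$-algebra with dyadic atoms is a contraction on any rearrangement-invariant Haar system space, by exhibiting $\cond^{\mathcal{F}}$ as an \emph{average of measure-preserving dyadic rearrangements}. This way the proof invokes nothing beyond the distribution-invariance axiom \Cref{dfn:HS-1d}~\ref{dfn:HS-1d:1} and the triangle inequality, and so is uniform over all $X\in\mathcal{H}(\delta)$. First I would make a harmless discretization. Taking $A_1,\dots,A_m$ to be the atoms of $\mathcal{F}$ (pairwise disjoint with union $[0,1)$; these are again finite unions of dyadic intervals), fix $x\in\mathcal{V}(\delta) = \langle\{h_I:I\in\mathcal{D}\}\rangle$. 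It is constant on the dyadic intervals of some level $N$, and after enlarging $N$ I may assume each $A_i$ is a union of exactly $n_i = 2^N|A_i|$ level-$N$ dyadic intervals. By definition $\cond^{\mathcal{F}}x$ is constant on each $A_i$ with value the mean of $x$ over $A_i$; in particular it is a level-$N$ dyadic step function with $\int_0^1\cond^{\mathcal{F}}x = \int_0^1 x = 0$, so $\cond^{\mathcal{F}}x\in\mathcal{V}(\delta)$ and $\|\cond^{\mathcal{F}}x\|_X$ is well defined.

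Next I would set up the group average. Let $G = \prod_{i=1}^m G_i$, where $G_i$ permutes the level-$N$ dyadic subintervals of $A_i$. Each $\rho\in G$ induces a measure-preserving bijection $T_\rho$ of $[0,1)$ (mod nullsets) that maps every level-$N$ dyadic interval affinely onto another, permuting subintervals within each atom and fixing the atoms setwise. Two observations drive the argument. (i) For $t$ lying in a level-$N$ interval $K\subset A_i$, averaging $x(T_\rho(t))$ over $\rho\in G$ replaces the value of $x$ on $K$ by the uniform average of the values of $x$ over all level-$N$ subintervals of $A_i$, i.e.\ by the mean of $x$ over $A_i$; hence
\[
  \frac{1}{|G|}\sum_{\rho\in G} x\circ T_\rho = \cond^{\mathcal{F}}x .
\]
(ii) Since $T_\rho$ merely permutes level-$N$ dyadic intervals, $x\circ T_\rho\in\mathcal{V}(\delta)$ and $|x\circ T_\rho|$ is identically distributed with $|x|$; by \Cref{dfn:HS-1d}~\ref{dfn:HS-1d:1} this gives $\|x\circ T_\rho\|_X = \|x\|_X$ for every $\rho\in G$.

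Combining (i) and (ii) via the triangle inequality yields the claim at once:
\[
  \|\cond^{\mathcal{F}}x\|_X
  = \Big\|\frac{1}{|G|}\sum_{\rho\in G} x\circ T_\rho\Big\|_X
  \le \frac{1}{|G|}\sum_{\rho\in G}\|x\circ T_\rho\|_X
  = \|x\|_X .
\]
I expect the only genuinely delicate point to be the bookkeeping in observation (i): one must verify that averaging over the block-permutation group reproduces \emph{exactly} the atomwise mean, which is precisely what forces the discretization step (choosing a single level $N$ adapted to both $x$ and all the atoms $A_i$). Everything else is routine once the group-average representation is in hand; in particular no duality and none of the lattice estimates of \Cref{RI properties} are needed, only rearrangement invariance and the norm triangle inequality. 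An alternative would be to argue through the dual characterization \Cref{RI properties}~\ref{RI properties 3} together with the self-adjointness of $\cond^{\mathcal{F}}$, but the averaging route above is shorter and keeps all computations inside $\mathcal{V}(\delta)$.
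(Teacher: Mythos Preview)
Your proof is correct. The paper does not actually prove this lemma; it simply cites \cite[Lemma~4.3]{lechner:speckhofer:2023} for a proof. Your averaging-over-permutations argument is a clean, self-contained way to obtain the result directly from the distribution-invariance axiom \Cref{dfn:HS-1d}~\ref{dfn:HS-1d:1}, and the discretization step (choosing a common level $N$ for $x$ and for all atoms) is exactly what is needed to make the group average reproduce the atomwise means.
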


\begin{myproof}[Proof of \Cref{thm:basic-operators}]
  Let $z = \sum_{K,L\in\mathcal{D}} a_{K,L} h_K\otimes k_L\in \langle \{h_K\otimes k_L : K,L\in\mathcal{D}\} \rangle$ and suppose that
  \begin{equation}\label{eq:162}
    \widetilde h_I
    = \sum_{K\in\mathcal{A}_I} \theta_K h_K
    \qquad\text{and}\qquad
    \widehat k_J
    = \sum_{L\in\mathcal{B}_J} \varepsilon_L k_L
  \end{equation}
  for suitable collections of dyadic intervals $\mathcal{A}_I$, $\mathcal{B}_J$, $I,J\in\mathcal{D}$.  For any $I,J\in\mathcal{D}$, we put
  $A_I = \bigcup \mathcal{A}_I$ and $B_J = \bigcup \mathcal{B}_J$.  Pick $N\in\mathbb{N}$ such that $a_{K,L} = 0$, whenever there exists
  $I_0\in\mathcal{D}_{N-1}$ and $K_0\in\mathcal{A}_{I_0}$ such that $K\subset K_0$ or there exists $J_0\in\mathcal{D}_{N-1}$ and
  $L_0\in\mathcal{B}_{J_0}$ such that $L\subset L_0$.  We define the collections $\mathcal{A} = \bigcup_{I\in\mathcal{D}}\mathcal{A}_I$ and
  $\mathcal{B} = \bigcup_{J\in\mathcal{D}}\mathcal{B}_J$.  By the definition of the norm in $Z$
  \begin{equation}\label{eq:158}
    \|z\|_Z
    = \Bigl\|
    s\mapsto \Bigl\|
    t\mapsto \cond \Bigl|
    \sum_{K,L\in\mathcal{D}} \sigma_K^{(1)} \sigma_L^{(2)} a_{K,L} h_K(s) k_L(t)
    \Bigr|
    \Bigr\|_Y
    \Bigr\|_X.
  \end{equation}

  \begin{proofcase}[Estimates for $B$] By the definitions of the operator $B$ and the norm in $Z$ and by~\eqref{eq:162}, we obtain that is equal to
    \begin{equation*}
      \|B z\|_Z
      = \Bigl\|
      s\mapsto \Bigl\|
      t\mapsto \cond \Bigl|
      \sum_{I,J\in\mathcal{D}} a_{I,J}
      \sum_{\substack{K\in\mathcal{A}_I\\L\in\mathcal{B}_J}}
      \sigma_K^{(1)} \sigma_L^{(2)} \theta_K \varepsilon_L h_K(s) k_L(t)
      \Bigr|
      \Bigr\|_Y
      \Bigr\|_X.
    \end{equation*}
    For fixed $s,t$, note that for each $J\in\mathcal{D}$ there exists at most one $L\in\mathcal{B}_J$ such that $L\ni t$.  Hence for such $L$,
    replacing the Rademacher $\sigma_L^{(2)}$ by $\sigma_J^{(2)}$ yields
    \begin{equation*}
      \|B z\|_Z 
      = \Bigl\|
      s\mapsto \Bigl\|
      t\mapsto \cond \Bigl|
      \sum_{I,J\in\mathcal{D}} \sigma_J^{(2)}
      a_{I,J}
      \sum_{\substack{K\in\mathcal{A}_I\\L\in\mathcal{B}_J}}
      \sigma_K^{(1)}\theta_K \varepsilon_L h_K(s) k_L(t)
      \Bigr|
      \Bigr\|_Y
      \Bigr\|_X.
    \end{equation*}
    Exploiting similarly that for fixed $s$, and $I\in\mathcal{D}$, there exists at most one $K\in\mathcal{A}_I$ with $I\ni s$, we can replace
    $\sigma_K^{(1)}$ with $\sigma_I^{(1)}$ and obtain
    \begin{equation}\label{eq:163}
      \begin{aligned}
        \|B z\|_Z 
        = \Bigl\|
        &s\mapsto \Bigl\|
          t\mapsto \cond \Bigl|
          \sum_{I,J\in\mathcal{D}} \sigma_I^{(1)} \sigma_J^{(2)}
          a_{I,J}
          \sum_{\substack{K\in\mathcal{A}_I\\L\in\mathcal{B}_J}}
        \theta_K \varepsilon_L h_K(s) k_L(t)
        \Bigr|
        \Bigr\|_Y
        \Bigr\|_X\\
        &= \Bigl\|
          s\mapsto \Bigl\|
          t\mapsto \cond \Bigl|
          \sum_{I,J\in\mathcal{D}} \sigma_I^{(1)} \sigma_J^{(2)} a_{I,J}
          \widetilde h_I(s) \widehat k_J(t)
          \Bigr|
          \Bigr\|_Y
          \Bigr\|_X.
      \end{aligned}
    \end{equation}
    Recall~\eqref{eq:162} for the latter equality.  Note that by equimeasurability in $Y$, we first replace $\widehat k_J$ by $k_J$ in~\eqref{eq:163}
    to obtain
    \begin{equation*}
      \|B z\|_Z
      = \Bigl\|
      s\mapsto \Bigl\|
      t\mapsto \cond \Bigl|
      \sum_{I,J\in\mathcal{D}} \sigma_I^{(1)} \sigma_J^{(2)} a_{I,J}
      \widetilde h_I(s) k_J(t)
      \Bigr|
      \Bigr\|_Y
      \Bigr\|_X,
    \end{equation*}
    and then use equimeasurability in $X$ to replace $\widetilde h_I$ by $h_I$:
    \begin{equation*}
      \|B z\|_Z
      = \Bigl\|
      s\mapsto \Bigl\|
      t\mapsto \cond \Bigl|
      \sum_{I,J\in\mathcal{D}} \sigma_I^{(1)} \sigma_J^{(2)} a_{I,J}
      h_I(s) k_J(t)
      \Bigr|
      \Bigr\|_Y
      \Bigr\|_X
      = \|z\|_Z.
    \end{equation*}
  \end{proofcase}
  
  \begin{proofcase}[Estimates for $A$]
    Observe that by the faithfulness of the Haar systems $(\widetilde h_I : I\in\mathcal{D})$ and $(\widehat k_J : J\in\mathcal{D})$, the collections
    of sets $\{A_{I_0} : I_0\in\mathcal{D}_N\}$ and $\{B_{J_0} : J_0\in\mathcal{D}_N\}$ are both partitions of $[0,1)$.  Let $\mathcal{F}$ denote the
    $\sigma$-algebra generated by $\{A_{I_0} : I_0\in\mathcal{D}_N\}$ and let $\mathcal{G}$ denote the $\sigma$-algebra generated by
    $\{B_{J_0} : J_0\in\mathcal{D}_N\}$.

    First, let us fix $s,t$ and split the random function in~\eqref{eq:158} into two parts:
    \begin{equation}\label{eq:6}
      \|z\|_Z
      = \Bigl\|
      s\mapsto \Bigl\|
      t\mapsto \cond \Bigl|
      f(s,t) + g(s,t)
      \Bigr|
      \Bigr\|_Y
      \Bigr\|_X,
    \end{equation}
    where we put
    \begin{equation}\label{eq:12}
      \begin{aligned}
        f_1(s,t)
        &= \sum_{I,J\in \mathcal{D}^{N-1}} \sum_{\substack{K\in \mathcal{A}_{I}\\L\in \mathcal{B}_J}} \sigma_K^{(1)} \sigma_L^{(2)} a_{K,L} h_K(s) k_L(t),\\
        g(s,t)
        &= \sum_{(K,L)\notin \mathcal{A}\times \mathcal{B}} \sigma_K^{(1)} \sigma_L^{(2)} a_{K,L} h_K(s) k_L(t)
      \end{aligned}
    \end{equation}
    Secondly, define
    \begin{equation*}
      f_1'(s,t)
      = \sum_{I,J\in \mathcal{D}^{N-1}} \sum_{\substack{K\in \mathcal{A}_{I}\\L\in \mathcal{B}_J}} \sigma_K'^{(1)} \sigma_L'^{(2)} a_{K,L} h_K(s) k_L(t),
    \end{equation*}
    where $(\sigma_K'^{(1)})$ is an independent copy of $(\sigma_K^{(1)})$ and $(\sigma_L'^{(2)})$ is an independent copy of $(\sigma_L^{(2)})$.  We
    denote the expectation with respect to these copies by $\cond'$.  Consequently, $f_1(s,t) + g(s,t)$ is equimeasurable to $f_1'(s,t) + g(s,t)$
    (with respect to the product measure of the probability space with its independent copy), and we obtain
    \begin{equation*}
      \|z\|_Z
      = \Bigl\|
      s\mapsto \Bigl\|
      t\mapsto \cond'\cond \Bigl|
      f_1'(s,t) + g(s,t)
      \Bigr|
      \Bigr\|_Y
      \Bigr\|_X.
    \end{equation*}
    For fixed $s,t$, we observe that for each $I,J\in \mathcal{D}$, there exists at most one $K\in \mathcal{A}_I$ with $s\in K$ and at most one
    $L\in \mathcal{B}_J$ with $t\in L$.  Hence, the random function
    \begin{equation}\label{eq:22}
      f_2'(s,t)
      = \sum_{I,J\in \mathcal{D}^{N-1}} \sigma_I^{(1)} \sigma_J^{(2)} \sum_{\substack{K\in \mathcal{A}_{I}\\L\in \mathcal{B}_J}} a_{K,L} h_K(s) k_L(t)
    \end{equation}
    is equimeasurable to $f_1'(s,t)$, and thus
    \begin{equation}\label{eq:21}
      \|z\|_Z
      = \Bigl\|
      s\mapsto \Bigl\|
      t\mapsto \cond'\cond \Bigl|
      f_2'(s,t) + g(s,t)
      \Bigr|
      \Bigr\|_Y
      \Bigr\|_X.
    \end{equation}
    Now, applying \Cref{lem:conditional-expectation} in $X$ and $Y$ to~\eqref{eq:21} together with \Cref{RI properties}~\ref{RI properties 5} yields
    \begin{equation*}
      \|z\|_Z
      \geq \Bigl\|
      s\mapsto \cond^{\mathcal{F}}_s \Bigl\|
      t\mapsto \cond^{\mathcal{G}}_t \cond'\cond \Bigl|
      f_2'(s,t) + g(s,t)
      \Bigr|
      \Bigr\|_Y
      \Bigr\|_X.
    \end{equation*}
    Hence, by Fubini's theorem and Jensen's inequality, we obtain
    \begin{equation}\label{eq:15}
      \|z\|_Z
      \geq \Bigl\|
      s\mapsto \Bigl\|
      t\mapsto \cond'\cond \Bigl|
      \bigl(\cond_s^{\mathcal{F}}\cond_t^{\mathcal{G}} f_2'\bigr)(s,t)
      + \bigl(\cond_s^{\mathcal{F}}\cond_t^{\mathcal{G}} g\bigr)(s,t)
      \Bigr|
      \Bigr\|_Y
      \Bigr\|_X.
    \end{equation}

    We claim the following identities are true:
    \begin{enumerate}[label=(\alph*)]
    \item\label{enu:thm:basic-operators:i} $\cond^{\mathcal{F}} h_K = \theta_K \frac{|K|}{|I|}\widetilde{h}_I$, whenever $K\in\mathcal{A}_I$ for some
      $I\in \mathcal{D}^N$;
    \item\label{enu:thm:basic-operators:ii} $\cond^{\mathcal{G}} k_L = \varepsilon_L \frac{|L|}{|J|}\widehat{k}_J$, whenever $L\in \mathcal{B}_J$ for
      some $J\in \mathcal{D}^{N}$;
    \item\label{enu:thm:basic-operators:iii} $\cond^{\mathcal{F}} h_K = 0$ if $K\notin \mathcal{A}$ as well as $\cond^{\mathcal{G}} k_L = 0$ if
      $L\notin \mathcal{B}$.
    \end{enumerate}
    To see this, we note that by the faithfulness of our Haar system $(\widetilde{h}_I : I\in \mathcal{D})$
    \begin{equation}\label{eq:17}
      |A_{I_0}|^{-1} \chi_{A_{I_0}}
      = \sum_{I\supsetneq I_0} \widetilde{h}_I(A_{I_0}) |A_I|^{-1} \widetilde{h}_I,
      \qquad I_0\in \mathcal{D}_N.
    \end{equation}
    Suppose now that $K\in \mathcal{A}_I$ for some $I\in \mathcal{D}^{N-1}$.  Observe that whenever $K\cap A_{I_0}\neq \emptyset$ for some
    $I_0\in \mathcal{D}_N$, then necessarily $I_0\subset I$.  This observation together with the identity~\eqref{eq:17} and the fact that
    $|A_I| = |I|$ and~\eqref{eq:162} yield
    \begin{align*}
      \cond^{\mathcal{F}} h_K
      &= \sum_{\substack{I_0\in \mathcal{D}_{N}\\I_0\subset I}} \langle \chi_{A_{I_0}}, h_K \rangle |A_{I_0}|^{-1} \chi_{A_{I_0}}
      = \sum_{\substack{I_0\in \mathcal{D}_{N}\\I_0\subset I}} \widetilde{h}_I(A_{I_0}) \theta_K \frac{|K|}{|I|} \chi_{A_{I_0}}\\
      &= \theta_K \frac{|K|}{|I|} \sum_{\substack{I_0\in \mathcal{D}_{N}\\I_0\subset I}} \widetilde{h}_I \chi_{A_{I_0}}
      = \theta_K \frac{|K|}{|I|} \widetilde{h}_I,
    \end{align*}
    as claimed in~\ref{enu:thm:basic-operators:i}.  For the final claim~\ref{enu:thm:basic-operators:iii}, suppose that $K\notin \mathcal{A}$ and
    observe that~\eqref{eq:17} and~\eqref{eq:162} yield immediately $\langle \chi_{A_{I_0}}, h_K \rangle = 0$ for all $I_0\in \mathcal{D}_N$, and
    hence $\cond^{\mathcal{F}} h_K = 0$.  The proofs for~\ref{enu:thm:basic-operators:ii} and the second part of~\ref{enu:thm:basic-operators:iii} and
    the are completely analogous and therefore omitted.

    Combining~\ref{enu:thm:basic-operators:i}--\ref{enu:thm:basic-operators:iii} with~\eqref{eq:12} and~\eqref{eq:22} yields
    \begin{equation}\label{eq:23}
      \begin{aligned}
        \bigl(\cond_s^{\mathcal{F}}\cond_t^{\mathcal{G}} f_2'\bigr)(s,t)
        &= \sum_{I,J\in \mathcal{D}^{N-1}} \sigma_I'^{(1)} \sigma_J'^{(2)} \sum_{\substack{K\in \mathcal{A}_{I}\\L\in \mathcal{B}_J}}
        a_{K,L} \bigl(\cond_s^{\mathcal{F}}h_K\bigr)(s) \bigl(\cond_t^{\mathcal{G}}k_L\bigr)(t)\\
        &= \sum_{I,J\in \mathcal{D}^{N-1}} \sigma_I'^{(1)} \sigma_J'^{(2)} \sum_{\substack{K\in \mathcal{A}_{I}\\L\in \mathcal{B}_J}}
        a_{K,L} \frac{|K| |L|}{|I| |J|}
        \theta_K \varepsilon_L \widetilde{h}_I(s) \widehat{k}_J(t),
      \end{aligned}
    \end{equation}
    as well as
    \begin{equation}\label{eq:24}
      \bigl(\cond_s^{\mathcal{F}}\cond_t^{\mathcal{G}} g\bigr)(s,t)
      = 0.
    \end{equation}
    We insert~\eqref{eq:23} and~\eqref{eq:24} into~\eqref{eq:15} and obtain
    \begin{align*}
      \|z\|_Z
      &\geq \Bigl\|
        s\mapsto \Bigl\|
        t\mapsto \cond' \Bigl|
        \sum_{I_0,J_0\in \mathcal{D}_{N-1}} \sum_{\substack{I\supset I_0\\J\supset J_0}} \sigma_I'^{(1)} \sigma_J'^{(2)} \sum_{\substack{K\in \mathcal{A}_I\\L\in \mathcal{B}_J}}
      a_{K,L} \frac{|K| |L|}{|I| |J|}
      \theta_K \varepsilon_L \widetilde{h}_I(s) \widehat{k}_J(t)
      \Bigr|
      \Bigr\|_Y
      \Bigr\|_X\\
      &\geq \Bigl\|
        s\mapsto \Bigl\|
        t\mapsto \cond' \Bigl|
        \sum_{I_0,J_0\in \mathcal{D}_{N-1}} \sum_{\substack{I\supset I_0\\J\supset J_0}} \sigma_I'^{(1)} \sigma_J'^{(2)} 
      \frac{\langle \widetilde{h}_I\otimes \widehat{h}_{J}, z\rangle}{|I| |J|}
      \widetilde{h}_I(s) \widehat{k}_J(t)
      \Bigr|
      \Bigr\|_Y
      \Bigr\|_X.
    \end{align*}
    Repeating the argument below~\eqref{eq:163}, we can exchange $\widetilde{h}_I$ by $h_I$ and $\widehat{k}_J$ by $k_J$, and thus
    \begin{equation*}
      \|z\|_Z
      \geq \Bigl\|
      s\mapsto \Bigl\|
      t\mapsto \cond' \Bigl|
      \sum_{I_0,J_0\in \mathcal{D}_{N-1}} \sum_{\substack{I\supset I_0\\J\supset J_0}} \sigma_I'^{(1)} \sigma_J'^{(2)}
      \frac{\langle \widetilde{h}_I\otimes \widetilde{k}_J, z \rangle}{|I| |J|}
      h_I(s) k_J(t)
      \Bigr|
      \Bigr\|_Y
      \Bigr\|_X
      = \|Qz\|_Z.
    \end{equation*}
  \end{proofcase}
  By the faithfulness of our Haar systems, it is clear that $I_Z = AB$.
\end{myproof}

\section{Haar multipliers, pointwise multipliers and Capon's projection}
\label{sec:mult-capons-proj}
In this section we prove the required proximity results between Haar multipliers and pointwise multipliers that were stated in \Cref{proximity Haar to
  pointwise}, which yield that Haar system Hardy spaces are 1-Capon spaces (see \Cref{two-parametric assumption}).
\begin{thm}\label{thm:pw-multipliers:1}
  Let $Z = Z(\boldsymbol{\sigma},X,Y)$ be a Haar system Hardy space and let $\mathcal{V}_Z$ denote the space $\mathcal{V}(\delta^2)$ equipped with the
  norm of $Z$, and put $\mathcal{V}_{Z^{\Omega}} = \mathcal{O}(\mathcal{V}_Z)$.  Suppose that the Haar multiplier
  $D\colon \mathcal{V}_Z\to \mathcal{V}_Z$ given by $D h_I\otimes k_J = d_{I,J} h_I\otimes k_J$, $I,J\in \mathcal{D}$ has a finite bi-tree variation
  semi-norm, i.e., $\|D\|_{\mathrm{T^2}\mathrm{S}} < \infty$.

  Then the following estimates are true:
  \begin{subequations}\label{eq:4}
    \begin{align}
      \|(\mathcal{O}D - M_1^D\mathcal{O})\mathcal{C}\colon\mathcal{V}_Z\to Z^\Omega\|
      &\leq 4 \|D\|_{\mathrm{T^2}\mathrm{S}},\\
      \|(\mathcal{O}D - M_2^D\mathcal{O})(I_Z - \mathcal{C})\colon\mathcal{V}_Z\to Z^\Omega\|
      &\leq 4 \|D\|_{\mathrm{T^2}\mathrm{S}},\\
      \|(M_1^D- M_2^D)\mathcal{O}\mathcal{C}\colon \mathcal{V}_Z \to Z^\Omega\|
      &\leq \|D\| + \|D\|_{\infty} + 8\|D\|_{\mathrm{T^2}\mathrm{S}}.
    \end{align}
  \end{subequations}
\end{thm}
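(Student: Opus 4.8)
The three displayed inequalities in~\eqref{eq:4} I will call (a), (b), (c) from top to bottom. The plan is to prove (a) in full, obtain (b) from (a) by the symmetry interchanging the two coordinates (and the lower/diagonal parts with the upper/superdiagonal parts), and then deduce (c) purely algebraically. The whole difficulty is concentrated in (a).

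I would begin with a pointwise coefficient identity. Fix a lower-triangular pair $(I,J)$, so $I\in\mathcal{D}_i$, $J\in\mathcal{D}_j$ with $i\ge j$, and a point $(s,t)\in I\times J$; write $I_l(s),J_l(t)$ for the level-$l$ dyadic intervals through $s,t$, and $I^{(l)}$ for the level-$l$ ancestor of $I$ (so $I^{(i)}=I$, $I^{(j)}=I_j(s)$). Then, since $m_1^D(s,t)=\lim_k d_{I_k(s),J_k(t)}$,
\begin{equation*}
  d_{I,J}-m_1^D(s,t)=\sum_{l=j}^{i-1}\bigl(d_{I^{(l+1)},J}-d_{I^{(l)},J}\bigr)+\sum_{l=j}^{\infty}\bigl(d_{I_l(s),J_l(t)}-d_{I_{l+1}(s),J_{l+1}(t)}\bigr).
\end{equation*}
The first sum coarsens the first coordinate down to the diagonal and its increments are lower-triangular refinements; the second sum refines along the diagonal. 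Exactly as in the telescoping computation proving \Cref{thm:pointwise-int} (compare \Cref{rem:cvs}), taking absolute values gives the uniform bound $|d_{I,J}-m_1^D(s,t)|\le\|D\|_{\mathrm{T^2}\mathrm{S}}$; but I keep the two sums apart because they are matched against two \emph{different} pieces of $\|D\|_{\mathrm{T^2}\mathrm{S}}$.

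Next I would convert this into operator estimates. Applying $\mathcal{C}$ to $z=\sum a_{I,J}h_I\otimes k_J$ and subtracting, $(\mathcal{O}D-M_1^D\mathcal{O})\mathcal{C}z$ is the function $\sum_{(I,J)\text{ lower}}a_{I,J}\,(d_{I,J}-m_1^D(s,t))\,h_I(s)k_J(t)\,(\sigma^{(1)}_I\sigma^{(2)}_J)$. Inserting the identity and exchanging orders of summation, the coarsening sum splits, for each pair of levels $l\ge j$, into $z\mapsto \mathcal{O}\bigl(M_{l,j}\,[\,Q_j\otimes P_{\ge l+1}\,]z\bigr)$, where $Q_j$ projects the second coordinate onto level $j$, $P_{\ge l+1}$ projects the first onto levels $\ge l+1$, and $M_{l,j}$ is pointwise multiplication by the symbol $d_{K(s),J(t)}-d_{\pi(K(s)),J(t)}$ ($K(s)\in\mathcal{D}_{l+1}$, $J(t)\in\mathcal{D}_j$), which is bounded by $\max_{\mathcal{D}_l\times\mathcal{D}_j}|d-d^\omega|$. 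By \Cref{rem:pw-multiplier} the multiplier norm is at most this symbol bound, the tensor projection $Q_j\otimes P_{\ge l+1}$ is bounded by $4$ via \Cref{pro:bi-monotone} and \Cref{rem:cond}, and $\mathcal{O}$ is isometric; summing over $l\ge j\ge0$ and reindexing reproduces, up to the factor $4$, the lower-triangular part of $\|D\|_{\mathrm{T^2}\mathrm{S}}$. The diagonal sum contributes, for each level $l$, pointwise multiplication by $\phi_l(s,t)=d_{I_l(s),J_l(t)}-d_{I_{l+1}(s),J_{l+1}(t)}$ (bounded by $\max_{\mathcal{D}_l\times\mathcal{D}_l}|d-d^{\omega,\xi}|$) applied to $\mathcal{O}$ of the projection of $z$ onto the lower triangle with second coordinate of level $\le l$. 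That projection is a sum of $l+1$ tensor projections, so its norm is $\le 4(l+1)$ by \Cref{pro:bi-monotone}; this is exactly where the weights $(k+1)$ in the diagonal part of $\|D\|_{\mathrm{T^2}\mathrm{S}}$ are consumed. Adding the two contributions gives $\|(\mathcal{O}D-M_1^D\mathcal{O})\mathcal{C}\|\le 4(\text{lower-tri part}+\text{diagonal part})\le 4\|D\|_{\mathrm{T^2}\mathrm{S}}$. Estimate (b) follows by the same argument with the coordinates interchanged, coarsening the second coordinate down to the superdiagonal, then refining along it, comparing with $m_2^D(s,t)=\lim_k d_{I_k(s),J_{k+1}(t)}$ and using \Cref{pro:path-distance}; the upper-triangular and superdiagonal parts of $\|D\|_{\mathrm{T^2}\mathrm{S}}$ (again with their $(k+1)$ weights) absorb the truncation-projection norms. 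Finally, for (c) I would use $\mathcal{C}+(I_Z-\mathcal{C})=I_Z$ to write
\begin{equation*}
  (M_1^D-M_2^D)\mathcal{O}\mathcal{C}=\mathcal{O}D-M_2^D\mathcal{O}-(\mathcal{O}D-M_1^D\mathcal{O})\mathcal{C}-(\mathcal{O}D-M_2^D\mathcal{O})(I_Z-\mathcal{C}),
\end{equation*}
and, since $\mathcal{O}$ is isometric, bound $\|\mathcal{O}D\|=\|D\|$ and $\|M_2^D\mathcal{O}\|\le\|m_2^D\|_{L^\infty}\le\|D\|_\infty$, estimating the remaining two terms by (a) and (b) to obtain $\|D\|+\|D\|_\infty+8\|D\|_{\mathrm{T^2}\mathrm{S}}$.

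The main obstacle is (a). Because $\mathcal{C}$ need not be bounded on $Z$, one cannot estimate $\mathcal{O}D\mathcal{C}$ and $M_1^D\mathcal{O}\mathcal{C}$ separately, and the uniform pointwise bound $|d_{I,J}-m_1^D|\le\|D\|_{\mathrm{T^2}\mathrm{S}}$ is by itself useless, since the ``symbol'' $d_{I,J}$ depends on the basis vector rather than on $(s,t)$. The crux is the bookkeeping above: organizing the telescoping so that each increment becomes an honest pointwise multiplier (with small symbol) \emph{composed with} a bounded triangular-truncation projection, and then checking that the $(k+1)$ weights built into $\|D\|_{\mathrm{T^2}\mathrm{S}}$ exactly compensate the linear growth of those projection norms while keeping the final constant equal to $4$. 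Carefully tracking the projection constants of \Cref{pro:bi-monotone} and \Cref{rem:cond} through this reindexing is the delicate step.
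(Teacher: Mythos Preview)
Your proposal is correct and follows essentially the same route as the paper: the same telescoping decomposition (your coarsening and diagonal sums are the paper's $B_1$ and $A_1$), the same pointwise-to-operator passage via monotonicity of the $Z^\Omega$-norm and the projection bounds of \Cref{pro:bi-monotone}, and the same algebraic derivation of (c) from (a) and (b). The only slip is notational---the pointwise multiplier $M_{l,j}$ acts on $Z^\Omega$, so it should appear after $\mathcal{O}$, not before---but this does not affect the argument.
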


\begin{myproof}
  To this end, let $z = \sum_{I,J\in\mathcal{D}} a_{I,J} h_I\otimes k_J\in\mathcal{V}_Z$ with $a_{I,J}\in \mathbb{R}$ and let $s,t\in [0,1)$ be fixed.
  \begin{proofcase}[Estimate for $\|(\mathcal{O}D - M_1^D\mathcal{O})\mathcal{C}\colon\mathcal{V}_Z\to Z^\Omega\|$]
    Observe that for fixed $s,t\in [0,1)$ and $\sigma\in\boldsymbol{\sigma}$, we have
    \begin{align*}
      \bigl|\bigl((\mathcal{O}D& - M_1^D\mathcal{O})\mathcal{C}z\bigr)(s,t,\sigma)\bigr|
                                 = \Bigl|
                                 \sum_{j=0}^{\infty} \sum_{i=j}^{\infty} \sum_{\substack{I\in \mathcal{D}_{i}\\J\in \mathcal{D}_j}} \bigl(d_{I,J} - m_1^D(s,t)\bigr)
      \sigma_I^{(1)} \sigma_J^{(2)} a_{I,J} h_I(s) k_J(t)
      \Bigr|\\
                               &\leq \Bigl|
                                 \sum_{j=0}^{\infty} \sum_{i=j}^{\infty} \bigl(d_{I_j(s),J_j(t)} - m_1^D(s,t)\bigr)
                                 \sigma_{I_i(s)}^{(1)} \sigma_{J_j(t)}^{(2)} a_{I_i(s),J_j(t)} h_{I_i(s)}(s) k_{J_j(t)}(t)
                                 \Bigr|\\
                               &\qquad + \Bigl|
                                 \sum_{j=0}^{\infty} \sum_{i=j}^{\infty} \bigl(d_{I_i(s),J_j(t)} - d_{I_j(s),J_j(t)}\bigr)
                                 \sigma_{I_i(s)}^{(1)} \sigma_{J_j(t)}^{(2)} a_{I_i(s),J_j(t)} h_{I_i(s)}(s) k_{J_j(t)}(t)
                                 \Bigr|
                                 =: A_1 + B_1.
    \end{align*}
    We will now estimate $A_1$ and $B_1$, separately.  First, note that by the definitions of $I_i(s)$, $J_j(t)$, $m_1^D$ as well as \Cref{rem:cond}
    and \Cref{dfn:multiplier-space}, we obtain
    \begin{align*}
      A_1
      &= \Bigl|
        \sum_{j=0}^{\infty} \sum_{i=j}^{\infty} \sum_{k=j}^{\infty}
        \bigl(d_{I_k(s),J_k(t)} - d_{I_{k+1}(s),J_{k+1}(t)}\bigr)
        \sigma_{I_i(s)}^{(1)} \sigma_{J_j(t)}^{(2)} a_{I_i(s),J_j(t)} h_{I_i(s)}(s) k_{J_j(t)}(t)
        \Bigr|\\
      &\leq \sum_{k=0}^{\infty} \sum_{j=0}^k \bigl|d_{I_k(s),J_k(t)} - d_{I_{k+1}(s),J_{k+1}(t)}\bigr|
        \cdot \Bigl| \sum_{i=j}^{\infty} \sigma_{I_i(s)}^{(1)} \sigma_{J_j(t)}^{(2)} a_{I_i(s),J_j(t)} h_{I_i(s)}(s) k_{J_j(t)}(t) \Bigr|\\
      &= \sum_{k=0}^{\infty} \sum_{j=0}^k \bigl|d_{I_k(s),J_k(t)} - d_{I_{k+1}(s),J_{k+1}(t)}\bigr|
        \cdot \bigl| \bigl(\mathcal{O} (I_{Z^{\Omega}} - \cond_j)\otimes (\cond_{j+1} - \cond_j) z\bigr) (s,t)\bigr|.
    \end{align*}
    Similarly, using \Cref{rem:cond} and \Cref{dfn:multiplier-space} yields
    \begin{align*}
      B_1
      &= \Bigl|
        \sum_{j=0}^{\infty} \sum_{i=j}^{\infty} \sum_{k=j}^{i-1}
        \bigl(d_{I_{k+1}(s),J_j(t)} - d_{I_k(s),J_j(t)}\bigr)
        \sigma_{I_i(s)}^{(1)} \sigma_{J_j(t)}^{(2)} a_{I_i(s),J_j(t)} h_{I_i(s)}(s) k_{J_j(t)}(t)
        \Bigr|\\
      &\leq \sum_{k=0}^{\infty} \sum_{j=0}^k \bigl|d_{I_{k+1}(s),J_j(t)} - d_{I_k(s),J_j(t)}\bigr|
        \cdot \Bigl|
        \sum_{i=j\vee k+1}^{\infty} \sigma_{I_i(s)}^{(1)} \sigma_{J_j(t)}^{(2)} a_{I_i(s),J_j(t)} h_{I_i(s)}(s) k_{J_j(t)}(t)
        \Bigr|\\
      &= \sum_{k=0}^{\infty} \sum_{j=0}^k \bigl|d_{I_{k+1}(s),J_j(t)} - d_{I_k(s),J_j(t)}\bigr|
        \cdot \bigl| \bigl(\mathcal{O} (I_{Z^{\Omega}} - \cond_{j\vee k+1})\otimes (\cond_{j+1} - \cond_j) z\bigr) (s,t)\bigr|.
    \end{align*}
    By the monotonicity of the norm in $Z^\Omega$ and \Cref{pro:bi-monotone} we obtain
    \begin{align*}
      &\|(\mathcal{O}D - M_1^D\mathcal{O})\mathcal{C} z\|_{Z^{\Omega}}
        \leq \|A_1\|_{Z^{\Omega}} + \|B_1\|_{Z^{\Omega}}\\
      &\quad \leq \sum_{k=0}^{\infty} (k+1) \max_{\substack{I,J\in \mathcal{D}_k\\\omega,\xi\in\{\pm 1\}}} |d_{I,J} - d_{I^\omega,J^\xi}| \cdot 4 \|z\|_Z
      + \sum_{k=0}^{\infty} \sum_{j=0}^k \max_{\substack{I\in \mathcal{D}_k\\J\in \mathcal{D}_j\\\omega\in\{\pm 1\}}} |d_{I^{\omega},J} - d_{I,J}|
      \cdot 4 \|z\|_Z\\
      &\quad \leq 4 \|D\|_{\mathrm{T^2}\mathrm{S}}\cdot \|z\|_{Z},
    \end{align*}
    which establishes the first inequality in~\eqref{eq:4}.
  \end{proofcase}

  \begin{proofcase}[Estimate for $\|(\mathcal{O}D - M_2^D\mathcal{O})(I_Z - \mathcal{C})\colon\mathcal{V}_Z\to Z^\Omega\|$]
    For fixed $s,t\in [0,1)$ and $\sigma\in \boldsymbol{\sigma}$, we obtain
    \begin{align*}
      \bigl|\bigl((\mathcal{O}D& - M_1^D\mathcal{O})\mathcal{C}z\bigr)(s,t,\sigma)\bigr|
                                 = \Bigl|
                                 \sum_{i=0}^{\infty} \sum_{j=i+1}^{\infty} \sum_{\substack{I\in \mathcal{D}_{i}\\J\in \mathcal{D}_j}} \bigl(d_{I,J} - m_2^D(s,t)\bigr)
      \sigma_I^{(1)} \sigma_J^{(2)} a_{I,J} h_I(s) k_J(t)
      \Bigr|\\
                               &\leq \Bigl|
                                 \sum_{i=0}^{\infty} \sum_{j=i+1}^{\infty} \bigl(d_{I_i(s),J_{i+1}(t)} - m_2^D(s,t)\bigr)
                                 \sigma_{I_i(s)}^{(1)} \sigma_{J_j(t)}^{(2)} a_{I_i(s),J_j(t)} h_{I_i(s)}(s) k_{J_j(t)}(t)
                                 \Bigr|\\
                               &\qquad + \Bigl|
                                 \sum_{i=0}^{\infty} \sum_{j=i+1}^{\infty} \bigl(d_{I_i(s),J_j(t)} - d_{I_i(s),J_{i+1}(t)}\bigr)
                                 \sigma_{I_i(s)}^{(1)} \sigma_{J_j(t)}^{(2)} a_{I_i(s),J_j(t)} h_{I_i(s)}(s) k_{J_j(t)}(t)
                                 \Bigr|\\
                               &=: A_2 + B_2.
    \end{align*}
    By the definitions of $I_i(s)$, $J_j(t)$, $m_2^D$ as well as \Cref{rem:cond} and \Cref{dfn:multiplier-space}, we obtain
    \begin{align*}
      A_2
      &= \Bigl|
        \sum_{i=0}^{\infty} \sum_{j=i+1}^{\infty} \sum_{k=i}^{\infty}  \bigl(d_{I_k(s),J_{k+1}(t)} - d_{I_{k+1}(s),J_{k+2}(t)}\bigr)
        \sigma_{I_i(s)}^{(1)} \sigma_{J_j(t)}^{(2)} a_{I_i(s),J_j(t)} h_{I_i(s)}(s) k_{J_j(t)}(t)
        \Bigr|\\
      &\leq \sum_{k=0}^{\infty} \sum_{i=0}^k |d_{I_k(s),J_{k+1}(t)} - d_{I_{k+1}(s),J_{k+2}(t)}|
        \cdot \Bigl| \sum_{j=i+1}^{\infty} \sigma_{I_i(s)}^{(1)} \sigma_{J_j(t)}^{(2)} a_{I_i(s),J_j(t)} h_{I_i(s)}(s) k_{J_j(t)}(t) \Bigr|\\
      &= \sum_{k=0}^{\infty} \sum_{i=0}^k |d_{I_k(s),J_{k+1}(t)} - d_{I_{k+1}(s),J_{k+2}(t)}|
        \cdot \bigl| \bigl(\mathcal{O} (\cond_{i+1} - \cond_i)\otimes (I_{Z^{\Omega}} - \cond_{i+1}) z\bigr) (s,t)\bigr|.
    \end{align*}
    Similarly, using \Cref{rem:cond} and \Cref{dfn:multiplier-space} yields
    \begin{align*}
      B_2
      &= \Bigl|
        \sum_{i=0}^{\infty} \sum_{j=i+1}^{\infty} \sum_{k=i+1}^{j-1} \bigl(d_{I_i(s),J_{k+1}(t)} - d_{I_i(s),J_k(t)}\bigr)
        \sigma_{I_i(s)}^{(1)} \sigma_{J_j(t)}^{(2)} a_{I_i(s),J_j(t)} h_{I_i(s)}(s) k_{J_j(t)}(t)
        \Bigr|\\
      &\leq \sum_{k=1}^{\infty} \sum_{i=0}^{k-1} |d_{I_i(s),J_{k+1}(t)} - d_{I_i(s),J_k(t)}|
        \cdot \Bigl| \sum_{j=k+1}^{\infty} \sigma_{I_i(s)}^{(1)} \sigma_{J_j(t)}^{(2)} a_{I_i(s),J_j(t)} h_{I_i(s)}(s) k_{J_j(t)}(t) \Bigr|\\
      &= \sum_{k=1}^{\infty} \sum_{i=0}^{k-1} |d_{I_i(s),J_{k+1}(t)} - d_{I_i(s),J_k(t)}|
        \cdot \bigl| \bigl(\mathcal{O} (\cond_{i+1} - \cond_i)\otimes (I_{Z^{\Omega}} - \cond_{k+1}) z\bigr) (s,t)\bigr|.
    \end{align*}
    By the monotonicity of the norm in $Z^\Omega$, and \Cref{pro:bi-monotone} we obtain
    \begin{align*}
      &\|(\mathcal{O}D - M_2^D\mathcal{O})(I_Z - \mathcal{C}) z\|_{Z^\Omega}
        \leq \|A_2\|_{Z^{\Omega}} + \|B_2\|_{Z^{\Omega}}\\
      &\quad \leq \sum_{k=0}^{\infty} (k+1) \max_{\substack{I\in \mathcal{D}_k\\J\in \mathcal{D}_{k+1}\\\omega,\xi\in\{\pm 1\}}} |d_{I,J} - d_{I^\omega,J^\xi}| \cdot 4 \|z\|_Z
      + \sum_{k=1}^{\infty} \sum_{i=0}^{k-1} \max_{\substack{I\in \mathcal{D}_i\\J\in \mathcal{D}_k\\\xi\in\{\pm 1\}}} |d_{I,J^{\xi}} - d_{I,J}| \cdot 4 \|z\|_Z\\
      &\quad \leq 4 \|D\|_{\mathrm{T^2}\mathrm{S}}\cdot \|z\|_{Z},
    \end{align*}
    which establishes the second estimate in~\eqref{eq:4}.
  \end{proofcase}
  
  \begin{proofcase}[Estimate for $\|(M_2^D - M_1^D)\mathcal{O}\mathcal{C}\colon \mathcal{V}_Z\to Z^\Omega\|$]
    Observe that by~\eqref{eq:4}, and by the definition of $M_2^D$, we have $\|M_2^D\|\leq \|D\|$, and thus
    \begin{align*}
      \|D z\|_Z
      &= \|\mathcal{O} D z\|_{Z^{\Omega}}
        = \|\mathcal{O} D \mathcal{C} z + \mathcal{O} D (I_Z-\mathcal{C})\|_ z{Z^{\Omega}}\\
      &\geq \|M_1^D \mathcal{O} \mathcal{C} z + M_2^D\mathcal{O} (I_Z-\mathcal{C}) z\|_{Z^{\Omega}} - 8\|D\|_{\mathrm{T^2}\mathrm{S}}\cdot \|z\|_Z\\
      &= \|(M_1^D - M_2^D) \mathcal{O} \mathcal{C} z + M_2^D\mathcal{O} z\|_{Z^{\Omega}} - 8\|D\|_{\mathrm{T^2}\mathrm{S}}\cdot \|z\|_Z\\
      &\geq \|(M_1^D - M_2^D) \mathcal{O} \mathcal{C} z\|_{Z^{\Omega}} - \|M_2^D\mathcal{O} z\|_{Z^{\Omega}} - 8\|D\|_{\mathrm{T^2}\mathrm{S}}\cdot \|z\|_Z\\
      &\geq \|(M_1^D - M_2^D) \mathcal{O} \mathcal{C} z\|_{Z^{\Omega}} - \|D\|_{\infty}\cdot \|z\|_Z - 8\|D\|_{\mathrm{T^2}\mathrm{S}}\cdot \|z\|_Z,
    \end{align*}
    as claimed.\qedhere
  \end{proofcase}
\end{myproof}

\begin{cor}\label{cor:pw-multipliers:2} Assuming the conditions of \Cref{thm:pw-multipliers:1} and assuming that
  \begin{equation*}
    \|D\|_{{\mathrm{T^2}\mathrm{S}}}
    < |\lambda_{\mathcal U} (D)-\mu_{\mathcal U}(D)|/8
  \end{equation*}
  it follows that
  \begin{equation*}
    \|\mathcal{C}\|_{\mathcal{L}(Z)}
    \le \frac{\|D\|+\|D\|_\infty + 8 \|D\|_{{\mathrm{T^2}\mathrm{S}}}}{|\lambda_{\mathcal U} (D) - \mu_{\mathcal U}(D)| - 8\|D\|_{{\mathrm{T^2}\mathrm{S}}}}.
  \end{equation*}
  and thus, $\mathcal{C}\colon Z\to Z$ is bounded if $\|D\colon Z\to Z\| < \infty$.

  In particular, $\mathcal{C}$ is bounded on $Z$ if and only if there exists a bounded Haar multiplier $D\colon Z\to Z$ such that
  $\lambda_\mathcal{U}(D)\neq\mu_\mathcal{U}(D)$.
\end{cor}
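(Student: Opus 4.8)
The plan is to read the boundedness of $\mathcal{C}$ off the third estimate of \Cref{thm:pw-multipliers:1} by showing that the pointwise multiplier $M_1^D-M_2^D$ is bounded \emph{below} on the range of $\mathcal{O}\mathcal{C}$. Fix $z\in\mathcal{V}_Z$ and put $w=\mathcal{O}\mathcal{C}z$, $g=m_1^D-m_2^D\in L^\infty[0,1)^2$, and $c=\lambda_{\mathcal U}(D)-\mu_{\mathcal U}(D)$. Since $\mathcal{O}$ is an into isometry, $\|w\|_{Z^\Omega}=\|\mathcal{C}z\|_Z$, and since $M_i^D$ is multiplication by $m_i^D$ we have $(M_1^D-M_2^D)w=g\cdot w$. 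The argument rests on the inequality $\|\mathcal{C}z\|_Z=\|w\|_{Z^\Omega}\le (\inf|g|)^{-1}\|gw\|_{Z^\Omega}$, which holds because $|w|\le(\inf|g|)^{-1}|gw|$ pointwise and the norm of $Z^\Omega=X(Y(L^1(\boldsymbol{\sigma})))$ is monotone (\Cref{RI properties}~\ref{RI properties 5}, as exploited in \Cref{rem:pw-multiplier}).

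Next I would show $\inf|g|>0$. By \Cref{thm:pointwise-int} the constant $c=\int_0^1\int_0^1 g(s,t)\,\mathrm{d}t\,\mathrm{d}s$ is the mean of $g$, so $\|g-c\|_{L^\infty}$ is at most the oscillation of $g$. \Cref{rem:cvs} pins $m_1^D$ within $\|D\|_{\mathrm{T^2}\mathrm{S}}$ of the single entry $d_{[0,1),[0,1)}$ and $m_2^D$ within $\|D\|_{\mathrm{T^2}\mathrm{S}}$ of $d_{[0,1),J_1(\cdot)}$; combining these estimates yields $\|g-c\|_{L^\infty}\le 8\|D\|_{\mathrm{T^2}\mathrm{S}}$, and the hypothesis $\|D\|_{\mathrm{T^2}\mathrm{S}}<|c|/8$ gives $\inf|g|\ge|c|-8\|D\|_{\mathrm{T^2}\mathrm{S}}>0$. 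Feeding this together with $\|gw\|_{Z^\Omega}=\|(M_1^D-M_2^D)\mathcal{O}\mathcal{C}z\|_{Z^\Omega}\le(\|D\|+\|D\|_\infty+8\|D\|_{\mathrm{T^2}\mathrm{S}})\|z\|_Z$ (the third line of \Cref{thm:pw-multipliers:1}) into the displayed inequality and dividing yields the asserted bound on $\|\mathcal{C}z\|_Z$ for every $z\in\mathcal{V}_Z$; density of $\mathcal{V}(\delta^2)$ in $Z$ extends it to $\|\mathcal{C}\|_{\mathcal{L}(Z)}$. In particular, when $\|D\colon Z\to Z\|<\infty$ the numerator is finite, so $\mathcal{C}$ is bounded.

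For the final equivalence, the forward direction is immediate: if $\mathcal{C}$ is bounded on $Z$, take $D=\mathcal{C}$ itself; evaluating the iterated limits of \Cref{ntn:basics} directly gives $\lambda_{\mathcal U}(\mathcal{C})=1$ and $\mu_{\mathcal U}(\mathcal{C})=0$, so $\lambda_{\mathcal U}(\mathcal{C})\ne\mu_{\mathcal U}(\mathcal{C})$. Conversely, suppose $D\colon Z\to Z$ is a bounded Haar multiplier with $\lambda_{\mathcal U}(D)\ne\mu_{\mathcal U}(D)$. A general such $D$ need not have small, or even finite, bi-tree variation semi-norm, so I would first apply \Cref{two-parameter algebraic stabilization theorem norm version} to obtain a bi-parameter faithful Haar system for which $\tilde D=D|_{\tilde H\otimes\tilde K}=ADB$ has $\|\tilde D\|_{\mathrm{T^2}\mathrm{S}}$ arbitrarily small while $\lambda_{\mathcal U}(\tilde D)=\lambda_{\mathcal U}(D)$ and $\mu_{\mathcal U}(\tilde D)=\mu_{\mathcal U}(D)$ are preserved. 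By \Cref{thm:basic-operators}, $\|A\|,\|B\|\le1$, whence $\|\tilde D\colon Z\to Z\|\le\|D\|<\infty$; choosing the stabilization fine enough that $\|\tilde D\|_{\mathrm{T^2}\mathrm{S}}<|\lambda_{\mathcal U}(D)-\mu_{\mathcal U}(D)|/8$, the first part applied to $\tilde D$ shows $\mathcal{C}$ is bounded.

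The main obstacle is exactly the oscillation estimate $\|g-c\|_{L^\infty}\le 8\|D\|_{\mathrm{T^2}\mathrm{S}}$ that secures $\inf|g|>0$: the semi-norm controls $m_2^D$ only up to the difference $|d_{[0,1),[0,1/2)}-d_{[0,1),[1/2,1)}|$ between the two upper-triangular roots, a quantity governed by the \emph{balancing condition} rather than by $\|\cdot\|_{\mathrm{T^2}\mathrm{S}}$. This is precisely why the equivalence is proved by stabilizing $D$ first: passing to a bi-tree-semi-stabilized $\tilde D$ supplies the balancing condition and drives that root difference below $\delta$, so that $g$ stays uniformly close to the nonzero constant $c$ and the pointwise multiplier $M_1^D-M_2^D$ becomes invertible on the range of $\mathcal{O}\mathcal{C}$.
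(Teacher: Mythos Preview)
Your approach is essentially identical to the paper's. The paper expands $(\lambda-\mu)\mathcal{O}\mathcal{C}z=(\lambda I_{Z^\Omega}-M_1^D)\mathcal{O}\mathcal{C}z-(\mu I_{Z^\Omega}-M_2^D)\mathcal{O}\mathcal{C}z+(M_1^D-M_2^D)\mathcal{O}\mathcal{C}z$ and applies the triangle inequality together with \Cref{thm:pw-multipliers:1}; unwinding that operator identity is exactly your pointwise bound $\inf|m_1^D-m_2^D|\ge|c|-\|g-c\|_\infty$ followed by monotonicity of the $Z^\Omega$-norm. For the final equivalence, both you and the paper pass to a stabilized $\tilde D$ via \Cref{two-parameter algebraic stabilization theorem norm version} (equivalently \Cref{T:3.4}) and control its $Z$-norm via \Cref{thm:basic-operators}.

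The concern you raise in your last paragraph is justified, and the paper's proof glosses over the same point: the assertion $\|(\lambda I-M_1^D)\mathcal{O}\mathcal{C}z\|_{Z^\Omega}+\|(\mu I-M_2^D)\mathcal{O}\mathcal{C}z\|_{Z^\Omega}\le 8\|D\|_{\mathrm{T^2S}}\|\mathcal{C}z\|_Z$ (equivalently your $\|g-c\|_\infty\le8\|D\|_{\mathrm{T^2S}}$) genuinely needs control on $|d_{[0,1),[0,1/2)}-d_{[0,1),[1/2,1)}|$, which $\|\cdot\|_{\mathrm{T^2S}}$ alone does not supply. Indeed, any $D$ in the three-dimensional kernel of $\|\cdot\|_{\mathrm{T^2S}}$ with distinct values $\mu_1\ne\mu_2$ on the left and right upper triangular parts already violates the estimate. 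So the quantitative inequality, as literally stated, tacitly uses the balancing condition; the qualitative equivalence, which is what is actually used downstream (e.g.\ in \Cref{10-10-5}), is unaffected because stabilization supplies that condition before the bound is invoked. Your write-up has the virtue of making this dependency explicit.
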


\begin{proof}
  Put $\lambda = \lambda_\mathcal{U}(D)$ and $\mu = \mu_\mathcal{U}(D)$ note that
  \begin{align*}
    \|(\lambda - \mu) \mathcal{C} z\|_Z
    &= \|(\lambda I_{Z^{\Omega}} - M_1^D) \mathcal{O} \mathcal{C} z - (\mu I_{Z^{\Omega}} - M_2^D) \mathcal{O} \mathcal{C} z + (M_1^D - M_2^D)\mathcal{O} \mathcal{C} z\|_{Z^{\Omega}}\\
    &\leq \|(\lambda I_{Z^{\Omega}} - M_1^D) \mathcal{O} \mathcal{C} z\|_{Z^{\Omega}} +  \|(\mu I_{Z^{\Omega}} - M_2^D) \mathcal{O} \mathcal{C} z\|_{Z^{\Omega}} +  \|(M_1^D - M_2^D)\mathcal{O} \mathcal{C} z\|_{Z^{\Omega}}\\
    &\leq 8\|D\|_{\mathrm{T^2}\mathrm{S}}\cdot \|\mathcal{C} z\|_Z + (\|D\| + \|D\|_{\infty} + 8\|D\|_{\mathrm{T^2}\mathrm{S}})\cdot \|z\|_{Z^{\Omega}}.
  \end{align*}
  Hence, if $8\|D\|_{\mathrm{T^2}\mathrm{S}} < |\lambda - \mu|$, then
  \begin{equation*}
    \|\mathcal{C} z\|_Z
    \leq \frac{\|D\| + \|D\|_{\infty} + 8\|D\|_{\mathrm{T^2}\mathrm{S}}}{|\lambda - \mu| - 8\|D\|_{\mathrm{T^2}\mathrm{S}}}\cdot \|z\|_{Z^{\Omega}}.
  \end{equation*}
    
  For the final part, assume that there exists a bounded Haar multiplier $D\colon Z\to Z$ such that $\lambda_\mathcal{U}(D)\neq\mu_\mathcal{U}(D)$. By \Cref{T:3.4} and
  \Cref{thm:basic-operators}, we may pass to a bounded Haar multiplier $\tilde D\colon Z\to Z$ such that
  $\|\tilde D\|_{\mathrm{T^2}\mathrm{S}} < |\lambda_\mathcal{U}(D) - \mu_\mathcal{U}(D)|/8$ and $\lambda_\mathcal{U}(\tilde D) = \lambda_\mathcal{U}(D)$, $\mu_\mathcal{U}(\tilde D) = \mu_\mathcal{U}(D)$, which yields the conclusion.
\end{proof}

\begin{thm}\label{thm:pw-multipliers:2}
  Let $Z = Z(\mathbf{\sigma},X,Y)\in\mathcal{HH}(\delta^2)$ and $\mathcal{V}_Z = \mathcal{V}(\delta^2)$ with the norm induced by $Z$.  Suppose that
  $m\colon [0,1)^2\to \mathbb{R}$ is continuous almost everywhere with $\|m\|_{L^\infty([0,1)^2)}\leq 1$, and let the pointwise multiplication operator
  $M\colon \mathcal{V}_Z\to Z^\Omega$ be given by
  \begin{equation*}
    (Mw)(s,t,\sigma) = m(s,t)\cdot w(s,t,\sigma),
    \qquad s,t\in [0,1),\ \sigma\in \boldsymbol{\sigma}.
  \end{equation*}
  If $\mathcal{C}\colon Z\to Z$ is unbounded but $M\mathcal{O}\mathcal{C}\colon \mathcal{V}_Z \to Z^\Omega$ is bounded, then $M=0$.
\end{thm}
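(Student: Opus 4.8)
The plan is to argue by contraposition: assuming $m$ is not the zero element of $L^\infty$, I will show that $M\mathcal{O}\mathcal{C}$ cannot be bounded, which contradicts the hypothesis and forces $M=0$. Since $m$ is continuous almost everywhere and the set $\{m\neq 0\}$ then has positive measure, it must contain a point $(s_0,t_0)$ of continuity with $c := m(s_0,t_0)\neq 0$. By continuity there is an open neighborhood of $(s_0,t_0)$ on which $|m|\geq |c|/2$, and since open sets contain arbitrarily small dyadic squares, I can fix a dyadic \emph{square} $I_0\times J_0$ with $|I_0|=|J_0|=2^{-l_0}$ lying inside this neighborhood, so that $|m(s,t)|\geq |c|/2$ for all $(s,t)\in I_0\times J_0$.

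The structural heart of the argument is that the Capon projection commutes with the dyadic scaling operator $\Delta_{I_0,J_0}$ of \Cref{pro:down-up-scale} \emph{precisely because} $I_0$ and $J_0$ have the same size. Indeed, $\Delta_{I_0,J_0}$ sends $h_I\otimes k_J$ with $I\in\mathcal{D}_i$, $J\in\mathcal{D}_j$ to $h_{\rho_0(I)}\otimes k_{\tau_0(J)}$ with $\rho_0(I)\in\mathcal{D}_{i+l_0}$ and $\tau_0(J)\in\mathcal{D}_{j+l_0}$, so the defining inequality $i\geq j$ of $\mathcal{C}$ is preserved, whence $\mathcal{C}\Delta_{I_0,J_0}=\Delta_{I_0,J_0}\mathcal{C}$ on $\mathcal{V}(\delta^2)$. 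Moreover every $\Delta_{I_0,J_0}\mathcal{C}w$ is a combination of Haar tensors $h_K\otimes k_L$ with $K\subset I_0$, $L\subset J_0$, so its image under the isometry $\mathcal{O}$ (see \Cref{dfn:multiplier-space}) is, as a function of $(s,t)$, supported in $I_0\times J_0$.

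Now fix an arbitrary $w\in\mathcal{V}_Z$ and set $v=\Delta_{I_0,J_0}w$. Using the commutation relation and the support property, $(M\mathcal{O}\mathcal{C}v)(s,t,\sigma)=m(s,t)\,(\mathcal{O}\Delta_{I_0,J_0}\mathcal{C}w)(s,t,\sigma)$, where the second factor vanishes off $I_0\times J_0$, while there $|m|\geq |c|/2$. By the monotonicity of the $Z^\Omega$-norm (\Cref{RI properties}~\ref{RI properties 5}, cf.\ \Cref{rem:pw-multiplier}) and the fact that $\mathcal{O}$ is isometric, this gives $\|M\mathcal{O}\mathcal{C}v\|_{Z^\Omega}\geq \tfrac{|c|}{2}\,\|\mathcal{O}\Delta_{I_0,J_0}\mathcal{C}w\|_{Z^\Omega}=\tfrac{|c|}{2}\,\|\Delta_{I_0,J_0}\mathcal{C}w\|_Z$. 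Applying $\Upsilon_{I_0,J_0}\Delta_{I_0,J_0}=I_Z$ together with $\|\Upsilon_{I_0,J_0}\|\leq 4^{l_0+1}$ from \Cref{pro:down-up-scale} yields $\|\Delta_{I_0,J_0}\mathcal{C}w\|_Z\geq 4^{-(l_0+1)}\|\mathcal{C}w\|_Z$, and $\|v\|_Z=\|\Delta_{I_0,J_0}w\|_Z\leq \|w\|_Z$. Combining these,
\[
  \frac{\|M\mathcal{O}\mathcal{C}v\|_{Z^\Omega}}{\|v\|_Z}
  \ \geq\ \frac{|c|}{2}\,4^{-(l_0+1)}\,\frac{\|\mathcal{C}w\|_Z}{\|w\|_Z}.
\]
As $l_0$ and $c$ are fixed while $\mathcal{C}$ is unbounded on $Z$, the right-hand side is unbounded over $w$, so $M\mathcal{O}\mathcal{C}$ is unbounded, the desired contradiction.

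The only step requiring genuine care is the pairing of the commutation identity with the localization: one must choose $I_0,J_0$ of \emph{equal} size so that scaling preserves the triangular region defining $\mathcal{C}$, and simultaneously small enough that $m$ stays bounded away from zero on $I_0\times J_0$. I expect this to be the main conceptual obstacle, whereas the remaining ingredients—the norm bounds for $\Delta_{I_0,J_0}$ and $\Upsilon_{I_0,J_0}$, the isometry of $\mathcal{O}$, and the monotonicity of the $Z^\Omega$-norm—are all established earlier and enter only as routine bookkeeping.
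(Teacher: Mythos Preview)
Your proof is correct and, in fact, cleaner than the paper's. Both arguments share the same backbone: locate a dyadic \emph{square} $I_0\times J_0$ on which $|m|$ is bounded below, exploit that $\mathcal{C}$ commutes with the scaling operator $\Delta_{I_0,J_0}$ (which crucially requires $|I_0|=|J_0|$), and invert via $\Upsilon_{I_0,J_0}\Delta_{I_0,J_0}=I_Z$ to transfer unboundedness of $\mathcal{C}$ to unboundedness of $M\mathcal{O}\mathcal{C}$.

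The difference lies in how the localization to $I_0\times J_0$ is achieved. The paper tests on an arbitrary $z\in\mathcal{V}_Z$, applies the pointwise restriction $R_{I_0,J_0}$, and then must account for the part of $\mathcal{O}\mathcal{C}z$ that is not already supported in $I_0\times J_0$. This forces a four-term splitting $I_Z=(I_X-\cond_{l_0})\otimes(I_Y-\cond_{l_0})+\cdots$ and separate estimates for three cross terms, each contributing constants depending on $l_0$, before arriving at a bound on $\|S_{I_0,J_0}\mathcal{C}z\|_Z$. You bypass this entirely by testing only on vectors of the form $v=\Delta_{I_0,J_0}w$: since $\mathcal{C}$ commutes with $\Delta_{I_0,J_0}$, the vector $\mathcal{C}v=\Delta_{I_0,J_0}\mathcal{C}w$ is automatically supported in $I_0\times J_0$, so no splitting is needed and the monotonicity of the $Z^\Omega$-norm applies directly. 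Your route is shorter and conceptually sharper; the paper's route has the minor advantage of producing an explicit bound on $\|\mathcal{C}\|$ in terms of $\|M\mathcal{O}\mathcal{C}\|$, $k_0$, and $l_0$ valid for \emph{every} test vector, but this is not needed for the stated conclusion.
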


\begin{proof}
  Suppose that $|\{ |m| > 0\}| > 0$, then there exists $k_0\in\mathbb{N}$ such that $|\{ |m| > 1/k_0\}| > 0$.  By the almost everywhere continuity of
  $m$, there exist dyadic intervals $I_0,J_0\in\mathcal{D}$ with $|I_0| = |J_0| = 2^{-l_0}$ such that
  $I_0\times J_0\setminus N\subset \{ |m| > 1/k_0\}$, where $|N| = 0$.  Define the pointwise multiplier $R_{I_0, J_0}\colon Z^\Omega\to Z^\Omega$ by
  $(R_{I_0, J_0} w)(s,t,\sigma) = \chi_{I_0\times J_0}(s,t) w(s,t,\sigma)$ and let $z\in \mathcal{V}_Z$.  By the monotonicity of the norm in $Z^\Omega$, we obtain
  \begin{equation}\label{eq:169}
    \begin{aligned}
      \|M\mathcal{O}\mathcal{C}z\|_{Z^\Omega}
      &\geq \|R_{I_0, J_0} M\mathcal{O}\mathcal{C}z\|_{Z^\Omega}\\
      &= \Bigl\|
        s\mapsto \Bigl\|
        t\mapsto \cond \bigl|
        \chi_{I_0\times J_0}(s,t) m(s,t) (\mathcal{O}\mathcal{C}z)(s,t,\sigma)
        \bigr|
        \Bigr\|_Y
        \Bigr\|_X\\
      &\geq \frac{1}{k_0} \Bigl\|
        s\mapsto \Bigl\|
        t\mapsto \cond \bigl|
        \chi_{I_0\times J_0}(s,t) (\mathcal{O}\mathcal{C} w)(s,t,\sigma)
        \bigr|
        \Bigr\|_Y
        \Bigr\|_X\\
      &= \frac{1}{k_0} \|R_{I_0, J_0} \mathcal{O}\mathcal{C} z\|_{Z^\Omega}.
    \end{aligned}
  \end{equation}
  In the following, we will use the identity
  \begin{equation}\label{eq:168}
    I_{Z}
    = (I_X - \cond_{l_0})\otimes (I_Y - \cond_{l_0})
    + (I_X - \cond_{l_0})\otimes \cond_{l_0}
    + \cond_{l_0}\otimes (I_Y - \cond_{l_0})
    + \cond_{l_0}\otimes \cond_{l_0}
  \end{equation}
  to split $R_{I_0, J_0} \mathcal{O}\mathcal{C}z$ into parts and estimate them separately.  Let
  $z = \sum_{I,J} a_{I,J} h_I\otimes k_J\in \mathcal{V}_Z$ with $a_{I,J}\in \mathbb{R}$.

  First, observe that by the definition of $\cond_{l_0}$ and $\mathcal{C}$
  \begin{equation}\label{eq:165}
    R_{I_0, J_0}\mathcal{O}(I_X - \cond_{l_0})\otimes \cond_{l_0}\mathcal{C}z
    = 0.
  \end{equation}

  Secondly, the definitions of $R_{I_0, J_0}$, $S_{I_0, J_0}$, $\cond_{l_0}$ and the monotonicity of the norm in $Z^\Omega$ yield
  \begin{align*}
    \|R_{I_0, J_0}\mathcal{O}(\cond_{l_0}\otimes (I_Y - \cond_{l_0})) \mathcal{C}z\|_{Z^\Omega}
    &= \Bigl\|
      \sum_{\substack{|I| > 2^{-l_0}\\|J|\leq 2^{-l_0}}}
    a_{I,J} ((\chi_{I_0}h_I)\otimes (\chi_{J_0}k_J))\otimes (\sigma_I^{(1)}\otimes\sigma_J^{(2)})
    \Bigr\|_{Z^\Omega}\\
    &\leq \sum_{|I| > 2^{-l_0}} \Bigl\|
      \sum_{J\subset J_0} a_{I,J} ((\chi_{I_0} h_I)\otimes k_J)\otimes (\sigma_I^{(1)}\otimes\sigma_J^{(2)})
      \Bigr\|_{Z^\Omega}\\
    &\leq \sum_{|I| > 2^{-l_0}} \Bigl\|
      \sum_{J\subset J_0} a_{I,J} (h_I\otimes k_J)\otimes (\sigma_I^{(1)}\otimes\sigma_J^{(2)})
      \Bigr\|_{Z^\Omega}\\
    &= \sum_{|I| > 2^{-l_0}} \Bigl\|
      \sum_{J\subset J_0} a_{I,J} h_I\otimes k_J
      \Bigr\|_{Z}\\
    &= \sum_{|I| > 2^{-l_0}} \bigl\|
      S_{I, J_0}(p_I\otimes I_Y)z
      \bigr\|_{Z}.
  \end{align*}
  By \Cref{pro:bi-monotone} and \Cref{pro:restriction-operators}, we obtain
  \begin{equation}\label{eq:159}
    \|R_{I_0, J_0}\mathcal{O}\cond_{l_0}\otimes (I_Y - \cond_{l_0}) \mathcal{C}z\|_{Z^\Omega}
    \leq \sum_{|I| > 2^{-l_0}} 8 \|z\|_{Z}
    \leq 2^{l_0+3} \|z\|_{Z}.
  \end{equation}

  Thirdly, note that by \Cref{pro:bi-monotone}
  \begin{align*}
    4\|z\|_{Z}
    &\geq \|p_I\otimes p_Jz\|_{Z}
      = \Bigl\|a_{I,J} (h_I\otimes k_J)\otimes (\sigma_I^{(1)}\otimes\sigma_J^{(2)})\Bigr\|_{Z^\Omega},
  \end{align*}
  hence, by the above inequality, we obtain
  \begin{align*}
    \|R_{I_0, J_0}\mathcal{O}(\cond_{l_0}\otimes\cond_{l_0}) \mathcal{C}z\|_{Z^\Omega}
    &= \Bigl\|
      \sum_{|I|,|J| > 2^{-l_0}} a_{I,J} (\chi_{I_0}h_I\otimes \chi_{J_0}k_J)\otimes (\sigma_I^{(1)}\otimes\sigma_J^{(2)})
      \Bigr\|_{Z^\Omega}\\
    &\leq \sum_{|I|,|J| > 2^{-l_0}} \Bigl\|
      a_{I,J} (h_I\otimes k_J)\otimes (\sigma_I^{(1)}\otimes\sigma_J^{(2)})
      \Bigr\|_{Z^\Omega}
      \leq \sum_{|I|,|J| > 2^{-l_0}} 4 \|z\|_{Z^\Omega}.
  \end{align*}
  We record the estimate
  \begin{equation}\label{eq:167}
    \|R_{I_0, J_0}\mathcal{O}\cond_{l_0}\otimes\cond_{l_0} \mathcal{C} z\|_{Z^\Omega}
    \leq 4^{l_0+1} \|z\|_{Z}.
  \end{equation}
  Combining~\eqref{eq:169}, \eqref{eq:168}, \eqref{eq:165}, \eqref{eq:159} and \eqref{eq:167} yields
  \begin{align*}
    k_0\cdot \|M\mathcal{O}\mathcal{C}z\|_{Z^\Omega}
    &\geq \|
      R_{I_0, J_0} \mathcal{O}(I_X - \cond_{l_0})\otimes I_Y - \cond_{l_0}) \mathcal{C} z
      \|_{Z^\Omega}
      - \|R_{I_0, J_0} \mathcal{O}(I_X - \cond_{l_0})\otimes \cond_{l_0} \mathcal{C} z\|_{Z^\Omega}\\
    &\qquad - \|R_{I_0, J_0} \mathcal{O}\cond_{l_0}\otimes (I_Y - \cond_{l_0}) \mathcal{C} z\|_{Z^\Omega}
      - \|R_{I_0, J_0} \mathcal{O}\cond_{l_0}\otimes \cond_{l_0} \mathcal{C} z\|_{Z^\Omega}\\
    &\geq \|
      R_{I_0, J_0} \mathcal{O}(I_X - \cond_{l_0})\otimes (I_Y - \cond_{l_0}) \mathcal{C} z
      \|_{Z^\Omega}
      - (2^{l_0+3} + 4^{l_0+1})\|z\|_{Z}.
  \end{align*}
  Note that by definition of $R_{I_0, J_0}$, $\cond_{l_0}$ and $S_{I_0, J_0}$, we obtain
  \begin{equation*}
    R_{I_0, J_0} \mathcal{O}(I_X - \cond_{l_0})\otimes (I_Y - \cond_{l_0}) \mathcal{C}z
    = \mathcal{O}S_{I_0, J_0} \mathcal{C}z,
  \end{equation*}
  and thus, since $\mathcal{O}$ is an isometry, the latter estimate yields
  \begin{equation}\label{eq:170}
    \|S_{I_0, J_0} \mathcal{C}z\|_{Z}
    = \|\mathcal{O}S_{I_0, J_0} \mathcal{C}z\|_{Z^\Omega}
    \leq (k_0\cdot \|M\mathcal{O}\mathcal{C}\| + 2^{l_0+3} + 4^{l_0+1})\cdot \|z\|_{Z}.
  \end{equation}

  On the other hand, since the Capon projection $\mathcal{C}$ commutes with both $\Delta_{I_0, J_0}$ and $S_{I_0, J_0}$, we obtain by \Cref{pro:down-up-scale},
  and~\eqref{eq:170} that
  \begin{align*}
    \|\mathcal{C} z\|_{Z}
    &= \|\Upsilon_{I_0, J_0}\Delta_{I_0, J_0}\mathcal{C} z\|_{Z}
      \leq \|\Upsilon_{I_0, J_0}\|\cdot \|\mathcal{C}\Delta_{I_0, J_0} z\|_{Z}
      = \|\Upsilon_{I_0, J_0}\|\cdot \|\mathcal{C}S_{I_0, J_0}\Delta_{I_0, J_0} z\|_{Z}\\
    &\leq \|\Upsilon_{I_0, J_0}\|\cdot \|S_{I_0, J_0}\mathcal{C}\|\cdot
      \|\Delta_{I_0, J_0} z\|_{Z}
      \leq 4^{l_0}\cdot \bigl( k_0\cdot \|M\mathcal{O}\mathcal{C}\| + (2^{l_0+3} + 4^{l_0+1})
      \bigr)
      \cdot \|z\|_{Z}.
  \end{align*}
  Contrary to our hypothesis, this implies the boundedness of the Capon projection $\mathcal{C}\colon Z\to Z$; hence, $|\{ |m| > 0\}| = 0$, i.e., $M = 0$.\qedhere
\end{proof}

\bibliographystyle{abbrv}%
\bibliography{bibliography}%

\end{document}